\UseRawInputEncoding
\documentclass[12pt, reqno]{amsart}
\usepackage{amssymb,amsthm,amsmath}
\usepackage[numbers,sort&compress]{natbib}
\usepackage{amsfonts}
\usepackage{mathrsfs}
\usepackage{latexsym}
\usepackage{tikz}
\usepackage{xcolor}
\usepackage{tkz-euclide}
\usepackage{float}
\usepackage{pdfsync}
\usepackage{indentfirst}
\usepackage{esint}
\usepackage{microtype} 
\usepackage[colorlinks=true, linkcolor=blue, citecolor=red]{hyperref}

\newtheorem{theorem}{Theorem}[section]

\renewcommand{\theequation}{\arabic{section}.\arabic{equation}}

\newtheorem{corollary}{Corollary}[section]
\newtheorem{lemma}{Lemma}[section]
\newtheorem{proposition}{Proposition}[section]
\newtheorem{maincorollary}[theorem]{Corollary}
\newtheorem{mainproposition}[theorem]{Proposition}
\newtheorem{definition}{Definition}[section]

\newtheorem{remark}{Remark}[section]

\newcommand{\beq}{\begin{equation}}
	\newcommand{\eeq}{\end{equation}}
\newcommand{\ben}{\begin{eqnarray}}
	\newcommand{\een}{\end{eqnarray}}
\newcommand{\beno}{\begin{eqnarray*}}
	\newcommand{\eeno}{\end{eqnarray*}}
\numberwithin{equation}{section} 

\numberwithin{equation}{section}

\newcommand{\alabel}{\stepcounter{equation}\tag{\theequation}\label}  
\allowdisplaybreaks

\begin{document}
	\title[Global Existence and Partial Regularity of cns equations]{\bf Global Existence and Partial Regularity for the
Three-Dimensional Chemotaxis--Navier--Stokes System
in a Half-Space}
	\author{Yubo~Chen$^{1}$}
	\address[Yubo~Chen]{School of Mathematical Sciences, Dalian University of Technology, Dalian, 116024,  China}
	\email{1220823215@mail.dlut.edu.cn}

	\author{Jianbo~Yu$^{1,*}$}
	\address[Jianbo~Yu]{School of Mathematical Sciences, Dalian University of Technology, Dalian, 116024,  China}
	\email{yujb@mail.dlut.edu.cn}
	\thanks{$^*$Author to whom correspondence should be addressed.}
	
	\date{\today}

\begin{abstract}
We establish endpoint partial regularity for suitable weak solutions to the three-dimensional chemotaxis--Navier--Stokes system with constant sensitivity and linear consumption in a half-space under no-slip and no-flux boundary conditions. The full space--time singular set, including its boundary part, has vanishing one-dimensional parabolic Hausdorff measure. This follows from a Caffarelli--Kohn--Nirenberg type criterion at the natural $r^{-1}$ dissipation scaling, valid at interior and boundary points without additional pressure smallness.

The key estimate combines weighted mass control with the localized entropy--energy inequality to close the iteration without a separate $n\log n$ quantity and accounts for the mass term generated by rescaling. Boundary-compatible pressure and caloric decompositions yield local H\"older regularity. We also prove a lower-order criterion requiring smallness only of the velocity, signal gradient, and pressure oscillation, with a threshold depending on an arbitrary finite density mass bound.

We construct global suitable weak solutions under natural entropy and energy assumptions without smallness restrictions on the initial data. The potential is time-independent with bounded gradient; boundedness of the potential itself is assumed only for the large-time results. If its gradient is also H\"older continuous, a solution can be selected to become bounded and classical throughout the closed half-space after a finite waiting time. For the same solution, we obtain decay and Neumann Gaussian asymptotics for both scalar components without initial spatial moment assumptions. The profile coefficients are the conserved bacterial mass and the limiting signal mass, respectively, and the latter is strictly positive whenever the initial signal is nonzero.
\end{abstract}
\maketitle

	 {\small\noindent {\bf Keywords:} Boundary $\varepsilon$-regularity; Hausdorff measure;
	 Suitable weak solutions;\\
	 Chemotaxis--Navier--Stokes equations.}

   \noindent   {\bf \small MSC2020: Primary 35B65, 35Q30;
Secondary 35D30, 35B40, 92C17}

\clearpage
\tableofcontents
\clearpage

\section{Introduction and main results}
\label{section1}
Chemotaxis describes the directed movement of cells in response to
chemical gradients. In suspensions of aerobic bacteria, this mechanism
interacts with oxygen consumption and the motion of the surrounding
fluid: bacteria migrate towards oxygen-rich regions, consume oxygen,
and affect the fluid through buoyancy, while the fluid transports both
bacteria and oxygen. The experiments of Dombrowski et al.~\cite{DCCGK2004} and the
modeling work of Tuval et al.~\cite{ILCJR 2005} illustrate how these interactions
can produce collective motion and concentrated bacterial structures.
The associated chemotaxis--fluid equations couple cross-diffusion to
incompressible fluid dynamics and raise questions about global
existence, local regularity, and the possible concentration of weak
solutions near physical boundaries.

In this paper, we study the three-dimensional
chemotaxis--Navier--Stokes system
\begin{equation}\label{che}
\left\{
\begin{aligned}
&\partial_t n+u\cdot\nabla n-\Delta n
  =-\nabla\cdot(n\nabla c),\\
&\partial_t c+u\cdot\nabla c-\Delta c=-nc,\\
&\partial_t u+u\cdot\nabla u-\Delta u+\nabla p
  =-n\nabla\phi,\\
&\nabla\cdot u=0
\end{aligned}
\right.
\end{equation}
in the upper half-space
\[
\mathbb H^3:=\{x=(x',x_3)\in\mathbb R^3:x_3>0\},
\qquad
\Gamma:=\partial\mathbb H^3=\{x_3=0\}.
\]
Here $n$, $c$, $u$, and $p$ denote the cell density, oxygen
concentration, fluid velocity, and pressure, respectively.
The potential $\phi$ is prescribed and independent of time.
Our standing assumption is
\[
\phi\in W^{1,\infty}_{\rm loc}(\overline{\mathbb H^3}),
\qquad
M_\phi:=\|\nabla\phi\|_{L^\infty(\mathbb H^3)}<\infty.
\]
The local Sobolev condition is understood on bounded half-balls.
The global weak-solution construction and the local regularity
criteria require only this gradient bound; neither boundedness of
$\phi$ nor a bound on its second derivatives is imposed here.
For the large-time results we additionally assume
$\phi\in L^\infty(\mathbb H^3)$, so that
$\phi\in W^{1,\infty}(\mathbb H^3)$, to control the effective force
$\phi\nabla n$. H\"older continuity of $\nabla\phi$ is imposed separately
for the final classical regularity conclusion in
Theorem~\ref{thm:eventual-regularity}.
Thus the local results include the linear potential $\phi(x)=x_3$,
whereas the large-time results below do not.
We impose the boundary conditions
\begin{equation}\label{bound}
u=0,\qquad \partial_\nu n=\partial_\nu c=0
\qquad\text{on }\Gamma\times(0,\infty).
\end{equation}
Here $\nu=-e_3$ is the outward unit normal to $\mathbb H^3$.
The scalar variables $n$ and $c$ satisfy homogeneous Neumann
boundary conditions, while $u$ satisfies the no-slip boundary
condition. For weak solutions, the Neumann conditions are
understood through the integral identities in Definition~\ref{def:suitable-half-space}(ii),
and the no-slip condition is understood in the trace sense.

The existence theory for chemotaxis--fluid systems has developed
through estimates that exploit the interaction between density
entropy, concentration dissipation, and fluid energy.
Early contributions include the construction of global solutions
near constant equilibria by Duan, Lorz, and Markowich~
\cite{DLM2010}, and the analysis of a coupled chemotaxis--Stokes
model by Lorz~\cite{AL2010}. Chae, Kang, and Lee~\cite{CKL2013}
established local regular solvability and continuation criteria,
together with global results under structural assumptions.
Their subsequent work~\cite{CKL2014} investigated global existence
and temporal decay in regimes involving smallness conditions,
including the two-dimensional Navier--Stokes and three-dimensional
Stokes cases. These results identify several mechanisms through
which diffusion and consumption can control chemotactic aggregation.

Dai and Liu~\cite{DL2020LowModes} obtained a regularity criterion
involving only low modes of the oxygen concentration and fluid
velocity through a wavenumber splitting approach.
The lower-order criterion proved here uses local space--time
norms of suitable weak solutions,
including at a no-slip boundary, and allows an arbitrary finite
density mass bound through a mass-dependent smallness threshold.

In two space dimensions, global solvability is understood in a
substantially wider range of settings. Liu and Lorz~\cite{LL2011}
obtained global solutions for large data under suitable assumptions,
while Winkler~\cite{Winkler2012} established global classical
solvability for a two-dimensional chemotaxis--Navier--Stokes system;
the three-dimensional result in~\cite{Winkler2012} concerns the
Stokes case. Zhang and Zheng~\cite{ZZ2014} further developed the
two-dimensional well-posedness theory. Wang, Winkler, and Xiang
studied global classical solutions with subcritical sensitivity~
\cite{WWX2018} and the small-convection limit~\cite{WWX2018-2}.
Related questions concerning control of cross-diffusion in a
two-dimensional coral-fertilization model were studied by
Wang, Zhang, and Zheng~\cite{WZZ2021}. The differences in dimension,
sensitivity, and fluid convection in these works are relevant when
comparing their conclusions with the three-dimensional system~\eqref{che}.

For three-dimensional chemotaxis--Navier--Stokes systems, global
weak solutions have been constructed in several settings.
Winkler~\cite{Winkler2016} treated bounded convex domains,
He and Zhang~\cite{HZ2017} established further global weak
existence results, and Kang, Lee, and Winkler~\cite{KLW2022}
considered the Cauchy problem in $\mathbb R^3$. Such existence
results provide solutions beyond the possible breakdown of
classical regularity. A further issue is to identify a local
admissibility condition and use it to quantify the exceptional
set on which regularity may fail.

For the Navier--Stokes equations, this program originates in the
work of Scheffer~\cite{SV1,SV2,SV4} and the partial regularity
theorem of Caffarelli, Kohn, and Nirenberg~\cite{CKN}.
The latter proves that the interior singular set of a suitable
weak solution has zero one-dimensional parabolic Hausdorff
measure. Lin~\cite{Lin} gave a subsequent proof of this theorem,
and Tsai~\cite{Tsai-2018} provides a systematic account of the
surrounding regularity theory. Iteration and decay arguments
also appear in the four-dimensional first-blow-up-time analysis
of Dong and Du~\cite{DD}.

For suitable weak solutions of the Navier--Stokes equations,
Seregin~\cite{Seregin2002Boundary} established a
Caffarelli--Kohn--Nirenberg type condition for local
H\"older regularity near a plane boundary.

For the three-dimensional boundary
problem, Dong and Wang~\cite{DW2020} established
$\varepsilon$-regularity criteria in a half-cylinder with the
no-slip condition, using pressure decompositions, interpolation,
and iteration. Their criteria provide a useful starting point
for analyzing the fluid component of~\eqref{che}.

Partial regularity for chemotaxis--Navier--Stokes equations
must also account for the entropy of the density.
Chen, Li, and Wang~\cite{CLW2025} studied singularities at the
first blow-up time and obtained a vanishing $5/3$-dimensional
Hausdorff measure result. Chen, Li, Wang, and Wang~
\cite{CLWW2023} constructed global suitable weak solutions
in the whole space by deriving a local inequality for the
coupled entropy and energy. Their work~
\cite{CLWW2023Hausdorff} subsequently proved
$\mathcal P^{1+\delta}(\mathcal S)=0$ for arbitrarily small
$\delta>0$. The density-dissipation condition there involves
an additional factor $r^{-\delta}$ beyond the natural
normalization $r^{-1}$. The resulting dimension bound
$\dim_{\mathcal P}\mathcal S\leq 1$ leaves open the endpoint
measure conclusion $\mathcal P^1(\mathcal S)=0$ within that
argument.

A related line of work concerns exceptional times.
Fuest~\cite{Fuest2024Temporal} proved vanishing
$1/2$-dimensional Hausdorff measure of a temporal exceptional
set for weak energy solutions in a bounded convex domain.
Chen and Wang~\cite{ChenWang2026Temporal} also investigated
partial regularity in time for a class of three-dimensional
chemotaxis--fluid models. These results describe the size of
a subset of the time axis. The present work addresses local
regularity at individual space-time points and the parabolic
Hausdorff measure of the full space-time singular set,
including points on the physical boundary.

Large-time regularity and asymptotic behavior provide another
perspective on the interaction between chemotaxis and fluid
motion. In bounded convex domains,
Winkler~\cite{Winkler2017} established eventual smoothness
and stabilization for an appropriate class of
three-dimensional chemotaxis--Navier--Stokes solutions.
For the two-dimensional Cauchy problem, Chae, Kang, and
Lee~\cite{CKL2016Asymptotics} obtained temporal decay and
asymptotic profiles under smallness assumptions on the
initial data.

The finite-mass half-space problem considered here requires
a different large-time analysis. The absence of a
Poincar\'e inequality of the bounded-domain type prevents
a direct use of the corresponding stabilization argument.
Moreover, decay of the concentration in the uniform norm
does not determine whether its total mass vanishes.
These issues motivate our study of eventual regularity
and of the scalar diffusion profiles for the same global
solution constructed below.

The chemotactic coupling introduces difficulties that are
already present in the interior and are not resolved by
the kinetic-energy estimates for the Navier--Stokes
equations alone. The density is controlled through an
entropy involving $n\log n$, whose dissipation contains
$|\nabla\sqrt n|^2$, and its estimates are coupled to the
gradient energy of the concentration. Unlike the kinetic
energy density, $n\log n$ changes sign and does not scale
homogeneously: parabolic rescaling produces an additional
term proportional to the local mass. Consequently, the
localized entropy--energy inequality cannot be inserted
directly into the usual scale-invariant energy iteration.
Chen, Li, Wang, and Wang~\cite{CLWW2023Hausdorff} already used
weighted local mass estimates to control the negative entropy;
see in particular their equations~(4.13)--(4.15).
Building on this mechanism, we combine the entropy--energy
inequality and the mass identity with the same weight to obtain
a closed estimate involving only the local energy, dissipation,
and pressure quantities. The resulting iteration removes the
additional density-dissipation radius loss and gives the endpoint
measure conclusion in our suitable weak solution framework,
including at the boundary.

The boundary creates a further compatibility problem in
the existence construction. The scalar variables satisfy
Neumann conditions, whereas the velocity satisfies the
full no-slip condition and must remain divergence-free.
A straightforward reflection followed by convolution
does not generally preserve both velocity constraints.
At the same time, regularizing the chemotactic flux and
the consumption term independently may destroy the
cancellation underlying the entropy estimates.
We therefore combine half-space semigroup regularization
with linked truncations of these two nonlinearities.
The construction also preserves the localized inequality
for test functions satisfying the required Neumann
compatibility condition on the flat boundary.

Boundary regularity requires additional information beyond
these global bounds. The local pressure is coupled both
to the no-slip condition and to the density-dependent
force $n\nabla\phi$, so its estimates must be combined
with the scalar estimates in the iteration.
The boundary decay argument must also respect the
different symmetries of the Dirichlet velocity and the
Neumann scalar variables. We use pressure decompositions
and caloric remainders with compatible reflection
parities. The velocity reflection is applied componentwise
after the pressure and force have been included in the
right-hand side of the heat equation; it is not used
as a solenoidal extension of the coupled system.
These estimates connect regularity at boundary points
with uniform control at nearby interior points.

A separate difficulty arises in the large-time analysis
on the unbounded half-space. Finite-mass dispersion must
replace the bounded-domain stabilization mechanism.
We first obtain signal decay independently of the
transport and then derive weighted density estimates.
Since $\phi$ is bounded, the identity
\[
-n\nabla\phi=-\nabla(n\phi)+\phi\nabla n
\]
allows the gradient part to be absorbed into the pressure.
The estimate
\[
\|\phi\nabla n(t)\|_{L^2(\mathbb H^3)}
\leq\|\phi\|_{L^\infty(\mathbb H^3)}
      \|\nabla n(t)\|_{L^2(\mathbb H^3)}
\]
makes explicit where boundedness of the potential itself enters.
The remaining force belongs to $L^1_tL^2_x\cap L^2_tL^2_x$
on a time tail, which permits a strong restart of the
velocity. This yields eventual regularity for the selected
global solution and provides the starting point for the
subsequent decay and Gaussian asymptotics.

We develop this treatment using the shifted concentration
\[
    \tilde c=1+c,
    \qquad
    M_c=1+\|c_0\|_{L^\infty(\mathbb H^3)}.
\]
The shift permits local estimates involving
$\sqrt{\tilde c}$ without assuming a strictly positive
lower bound for $c$. The initial concentration still satisfies
the hypotheses of Theorem~\ref{thm:half-space-existence}, including
$\nabla\sqrt{c_0}\in L^2$. The three dissipation densities
used throughout the regularity analysis are
\[
    |\nabla\sqrt n|^2,
    \qquad
    |\nabla^2\sqrt{\tilde c}|^2,
    \qquad
    |\nabla u|^2.
\]

Our main result, Theorem~\ref{main4}, gives a dissipation criterion
at the natural scaling: an interior or boundary point $z_0$ is
regular whenever
\[
\limsup_{r\downarrow0}\frac1r\int_{\mathcal Q_r(z_0)}
\left(|\nabla\sqrt n|^2+|\nabla^2\sqrt{\tilde c}|^2
      +|\nabla u|^2\right)\,dx\,dt\leq\varepsilon_3,
\]
where $\varepsilon_3$ depends only on $M_c$ and $M_\phi$.
The condition uses the common normalization $r^{-1}$ for all
three dissipation terms and requires no additional pressure
smallness. Its immediate consequence, Corollary~\ref{cor:full-singular-set},
is the endpoint estimate $\mathcal P^1(\mathcal S)=0$ for the full
singular set in $\overline{\mathbb H^3}\times(0,\infty)$.
The comparison with previous interior results and the relevance
of including the boundary are discussed in
Remarks~\ref{rem:interior-comparison}--\ref{rem:boundary-concentration}.

A second contribution concerns the large-time behavior
of the global solution selected in our construction.
Under the standing bounded-potential assumption and without
smallness assumptions on the initial data,
this solution becomes bounded and classical throughout
the closed half-space after a finite waiting time.
For the same solution, we obtain quantitative decay
and identify the Neumann Gaussian profiles of both
scalar components, including the $L^1$ and $L^\infty$
endpoint norms and without initial spatial moment
assumptions. In particular, every nonzero initial signal
leaves a strictly positive limiting signal mass,
although its concentration converges uniformly to zero.

The existence framework is provided by a global
construction of suitable weak solutions satisfying a
localized entropy--energy inequality up to the flat
boundary. The approximation preserves incompressibility,
the boundary conditions, and the cancellation between
density entropy and concentration energy. It also yields
local $L^{5/3}$ pressure integrability on time intervals
compactly contained in $(0,\infty)$.

Two auxiliary regularity criteria support the endpoint
partial regularity theorem. The first uses local energy,
dissipation, and pressure oscillation; the second involves
lower-order space--time norms of the velocity,
the concentration gradient, and pressure oscillation.
In the second criterion, the density is only required to satisfy
an arbitrary prescribed $L^1$ bound on the unit cylinder;
the smallness threshold depends on this bound.
A caloric decomposition converts this bound into smallness
of the rescaled density mass at a suitably chosen smaller scale.
For mass-conserving global solutions, the bound on a physical
unit cylinder is supplied by the initial density mass.
Both criteria give local H\"older
regularity of $(n,\nabla c,u)$ without a separate smallness
assumption on $n|\log n|$. The proof mechanisms are
described in the subsection on key ingredients below.

The paper is organized as follows.
Section~\ref{subsec:solution-framework} defines suitable weak solutions
and states the global existence theorem. Section~\ref{subsec:main-regularity-results}
introduces regular and singular points and states the main regularity
theorem, its corollary, and the auxiliary criteria.
Section~\ref{subsec:key-ingredients} describes the proof strategy.
Section~\ref{section2} collects notation and preliminary estimates.
Section~\ref{sec:existence} proves Theorem~\ref{thm:half-space-existence}
by a global approximation and compactness argument. Section~\ref{section4} proves
Proposition~\ref{main1}, and Section~\ref{sec:lower-order-criteria}
establishes the lower-order criteria, including Proposition~\ref{main2}.
Section~\ref{sec:dissipation-singular-set} proves Theorem~\ref{main4}
and Corollary~\ref{cor:full-singular-set}.
Section~\ref{sec:eventual-regularity} proves
Theorem~\ref{thm:eventual-regularity}. Finally,
Section~\ref{sec:large-time-asymptotics} proves the decay and
Gaussian asymptotics in Theorem~\ref{thm:large-time-asymptotics}.

\subsection{Suitable weak solutions and global existence}
\label{subsec:solution-framework}
We first specify the solution class and state its global existence
theorem. The following notation will be used in the initial-data
assumptions and in the construction in Section~\ref{sec:existence}.
For $1<q<\infty$, let
\[
L_\sigma^q(\mathbb H^3)
:=
\overline{
\left\{
v\in C_c^\infty(\mathbb H^3;\mathbb R^3):
\nabla\cdot v=0
\right\}
}^{\,L^q(\mathbb H^3;\mathbb R^3)},
\]
and set $H(s):=(s+1)\log(s+1)$ for $s\geq0$.

\begin{definition}[Suitable weak solutions in the half-space] \label{def:suitable-half-space} 
Let $0<T<\infty$ and let the initial data $(n_0,c_0,u_0)$
satisfy \eqref{eq:initial-data-half-space} below.
A quadruple $(n,c,u,p)$ is called a \emph{suitable weak solution}
to \eqref{che}--\eqref{bound} in $\mathbb H^3\times(0,T)$ if the
following properties hold:
\begin{itemize} 
\item[\rm (i)] 
The functions \(n\) and \(c\) are nonnegative almost everywhere in \(\mathbb H^3\times(0,T)\). Throughout the paper, we use the convention \(0\log0=0\). Moreover, writing 

\[ \tilde c:=c+1, \] we assume that $ M_c:= \|\tilde c_0\|_{L^\infty(\mathbb H^3)} =\|1+c_0\|_{L^\infty(\mathbb H^3)}<\infty$, where $c_0$ is the given nonnegative initial concentration.
We require, in addition, that
\[
0\leq c(x,t)\leq M_c-1
\quad\text{for almost every }(x,t)\in\mathbb H^3\times(0,T).
\]
In particular, $1\leq\sqrt{\tilde{c}}\leq M_c^{1/2}$ almost everywhere.

We also require the following global bounds. For almost every $0<t<T$,
\begin{equation}\label{eq:limit-global-scalar-bounds}
\|n(t)\|_{L^1(\mathbb H^3)}\leq\|n_0\|_{L^1(\mathbb H^3)},
\qquad
\|c(t)\|_{L^1(\mathbb H^3)}\leq\|c_0\|_{L^1(\mathbb H^3)}.
\end{equation}
In the following estimates, all spatial norms and integrals are over
$\mathbb H^3$. There is a time-independent data constant
\begin{equation}\label{eq:data-constant}
\begin{aligned}
C_{\rm in}:=C\bigl(&1+\|\nabla\phi\|_{L^\infty},
\|n_0\|_{L^1},\int_{\mathbb H^3}H(n_0)\,dx,\\
&\|c_0\|_{L^1},\|c_0\|_{L^\infty},
\|\nabla\sqrt{c_0}\|_{L^2},\|u_0\|_{L^2}\bigr)
\end{aligned}
\end{equation}
such that the global entropy--energy estimate
\begin{equation}\label{eq:limit-global-energy}
\begin{aligned}
&\operatorname*{ess\,sup}_{0<t<T}
\left\{\int_{\mathbb H^3}\bigl(n(t)+H(n(t))\bigr)\,dx
+\|\nabla\sqrt{1+c(t)}\|_{L^2}^2+\|u(t)\|_{L^2}^2\right\}\\
&\quad+\int_0^T\int_{\mathbb H^3}
\frac{|\nabla n|^2}{1+n}\,dx\,dt
+\int_0^T\left(\|\nabla^2\sqrt{1+c}\|_{L^2}^2
+\|\nabla u\|_{L^2}^2\right)\,dt\\
&\quad+\int_0^T\int_{\mathbb H^3}
\left(\frac{|\nabla\sqrt{1+c}|^4}{1+c}
+n|\nabla\sqrt{1+c}|^2\right)\,dx\,dt
\leq C_{\rm in}(1+T).
\end{aligned}
\end{equation}
holds. Here $H(n)=(1+n)\log(1+n)$ and
$|\nabla n|^2/(1+n)=4|\nabla\sqrt{1+n}|^2$.
In particular,
\[
\begin{aligned}
&n\in L^{5/3}((0,T)\times\mathbb H^3),\\
&u,\ \nabla\sqrt{1+c}\in L^{10/3}((0,T)\times\mathbb H^3),
\qquad
\nabla c\in L^4((0,T)\times\mathbb H^3).
\end{aligned}
\]

\item[\rm (ii)] 
 The quadruple $(n,c,u,p)$ satisfies \eqref{che} in the sense of
distributions in $\mathbb H^3\times(0,T)$, with the boundary conditions
\[
u=0,\qquad \partial_\nu n=\partial_\nu c=0
\quad\text{on }\Gamma\times(0,T).
\]
The condition $u=0$ is understood in the trace sense.
The homogeneous Neumann conditions for $n$ and $c$ are understood
in the following weak sense: for every
$\zeta\in C_c^\infty(\overline{\mathbb H^3}\times(0,T))$,
\[
\int_0^T\int_{\mathbb H^3}
\bigl[-n\partial_t\zeta-nu\cdot\nabla\zeta
      +(\nabla n-n\nabla c)\cdot\nabla\zeta\bigr]\,dx\,dt=0
\]
and
\[
\int_0^T\int_{\mathbb H^3}
\bigl[-c\partial_t\zeta-cu\cdot\nabla\zeta
      +\nabla c\cdot\nabla\zeta+nc\zeta\bigr]\,dx\,dt=0.
\]
The test functions may be nonzero on $\Gamma$.

\item[\rm (iii)]
For every bounded domain $D\subset\mathbb H^3$ whose closure is compact in $\overline{\mathbb H^3}$,
the following local regularity properties hold:
\begin{align*}
&n,\ n\log n \in L^\infty(0,T;L^1(D)),
\qquad
\nabla\sqrt{\tilde c}\in L^\infty(0,T;L^2(D)),\\
&u\in L^\infty(0,T;L^2(D)),
\qquad
\nabla\sqrt n,\ \nabla^2\sqrt{\tilde c},\ \nabla u
\in L^2((0,T)\times D).
\end{align*}
In addition, for every compact interval $I\Subset(0,T]$,
\[
p\in L^{5/3}(I\times D).
\]

\item[\rm (iv)] \textbf{(Local boundary energy inequality).} 
\par Let $0<t_1<\tau<T$ and $0\leq\psi\in C_c^\infty\bigl( \overline{\mathbb H^3}\times(t_1,T) \bigr)$ satisfy the boundary compatibility condition \[ \partial_3\psi=0 \qquad\text{on }\Gamma\times(t_1,T). \] 
Then, for almost every \(\tau\in(t_1,T)\), 
\begin{align}
&\int_{\mathbb H^3} \left(n\log n+2|\nabla\sqrt{\tilde c}|^2+112M_{c}|u(x,\tau)|^2\right)\,\psi(x,\tau)\,dx\nonumber\\
&+\int_0^\tau\!\!\int_{\mathbb H^3}
      \left(2|\nabla\sqrt n|^2+\frac47|\nabla^2\sqrt{\tilde c}|^2+112M_c|\nabla u|^2\right)\psi\,dx\,dt
 \nonumber\\
&+2\int_0^\tau\!\!\int_{\mathbb H^3}
      n|\nabla\sqrt{\tilde c}|^2\psi\,dx\,dt
+\frac14\int_0^\tau\!\!\int_{\mathbb H^3}
      (\sqrt{\tilde c})^{-2}|\nabla\sqrt{\tilde c}|^4
      \psi\,dx\,dt
 \nonumber\\
\leq&
 \int_0^\tau\!\!\int_{\mathbb H^3}
      \left(n\log n+2|\nabla\sqrt{\tilde c}|^2+112M_c|u|^2\right)
      (\partial_t\psi+\Delta\psi+u\cdot\nabla\psi)
      \,dx\,dt
 \nonumber\\
&+\int_0^\tau\!\!\int_{\mathbb H^3}
      n\log n\,\nabla\tilde c\cdot\nabla\psi\,dx\,dt
 +\int_0^\tau\!\!\int_{\mathbb H^3}
      n\nabla\tilde c\cdot\nabla\psi\,dx\,dt
 \nonumber\\
&-\frac47\int_0^\tau\!\!\int_{\mathbb H^3}
      (\sqrt{\tilde c})^{-1}|\nabla\sqrt{\tilde c}|^2
      \nabla\sqrt{\tilde c}\cdot\nabla\psi\,dx\,dt
 -\frac{16}{7}\sum_i\int_0^\tau\!\!\int_{\mathbb H^3}
      (\sqrt{\tilde c})^{-1}
      (\partial_i\sqrt{\tilde c})^3
      \partial_i\psi\,dx\,dt
 \nonumber\\
&+224M_c\int_0^\tau\!\!\int_{\mathbb H^3}
      (p-\bar p(t))u\cdot\nabla\psi\,dx\,dt
 -224M_c\int_0^\tau\!\!\int_{\mathbb H^3}
      n\nabla\phi\cdot u\,\psi\,dx\,dt,
\label{le}
\end{align}
where \(\bar p\in L^1_{\rm loc}(0,T)\) is an arbitrary locally integrable function of time; in applications it will be chosen as a suitable local spatial average of the pressure. 
\end{itemize} 
\end{definition}

The following theorem provides global solutions in this class.
For a global solution, suitability is understood on every finite
time interval. The proof is given in Section~\ref{sec:existence}.

\begin{theorem}[Global half-space existence]
\label{thm:half-space-existence}
Assume that $\phi$ satisfies the standing bounded-gradient assumption
and that the initial data satisfy
\begin{equation}
\label{eq:initial-data-half-space}
\begin{aligned}
&n_0\geq0,\qquad
n_0\in L^1(\mathbb H^3),\qquad
H(n_0)\in L^1(\mathbb H^3),\\
&c_0\geq0,\qquad
c_0\in L^1(\mathbb H^3)\cap L^\infty(\mathbb H^3),\qquad
\nabla\sqrt{c_0}\in L^2(\mathbb H^3),\\
&u_0\in L^2_\sigma(\mathbb H^3).
\end{aligned}
\end{equation}
Then there exists a global quadruple $(n,c,u,p)$ solving
\begin{equation}
\label{eq:half-space-CNS}
\left\{
\begin{aligned}
&\partial_t n+u\cdot\nabla n-\Delta n
=-\nabla\cdot(n\nabla c),\\
&\partial_t c+u\cdot\nabla c-\Delta c
=-nc,\\
&\partial_t u+u\cdot\nabla u-\Delta u+\nabla p
=-n\nabla\phi,\\
&\nabla\cdot u=0
\end{aligned}
\right.
\qquad
\text{in }\mathbb H^3\times(0,\infty).
\end{equation}
The boundary conditions are understood in the sense specified in
Definition~\ref{def:suitable-half-space}(ii).

For every $0<T<\infty$ and every bounded domain
$D\subset\mathbb H^3$ whose closure is compact in
$\overline{\mathbb H^3}$,
\begin{equation}
\label{eq:limit-regularity}
\begin{aligned}
&n,\ n\log n\in L^\infty(0,T;L^1(D)),
\qquad
\nabla\sqrt n\in L^2((0,T)\times D),\\
&\nabla\sqrt{\tilde c}\in L^\infty(0,T;L^2(D)),
\qquad
\nabla^2\sqrt{\tilde c}\in L^2((0,T)\times D),\\
&u\in L^\infty(0,T;L^2(D))
\cap L^2(0,T;H^1(D)).
\end{aligned}
\end{equation}
Moreover, for every compact interval $I\Subset(0,T]$,
\[
p\in L^{5/3}(I\times D).
\]

The solution attains its initial data in the weak sense specified in
Section~\ref{subsec:compactness-local-energy} and satisfies Definition~\ref{def:suitable-half-space} on every finite
time interval. In particular, the pressure is locally $L^{5/3}$ up to
the flat boundary on every time interval compactly contained in
$(0,\infty)$; no pressure estimate at the genuine initial time $t=0$
is asserted.
The approximation may be selected as specified in
Section~\ref{sec:eventual-regularity}; this selection also has the
eventual regularity asserted in Theorem~\ref{thm:eventual-regularity}
and the large-time behavior in Theorem~\ref{thm:large-time-asymptotics}.
\end{theorem}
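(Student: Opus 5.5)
The proof proceeds by a regularize--estimate--pass-to-the-limit scheme on the half-space, arranged so that every step respects the mixed boundary structure: Neumann conditions for $(n,c)$ and no-slip plus incompressibility for $u$.

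\textbf{Step 1: regularized problems.} I first smooth the data.
\begin{itemize}
\item The scalar data $n_0,c_0$ are regularized by the Neumann heat semigroup on $\mathbb H^3$, which is even reflection followed by convolution.
\item The velocity $u_0$ is regularized by the Stokes semigroup with Dirichlet condition, which preserves $L^2_\sigma$.
\item $n_0$ is also truncated and given a small Gaussian tail.
\end{itemize}
I then consider the approximate system
\[
\partial_t n_\varepsilon+u_\varepsilon\cdot\nabla n_\varepsilon-\Delta n_\varepsilon=-\nabla\cdot\bigl(n_\varepsilon F_\varepsilon'(n_\varepsilon)\nabla c_\varepsilon\bigr),
\qquad
\partial_t c_\varepsilon+u_\varepsilon\cdot\nabla c_\varepsilon-\Delta c_\varepsilon=-F_\varepsilon(n_\varepsilon)c_\varepsilon ,
\]
where $F_\varepsilon(s)=\varepsilon^{-1}\log(1+\varepsilon s)$ and $F'_\varepsilon$ is its derivative. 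The point of linking the two truncations is that the cross terms still cancel in $\frac{d}{dt}\int n\log n$ and $\frac{d}{dt}\int|\nabla\sqrt{c}|^2$. In the fluid equation the convection is Leray-regularized, $(Y_\varepsilon u)\cdot\nabla u$, with $Y_\varepsilon$ the Stokes semigroup at time $\varepsilon$, and the force $-F_\varepsilon(n)\nabla\phi$ is bounded. Local classical solutions come from a fixed point in the corresponding semigroup spaces. Global continuation follows from the $\varepsilon$-dependent bounds that the truncations supply: the flux $nF'_\varepsilon(n)\le\varepsilon^{-1}$ and the force is bounded.

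\textbf{Step 2: uniform estimates.} For the approximate solutions I derive:
\begin{itemize}
\item the maximum principle $0\le c_\varepsilon\le\|c_0\|_\infty$;
\item mass bounds, using that the Neumann flux vanishes;
\item the Winkler-type entropy--energy functional $\int H(n)+2\int|\nabla\sqrt{\tilde c}|^2+K\int|u|^2$, written in the $(1+n)$ form because $n\log n$ is not bounded below on $\mathbb H^3$.
\end{itemize}
In the entropy--energy estimate the boundary integrals are handled as follows. The Neumann terms vanish directly. The term $\partial_\nu|\nabla c|^2$ equals $2\partial_3 c\,\partial_{33}c$, which is zero on the flat boundary since $\partial_3c=0$ there. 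This is where flatness matters: there is no curvature term and no convexity is needed.

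The force term is controlled by
\[
\int F_\varepsilon(n)\nabla\phi\cdot u\le M_\phi\|n\|_{L^{6/5}}\|u\|_{L^6}.
\]
Interpolating $\|n\|_{L^{6/5}}$ between the $L^1$ mass and the dissipation $\|\nabla\sqrt{1+n}\|_2$, then using Young's inequality, yields a bound growing like $C_{\rm in}(1+T)$. This gives \eqref{eq:limit-global-energy} uniformly in $\varepsilon$.

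For the pressure, I split $p=p_1+p_2$, where $p_1$ solves the Stokes system with forcing $-\nabla\cdot(Y_\varepsilon u\otimes u)$ and $p_2$ solves it with forcing $-F_\varepsilon(n)\nabla\phi$. The $L^{5/3}$ maximal regularity of the half-space Stokes problem (Solonnikov, Giga--Sohr) gives local $L^{5/3}$ bounds on $I\times D$ for $I\Subset(0,T]$. Restricting away from $t=0$ absorbs the lack of initial regularity of $u_0$, via a cutoff in time.

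\textbf{Step 3: compactness and the local inequality.} Aubin--Lions, applied with the time-derivative bounds read off from the equations, gives strong local convergence of $n_\varepsilon$ in $L^1$, of $\nabla c_\varepsilon$ in $L^2$, and of $u_\varepsilon$ in $L^2$, together with weak convergence of the dissipation terms. The weak formulations in (ii) pass to the limit, and the boundary data survive through traces.

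For (iv), I would derive the localized identity at the $\varepsilon$ level by testing with $(\log n+1)\psi$, $-\nabla\cdot(\psi\nabla\sqrt{\tilde c})$-type quantities and $u\psi$. The condition $\partial_3\psi=0$ on $\Gamma$ kills the extra boundary integrals coming from the integrations by parts in the $c$-Hessian terms; the remaining ones vanish because of $\partial_3 c=0$ and $u=0$. Replacing $p$ by $p-\bar p$ is allowed because $\int u\cdot\nabla\psi=0$.

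I expect the limit in (iv) to be the \emph{main obstacle}. The left-hand side is handled by weak lower semicontinuity. The right-hand side requires strong convergence of $n\log n\,\nabla c\cdot\nabla\psi$, of $(\sqrt{\tilde c})^{-1}|\nabla\sqrt{\tilde c}|^3$, and of $p\,u$. I would obtain it as follows:
\begin{itemize}
\item strong $L^3_{\rm loc}$ convergence of $\nabla\sqrt{\tilde c}$, by interpolating the $L^4$ bound against strong $L^2$ convergence;
\item uniform integrability of $n\log n$ against $|\nabla c|$, from $n\in L^{5/3}$ and $\nabla c\in L^4$;
\item for $p\,u$, weak $L^{5/3}$ convergence of $p$ paired with strong $L^{5/2}$ convergence of $u$.
\end{itemize}
The truncation error $n(F'_\varepsilon(n)-1)$ vanishes in $L^1_{\rm loc}$ by the same uniform integrability.

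Finally, the selection for the later theorems is deferred to Section~\ref{sec:eventual-regularity}.
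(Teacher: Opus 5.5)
\textbf{Overall.} Your scheme is essentially the paper's. It uses the linked truncations ($nF_\varepsilon'(n)$ paired with $F_\varepsilon(n)$, i.e.\ the paper's $a_\tau=nh_\tau'$), Stokes-semigroup smoothing of the convecting velocity, and unsmoothed transport in the signal equation. It also uses the $(1+n)$-entropy with the $L^{6/5}$ interpolation of the force, half-space maximal regularity for the pressure away from $t=0$, Aubin--Lions, tests with $\partial_3\psi=0$, and the weak-$L^{5/3}$/strong-$L^{5/2}$ pressure pairing.

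\textbf{The gap.} Your plan never establishes $\nabla\sqrt n\in L^2((0,T)\times D)$, which is part of \eqref{eq:limit-regularity} and of Definition~\ref{def:suitable-half-space}(iii). Your global functional controls only $|\nabla n|^2/(1+n)$. This is strictly weaker than $|\nabla\sqrt n|^2=|\nabla n|^2/(4n)$ on $\{n<1\}$. Your localized identity does bound $\nabla\sqrt{n_\varepsilon}$, but only on $\operatorname{supp}\psi$ with $\psi$ vanishing near $t=0$. The reason is that it contains the pressure, and no local pressure bound up to $t=0$ is available: the initial-layer part only satisfies $\|\nabla p_2(t)\|_{L^2}\le Ct^{-1}\|u_0\|_{L^2}$.

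\textbf{How to close it.} Use only the scalar part of the localized computation, namely the entropy identity plus $\frac47$ times the $\sqrt{\tilde c}$-identity. It contains no pressure. The velocity enters only through transport terms and $\iint|\nabla u_\varepsilon|^2\chi$, all bounded by the global energy. Then:
\begin{enumerate}
\item Test it with a time-independent cutoff $\chi$ satisfying $\partial_\nu\chi=0$ on $\Gamma$.
\item Keep the initial term $\int(n_0^\varepsilon\log n_0^\varepsilon+2|\nabla\sqrt{1+c_0^\varepsilon}|^2)\chi$, which is bounded through $|z\log z|\le H(z)+e^{-1}$.
\item Bound the negative part of the terminal entropy by $e^{-1}\int\chi$.
\end{enumerate}
This gives $\sup_\varepsilon\int_0^T\!\int_D|\nabla\sqrt{n_\varepsilon}|^2<\infty$. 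Strong $L^2$ convergence of $\sqrt{n_\varepsilon}$ then identifies the weak limit.

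\textbf{Smaller points.}
\begin{itemize}
\item The force $F_\varepsilon(n)\nabla\phi$ is not bounded, since $F_\varepsilon(s)=\varepsilon^{-1}\log(1+\varepsilon s)$ is unbounded. Continuation still works: $F_\varepsilon(n)\le n$, and the cap $nF_\varepsilon'(n)\le\varepsilon^{-1}$ gives $n_\varepsilon\in L^\infty_tL^2_x$ at fixed $\varepsilon$.
\item In the $(1+n)$ form the cross terms do not cancel exactly. The entropy produces $\frac{n}{1+n}F_\varepsilon'(n)\nabla n\cdot\nabla c$, which leaves the residual $4\int\frac{F_\varepsilon'(n)(n-c)}{\sqrt{1+n}\sqrt{1+c}}\nabla\sqrt{1+n}\cdot\nabla\sqrt{1+c}$. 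You must absorb it using $(n-c)^2/(1+n)\le n+c^2$, $n(F_\varepsilon')^2\le F_\varepsilon$, and $\int_0^T\|\nabla c\|_{L^2}^2\le\frac12\|c_0\|_{L^2}^2$.
\item As written, $p=p_1+p_2$ omits the initial-data contribution. The split holds for $\chi p$ after your time cutoff, with a third Stokes problem driven by $\chi'u\in L^{5/3}_tL^2_x$. That is a legitimate alternative to the paper's split into the fully forced zero-data part and the pressure of $e^{-tA}u_0^\varepsilon$.
\item The terminal-time terms in (iv) need convergence on individual time slices, not only space--time weak lower semicontinuity. For example, use the paper's common full-measure set of times with strong convergence, or weak convergence at every time combined with convexity.
\item The selection in Section~\ref{sec:eventual-regularity} is built on the paper's two-parameter limit: first $\varepsilon\to0$ at fixed truncation, then $\tau\to0$. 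With your single parameter, that deferral does not apply verbatim.
\end{itemize}
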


\begin{remark}\label{rem:local-nature-suitable}

Definition~\ref{def:suitable-half-space}(i) includes the global mass
and entropy--energy bounds
\eqref{eq:limit-global-scalar-bounds}--\eqref{eq:limit-global-energy}.
The global density dissipation controls $\nabla\sqrt{1+n}$;
item~(iii) additionally requires the local Fisher-information bound
for $\nabla\sqrt n$.

The energy quantities in items~(i) and~(iii) are controlled on
$(0,T)$, including near the initial time. In contrast, the pressure is
only required to be locally integrable on time intervals compactly
contained in $(0,T)$. In particular, no pressure integrability up to
the genuine initial time $t=0$ is required. This is sufficient for the
local analysis below, since every cylinder under consideration is chosen
so that its time interval is compactly contained in $(0,T)$.
\end{remark}

\subsection{Main regularity results}
\label{subsec:main-regularity-results}
Having established the solution framework and stated its existence
theorem, we turn to local regularity at interior and boundary points.
We first fix the cylinders and the parabolic measure used below,
then define regular points and state the main regularity theorem.

For
\[
z_0=(x_0,t_0)\in\overline{\mathbb H^3}\times(0,T),
\]
we call $r>0$ an \emph{admissible radius at $z_0$} if
\[
r^2<t_0,
\]
and, whenever $x_0\in\mathbb H^3$,
\[
B_r(x_0)\Subset\mathbb H^3.
\]
For such an admissible radius, we set
\[
\mathcal B_r(x_0)
:=
\begin{cases}
B_r(x_0),
& x_0\in\mathbb H^3,\\[1mm]
B_r(x_0)\cap\mathbb H^3,
& x_0\in\Gamma,
\end{cases}
\]
and
\[
\mathcal Q_r(z_0)
:=
\mathcal B_r(x_0)\times(t_0-r^2,t_0).
\]
Thus $\mathcal Q_r(z_0)$ is a full parabolic cylinder at an interior
point and a half-cylinder at a boundary point. Notice that every
\[
z_0\in\overline{\mathbb H^3}\times(0,T)
\]
admits sufficiently small admissible radii.

For later use, define the dissipation density
\[
e
:=
|\nabla\sqrt n|^2
+
|\nabla^2\sqrt{\tilde c}|^2
+
|\nabla u|^2.
\]
We use the standard parabolic metric
\[
d_P\bigl((x,t),(y,s)\bigr)
:=
\max\left\{
|x-y|,\ |t-s|^{1/2}
\right\}
\]
on $\mathbb R^3\times\mathbb R$. For a set
$U\subset\mathbb R^3\times\mathbb R$, define
\[
\operatorname{diam}_P U
:=
\sup\{
d_P(z_1,z_2):z_1,z_2\in U
\}.
\]
For $\alpha\geq0$, $\delta>0$, and
$E\subset\mathbb R^3\times\mathbb R$, define
\[
\mathcal P_\delta^\alpha(E)
:=
\inf
\left\{
\sum_{j=1}^\infty
(\operatorname{diam}_P U_j)^\alpha:
E\subset\bigcup_{j=1}^\infty U_j,\ 
\operatorname{diam}_P U_j<\delta
\right\},
\]
and
\[
\mathcal P^\alpha(E)
:=
\lim_{\delta\downarrow0}
\mathcal P_\delta^\alpha(E).
\]

\begin{definition}[Regular and singular points]
\label{def:boundary-regular-point}
Let $(n,c,u,p)$ be a suitable weak solution in
$\mathbb H^3\times(0,T)$, and let
\[
z_0=(x_0,t_0)\in\overline{\mathbb H^3}\times(0,T).
\]
We call $z_0$ a \emph{regular point} if there exist an admissible
radius $r>0$ at $z_0$ and some $\alpha\in(0,1)$ such that
\[
n,\ u,\ \nabla c
\in C^{\alpha,\alpha/2}
\bigl(\overline{\mathcal Q_r(z_0)}\bigr).
\]
Otherwise, $z_0$ is called a \emph{singular point}.

The singular set on the time interval $(0,T)$ is defined by
\[
\mathcal S(T)
:=
\left\{
z_0\in\overline{\mathbb H^3}\times(0,T):
z_0\text{ is a singular point}
\right\}.
\]
For a global suitable weak solution on
$\mathbb H^3\times(0,\infty)$, we write
\[
\mathcal S
:=
\left\{
z_0\in\overline{\mathbb H^3}\times(0,\infty):
z_0\text{ is a singular point}
\right\}.
\]

\end{definition}

With regularity understood in the preceding sense, our main theorem
gives a common dissipation criterion for interior and boundary points.

\begin{theorem}[Main theorem: dissipation criterion]\label{main4}
Let $(n,c,u,p)$ be a suitable weak solution to
\eqref{che}--\eqref{bound} in $\mathbb H^3\times(0,\infty)$.
There exists $\varepsilon_3=\varepsilon_3(M_c,M_\phi)>0$ such that
every point
\[
z_0=(x_0,t_0)\in\overline{\mathbb H^3}\times(0,\infty)
\]
is regular provided
\begin{equation}\label{eq:main-dissipation-criterion}
\limsup_{r\downarrow0}\frac1r\int_{\mathcal Q_r(z_0)}
\left(|\nabla\sqrt n|^2+|\nabla^2\sqrt{\tilde c}|^2
      +|\nabla u|^2\right)\,dx\,dt\leq\varepsilon_3.
\end{equation}
Here the radii are admissible at $z_0$, so $\mathcal Q_r(z_0)$
is an interior cylinder or a boundary half-cylinder as appropriate.
\end{theorem}

\begin{maincorollary}[Size of the full singular set]
\label{cor:full-singular-set}
Let $(n,c,u,p)$ be a global suitable weak solution to
\eqref{che}--\eqref{bound} in $\mathbb H^3\times(0,\infty)$,
and let $\mathcal S\subset\overline{\mathbb H^3}\times(0,\infty)$
be its singular set. Then
\[
\mathcal P^1(\mathcal S)=0.
\]
\end{maincorollary}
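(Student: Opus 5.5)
The plan is to derive the corollary from Theorem~\ref{main4} by the standard Vitali covering argument of Caffarelli--Kohn--Nirenberg, applied on finite time windows and then combined by countable subadditivity.

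\textbf{Reduction to a compact window.} Since $\mathcal P^1$ is countably subadditive, it suffices to show $\mathcal P^1(\mathcal S\cap K)=0$ for each set
\[
K=\bigl(\overline{\mathbb H^3}\cap\{|x|\le R\}\bigr)\times[\tau_0,\tau_1],
\qquad 0<\tau_0<\tau_1<\infty .
\]
Fix $T>\tau_1+1$, and fix a bounded half-ball neighbourhood $D$ of $\{|x|\le R+1\}\cap\overline{\mathbb H^3}$. By Definition~\ref{def:suitable-half-space}(iii) the dissipation density $e$ belongs to $L^1(D\times(0,T))$. By Theorem~\ref{main4}, every $z_0\in\mathcal S\cap K$ satisfies
\[
\limsup_{r\downarrow0}\frac1r\int_{\mathcal Q_r(z_0)}e>\varepsilon_3 .
\]

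\textbf{Covering.} Fix $\delta\in(0,\sqrt{\tau_0}/2)$ and an open set $V\supset\mathcal S\cap K$ contained in $\mathbb R^3\times(\tau_0/2,T)$, with $V\cap\overline{\mathbb H^3}$ inside the region where $e$ is integrable. For each $z_0\in\mathcal S\cap K$, choose $r=r(z_0)<\delta$ admissible at $z_0$ such that
\[
\mathcal Q_r(z_0)\subset V,
\qquad
\int_{\mathcal Q_r(z_0)}e>\varepsilon_3 r .
\]
Admissibility raises a technical point. For an interior point close to $\Gamma$, $r$ is limited by the distance $x_{0,3}$, so the limsup in Theorem~\ref{main4} only involves radii below that distance. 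This causes no difficulty, because the limsup is a statement about arbitrarily small radii. In the covering we use the cylinders $\mathcal Q_r(z_0)$ themselves, enlarged to $Q^*_{5r}(z_0):=\bigl(B_{5r}(x_0)\times(t_0-25r^2,t_0+25r^2)\bigr)$ as sets in $\mathbb R^4$; these need not lie in $\mathbb H^3$, since they only serve as covering sets. Apply the Vitali lemma with respect to $d_P$ to the family of full parabolic balls $B_r(x_0)\times(t_0-r^2,t_0+r^2)$. Since all radii are below $\delta$ and $K$ is bounded, this yields a countable disjoint subfamily $\{z_j,r_j\}$ whose fivefold enlargements cover $\mathcal S\cap K$. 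The cylinders $\mathcal Q_{r_j}(z_j)$ are subsets of these disjoint balls, so they are pairwise disjoint as well. Hence
\[
\mathcal P^1_{10\delta}(\mathcal S\cap K)\le\sum_j 10r_j\le\frac{10}{\varepsilon_3}\sum_j\int_{\mathcal Q_{r_j}(z_j)}e\le\frac{10}{\varepsilon_3}\int_{V\cap(\mathbb H^3\times(0,T))}e .
\]

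\textbf{Conclusion and main obstacle.} First take $V$ to be the $\delta$-neighbourhood of $\mathcal S\cap K$. The estimate above gives $\mathcal P^1_{10\delta}(\mathcal S\cap K)\le C\|e\|_{L^1}<\infty$, uniformly in $\delta$, and a finite $\mathcal P^1$ measure forces Lebesgue measure zero in $\mathbb R^4$. In particular the $\mathcal H^4$-measure of $\mathcal S\cap K$ vanishes, since parabolic $\mathcal P^1$-finiteness implies parabolic $\mathcal P^5$-nullity. Therefore $\int_V e\to0$ as $V$ shrinks to $\mathcal S\cap K$, by absolute continuity of the integral. Letting $\delta\to0$ with $V$ shrinking gives $\mathcal P^1(\mathcal S\cap K)=0$. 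The only real input is Theorem~\ref{main4}. The step requiring care is the last one: checking that $\mathcal S$ is Lebesgue null in $\overline{\mathbb H^3}\times(0,\infty)$, in particular near $\Gamma$, where the sets are half-cylinders. The argument handles this because the half-cylinders are measurable subsets of the enlarged full balls, and the integral of $e$ is taken only over $\mathbb H^3$.
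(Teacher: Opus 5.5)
Your proof is correct, and its covering step is the same as the paper's. Both reduce to compact windows, choose radii from the contrapositive of Theorem~\ref{main4}, apply the $5r$-lemma to the full parabolic balls $P_r(z)$, and use that the $\mathcal Q_{r_j}(z_j)\subset P_{r_j}(z_j)$ are disjoint. Both then finish by absolute continuity of $\int e$ on an open set of small measure.

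The difference is in how you show that $\mathcal S\cap K$ is Lebesgue-null. The paper does this before any covering, via the Lebesgue differentiation theorem. At almost every interior point,
\[
r^{-1}\int_{\mathcal Q_r(z)}e\le Cr^4\fint_{P_r(z)}\widetilde e\to0,
\]
so Theorem~\ref{main4} already makes that point regular. The boundary part is handled separately, since it lies in the null set $\Gamma\times(0,\infty)$.

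You instead run the covering twice, as in the original Caffarelli--Kohn--Nirenberg argument. The first pass, with $V$ a $\delta$-neighbourhood, gives the bound $\mathcal P^1(\mathcal S\cap K)\le 10\varepsilon_3^{-1}\|e\|_{L^1}$. This yields $\mathcal P^5$-nullity and hence Lebesgue-nullity. The second pass, with $V$ of small measure, then gives zero.

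Your route needs no differentiation theorem and no separate treatment of boundary points, and it gives the finite-measure bound as a by-product. The paper's route needs only one covering pass and shows directly that almost every interior point satisfies the criterion of Theorem~\ref{main4}.
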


\begin{remark}[Endpoint improvement and inclusion of boundary singularities]
\label{rem:interior-comparison}

The earlier interior theory includes the vanishing
$5/3$-dimensional Hausdorff measure result at the first blow-up
time in~\cite{CLW2025}, and the subsequent space-time estimate
$\mathcal P^{1+\delta}(\mathcal S)=0$ for every sufficiently small
$\delta>0$ in~\cite{CLWW2023Hausdorff}. The latter already implies
$\dim_{\mathcal P}\mathcal S\leq1$.

For the constant-sensitivity, linear-consumption system~\eqref{che}
and the suitable weak solutions of
Definition~\ref{def:suitable-half-space},
Corollary~\ref{cor:full-singular-set} gives the endpoint conclusion
$\mathcal P^1(\mathcal S)=0$ for the full singular set, including
its boundary part. Theorem~\ref{main4} uses the natural factor
$r^{-1}$ for the density dissipation, in place of the
$r^{-1-\delta}$ factor in the earlier criterion.

The improvement is the endpoint measure conclusion and the
inclusion of boundary points, rather than a dimension bound
strictly below one. The comparison is made within the respective
solution frameworks: the entropy variables ($\sqrt{1+c}$ here
and $\sqrt c$ in~\cite{CLWW2023Hausdorff}) and pressure
integrability requirements differ, so no inclusion of the two
classes of suitable weak solutions is asserted.

\end{remark}

\begin{remark}[Boundary concentration and analytic difficulties]
\label{rem:boundary-concentration}
The experiments of Dombrowski et al.~\cite{DCCGK2004} show
concentration of bacteria near the edge of sessile drops,
while Tuval et al.~\cite{ILCJR 2005} describe accumulation near
solid--air--water contact lines. Including physical boundary
points in the regularity theory is therefore qualitatively
consistent with the observed importance of boundary regions.
The result bounds possible loss of regularity; it does not
identify the experimentally observed concentration regions
with mathematical singularities or assert that singularities occur.

The principal difficulties are the nonhomogeneous scaling and
negative part of $n\log n$, the interaction of Dirichlet and
Neumann conditions in boundary decay estimates, and the
coupling of pressure to $n\nabla\phi$. Our main innovations are
a local energy estimate without a separate entropy quantity,
parity-compatible caloric decompositions, and a boundary-compatible
existence and pressure framework. Their roles are detailed in
Section~\ref{subsec:key-ingredients}.
\end{remark}

We next state the two auxiliary $\varepsilon$-regularity criteria.
Given $z_*\in\overline{\mathbb H^3}\times(0,T)$ and an admissible
radius $\rho$, a translation and parabolic rescaling reduce
$\mathcal Q_\rho(z_*)$ to a unit cylinder. The radius $1$ in
Propositions~\ref{main1}--\ref{main2} is only a normalization.
We retain the notation $z_0=(x_0,t_0)$ for the normalized center.

The rescaled potential is
$\phi_\rho(y)=\phi(x_*+\rho y)$, so that
$\|\nabla_y\phi_\rho\|_{L^{\infty}}\le\rho M_\phi$
on the rescaled domain. The criteria use this rescaled gradient
bound; for $\rho\le1$ it is uniformly controlled by the original
$M_\phi$.

\begin{mainproposition}[Energy and pressure criterion]\label{main1}
Let $0<T<\infty$, and let $(n,c,u,p)$ be a normalized local
restriction of a suitable weak solution to \eqref{che}--\eqref{bound}
on the unit cylinder $\mathcal Q_1(z_0)$ described above.
For any $\tilde q\in(1,\infty)$, assume further that

$$
p\in L_t^{\tilde q}L_x^1(\mathcal Q_1(z_0)).
$$
There exists
\[
\varepsilon_0
=
\varepsilon_0(\tilde q,M_c,M_\phi)>0
\]
such that, if
\begin{align}
&\operatorname*{ess\,sup}_{t\in(t_0-1,t_0)}
\int_{\mathcal B_1(x_0)}
\left(
n+|u|^2+|\nabla\sqrt{\tilde c}|^2
\right)(x,t)\,dx
\nonumber\\
&\quad+
\int_{\mathcal Q_1(z_0)}
\left(
|\nabla\sqrt n|^2
+|\nabla^2\sqrt{\tilde c}|^2
+|\nabla u|^2
\right)\,dx\,dt
\nonumber\\
&\quad+
\int_{t_0-1}^{t_0}
\left\|
p-[p]_{x_0,1}(t)
\right\|_{L_x^1(\mathcal B_1(x_0))}^{\tilde q}\,dt
\leq\varepsilon_0,
\label{14}
\end{align}
where
$$
[p]_{x_0,r}(t)
:=
\fint_{\mathcal B_r(x_0)}p(x,t)\,dx,
$$
then $(n,\nabla c,u)$ is regular in
$\overline{\mathcal Q_{1/2}(z_0)}$.
\end{mainproposition}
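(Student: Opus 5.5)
We argue by a Campanato-type iteration of a scale-invariant quantity. For $z\in\overline{\mathcal Q_{1/2}(z_0)}$ (interior or on $\Gamma$) and admissible $r\le 1/4$ we set
\[
E(z,r):=\sup_{t}\frac1r\int_{\mathcal B_r}\bigl(|u|^2+|\nabla\sqrt{\tilde c}|^2\bigr)+\frac1r\int_{\mathcal Q_r} e
+\frac{1}{r}\sup_t\int_{\mathcal B_r} n
+r^{-\kappa}\Bigl(\int\|p-[p]_{x,r}\|^{\tilde q}_{L^1(\mathcal B_r)}dt\Bigr)^{1/\tilde q},
\]
with the pressure term normalized to be invariant under the natural scaling $u_r(y,s)=ru(x+ry,t+r^2s)$, $n_r=r^2n$, $c_r(y,s)=c(x+ry,t+r^2 s)$ (so $c$ and $M_c$ are unchanged, and $\nabla\phi_r=r\nabla\phi$). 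Hypothesis \eqref{14} gives $E(z,1/4)\le C\varepsilon_0$ uniformly in such $z$. The target is the decay estimate
\[
E(z,\theta r)\le \tfrac12 E(z,r)+C\theta^{-\beta}E(z,r)^{3/2}+C r M_\phi E(z,r)^{1/2},
\]
together with a companion excess-decay for $\fint|u-(u)|^3$, $\fint|\nabla c-(\nabla c)|^{3}$, $\fint |n-(n)|$. Iterating this yields Morrey/Campanato bounds $\fint_{\mathcal Q_r}|f-(f)|\lesssim r^\alpha$ for $f=u,\nabla c,n$, and hence Hölder continuity on $\overline{\mathcal Q_{1/2}(z_0)}$ by Campanato's characterization.

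\textbf{Step 1 (energy).} We apply \eqref{le} with a cutoff $\psi$ satisfying $\partial_3\psi=0$ on $\Gamma$; this is an even-in-$x_3$ bump adapted to the half-cylinder. The key point is that $n\log n$ has an indefinite sign and is not scale-invariant. To handle this, we add to \eqref{le} the weighted mass identity $\int n\psi(\tau)=\int n(\partial_t\psi+\Delta\psi+u\cdot\nabla\psi+\nabla c\cdot\nabla\psi)$, tested with the same $\psi$ multiplied by a large constant $\Lambda$. Using $n\log n+\Lambda n\ge n\log n\ge -e^{-1}$ on the support, and Sobolev ($\|\sqrt n\|_{L^6}\lesssim\|\nabla\sqrt n\|_{L^2}+$ mass), we bound the right-hand side of \eqref{le}. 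The terms $n\log n\,\nabla\tilde c\cdot\nabla\psi$ and $n\nabla\tilde c\cdot\nabla\psi$ are controlled by Hölder, using $n\log n\le C_\delta n^{1+\delta}$, the bound $\nabla\tilde c\in L^4$ coming from the $|\nabla\sqrt{\tilde c}|^4/\tilde c$ term (since $1\le\tilde c\le M_c$), and Gagliardo–Nirenberg interpolation for $n$ in $L^{5/3}$. The cubic terms in $\nabla\sqrt{\tilde c}$ are treated with the same interpolation. The force term is bounded by $M_\phi r\int n|u|\psi$. This gives
\[
E(z,r/2)\lesssim E(z,r)^{3/2}+E(z,r)^{5/3}+\text{pressure}\cdot E^{1/2}+rM_\phi E(z,r).
\]
The $-e^{-1}$ part of $n\log n$ only produces a term proportional to $|\mathcal Q_r|/r$, which is absorbed after rescaling. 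This is exactly where the mass term generated by rescaling enters, and why $\frac1r\sup_t\int n$ is included in $E$.

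\textbf{Step 2 (pressure).} At interior points we split $p=p_1+p_2$, where $p_1$ solves $-\Delta p_1=\mathrm{div}\,\mathrm{div}((u-\bar u)\otimes (u-\bar u)\chi)+\mathrm{div}(n\nabla\phi\chi)$ and $p_2$ is harmonic; this gives
\[
\|p-[p]\|_{L^1(\mathcal B_{\theta r})}\lesssim \theta^{\gamma}\|p-[p]\|_{L^1(\mathcal B_r)}+\dots
\]
At boundary points we instead use the Stokes decomposition of Seregin/Dong–Wang: $(v,p_1)$ solves the nonstationary Stokes system in the half-cylinder with right-hand side $-u\cdot\nabla u-n\nabla\phi$ and zero data, estimated in $L^{\tilde q}_tL^{s}_x$ via coercive Stokes estimates; the remainder $(w,p_2)$ is a Stokes flow vanishing on $\Gamma$, whose pressure satisfies the local decay $\|\nabla p_2\|_{L^{\tilde q}L^\infty(\mathcal Q_{\theta r})}\lesssim$ lower-order norms. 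This requires $p\in L^{\tilde q}_tL^1_x$, which is why $\tilde q$ appears in $\varepsilon_0$.

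\textbf{Step 3 (improvement to Hölder).} Once $E(z,r)\le C\varepsilon$ holds for all small $r$ and all $z$ near $z_0$, we perform caloric decompositions. For the scalars we use even reflection across $\Gamma$, which is compatible with the Neumann conditions, writing $n=h+\tilde n$ with $h$ caloric and $\tilde n$ solving the heat equation with source $-\mathrm{div}(nu+n\nabla c)$. For $u$ we use odd reflection componentwise, treating pressure and force as sources. The small Morrey norms, together with a bootstrap via $\nabla c\in L^4$ and $n\in L^{5/3}$ in Morrey spaces, give excess decay $\theta^{\alpha}$.

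\textbf{Main obstacle.} The hardest step is Step 1 at boundary points: closing the estimate for $n\log n$ with the mass term without a separate smallness of $\int n|\log n|$. My first attempt would choose $\Lambda\sim\log(1/r)$-free, i.e. a fixed $\Lambda=1$, and rely on $|n\log n|\le n^{1+\delta}+C n^{1-\delta}$ with the $n^{1-\delta}$ part controlled by Hölder against the scaled mass. I expect this to require a careful choice of exponents so that only $E^{1+}$ powers appear.
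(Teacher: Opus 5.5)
Your outline has the same architecture as the paper: the entropy--energy inequality combined with the weighted mass identity under the same weight, Stokes/harmonic pressure splitting, odd/even caloric decompositions, and Campanato. However, the decay inequality you aim for cannot be obtained, and this is exactly where the proof has to go differently.

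\textbf{The sublinear entropy term.} The negative part of the entropy appears both at the terminal time and in $\iint n\log n\,(\partial_t\psi+\Delta\psi)$. It can only be controlled through the weighted mass $M_\psi=\int n\psi$, and that control is intrinsically sublinear. H\"older gives $\int_{\{n<1\}}(-n\log n)\psi\le C_\alpha M_\psi^{1-\alpha}(\int\psi)^\alpha$, and even the sharp Jensen bound is of order $M_\psi\log(1/M_\psi)$.

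No choice of exponents or of the multiplier $\Lambda$ removes this. Adding $\Lambda$ times the mass identity lowers the floor only to $-e^{-1-\Lambda}\int\psi$, while multiplying the linear mass terms on the right by $\Lambda$; taking $\Lambda\sim\log(1/X)$ again produces $X\log(1/X)$. The bare floor $-e^{-1}\int\psi$ is $O(1)$ in normalized variables, independent of the smallness of $X$, so it is not absorbable either.

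\textbf{Missing linear terms in Step 1.} Your Step~1 bound has no linear terms at all. With an ordinary cutoff from $r$ to $r/2$, the terms $\iint(|u|^2+|\nabla\sqrt{\tilde c}|^2)(\partial_t\psi+\Delta\psi)$ give $C\,E(z,r)$ with a non-small constant. A small factor $\theta^2$ requires a backward heat kernel weight.

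\textbf{Consequence.} The correct estimate is that of Corollary~\ref{corollary1}:
$X(\theta\rho)\le C\theta^2X^{1-\delta}+C\theta^{-2}X^{3/2-\delta}+C\theta^{-2}X^{1/2}D$.
With a fixed $\theta$, the recursion $X_{k+1}\le C\theta^2X_k^{1-\delta}$ only keeps the bound at the level $(C\theta^2)^{1/\delta}$, with no decay in $r$. So neither $E(z,\theta r)\le\frac12E(z,r)+\dots$ nor the super-critical decay your Step~3 needs follows.

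\textbf{The missing idea: a scale-dependent contraction ratio.} Lemma~\ref{key} iterates along $\rho_{k+1}=\rho_k^{1+\beta}$ with $\gamma=\rho_k^{\beta}$ and proves by induction $X(\rho_k)\le\tfrac14\rho_k^{2-\eta}$ and $D(\rho_k)\le\tfrac14\rho_k^{1+\eta}$.
\begin{itemize}
\item The sublinear term becomes $\rho_k^{2\beta+(2-\eta)(1-\delta)}$. This beats $\rho_{k+1}^{2-\eta}$ precisely because $\beta\eta>(2-\eta)\delta$, e.g.\ with $\beta=\eta/4$ and $\delta=\eta^2/16$.
\item The pressure is propagated from the delayed scale $\rho_{k-l}$ with ratio $\rho_{k-l}^{\tilde\beta}$, so that its $\gamma^{1+\epsilon_1}$ gain dominates.
\end{itemize}
The output $X(\rho)\le\rho^{2-\eta_1}$, $D(\rho)\le\rho^{1+\eta_2}$ is what the heat-potential and parity-caloric arguments then turn into H\"older continuity.

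\textbf{A secondary point on the pressure norm.} The pressure should be iterated in $L^{\tilde q}_tL^{\tilde r}_x$ with $2/\tilde q+3/\tilde r=3$, not in $L^{\tilde q}_tL^1_x$ as in your $E(z,r)$. The term $\iint|p-[p]||u||\nabla\psi|$ in the energy inequality cannot be paired with an $L^1_x$ pressure norm using only energy-class control of $u$. The $L^1_x$ hypothesis enters only at the initial scale, through Lemma~\ref{p}.
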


Proposition~\ref{main1} involves the dissipation
$|\nabla^2\sqrt{\tilde c}|^2$. The next criterion replaces the
energy and dissipation smallness by lower-order space-time norms.
The density mass need not be small; its prescribed
upper bound enters the threshold for the other quantities.

\begin{mainproposition}[Lower-order criterion with bounded density mass]\label{main2}
Let $0<T<\infty$, and let $(n,c,u,p)$ be a normalized local
restriction of a suitable weak solution to \eqref{che}--\eqref{bound}
on the unit cylinder $\mathcal Q_1(z_0)$ described above. For each pair
\[
q>\frac52,
\qquad
\tilde q>1,
\]
and every $M>0$, there exists
\[
\varepsilon_1
=
\varepsilon_1(M,q,\tilde q,M_c,M_\phi)>0
\]
such that, if
\begin{equation}
\|n\|_{L^1_{t,x}(\mathcal Q_1(z_0))}\leq M
\label{eq:main2-mass-bound}
\end{equation}
and
\begin{align}
&\|u\|_{L_{t,x}^q(\mathcal{Q}_1(z_0))}
+\|\nabla\sqrt{\tilde c}\|_
{L_{t,x}^q(\mathcal{Q}_1(z_0))}
\nonumber\\
&\qquad
+\left\|
p-[p]_{x_0,1}(t)
\right\|_
{L_t^{\tilde q} L_x^1(\mathcal{Q}_1(z_0))}
\leq\varepsilon_1,
\label{eq:main2-smallness}
\end{align}
then $(n,\nabla c,u)$ is regular in
$\overline{\mathcal Q_{1/2}(z_0)}$.
\end{mainproposition}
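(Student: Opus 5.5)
The plan is to reduce Proposition~\ref{main2} to Proposition~\ref{main1} at a smaller scale $\rho=\rho(M,q,\tilde q,M_c,M_\phi)$ around each point $z_1\in\overline{\mathcal Q_{1/2}(z_0)}$. After rescaling $\mathcal Q_\rho(z_1)$ to a unit cylinder, the rescaled potential has gradient at most $\rho M_\phi\le M_\phi$. It therefore suffices to show that the rescaled energy, dissipation and pressure quantities in \eqref{14} fall below $\varepsilon_0(\tilde q,M_c,M_\phi)$. Regularity on all such small cylinders, which cover $\overline{\mathcal Q_{1/2}(z_0)}$, then gives regularity there. The argument has three steps, carried out in order: (a) smallness of the rescaled density mass, (b) energy and dissipation via the local inequality \eqref{le}, and (c) pressure.

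\textbf{Step (a): density mass.} The quantity $\sup_t\rho^{-1}\int_{\mathcal B_\rho}n$ is scale invariant, but the bound \eqref{eq:main2-mass-bound} only gives $L^1_{t,x}$ control of size $M$. I would write $n=n_1+n_2$ on $\mathcal Q_{3/4}$. Here $n_1$ is a caloric part solving the heat equation with Neumann-even reflection across $\Gamma$, with data given by cutoff times $n$. Its sup on $\mathcal Q_{1/2}$ is bounded by $C M$, by the mean-value property applied to the time-averaged $L^1$ bound. Hence $\rho^{-1}\int_{\mathcal B_\rho}n_1\le CM\rho^2$, which is small once $\rho$ is small. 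The remainder $n_2$ is driven by $\nabla\cdot(nu-n\nabla c)$. Its mass in $\mathcal B_\rho$ can be estimated by duality with the Neumann heat kernel, using $\|u\|_{L^q}+\|\nabla c\|_{L^q}\le C(M_c)\varepsilon_1$ with $q>5/2$, which is exactly the threshold making this flux subcritical. This step produces a bound like $C(M)\varepsilon_1\rho^{-\gamma}$ with some $\gamma$ depending on $q$. Choosing $\rho$ first and then $\varepsilon_1$ makes the rescaled mass $\le\eta$, where $\eta$ is small.

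\textbf{Step (b): energy and dissipation.} I would test \eqref{le} with a cutoff $\psi$ adapted to $\mathcal Q_{2\rho}(z_1)$ satisfying $\partial_3\psi=0$ on $\Gamma$. The right side contains $|u|^2$ and $|\nabla\sqrt{\tilde c}|^2$ terms, controlled by $\varepsilon_1^2$ through the $L^q$ norms with $q>5/2>2$. It also contains the cubic terms $|u|^3$ and $|\nabla\sqrt{\tilde c}|^3$, controlled by $\varepsilon_1$ through interpolation with the energy itself, and the pressure term, controlled by the pressure oscillation. The $n\log n$ terms are handled as in the introduction: combine with the weighted mass identity to bound the negative part of $n\log n$ by the mass, which is small by Step (a). The term $n\nabla\phi\cdot u$ is bounded by $M_\phi\|n\|_{L^{5/3}}\|u\|$ using Gagliardo--Nirenberg with $\nabla\sqrt n$, and then absorbed. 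The positive part of $n\log n$ on the left, together with the small mass, closes the estimate. The outcome is a bound on the rescaled energy plus dissipation at scale $\rho$ of the form $o(1)$ as $\varepsilon_1\to0$ for fixed $\rho$.

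\textbf{Step (c): pressure.} The pressure oscillation at scale $\rho$ is handled by the boundary-compatible decomposition $p=p_1+p_2$. Here $p_1$ solves a local Stokes problem with right side $u\otimes u$ and $n\nabla\phi$, and $p_2$ is harmonic or Stokes-regular, so that $\rho^{-?}\|p-[p]\|$ is dominated by terms that are small from Steps (a)--(b) plus a term controlled by $\varepsilon_1$.

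\textbf{Main obstacle.} I expect Step (a) to be the hardest, especially at boundary points: the caloric part must respect the Neumann parity, and the drift estimate must produce a genuinely small rescaled mass from only an $L^1$ bound on $n$. The iteration constants in Step (b) depend on $M_c$ only through $\sqrt{\tilde c}\le M_c^{1/2}$.
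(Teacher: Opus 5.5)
Your Step~(a) cannot be carried out in the order you propose, and as a result the argument is circular. Any version of (a) needs the flux $n(u+\nabla c)$ in $L^1(\mathcal Q_{3/4})$ with norm at most $C_M\varepsilon_1$, because that flux drives the part of $n$ you want to show has small local mass. You only know $\|u\|_{L^q}+\|\nabla c\|_{L^q}\le C\varepsilon_1$. So you need $\|n\|_{L^{q'}(\mathcal Q_{3/4})}\le C_M$, where $C_M$ depends only on $M,q,\tilde q,M_c,M_\phi$. This is where $q>5/2$, i.e.\ $q'<5/3$, genuinely enters.

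The hypothesis \eqref{eq:main2-mass-bound} gives no control of $nu$ at all. The $L^{5/3}$ integrability built into Definition~\ref{def:suitable-half-space} has norms that depend on the solution (through $C_{\rm in}$), and using them would destroy the uniformity of $\varepsilon_1$. The only available source of such a bound is the local energy inequality \eqref{le}. But your energy step~(b) comes afterwards, works at the small scale $\rho$, and uses the small mass that (a) was supposed to produce.

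The missing ingredient is a preliminary energy estimate at a fixed scale that is \emph{bounded} rather than small. Run the nested-cylinder iteration from the proof of Theorem~\ref{main3} on $\mathcal Q_{\rho_k}$ with $\rho_k\uparrow7/8$. Estimate every density $L^1$ factor by $M$ and every velocity, signal-gradient and pressure factor by $\varepsilon_1$. All powers of the energy stay strictly below one, so $X_k\le C_M2^{bk}(1+X_{k+2}^{\theta})$ with $\theta<1$. Young's inequality and iteration then give $A_{u,\sqrt n,\nabla\sqrt{\tilde c}}(3/4)+E_{u,\sqrt n,\nabla\sqrt{\tilde c}}(3/4)+\|n\|_{L^{5/3}(\mathcal Q_{3/4})}\le C_M$.

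Only now does H\"older's inequality give $\|n(u+\nabla c)\|_{L^1(\mathcal Q_{3/4})}\le C_M\varepsilon_1$. After that, your parity-compatible caloric decomposition yields $\rho^{-3}\int_{\mathcal Q_\rho(z)}n\le CM\rho^2+C_M\varepsilon_1\rho^{-3}$.

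From this point, Steps~(b)--(c) and Proposition~\ref{main1} are unnecessary. Proposition~\ref{main1} needs the time-supremum of the mass and full energy smallness. As written, (b) would not close with a single cutoff anyway, since its right side involves powers of the energy on the larger support of $\psi$. Instead apply Theorem~\ref{main3} with exponents $(q,q,\tilde q)$. This is admissible because $5/q<2$ and the exponent function of \eqref{qq} satisfies $F(q,q)=1$. At scale $r$, its hypotheses are bounded by $Cr^{1-5/q}\varepsilon_1$, by $CMr^2+C_M\varepsilon_1 r^{-3}$, and by $Cr^{-1-2/\tilde q}\varepsilon_1$. So you fix $r$ with $CMr^2$ small and then choose $\varepsilon_1$.
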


\begin{remark}[Dependence on the initial data and the observation scale]
\label{rem:mass-threshold-data}
The bound in~\eqref{eq:main2-mass-bound} is a space--time
$L^1$ bound, not a pointwise bound on $n$. The threshold is
uniform over all suitable weak solutions obeying the same
mass bound and can be chosen nonincreasing in $M$.
For a global solution satisfying
\[
\int_{\mathbb H^3}n(x,t)\,dx
=\|n_0\|_{L^1(\mathbb H^3)}
\quad\text{for a.e. }t>0,
\]
every physical unit cylinder contained in the positive-time
domain satisfies~\eqref{eq:main2-mass-bound} with
$M=1+\|n_0\|_{L^1(\mathbb H^3)}$. Hence its threshold may be
chosen as
\[
\varepsilon_1\!\left(
1+\|n_0\|_{L^1},q,\tilde q,
1+\|c_0\|_{L^\infty},\|\nabla\phi\|_{L^\infty}
\right).
\]
No dependence on $\|u_0\|_{L^2}$ or on the initial entropy
is needed for this local threshold; the velocity, signal-gradient,
and pressure smallness assumptions are still required.

For a physical cylinder of radius $R$, the natural rescaling gives
\[
\|n_R\|_{L^1(\mathcal Q_1)}
=R^{-3}\int_{\mathcal Q_R}n\,dx\,dt
\leq R^{-1}\|n_0\|_{L^1},
\qquad
\|\nabla\phi_R\|_{L^\infty}
\leq R\|\nabla\phi\|_{L^\infty}.
\]
Thus one can use $M=1+R^{-1}\|n_0\|_{L^1}$ after normalization.
The assertion does not provide a mass-independent threshold
uniform over all observation scales. The small-mass criterion
with a threshold independent of $M$ is retained in the proof
of the dissipation criterion.
\end{remark}

The next theorem concerns the large-time behavior of a solution
selected in the global existence construction. It supplements the
partial regularity results with eventual regularity throughout the
closed half-space.

\begin{theorem}[Eventual regularity in the half-space]
\label{thm:eventual-regularity}
In addition to the assumptions of Theorem~\ref{thm:half-space-existence},
assume that
\begin{equation}\label{eq:eventual-potential-assumptions}
\phi\in L^\infty(\mathbb H^3),\qquad
\nabla\phi\in C^{0,\beta_0}(\overline{\mathbb H^3})
\quad\text{for some }\beta_0\in(0,1),
\end{equation}
where the H\"older norm is global.
The global suitable weak solution $(n,c,u,p)$ in that theorem can be
chosen so that there exist $T_*>0$ and $\alpha\in(0,\beta_0)$ for which
\[
n,c,u\in C^{2+\alpha,1+\alpha/2}_{\rm loc}
\bigl(\overline{\mathbb H^3}\times[T_*,\infty)\bigr)
\]
and
\[
\sup_{t\geq T_*}
\left(\|n(t)\|_{L^\infty(\mathbb H^3)}
+\|c(t)\|_{W^{1,\infty}(\mathbb H^3)}
+\|u(t)\|_{L^\infty(\mathbb H^3)}\right)<\infty.
\]
The equations and boundary conditions hold classically on this
time interval, and $\nabla p$ has the corresponding local
$C^{\alpha,\alpha/2}$ regularity up to the flat boundary.
In particular, the full singular set satisfies
\[
\mathcal S\cap
\bigl(\overline{\mathbb H^3}\times[T_*,\infty)\bigr)=\varnothing.
\]
No smallness of the initial data is imposed.
The eventual uniform bounds require only the additional assumption
$\phi\in L^\infty$; H\"older continuity of $\nabla\phi$ is used solely
to obtain the displayed classical regularity and the corresponding
pressure regularity. 
\end{theorem}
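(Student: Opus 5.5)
The proof selects the global solution as a limit of approximate solutions that satisfy the global entropy--energy estimate uniformly. The argument then has three stages: extract a single good time $t_0$ at which the solution is small in critical norms, restart from $t_0$ as a strong solution globally in time, and bootstrap to classical regularity. All bounds below are derived uniformly for the approximations and passed to the limit, so the selected weak solution coincides with the strong one on $[T_*,\infty)$ by construction; no weak--strong uniqueness argument is required.

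\emph{Step 1 (signal decay independent of transport).} From $\partial_t c+u\cdot\nabla c-\Delta c=-nc\le0$, with Neumann conditions and $\nabla\cdot u=0$, $u=0$ on $\Gamma$, the $L^p$ norms of $c$ are nonincreasing. Interpolating $\|c\|_{L^\infty}$ between $L^1$ and the dissipation $\int_0^\infty\|\nabla c^{p/2}\|_{L^2}^2\,dt<\infty$ via Nash-type inequalities on $\mathbb H^3$ gives $\|c(t)\|_{L^\infty}\to0$. The global bound \eqref{eq:limit-global-energy} does not grow in time once we use the time-integrated dissipation: for the selected solution I would re-derive it with a constant independent of $T$, using $\phi\in L^\infty$ to write $n\nabla\phi=\nabla(n\phi)-\phi\nabla n$. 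Then $\int_0^\infty(\|\nabla\sqrt{1+n}\|_2^2+\|\nabla u\|_2^2+\|\nabla^2\sqrt{\tilde c}\|_2^2)\,dt<\infty$.

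\emph{Step 2 (small time and strong restart).} Finiteness of these integrals produces arbitrarily late times $t_0$ at which $\|\nabla u(t_0)\|_{L^2}$, $\|\nabla\sqrt{\tilde c}(t_0)\|_{L^4}$ and the density Fisher information are small. The force $\phi\nabla n$, bounded by $\|\phi\|_\infty\|\nabla n\|_2$, lies in $L^1_tL^2_x\cap L^2_tL^2_x$ on $(t_0,\infty)$ with small norm, using $n\in L^\infty_tL^1_x$ together with higher $L^p$ bounds of $n$ obtained from the coupled functional $\int n^p+\int|\nabla c|^{2q}$, which is made small by the decay of $c$. I would then restart with a strong solution of the half-space problem (Stokes semigroup plus Neumann heat semigroup, fixed point in $L^\infty_tH^1\cap L^2_tH^2$ for $u$ and in $L^\infty_tL^p$ for $n$ and $\nabla c$) that is global because the data and forcing are small. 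Here the density is not small in $L^1$, but the Gagliardo--Nirenberg bounds only involve $\|n\|_{L^1}$ multiplied by small quantities.

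\emph{Step 3 and main obstacle.} The hardest step is the global-in-time $L^\infty$ bound on $n$, since the mass does not decay. I would perform a Moser--Alikakos iteration on $n$, with the chemotactic flux controlled by $\|\nabla c\|_{L^\infty}$. That bound in turn comes from Neumann heat-semigroup smoothing in the Duhamel formula for $c$, using $\|u\|_{L^\infty}$ from the $H^2$ Stokes bound and the smallness of $c$ in $L^\infty$, which makes $nc$ harmless. With uniform bounds in hand, Schauder theory for the Neumann heat equation and the Stokes system with no-slip conditions yields $C^{2+\alpha,1+\alpha/2}_{\rm loc}$ regularity up to $\Gamma$ on $[T_*,\infty)$, together with the stated regularity of $\nabla p$. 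This step uses the Hölder continuity of $\nabla\phi$ to make $n\nabla\phi$ Hölder. Regularity at every point then gives $\mathcal S\cap(\overline{\mathbb H^3}\times[T_*,\infty))=\varnothing$.
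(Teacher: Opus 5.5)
Your proposal has a genuine gap, in Step~1 and in the way Step~2 relies on it.

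\paragraph{The uniform entropy--energy bound is not available.} You claim that \eqref{eq:limit-global-energy} can be re-derived with a $T$-independent constant, so that $\int_0^\infty(\|\nabla\sqrt{1+n}\|_{L^2}^2+\|\nabla u\|_{L^2}^2+\dots)\,dt<\infty$. The obstruction is the force work, and neither form of it closes.
\begin{itemize}
\item Written as $\int\phi\nabla n\cdot u$, it is bounded by $\|\phi\|_{L^\infty}\|\nabla n\|_{L^2}\|u\|_{L^2}$. The entropy dissipation controls $\nabla n$ only in the weighted form $|\nabla n|^2/(1+n)=4|\nabla\sqrt{1+n}|^2$, so $\|\nabla n\|_{L^2}$ is not controlled. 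Also $\|u\|_{L^2}$ is not time-integrable on $\mathbb H^3$, since there is no Poincar\'e inequality. Gronwall therefore gives growth in time, not a uniform bound.
\item In the original form the bound is $M_\phi\|n\|_{L^{6/5}}\|\nabla u\|_{L^2}$. This would need $\|n\|_{L^{6/5}}^2\in L^1(0,\infty)$, which fails already for the heat flow with conserved mass, where $\|n(t)\|_{L^{6/5}}^2\sim t^{-1/2}$.
\end{itemize}
Even finiteness of $\int^\infty\|\nabla n\|_{L^2}^2$ would only place $\phi\nabla n$ in $L^2_tL^2_x$. Membership in $L^1_tL^2_x$ on an infinite interval, which your restart uses, needs a decay rate. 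So what is required is a density estimate that does not see the velocity at all.

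\paragraph{Your density functional is circular.} The functional $\int n^p+\int|\nabla c|^{2q}$ does not provide such an estimate. The equation for $\nabla c$ carries the term $(\nabla u)^{\mathsf T}\nabla c$. Absorbing $\int|\nabla c|^{2q}|\nabla u|$ by Gagliardo--Nirenberg requires something like $\int\|\nabla u\|_{L^2}^4\,dt$, which is available only after the strong restart.

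\paragraph{The missing idea (the paper's route, Lemma~\ref{lem:ev-scalar-decay}).} Once $\|c\|_{L^\infty}$ is small after some $T_0$ (your Step~1 essentially supplies this; the paper uses Nash plus duality for the drift--absorption propagator), proceed as follows.
\begin{itemize}
\item Test jointly with $\int\Psi(n)\rho_q(c)$, where $\rho_q(c)=(1-c/d_q)^{-1/(10q)}$.
\item Both scalars are transported by the same divergence-free, no-slip $u$, so the transport terms cancel exactly. A one-variable structural inequality absorbs the chemotactic cross-term.
\item The Nash inequality then gives heat-like decay of $\|n(t)\|_{L^2}$ and $\|n(t)\|_{L^6}$, and the tail $\int_t^\infty\|\nabla n\|_{L^2}^2\le CM^2(t-T_0)^{-3/2}$. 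Dyadic Cauchy--Schwarz gives $\int_{T_0+R}^\infty\|\nabla n\|_{L^2}\le CMR^{-1/4}$.
\end{itemize}
Only then is $g=\mathbb P_+(\phi\nabla n)$ a known force in $L^1_tL^2_x\cap L^2_tL^2_x$, and the kinetic energy is bounded on the tail. The velocity can then be restarted as a forced Navier--Stokes problem alone, by a small-enstrophy argument. This avoids any global small-data theory for the coupled system with non-small mass, which your fixed point implicitly needs.

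\paragraph{The uniqueness claim.} This also undermines your statement that no weak--strong uniqueness is needed.
\begin{itemize}
\item If you restart the limit solution, you must identify it with the strong solution. The paper does this by weak--strong uniqueness for Navier--Stokes with the fixed force $g$. That is why it selects a limit satisfying the kinetic-energy inequality from almost every starting time.
\item If you restart the approximants instead, you need smallness at a restart time uniformly in the regularization parameters. You must also face the fact that the regularized system transports $n$ by $\mathcal J_\varepsilon u$ but $c$ by $u$, so the transport cancellation above fails at that level. This is why the paper first lets $\varepsilon\to0$ at fixed $\tau$ before performing the joint test.
\end{itemize}
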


\begin{remark}[Comparison with Winkler's eventual regularity argument]
\label{rem:eventual-winkler}
Winkler~\cite{Winkler2017} proves eventual regularity for an
appropriate class of weak solutions in bounded convex domains.
Our weighted density test is inspired by his joint scalar tests;
we do not claim that this scalar device is new. The argument on
$\mathbb H^3$, however, has a different large-time mechanism.
Finite signal mass and the half-space Nash inequality give
$\|c(t)\|_{L^{\infty}}\leq Ct^{-3/2}\|c_0\|_{L^1}$, independently of the
transport. A weighted density estimate then yields
\[
\int_{T_0+R}^\infty\|\nabla n\|_{L^2}^2\leq CR^{-3/2},
\qquad
\int_{T_0+R}^\infty\|\nabla n\|_{L^2}\leq CR^{-1/4}.
\]
These estimates hold for $R\geq1$.
Since $\phi$ is bounded, the decomposition
$-n\nabla\phi=-\nabla(n\phi)+\phi\nabla n$ gives an effective
solenoidal force in $L^1_tL^2_x\cap L^2_tL^2_x$ on the time tail.
This permits a strong restart of the velocity without a Poincar\'e
inequality. Half-space Stokes estimates and the Neumann reflection
argument then give regularity at every interior and boundary point
after one common waiting time. The proof in
Section~\ref{sec:eventual-regularity} also specifies an iterated
approximation limit that retains the global kinetic-energy inequality
and makes the joint scalar tests legitimate. Accordingly, the theorem
asserts eventual regularity for this selection of the global solution;
it does not infer it from the local solution definition or from the
Hausdorff-measure estimate alone.
\end{remark}

For the same selected solution, eventual regularity can be strengthened
to quantitative decay and a precise description of the scalar diffusion
profiles. We state this separately to distinguish regularity after a
waiting time from the asymptotic behavior as $t\to\infty$.

\begin{theorem}[Large-time decay and Gaussian asymptotics]
\label{thm:large-time-asymptotics}
Let $(n,c,u,p)$ be the solution selected in the proof of
Theorem~\ref{thm:eventual-regularity}. Set
\[
 M=\int_{\mathbb H^3}n_0,\qquad L=\int_{\mathbb H^3}c_0.
\]
All spatial norms below are over $\mathbb H^3$.
There is $T_1>0$ such that, for every $1\leq r\leq\infty$,
\begin{equation}\label{eq:main-asym-scalar-rates}
 \|n(t)\|_{L^r}+\|c(t)\|_{L^r}\leq C_r t^{-\frac32(1-\frac1r)},
 \qquad \|\nabla c(t)\|_{L^{\infty}}\leq Ct^{-2},
 \quad t\geq T_1.
\end{equation}
The velocity satisfies
\begin{equation}\label{eq:main-asym-velocity}
\begin{aligned}
 &\lim_{t\to\infty}
   \left(\|u(t)\|_{L^2}+t^{1/2}\|\nabla u(t)\|_{L^2}\right)=0,\\
 &\lim_{t\to\infty}
   t^{\frac32(\frac12-\frac1q)}\|u(t)\|_{L^q}=0,
   \qquad 2<q\leq\infty.
\end{aligned}
\end{equation}
In particular,
\[
 \|n(t)\|_{L^{\infty}}+\|c(t)\|_{W^{1,\infty}}+\|u(t)\|_{L^{\infty}}
 \longrightarrow0.
\]

The density has conserved mass $M$, and the remaining signal mass is
\begin{equation}\label{eq:main-asym-signal-mass}
 L_\infty:=\lim_{t\to\infty}\int_{\mathbb H^3}c(t)
 =L-\int_0^\infty\int_{\mathbb H^3}nc.
\end{equation}
If $L>0$, then $0<L_\infty\leq L$; if $L=0$, then $c=0$.
Define the unit-mass Neumann Gaussian on the half-space by
\[
 G_N(x,t)=2(4\pi t)^{-3/2}e^{-|x|^2/(4t)},
 \qquad x\in\mathbb H^3.
\]
For every $1\leq r\leq\infty$,
\begin{equation}\label{eq:main-asym-gaussian}
 \lim_{t\to\infty}t^{\frac32(1-\frac1r)}
 \left(\|n(t)-MG_N(t)\|_{L^r}
       +\|c(t)-L_\infty G_N(t)\|_{L^r}\right)=0.
\end{equation}
No initial spatial moment assumption is required.
\end{theorem}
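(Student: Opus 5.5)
The plan is to prove the decay rates first, then the mass limit and its positivity, and finally the Gaussian profiles. Throughout we work on the time tail $[T_*,\infty)$ provided by Theorem~\ref{thm:eventual-regularity}, where the solution is classical and uniformly bounded and all tests are legitimate.

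\textbf{Signal decay.} The signal comes first, because it decouples from the transport. Since $c\ge0$, $n\ge0$, and $\nabla\cdot u=0$ with $u=0$ on $\Gamma$, every $L^r$ norm of $c$ is nonincreasing. Combining the half-space Nash inequality with $\|c(t)\|_{L^1}\le L$ gives $\|c(t)\|_{L^\infty}\le Ct^{-3/2}L$, as recorded in Remark~\ref{rem:eventual-winkler}. Interpolating yields the $L^r$ rates for $c$.

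\textbf{Density decay and velocity decay.} The density has conserved mass $M$. To obtain $\|n(t)\|_{L^r}\le C_rt^{-\frac32(1-\frac1r)}$, we will run a Duhamel/Moser argument using the Neumann heat semigroup $e^{t\Delta_N}$ on $\mathbb H^3$. This semigroup is given by even reflection, so it satisfies the same $L^1$--$L^\infty$ and gradient bounds as the whole-space heat kernel. The transport and chemotactic terms are written in divergence form, $\nabla\cdot(nu+n\nabla c)$, and the drifts are required to be small in the scale-invariant sense on the tail.

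The smallness of the drifts is where the bulk of the work lies. For $\nabla c$ we will use parabolic smoothing together with the tail integrability $\int_{T_0+R}^\infty\|\nabla n\|_{L^2}^2\le CR^{-3/2}$, aiming for $\|\nabla c\|_{L^\infty}\le Ct^{-2}$. This is one half-power better than the heat rate, since $\|c\|_{L^\infty}\lesssim t^{-3/2}$ and one gradient costs $t^{-1/2}$. For $u$ we use the energy inequality with the effective force $\phi\nabla n\in L^1_tL^2_x$ on the tail, which gives $\|u(t)\|_{L^2}\to0$. The rates $t^{\frac32(\frac12-\frac1q)}\|u\|_{L^q}\to0$ and $t^{1/2}\|\nabla u\|_{L^2}\to0$ then follow from a standard Kato-type Stokes semigroup bootstrap in the half-space, started at a late time, once $\|u(t_1)\|_{L^2}$ is small.

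I expect this coupled bootstrap to be the main obstacle. The velocity decay $o(t^{-3/4})$ in $L^\infty$ has to feed into the density decay, while the forcing $n\nabla\phi$ must be controlled by the density decay. I would close it with a continuity argument on $\sup_{s\in[t_1,t]}s^{3/2}\|n(s)\|_{L^\infty}$, choosing $t_1$ large enough that $\|u\|$ and $\nabla c$ are small in the critical norms.

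\textbf{Limiting signal mass.} Integrating the $c$-equation gives $\frac{d}{dt}\int c=-\int nc\le0$, which proves \eqref{eq:main-asym-signal-mass}, since $\int_0^\infty\int nc<\infty$. On the tail, $\int nc\le M\|c\|_{L^\infty}\lesssim t^{-3/2}$, which is integrable. For positivity, set $\mu(t)=\int c(t)$. We have $\mu'\ge-\|n(t)\|_{L^\infty}\mu$, and $\|n\|_{L^\infty}$ is bounded on $[T_*,T_1]$ and $O(t^{-3/2})$ after $T_1$. Hence $\mu(t)\ge\mu(T_*)\exp(-\int_{T_*}^\infty\|n\|_{L^\infty})>0$. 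It therefore suffices to know $\mu(T_*)>0$. This holds because on $(0,T_*)$ the equation $c_t+u\cdot\nabla c-\Delta c=-nc$ with $nc\ge0$ preserves positivity: the sign of $c$ is preserved, and the bound $\int_0^{T_*}\int nc<\infty$ combined with a lower barrier argument shows that $c$ cannot vanish identically. Concretely, I would argue through $\int c\log c$ or via the $L^1$ contraction comparison with $c_0$. If $L=0$, then $c_0=0$, so $c\equiv0$ by uniqueness of $c$ for given $(n,u)$, which comes from the $L^1$ estimate.

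\textbf{Gaussian asymptotics.} Write $n(t)=e^{(t-t_1)\Delta_N}n(t_1)-\int_{t_1}^t\nabla e^{(t-s)\Delta_N}\cdot(nu+n\nabla c)\,ds$. The Neumann kernel is the even reflection of the free heat kernel, and for $L^1$ data on $\mathbb H^3$ we have $t^{\frac32(1-\frac1r)}\|e^{t\Delta_N}f-(\int f)G_N\|_{L^r}\to0$ without any moment assumption, by density of compactly supported data. The Duhamel term is $o(t^{-\frac32(1-\frac1r)})$ by the decay rates already obtained: the drift $n(u+\nabla c)$ has an extra decay factor, and the singular part near $s=t$ is handled by splitting the time integral at $t/2$. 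For $c$, the additional term $-\int e^{(t-s)\Delta_N}(nc)\,ds$ contributes exactly the mass defect $\int_{t_1}^\infty\int nc$, up to $o(\cdot)$, by the same argument. This produces the coefficient $L_\infty$ and completes \eqref{eq:main-asym-gaussian}.
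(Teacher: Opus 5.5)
\paragraph{Velocity decay.} There is a genuine gap here, and it propagates into your coupled bootstrap. The energy inequality with $g=\mathbb P_+(\phi\nabla n)\in L^1_tL^2_x$ gives three things: boundedness of $\|u(t)\|_{L^2}$, finite dissipation, and convergence of $\|u(t)\|_{L^2}$. It does not give that the limit is zero. $\mathbb H^3$ has no Poincar\'e inequality, so finite total dissipation does not force kinetic-energy decay. Your Kato-type step, started from small $\|u(t_1)\|_{L^2}$, needs exactly this decay. In the paper, the same decay is what controls the free term $e^{-(t/2)A}u(t/2)$ in the $o(t^{-3/4})$ bound for $\|u(t)\|_{L^\infty}$.

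The missing step is a comparison with the homogeneous Stokes flow. For $z(t)=u(t)-e^{-(t-T)A}u(T)$ one gets $\sup_{t\ge T}\|z(t)\|_{L^2}\le(\int_T^\infty\|u\|_{L^4}^4)^{1/2}+2\int_T^\infty\|g\|_{L^2}$. The first integral is finite by the eventual $H^1$ bound and finite dissipation. Moreover $e^{-(t-T)A}u(T)\to0$ in $L^2$ by the spectral theorem, because the Dirichlet Stokes operator has trivial kernel. Letting $T\to\infty$ then gives $\|u(t)\|_{L^2}\to 0$.

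Two further points in this part need repair.
\begin{itemize}
\item If the force is kept as $-n\nabla\phi$ over a Duhamel window of length comparable to $t$, even the sharp bounds $\|n(s)\|_{L^a}\lesssim Ms^{-\frac32(1-\frac1a)}$ give only a contribution of order $M\|\nabla\phi\|_{L^\infty}t^{-1/2}$ to $\|u(t)\|_{L^\infty}$. That is neither $o(t^{-3/4})$ nor small at the critical scale needed for your density absorption. The paper uses $-\mathbb P_+(n\nabla\phi)$ only for $t-s<\sqrt t$, and the effective force $g$, with $\int_{t/2}^\infty\|g\|_{L^2}^2\lesssim t^{-3/2}$, on the rest. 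It obtains $t^{1/2}\|\nabla u(t)\|_{L^2}\to0$ simply by averaging the enstrophy inequality over $[t/2,t]$.
\item Your continuity argument on $\sup s^{3/2}\|n(s)\|_{L^\infty}$ is not set up. At $t_1$ you only know that $n$ and $u$ are bounded, and $t_1$ is to be chosen using a smallness of $u$ that the bootstrap is itself supposed to produce. The argument is also unnecessary. Lemma~\ref{lem:ev-scalar-decay} already gives $\|n(t)\|_{L^2}\lesssim t^{-3/4}$ and $\|n(t)\|_{L^6}\lesssim t^{-5/4}$ independently of $u$, so the paper argues sequentially:
\begin{enumerate}
\item the velocity first: unit-window Stokes smoothing gives $o(t^{-1/2})$ in $L^\infty$, then the $[t/2,t]$ window gives $o(t^{-3/4})$;
\item then $\|\nabla c\|_{L^\infty}\lesssim t^{-2}$, from $\|c\|_{L^\infty}\lesssim t^{-3/2}$, $\|nc\|_{L^6}\lesssim t^{-11/4}$, and the smallness $s^{1/4}\|u\|_{L^\infty(s,2s;L^6)}\to0$ needed to absorb transport on windows of length $s$;
\item finally $\|n\|_{L^\infty}$, with the now critically small drift.
\end{enumerate}
\end{itemize}

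\paragraph{Positivity of the signal mass.} The second gap is the survival of the signal through the weak-solution period. Sign preservation and $\int_0^{T_*}\int nc<\infty$ do not exclude $\int_0^{T_*}\int nc=L$, that is, $\mu(T_*)=0$. On $(0,T_*)$ the density is only in $L^\infty_tL^1_x\cap L^{5/3}_{t,x}$, so a barrier $c\ge e^{-\int_0^t\|n\|_{L^\infty}}q$ is unavailable. Without such control, neither $\int c\log c$ nor an $L^1$ comparison with $c_0$ bounds $\int c$ from below.

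What is needed is a relative entropy against the passive scalar, computed at the approximation level. Let $q$ solve $q_t+u\cdot\nabla q=\Delta q$ with the same drift, Neumann condition and data. Then $\frac{d}{dt}\int q\log(q/c)=-\int q|\nabla\log(q/c)|^2+\int h_\tau(n)q\le M\|c_0\|_{L^\infty}$, which uses only the mass of $n$. Jensen's inequality for the probability measure $q\,dx/\|c_0^\varepsilon\|_{L^1}$ then gives $\|c(t)\|_{L^1}\ge\|c_0^\varepsilon\|_{L^1}\exp(-M\|c_0\|_{L^\infty}t/\|c_0^\varepsilon\|_{L^1})$. This bound passes to the selected solution through the global strong $L^1$ convergence of $c^{\varepsilon,\tau}$, which in turn requires spatial tightness of the concentration.

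With these repairs, the rest of your plan is the paper's. That covers your argument after $T_*$ ($\mu'\ge-\|n\|_{L^\infty}\mu$ with $\|n\|_{L^\infty}$ integrable). It also covers your Gaussian-profile step: the moment-free linear profile for $L^1$ data, splitting the Duhamel integrals at $t/2$, and the absorption contributing its time-integrated mass.
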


\begin{remark}[Zero equilibrium and diffusion of the conserved mass]
\label{rem:large-time-interpretation}
The uniform limit of $(n,c,u)$ is the zero equilibrium. When $M>0$,
however, $\|n(t)\|_{L^1}=M$ at every time, so this convergence does not
hold in $L^1$ for the density. Similarly, if $c_0\not\equiv0$, the
signal converges uniformly to zero while its total mass tends to the
strictly positive number $L_\infty$. Formula~\eqref{eq:main-asym-gaussian}
describes both scalars on the diffusive spatial scale $|x|\sim\sqrt t$,
including convergence in the normalized $L^1$ and $L^\infty$ norms.
The positive remaining signal mass is a feature of the present
finite-mass half-space problem; it contrasts with complete signal
consumption in the bounded-domain setting of \cite{Winkler2017}
with nonzero bacterial mass. The velocity conclusion follows from an energy comparison with the
homogeneous Stokes evolution. No algebraic rate for
$\|u(t)\|_{L^2}$ is asserted under the sole assumption
$u_0\in L^2_\sigma$. The theorem concerns the same approximation
selection as Theorem~\ref{thm:eventual-regularity}.
\end{remark}

\subsection{Key ingredients in the proof}
\label{subsec:key-ingredients}

The proof follows the local-energy, decay, and covering strategy
of partial regularity theory, with modifications required by
chemotaxis and the boundary. We first construct suitable weak
solutions and obtain pressure integrability. We then derive
scale-local energy and pressure estimates and iterate them to
prove Proposition~\ref{main1}; parabolic interpolation and a
second pressure decomposition first give a small-mass
lower-order criterion. To prove Proposition~\ref{main2}, we
derive an energy bound depending only on the prescribed mass
bound, split the density into a caloric part and a small heat
potential, and recover small rescaled density mass on a smaller
cylinder. The small-mass criterion then applies.
Finally, smallness of the full dissipation at all sufficiently
small scales drives the lower-order quantities below the
regularity threshold. This proves Theorem~\ref{main4}, and a
parabolic covering argument yields Corollary~\ref{cor:full-singular-set}.
The first three ingredients below concern partial regularity; the
fourth gives the additional large-time argument.

\medskip\noindent
\textbf{1. A new local energy estimate without separate control of
$n\log n$.}
The density entropy changes sign, and under the natural scaling
$n_\rho(y,s)=\rho^2n(x_0+\rho y,t_0+\rho^2s)$ one has
\[
n_\rho\log n_\rho
=\rho^2(n\log n)(x_0+\rho y,t_0+\rho^2s)
 +2\log\rho\,n_\rho.
\]
Consequently, the localized entropy term cannot be treated as
a nonnegative, scale-invariant energy.

Weighted mass control of the negative entropy was already used
in~\cite{CLWW2023Hausdorff}, equations~(4.13)--(4.15).
Our contribution here is the closed local energy estimate and
its use in an iteration at the natural dissipation scaling.
For this purpose, we combine the entropy--energy inequality
with the weak mass identity using the same nonnegative
backward-heat-kernel weight $\psi$.

The mass identity accounts for the additional
term $2\log\rho\,n_\rho$ and bounds the weighted mass
$M_\psi(t)=\int n\psi$. It also controls the negative entropy:
for $0<\alpha<1/6$,
\[
\int_{\{0<n<1\}}(-n\log n)\psi
\leq C_\alpha M_\psi(t)^{1-\alpha}
                \left(\int\psi\right)^\alpha.
\]
Together with interpolation and
$|n\log n|\leq C_\alpha(n^{1+\alpha}+n^{1-\alpha})$,
this yields Lemma~\ref{AE}. In particular, with the
scale-invariant quantities of Section~\ref{subsec:notation-scaling},
write $X(r)=A(r)+E(r)$, where $A$ is the total local energy,
$E$ the full dissipation, and $D$ the mixed-norm pressure oscillation.
Corollary~\ref{corollary1} gives, when $X(\rho)\leq1$,
$0<\rho\leq1$, $0<\gamma\leq1/2$, and $0<\delta<1/4$,
\[
\begin{aligned}
X(\gamma\rho)\leq{}C_\delta\gamma^2X(\rho)^{1-\delta}
 +C_\delta\gamma^{-2}X(\rho)^{3/2-\delta}
+C_\delta\gamma^{-2}X(\rho)^{1/2}D(\rho).
\end{aligned}
\]
The right-hand side has no independent $n\log n$ quantity.
This is the sense in which the new local energy estimate and
the resulting smallness criteria do not require separate
entropy control. The entropy structure remains part of
suitability and of the existence argument.

At the final dissipation step, the same weighted argument gives
$A(r)+E(r)\leq C(1+G(2r))$, where
\[
G(r)=r^{-5/3}\int_{\mathcal Q_r}
\left(|u|^{10/3}+n^{5/3}
 +|\nabla\sqrt{\tilde c}|^{10/3}
 +|p-[p]_r(t)|^{5/3}\right)\,dx\,dt.
\]
The pressure estimate and interpolation produce a contracting
iteration for $G$ once the full dissipation is small.
\textbf{Local entropy bounds imply uniform integrability of $n$, hence
$rA_{\sqrt n}(r)\to0$}; this removes the remaining local-mass
contribution of the force. Equation~\eqref{eq:corrected-G-iteration}
makes this mechanism precise and leads to the natural
$r^{-1}$ criterion without a radius loss in the density term.

\medskip\noindent
\textbf{2. Odd and even extensions in the boundary H\"older estimates.}
The decay estimates must respect the different boundary conditions
of the velocity and the scalar variables. After separating off
explicit heat potentials, we extend each Dirichlet velocity
component oddly and the Neumann scalar variables $n$ and
$\tilde c$ evenly across $\Gamma$. For a scalar divergence-form
equation, an odd extension of the unknown requires odd tangential
fluxes and an even normal flux; an even extension requires the
opposite parities. These choices ensure that the reflected
equations hold distributionally, without an artificial source
on the reflecting plane. The componentwise odd velocity
reflection is used for the heat equation after the pressure
and force have been included in its right-hand side.

For the concentration, the even caloric remainder $c_2$ has
even tangential derivatives and an odd normal derivative.
Thus the decay estimate for $\nabla c_2$ subtracts only constant
vectors $a$ with $a_3=0$, preserving the Neumann symmetry.
For the density, the tangential components of
$-n(u+\nabla c)$ are extended evenly and the normal component
oddly, so the caloric remainder is even and ordinary scalar
mean oscillations can be used. Lemmas~\ref{lem:heat-potentials}
and~\ref{lem:caloric-parity-decay} quantify these constructions.
Uniform decay at boundary centers and neighboring interior
centers then yields the Campanato estimates for H\"older
continuity of $(n,\nabla c,u)$ up to $\Gamma$.

\medskip\noindent
\textbf{3. Suitable weak solutions and pressure estimates in the half-space.}
The approximation must preserve incompressibility, the no-slip
condition, and the scalar entropy cancellations simultaneously.
We regularize the velocity drift with the half-space Stokes
semigroup, use Neumann heat smoothing for the scalar initial
data, and regularize the force by the Dirichlet heat semigroup.
The paired truncations
\[
a_\tau(n)=\frac{n}{1+\tau n},\qquad
h_\tau(n)=\frac{\log(1+\tau n)}{\tau},\qquad
a_\tau(n)=n h_\tau'(n)
\]
preserve the cancellation between the density entropy and
concentration energy. The shift $\tilde c=1+c$ avoids a positive
lower-bound assumption on $c$. Uniform entropy and dissipation
bounds, local compactness, and lower semicontinuity then yield
Theorem~\ref{thm:half-space-existence}, including the localized
inequality for test functions satisfying $\partial_3\psi=0$
on $\Gamma$.

Pressure control is needed both in this limit and in the
regularity iteration. For existence, the global half-space
Stokes decomposition separates the zero-initial-data forced
part from the homogeneous evolution of the initial velocity.
Maximal regularity, spatial Sobolev estimates, and Stokes
semigroup smoothing give the local $L^{5/3}$ pressure bound
on every time interval compactly contained in $(0,\infty)$
(Proposition~\ref{prop:uniform-pressure}). For local regularity,
an inhomogeneous Stokes problem on a smooth auxiliary domain
is combined with estimates for the homogeneous remainder.
The force carries the factor $\rho M_\phi$ after scaling,
which is retained in Lemma~\ref{p} and the ensuing iteration.
These pressure estimates connect the energy criterion,
the lower-order criterion, and the final dissipation criterion.

\medskip\noindent
\textbf{4. Eventual regularity without a Poincar\'e inequality.}
On the half-space the signal has a drift-independent $t^{-3/2}$
decay estimate, obtained from mass, the Nash inequality, and duality.
Once the signal is small, a joint test with a weighted power of the
density gives heat-like $L^2$ and $L^6$ decay and an integrable tail
for $\|\nabla n(t)\|_{L^2}$. To justify this test, we first remove the
velocity smoothing at fixed density truncation, so that both scalar
equations have the same drift, and then remove the truncation.
Global spatial tightness retains the kinetic-energy inequality at
almost every starting time. The identity
$-n\nabla\phi=-\nabla(n\phi)+\phi\nabla n$ and boundedness of
$\phi$ then turn the density-gradient estimate into a time-integrable
effective force. A small-enstrophy restart, weak--strong uniqueness
for the velocity equation with this fixed force, and flat-boundary
scalar--Stokes estimates complete the proof of
Theorem~\ref{thm:eventual-regularity}.

For the large-time behavior, an energy comparison with the homogeneous
Stokes evolution first gives decay of the kinetic energy.
Time averaging of the eventual enstrophy inequality yields
$\|\nabla u(t)\|_{L^2}=o(t^{-1/2})$. Long time intervals in the scalar
mild formulas give the heat-scale density decay and
$\|\nabla c(t)\|_{L^{\infty}}=O(t^{-2})$. A relative-entropy comparison
with the passive signal prevents complete consumption at any finite
time; integrability of the eventual density maximum preserves a
positive limiting signal mass. Finally, the scalar divergence fluxes
are asymptotically negligible compared with diffusion, while the
integrable absorption changes only the mass of the signal Gaussian.
This proves Theorem~\ref{thm:large-time-asymptotics}, including the
$L^1$ and $L^\infty$ endpoint profile statements without moment
assumptions.

For background on the linear analytic tools used in
this framework, we refer to
Krylov~\cite{K2001} and Dong and Kim~\cite{DK2011}
for parabolic $L^p$ theory,
Seeley~\cite{Seeley1972} for interpolation with boundary
conditions, and
Denk, Hieber, and Pr\"uss~\cite{DHP2003}
for maximal regularity.

	\section{Preliminaries for the main results}\label{section2}
    \subsection{Notation and scale-invariant quantities}
\label{subsec:notation-scaling}

We first introduce the notation used throughout this section. Set
\[
B_r:=B(0,r),\qquad
B_r^+:=B_r\cap\mathbb H^3,
\qquad
B:=B_1,\qquad
B^+:=B_1^+,
\]
and
\[
Q_r:=(-r^2,0)\times B_r,
\qquad
Q_r^+:=(-r^2,0)\times B_r^+,
\qquad
Q:=Q_1,\qquad
Q^+:=Q_1^+.
\]

For $q,r\in[1,\infty]$, we use the mixed-norm spaces
\[
L_t^qL_x^r
=
L^q(0,T;L^r(\Omega)),
\]
together with
\[
W^{1,2}_{q,r}
=
\left\{
u\in L_t^qL_x^r:
\partial_tu,\ D_xu,\ D_x^2u\in L_t^qL_x^r
\right\},
\]
and
\[
W^{0,1}_{q,r}
=
\left\{
u\in L_t^qL_x^r:
D_xu\in L_t^qL_x^r
\right\},
\]
equipped with their natural norms.

Throughout this section, admissible radii and the corresponding physical
cylinders
\[
\mathcal B_\rho(x_0),
\qquad
\mathcal Q_\rho(z_0)
\]
are understood in the sense introduced in Section~\ref{section1}.

For the iterative arguments, it is convenient to distinguish these
physical cylinders from the normalized reference cylinders. After fixing
$z_0$ and rescaling an admissible physical cylinder to unit size, we write
\[
\mathcal Q_r:=(-r^2,0)\times\mathcal B_r,
\qquad 0<r\leq1,
\]
where
\[
\mathcal B_r=
\begin{cases}
B_r, & \text{in the interior case},\\[1mm]
B_r^+, & \text{in the boundary case}.
\end{cases}
\]
We also set
\[
\mathcal B:=\mathcal B_1,
\qquad
\mathcal Q:=\mathcal Q_1.
\]
Thus $\mathcal Q_r$ denotes a full cylinder in the normalized interior
case and a half-cylinder in the normalized boundary case.

For a space--time point $z_0=(x_0,t_0)$ and an admissible physical radius
$\rho>0$, define
\[
A_u(z_0,\rho)
=
\operatorname*{ess\,sup}_{t_0-\rho^2<t<t_0}
\frac1\rho
\int_{\mathcal B_\rho(x_0)}|u(x,t)|^2\,dx,
\quad
E_u(z_0,\rho)
=
\frac1\rho
\int_{\mathcal Q_\rho(z_0)}
|\nabla u|^2\,dx\,dt,
\]
\[
A_{\sqrt n}(z_0,\rho)
=
\operatorname*{ess\,sup}_{t_0-\rho^2<t<t_0}
\frac1\rho
\int_{\mathcal B_\rho(x_0)}n(x,t)\,dx,
\quad
E_{\sqrt n}(z_0,\rho)
=
\frac1\rho
\int_{\mathcal Q_\rho(z_0)}
|\nabla\sqrt n|^2\,dx\,dt,
\]
\[
A_{\nabla\sqrt{\tilde c}}(z_0,\rho)
=
\operatorname*{ess\,sup}_{t_0-\rho^2<t<t_0}
\frac1\rho
\int_{\mathcal B_\rho(x_0)}
|\nabla\sqrt{\tilde c}|^2\,dx,
\quad
E_{\nabla\sqrt{\tilde c}}(z_0,\rho)
=
\frac1\rho
\int_{\mathcal Q_\rho(z_0)}
|\nabla^2\sqrt{\tilde c}|^2\,dx\,dt.
\]

For the pressure, let
\[
D_{q,r}(z_0,\rho)
:=
\rho^{\,2-\frac2q-\frac3r}
\left\|
p-[p]_{x_0,\rho}(t)
\right\|_{L_t^qL_x^r(\mathcal Q_\rho(z_0))}, 
\]
where $[p]_{x_0,\rho}(t)$ is defined in Proposition \ref{main1}. For $f\in\{u,\sqrt n,\nabla\sqrt{\tilde c}\}$,
we further set
\[
C_f^{q,r}(z_0,\rho)
:=
\rho^{\,1-\frac2q-\frac3r}
\|f\|_{L_t^qL_x^r(\mathcal Q_\rho(z_0))}.
\]

When the center $z_0$ is fixed and no confusion can arise, we suppress it
from the notation and write
\[
A_u(\rho),\quad
E_u(\rho),\quad
D_{q,r}(\rho),\quad
C_f^{q,r}(\rho),
\]
and similarly for the other quantities. In normalized coordinates, we
also write
\[
[p]_\rho(t)
:=
\fint_{\mathcal B_\rho}p(x,t)\,dx.
\]
Thus the average is taken over $B_\rho$ in the interior case and over
$B_\rho^+$ in the boundary case.

For convenience, define
\begin{align*}
A_{u,\sqrt n,\nabla\sqrt{\tilde c}}(\rho)
&=
A_u(\rho)
+A_{\sqrt n}(\rho)
+A_{\nabla\sqrt{\tilde c}}(\rho),\\
E_{u,\sqrt n,\nabla\sqrt{\tilde c}}(\rho)
&=
E_u(\rho)
+E_{\sqrt n}(\rho)
+E_{\nabla\sqrt{\tilde c}}(\rho),\\
C_{u,\sqrt n,\nabla\sqrt{\tilde c}}^{q,r}(\rho)
&=
C_u^{q,r}(\rho)
+C_{\sqrt n}^{q,r}(\rho)
+C_{\nabla\sqrt{\tilde c}}^{q,r}(\rho).
\end{align*}
More generally, comma-separated subscripts on $A$, $E$,
and $C^{q,r}$ denote the sum of the corresponding individual
quantities; for example,
\[
A_{u,\sqrt n}(\rho)=A_u(\rho)+A_{\sqrt n}(\rho),
\qquad
E_{u,\nabla\sqrt{\tilde c}}(\rho)
=E_u(\rho)+E_{\nabla\sqrt{\tilde c}}(\rho).
\]
These quantities are invariant under the natural scaling of
\eqref{che},
\[
u_\lambda(t,x)
=
\lambda u(\lambda^2t,\lambda x),
\qquad
n_\lambda(t,x)
=
\lambda^2 n(\lambda^2t,\lambda x),
\]
\[
c_\lambda(t,x)
=
c(\lambda^2t,\lambda x),
\qquad
p_\lambda(t,x)
=
\lambda^2p(\lambda^2t,\lambda x),
\qquad
\phi_\lambda(x)
=
\phi(\lambda x).
\]

The boundary case will be written explicitly in the iterative estimates
below. Set
\[
\rho_k:=1-2^{-k},
\qquad k\geq1,
\]
and define
\[
B_k^+:=B_{\rho_k}^+,
\qquad
Q_k^+:=Q_{\rho_k}^+.
\]
We also write
\[
A_k
:=
A_{u,\sqrt n,\nabla\sqrt{\tilde c}}(\rho_k),
\qquad
E_k
:=
E_{u,\sqrt n,\nabla\sqrt{\tilde c}}(\rho_k).
\]

For each integer $k\geq0$, choose a domain
$\tilde B_k\subset\mathbb R^3$ with smooth boundary such that
\[
B_{k+1}^+
\subset
\tilde B_k
\subset
B_{k+2}^+,
\]
and such that the $C^2$ norm of $\partial\tilde B_k$ is bounded by
$C\,2^k$. We further set
\[
\tilde Q_k
:=
(-\rho_{k+2}^2,0)\times\tilde B_k.
\]
For a space--time cylinder $I\times\Omega$, where $I=(a,b)$,
we denote its parabolic boundary by
\[
\partial_p(I\times\Omega)
:=
\bigl(\{a\}\times\overline{\Omega}\bigr)
\cup
\bigl((a,b)\times\partial\Omega\bigr).
\]
Thus $\partial_p(I\times\Omega)$ consists of the initial
time slice and the lateral spatial boundary, and does not
include the terminal time slice.

   	\begin{lemma}(See \cite[Theorem 1.1]{MS1995} and \cite[Lemma 4.4]{DW2020})\label{stokes}
   		Let $q_0$, $r_0\in(1,\infty)$ be fixed numbers. Assume that $g\in L_t^{q_0}L_x^{r_0}(\tilde{Q}_k)$. Then there exists a unique function pair $(v,p)$, which satisfies the following equations:
   		\begin{equation}
   			\begin{aligned}
   				\begin{cases}
   					\partial_tv-\Delta v+\nabla p=g \quad &\text{in}~\tilde{Q}_k,\\
   					\nabla\cdot v=0 & \text{in}~ \tilde{Q}_k,\\
   					[p]_{\tilde{B}_k}=0 &\text{for ~a.e.}~t\in[-\rho_{k+2}^2,0],\\
   					v=0 &\text{on}~\partial_p\tilde{Q}_k.
   				\end{cases}
   			\end{aligned}
   		\end{equation}
   		Moreover, $v$ and $p$ satisfy the following estimate:
   		\begin{align}
   			\|v\|_{W^{1,2}_{q_0,r_0}(\tilde{Q}_k)}+\|p\|_{W^{0,1}_{q_0,r_0}(\tilde{Q}_k)}\leq C2^{bk}\|g\|_{L_t^{q_0}L_x^{r_0}(\tilde{Q}_k)},
   		\end{align}
   		where the constants $C$ and $b$ depend only on $q_0$, $r_0$.

The factor $2^{bk}$ follows by tracking the spatial localization
constants, as recorded in \cite[Lemma 4.4]{DW2020}.
The time lengths $\rho_{k+2}^2\in[9/16,1)$ introduce no further
$k$-dependence: use the zero-initial-data estimate on $(-1,0)$,
extend the forcing by zero before $-\rho_{k+2}^2$, and restrict
the resulting solution to $\tilde Q_k$.

   	\end{lemma}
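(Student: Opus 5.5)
The plan is to deduce Lemma~\ref{stokes} from the maximal regularity theory for the nonstationary Stokes system on a bounded smooth domain. The additional work is to track how the constants depend on $k$. We first fix a reference domain: the domains $\tilde B_k$ have $C^2$ boundary norms at most $C2^k$ and diameter comparable to $1$. By \cite[Theorem 1.1]{MS1995}, on a fixed bounded $C^2$ domain $\Omega$ with time interval $(0,1)$, the problem
\[
\partial_tv-\Delta v+\nabla p=g,\qquad \nabla\cdot v=0,\qquad v|_{\partial_p}=0
\]
has a unique solution in $W^{1,2}_{q_0,r_0}$, with $\nabla p\in L^{q_0}_tL^{r_0}_x$. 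The estimate constant depends on $q_0$, $r_0$ and on the $C^2$ character of $\partial\Omega$. Uniqueness of $p$ holds up to a function of time. We fix that function by the normalization $[p]_{\tilde B_k}(t)=0$. Then the Poincar\'e--Wirtinger inequality on $\tilde B_k$ bounds $\|p\|_{L^{q_0}_tL^{r_0}_x}$ by $\|\nabla p\|_{L^{q_0}_tL^{r_0}_x}$, which gives the full $W^{0,1}_{q_0,r_0}$ bound.

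The polynomial factor $2^{bk}$ comes from the localization argument, as in \cite[Lemma 4.4]{DW2020}. Cover $\overline{\tilde B_k}$ by $O(2^{3k})$ balls of radius $\sim 2^{-k}$ on which $\partial\tilde B_k$ is a $C^2$ graph with uniformly bounded normalized curvature. On each such ball, after rescaling by $2^{k}$, we are in a half-space or whole-space situation with bounded geometry. There we apply cut-off functions $\eta_j$ with $|\nabla^m\eta_j|\lesssim 2^{mk}$. The localized equations contain commutator terms $v\nabla\eta_j$, $\nabla v\cdot\nabla\eta_j$, $p\nabla\eta_j$ and a divergence correction, which we handle with a Bogovskii operator on the local patch. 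Each of these terms carries at most a fixed power of $2^k$. We absorb them in the standard way: first obtain lower-order control of $\|v\|$ and $\|p\|$ from the global solvability, then do the interpolation argument. The accumulated constants give $C2^{bk}$ with $b$ depending only on $q_0$ and $r_0$. Equivalently, one can note that the constant in \cite{MS1995} depends polynomially on the $C^2$ norm and on the inverse of the size of the boundary charts, and simply quote \cite[Lemma 4.4]{DW2020}, which tracks exactly this.

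It remains to handle the time interval. We extend $g$ by zero to $(-1,-\rho_{k+2}^2)\times\tilde B_k$ and solve the zero-initial-data problem on $(-1,0)$. Since the forcing vanishes before $-\rho_{k+2}^2$ and the data are zero, uniqueness forces the solution to be identically zero there. Restricting to $\tilde Q_k$ therefore gives a solution with $v=0$ at $t=-\rho_{k+2}^2$, and the same bound holds. The interval length lies in $[9/16,1)$, so no further $k$-dependence appears. Uniqueness on $\tilde Q_k$ follows from uniqueness of the maximal regularity solution together with the mean-zero normalization.

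The main obstacle is making the $k$-dependence explicit, since the cited theorem does not state its constants quantitatively. I would handle this as described above: rescale each boundary chart to unit size and count powers of $2^k$ from the cut-offs and the Bogovskii corrections, deferring the detailed bookkeeping to \cite[Lemma 4.4]{DW2020}.
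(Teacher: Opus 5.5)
Your proposal is correct and follows essentially the same route as the paper. The paper also rests the lemma on the mixed-norm maximal regularity of \cite[Theorem 1.1]{MS1995}, attributes the factor $2^{bk}$ to the localization bookkeeping recorded in \cite[Lemma 4.4]{DW2020}, and removes any dependence on the time length $\rho_{k+2}^2$ by extending $g$ by zero to $(-1,0)$ and restricting the zero-initial-data solution to $\tilde Q_k$. Your Poincar\'e--Wirtinger step for the normalized pressure and your uniqueness remark are consistent with that argument.
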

   	\begin{lemma}(See \cite[Lemma 4.5]{DW2020})\label{Seregin}
   		Let $q_0$, $r_0\in(1,\infty)$ be constants and $g\in L_t^{q_0}L_x^{r_0}(Q_{k+1}^+)$. Assume that the functions $v\in W^{0,1}_{q_0,1}(Q_{k+1}^+)$ and $p\in L_t^{q_0}L_x^1(Q_{k+1}^+)$ satisfy the equations
   		\begin{equation}
   			\begin{aligned}
   				\begin{cases}
   					\partial_tv-\Delta v+\nabla p=g\quad &\text{in}~Q_{k+1}^+,\\
   					\nabla\cdot v=0,&\text{in}~Q_{k+1}^+.
   				\end{cases}
   			\end{aligned}
   		\end{equation}
   		and the boundary condition
   		\begin{align*}
   			v=0\quad on~[-\rho_{k+1}^2,0)\times \{y|y=(y',0),|y'|<\rho_{k+1}\}.
   		\end{align*}
   		Then we have $v\in W^{1,2}_{q_0,r_0}(Q_k^+)$, $p\in W^{0,1}_{q_0,r_0}(Q_k^+)$, and 
   		\begin{align*}
   			&\|v\|_{W^{1,2}_{q_0,r_0}(Q_k^+)}+\|p\|_{W^{0,1}_{q_0,r_0}(Q_k^+)}\\&\quad\leq C2^{bk}\left(\|g\|_{L_t^{q_0}L_x^{r_0}(Q_{k+1}^+)}+\|v\|_{L_t^{q_0}L_x^1(Q_{k+1}^+)}+\|p\|_{L_t^{q_0}L_x^1(Q_{k+1}^+)}\right),
   		\end{align*}
   		where the constants $C$ and $b$ only depend on $q_0$ and $r_0$.
   	\end{lemma}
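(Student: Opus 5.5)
The plan is to follow the localization-and-bootstrap scheme of Seregin's boundary theory for the Stokes system, as organized in \cite[Lemma 4.5]{DW2020}, while tracking how each constant depends on the width $\rho_{k+1}-\rho_k=2^{-k-1}$ of the annulus between $Q_k^+$ and $Q_{k+1}^+$. First, insert $N\sim\log_2(r_0)+C$ intermediate radii $\rho_k=s_N<s_{N-1}<\dots<s_0=\rho_{k+1}$ with consecutive gaps comparable to $2^{-k}/N$. For each $j$, fix smooth domains $\tilde B^{(j)}$ with $B^+_{s_{j+1}}\subset\tilde B^{(j)}\subset B^+_{s_j}$, chosen exactly as the $\tilde B_k$ of Lemma~\ref{stokes}. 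Their $C^2$ character is then bounded by $C2^k$, so Lemma~\ref{stokes} applies on them with constants $C2^{bk}$.

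At step $j$, split $(v,p)=(v_1,p_1)+(v_2,p_2)$. Here $(v_1,p_1)$ is the solution from Lemma~\ref{stokes} on $(-s_j^2,0)\times\tilde B^{(j)}$ with right-hand side $g$, zero boundary and initial data, and zero mean pressure. Hence
\[
\|v_1\|_{W^{1,2}_{q_0,r_0}}+\|p_1\|_{W^{0,1}_{q_0,r_0}}\le C2^{bk}\|g\|_{L^{q_0}_tL^{r_0}_x(Q^+_{k+1})}.
\]
The remainder $(v_2,p_2)$ solves the homogeneous Stokes system in the smaller half-cylinder, with $v_2=0$ on the flat part of the boundary. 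For $(v_2,p_2)$, apply the local boundary regularity for the homogeneous Stokes system near a flat boundary due to Seregin (cf.~\cite{Seregin2002Boundary}): spatial derivatives of $v_2$ of every order, together with $\nabla p_2$, are controlled on a slightly smaller half-cylinder in $L^{q_0}_tL^{\infty}_x$. The controlling quantity is $\|v_2\|_{L^{q_0}_tL^1_x}+\|p_2-[p_2]\|_{L^{q_0}_tL^1_x}$, multiplied by a negative power of the gap, i.e.\ by $C2^{bk}$. The $L^1$ norms of $(v_2,p_2)$ are in turn bounded by those of $(v,p)$ plus those of $(v_1,p_1)$, and the latter are already controlled by $g$. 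The time derivative is then recovered from the equation, $\partial_tv=\Delta v-\nabla p+g$, which is why the estimate is in $W^{1,2}_{q_0,r_0}$ with $\partial_t v$ in the same space as $g$.

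If the homogeneous estimate is available only in a form gaining finitely many integrability steps (for instance $L^1_x\to L^{s}_x$ for $\nabla^2v_2$ and $\nabla p_2$ through Sobolev embedding and a cut-off argument), iterate the splitting over the radii $s_j$. Each step raises the spatial exponent by a fixed Sobolev increment until $r_0$ is reached after $N=N(r_0)$ steps. Each step costs a factor $C2^{b'k}$, so the final constant is $C2^{bk}$ with $b=Nb'$, depending only on $q_0$ and $r_0$.

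I expect the main obstacle to be the homogeneous remainder. The pressure is only $L^1$ in space, and the half-space Stokes system is nonlocal in time: Kang-type examples show that $\partial_tv_2$ need not be locally controlled. One therefore cannot localize $v_2$ directly with a space--time cutoff. A cutoff $\eta v_2$ produces a divergence $v_2\cdot\nabla\eta$, and correcting it by a Bogovski\u{\i} operator on $\tilde B^{(j)}$ would require $\partial_t$ of the correction, hence $\partial_tv_2$ in a negative norm. To avoid this, I would cut off in space only, handle the resulting commutator terms $p_2\nabla\eta$, $\nabla v_2\cdot\nabla\eta$ and $v_2\Delta\eta$ as forcing in Lemma~\ref{stokes}, and treat the divergence term through the Bogovski\u{\i} correction estimated in $W^{-1}$-type norms where only $v_2$ and $p_2$ appear. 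The cost is again a power of $2^k$ from $\|\nabla\eta\|_\infty\lesssim 2^k$. If this fails, the fallback is to quote Seregin's $L^1$-to-smooth spatial estimate for the homogeneous problem as a black box and only verify its dependence on the scale by a dilation to unit size.
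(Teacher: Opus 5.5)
Once you commit to your fallback, your argument is the paper's. The paper gives no argument beyond citing \cite[Lemma 4.5]{DW2020}, i.e.\ Seregin's local boundary estimate for the Stokes system with the dependence on the gap $\rho_{k+1}-\rho_k$ tracked by rescaling. Your preliminary splitting via Lemma~\ref{stokes} is harmless but redundant, since that estimate already allows $g\neq0$.

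The fallback is not optional. A purely spatial cutoff still produces the time-dependent divergence $v_2\cdot\nabla\eta$, so the Bogovski\u{\i} correction still requires $\partial_tv_2$. After substituting the equation, the commutators $p_2\nabla\eta$ and $\nabla v_2\cdot\nabla\eta$ lie only in $L^{q_0}_tL^1_x$, and neither Lemma~\ref{stokes} nor the Bogovski\u{\i} operator is bounded there. Passing from spatial exponent $1$ to $r_0>1$ up to the flat boundary is exactly the nontrivial content of the cited result. You must therefore invoke the version valid at spatial exponent $1$ for $\nabla v$ and $p$, which is what \cite{DW2020} relies on.

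Three points in your reduction need correcting.

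First, spatial derivatives of every order are \emph{not} controlled up to $\Gamma$. Take $a\in L^{q_0}$ and let $v=(w(x_3,t),0,0)$, $p=-a(t)x_1$, where $\partial_tw-\partial_3^2w=a(t)$, $w(0,t)=0$, $w(\cdot,-1)=0$. This solves the homogeneous problem with $v\in W^{0,1}_{q_0,1}$ and $p\in L^{q_0}_tL^1_x$. However, $\partial_3w(0,t)=\pi^{-1/2}\int_{-1}^ta(s)(t-s)^{-1/2}\,ds$ and $\partial_3^3w=\partial_t\partial_3w$, so the boundary value of $\partial_3^3w$ is a half-order time derivative of $a$. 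Taking $a$ rapidly oscillating rules out any bound of $\|\partial_3^3v\|_{L^{q_0}_tL^\infty_x}$ by the $L^{q_0}_tL^1_x$ norms of $v$, $\nabla v$, $p$. Only $\nabla^2v$ and $\nabla p$ are controlled, which is all the lemma needs.

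Second, the cited estimate is usually stated with $\|v\|_{W^{0,1}_{q_0,1}(Q^+_{k+1})}$ on the right, i.e.\ it also contains $\|\nabla v\|_{L^{q_0}_tL^1_x}$. To reach the lemma as written, use $\|\nabla v\|_{L^1(B^+_s)}\le\epsilon\|\nabla^2v\|_{L^1(B^+_s)}+C\epsilon^{-1}\|v\|_{L^1(B^+_s)}$ at a.e.\ time and then take the $L^{q_0}$ norm in time. Choose $\epsilon$ comparable to a power of $s-s'$ for radii $\rho_k\le s'<s\le(\rho_k+\rho_{k+1})/2$, and absorb with the standard iteration lemma. This works because the constant is polynomial in the gap, and the absorbed quantities are finite by the qualitative part of the estimate applied from $Q^+_{k+1}$.

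Third, a single dilation to unit size cannot produce $C2^{bk}$, because the gap is small relative to the radius. Cover $Q^+_k$ by boundary half-cylinders centred on $\Gamma$ and interior cylinders of radius comparable to $2^{-k}$ whose enlargements lie in $Q^+_{k+1}$. Rescale each piece to unit size and sum; the number of pieces and all scaling factors are powers of $2^k$.
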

   	\begin{lemma}(See \cite[Chapter III, Lemma 2.1]{G1983})\label{it1}
   		Let $f:(0,\rho_0]\to[0,\infty)$ and suppose $f(\rho_0)\leq C_0$. If there exist $\alpha>\beta>0$ and $C_1$, $C_2>0$ such that for any $0<\gamma<\rho\leq \rho_0$, it holds that
   		\begin{align*}
   			f(\gamma)\leq C_1\left(\frac{\gamma}{\rho}\right)^\alpha f(\rho)+C_2\rho^\beta,
   		\end{align*}
   		then there exist constants $C_3$, $C_4>0$ depending on $C_0$, $C_1$, $C_2$, $\alpha$, and $\beta$ such that 
   		\begin{align*}
   			f(\gamma)\leq C_3\left(\frac{\gamma}{\rho_0}\right)^\beta f(\rho_0)+C_4\gamma^\beta
   		\end{align*}
   		for any $\gamma\in (0,\rho_0]$.
   	\end{lemma}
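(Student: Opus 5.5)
This is the classical Giaquinta iteration lemma (Lemma~\ref{it1}). I would prove it by iterating the hypothesis at a geometric sequence of radii with a fixed ratio $\tau\in(0,1)$, chosen so that the contraction factor beats $\tau^{\beta'}$ for some intermediate exponent $\beta<\beta'<\alpha$. The remaining radii are then handled by monotonicity-type comparison between consecutive scales.

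\textbf{Choice of $\tau$.} Fix $\beta':=(\alpha+\beta)/2$. Choose $\tau\in(0,1)$ so small that $C_1\tau^{\alpha}\le\tau^{\beta'}$, i.e.\ $\tau^{\alpha-\beta'}\le 1/C_1$. This is possible since $\alpha>\beta'$, and $\tau$ depends only on $C_1,\alpha,\beta$.

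\textbf{Iteration.} Apply the hypothesis with $\rho=\tau^k\rho_0$ and $\gamma=\tau^{k+1}\rho_0$:
\[
f(\tau^{k+1}\rho_0)\le\tau^{\beta'}f(\tau^k\rho_0)+C_2\tau^{k\beta}\rho_0^{\beta}.
\]
By induction,
\[
f(\tau^{k}\rho_0)\le\tau^{k\beta'}f(\rho_0)+C_2\rho_0^\beta\sum_{j=0}^{k-1}\tau^{(k-1-j)\beta'}\tau^{j\beta}.
\]
Factor out $\tau^{(k-1)\beta}$ from the sum to get
\[
\sum_{j=0}^{k-1}\tau^{(k-1-j)\beta'}\tau^{j\beta}=\tau^{(k-1)\beta}\sum_{i\ge0}\tau^{i(\beta'-\beta)}\le\frac{\tau^{(k-1)\beta}}{1-\tau^{\beta'-\beta}}.
\]
Using $\tau^{k\beta'}\le\tau^{k\beta}$, this gives
\[
f(\tau^k\rho_0)\le\tau^{k\beta}\bigl(f(\rho_0)+C\rho_0^\beta\bigr),
\]
where $C=C_2\tau^{-\beta}/(1-\tau^{\beta'-\beta})$.

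\textbf{Arbitrary $\gamma$.} For $\gamma\in(0,\rho_0]$, pick $k$ with $\tau^{k+1}\rho_0<\gamma\le\tau^k\rho_0$. Apply the hypothesis once more with $\rho=\tau^k\rho_0$ (or take equality when $\gamma=\tau^k\rho_0$):
\[
f(\gamma)\le C_1 f(\tau^k\rho_0)+C_2\tau^{k\beta}\rho_0^\beta .
\]
Since $\tau^{k\beta}\le\tau^{-\beta}(\gamma/\rho_0)^\beta$, we obtain
\[
f(\gamma)\le C_3(\gamma/\rho_0)^\beta f(\rho_0)+C_4\gamma^\beta,
\]
with $C_3=(C_1+1)\tau^{-\beta}$ and $C_4$ collecting the $\rho_0^\beta\cdot\rho_0^{-\beta}$ factors. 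The constants thus depend only on $C_1,C_2,\alpha,\beta$; the hypothesis $f(\rho_0)\le C_0$ is only needed if one wants to absorb $f(\rho_0)$ into a pure $\gamma^\beta$ bound.

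The only delicate point is the choice of the intermediate exponent $\beta'$. Iterating directly with exponent $\beta$ would make the geometric sum fail to converge uniformly, or leave a lost $k$ factor. The gap $\beta<\beta'<\alpha$ is exactly what makes both the contraction and the summation work.
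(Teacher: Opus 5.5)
Your proof is correct. It is the standard Giaquinta iteration, which the paper does not reproduce and only cites: a fixed ratio $\tau$ with $C_1\tau^\alpha\le\tau^{\beta'}$ for an intermediate exponent $\beta<\beta'<\alpha$, then geometric summation at the radii $\tau^k\rho_0$.

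There is one genuine difference in the last step. Giaquinta's Lemma~2.1 assumes $f$ is nondecreasing, and passes from $\tau^k\rho_0$ to an intermediate $\gamma\in(\tau^{k+1}\rho_0,\tau^k\rho_0]$ via $f(\gamma)\le f(\tau^k\rho_0)$. The statement here has no monotonicity hypothesis. You instead apply the hypothesis once more with $\rho=\tau^k\rho_0$ and use $(\gamma/\tau^k\rho_0)^\alpha\le1$. This is legitimate because the hypothesis is assumed for every pair $0<\gamma<\rho\le\rho_0$, and it is exactly what makes the argument prove the lemma as stated.

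The price is the factor $C_1+1$ in place of $1$. Your constants $C_3=(C_1+1)\tau^{-\beta}$ and $C_4=\tau^{-\beta}\bigl((C_1+1)C+C_2\bigr)$ are correct. You are also right that neither depends on $C_0$.
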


The following elementary heat-kernel estimates specify the auxiliary
problems used in Section~\ref{subsec:holder-regularity}. They also make
the dependence on the radius explicit.

\begin{lemma}[Heat potentials in mixed norms]
\label{lem:heat-potentials}
Let $1<q,r<\infty$, $I_R=(-R^2,0)$, and
$H_t(x)=(4\pi t)^{-3/2}\exp(-|x|^2/(4t))$ for $t>0$.
For $F=(F_1,F_2,F_3)$ and $g$ in
$L^q(I_R;L^r(\mathbb R^3))$, define
\[
\begin{split}
\mathcal T_R F(t)
&=\sum_{j=1}^3\int_{-R^2}^t
       (\partial_j H_{t-a})*F_j(a)\,da,\\
\mathcal S_R g(t)
&=\int_{-R^2}^t H_{t-a}*g(a)\,da.
\end{split}
\]
These are the zero-initial-data distributional solutions with
right-hand sides $\operatorname{div}F$ and $g$, respectively, and
\begin{align}
\|\mathcal T_RF\|_{L^q(I_R;L^r)}
&\leq CR\|F\|_{L^q(I_R;L^r)},
\label{eq:heat-potential-div}\\
\|\mathcal S_Rg\|_{L^q(I_R;L^r)}
+R\|\nabla\mathcal S_Rg\|_{L^q(I_R;L^r)}
&\leq CR^2\|g\|_{L^q(I_R;L^r)}.
\label{eq:heat-potential-scalar}
\end{align}
The constants are independent of $R$. Under reflection in $x_3=0$,
$\mathcal T_RF$ is odd if $F_1,F_2$ are odd and $F_3$ is even;
it is even if these parities are reversed. If $g$ is even, then
$\mathcal S_Rg$ is even. Data given on $B_R$ may be extended by
zero outside $B_R$ in these statements.
\end{lemma}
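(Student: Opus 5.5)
The plan is to reduce everything to the whole-space heat kernel and scaling. First, by the parabolic scaling $t\mapsto R^2t$, $x\mapsto Rx$, I reduce to $R=1$. Set $F^R(t,x)=F(R^2t,Rx)$ and similarly for $g$. Then $\mathcal T_RF(R^2t,Rx)=R\,\mathcal T_1F^R(t,x)$ and $\mathcal S_Rg(R^2t,Rx)=R^2\,\mathcal S_1g^R(t,x)$. The mixed norms transform by the same factor $R^{2/q+3/r}$ on both sides, so the powers $R$ and $R^2$ in \eqref{eq:heat-potential-div}--\eqref{eq:heat-potential-scalar} are exactly the scaling weights. The gradient term carries one extra factor $R^{-1}$, which accounts for the factor $R$ on the left.

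For $R=1$ I would use Young's inequality in space followed by Young's inequality in time on the finite interval $I_1$. The kernel bounds are
\[
\|H_s\|_{L^1}=1,\qquad \|\nabla H_s\|_{L^1}\le Cs^{-1/2}.
\]
These give, for $t\in I_1$,
\[
\|\mathcal T_1F(t)\|_{L^r}\le C\int_{-1}^t (t-a)^{-1/2}\|F(a)\|_{L^r}\,da .
\]
The analogous bound for $\mathcal S_1g$ uses the kernel $1$, and for $\nabla\mathcal S_1 g$ it uses the kernel $(t-a)^{-1/2}$. Since $s^{-1/2}\mathbf 1_{(0,1)}(s)\in L^1(\mathbb R)$, Young's convolution inequality in $t$ gives the $L^q(I_1)$ bounds for every $1\le q\le\infty$. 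The constants are absolute, and after rescaling they are independent of $R$. Thus the restriction $1<q,r<\infty$ is not even needed for these bounds; no maximal regularity is required, only integrability of the kernels.

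The distributional-solution claim is the Duhamel formula. For smooth, compactly supported data, $\mathcal S_1g$ solves $\partial_tw-\Delta w=g$ with $w(-1)=0$. Moreover $\mathcal T_1F=\sum_j\partial_j\mathcal S_1F_j$, since convolution commutes with derivatives, so it solves the equation with source $\operatorname{div}F$. The general case follows by density, using the bounds just proved and the continuity of pairing against test functions.

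Parity follows from the reflection $x\mapsto x^*=(x',-x_3)$. The kernel $H_s$ is even. Hence $\partial_jH_s$ is even for $j=1,2$ and odd for $j=3$, and convolution preserves or flips parity accordingly. If $F_1,F_2$ are odd and $F_3$ is even, then each term $(\partial_jH)*F_j$ is odd, because even$*$odd is odd and odd$*$even is odd. The reversed parities give an even result. Finally, $H*g$ is even whenever $g$ is even.

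Extension by zero outside $B_R$ keeps $F$ and $g$ in $L^q(I_R;L^r(\mathbb R^3))$ with the same norm. If $B_R$ is centered on $\Gamma$, this extension also preserves reflection parity. I expect no real obstacle here. The only care needed is the time-Young step on the bounded interval, together with correct bookkeeping of the scaling exponents.
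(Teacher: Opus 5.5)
Your proposal is correct and follows essentially the same route as the paper: spatial Young's inequality with $\|H_s\|_{L^1}=1$ and $\|\nabla H_s\|_{L^1}\le Cs^{-1/2}$, then one-dimensional Young's inequality in time, Duhamel's formula for smooth data followed by approximation, and the fact that reflection commutes with convolution while $\partial_3$ reverses parity. The only cosmetic difference is that you obtain the powers of $R$ by parabolic rescaling to $R=1$, whereas the paper reads them off directly from the $L^1(0,R^2)$ norms $R^2$ and $2R$ of the time kernels $1$ and $s^{-1/2}$.
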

\begin{proof}
The identities for the equations follow first for smooth data from
Duhamel's formula and then by approximation in the indicated mixed
norms. Spatial Young's inequality and
\[
\|H_t\|_{L^1(\mathbb R^3)}=1,
\qquad \|\nabla H_t\|_{L^1(\mathbb R^3)}\leq Ct^{-1/2}
\]
reduce the estimates to one-dimensional Young's inequality in time.
Indeed, the $L^1(0,R^2)$ norms of $1$ and $t^{-1/2}$ are $R^2$
and $2R$, respectively. Reflection commutes with convolution by
$H_t$, whereas a derivative in the normal direction reverses parity.
This proves all the symmetry assertions. No boundary condition is
imposed on the artificial sphere: these are whole-space auxiliary
solutions, subsequently restricted to $Q_R$ or $Q_R^+$.
\end{proof}

\begin{lemma}[Caloric decay compatible with the flat boundary]
\label{lem:caloric-parity-decay}
Let $1<q,r<\infty$.
For $0<\gamma\leq R/2$, the following assertions hold.
\begin{enumerate}
\renewcommand{\labelenumi}{(\roman{enumi})}
\item If the odd extension of $v$ is caloric in $Q_R$, then
\begin{equation}
\|v\|_{L_t^qL_x^r(Q_\gamma^+)}
\leq C(\gamma/R)^{2/q+3/r+1}\|v\|_{L_t^qL_x^r(Q_R^+)}.
\label{eq:caloric-zero-trace}
\end{equation}
\item If the even extension of $z$ is caloric in $Q_R$, put
\[
\Phi_z(a)=\inf_{b\in\mathbb R^3,\ b_3=0}
             \|\nabla z-b\|_{L_t^qL_x^r(Q_a^+)}.
\]
Then
\begin{equation}
\Phi_z(\gamma)\leq C(\gamma/R)^{2/q+3/r+1}\Phi_z(R).
\label{eq:caloric-neumann-gradient}
\end{equation}
\item For a caloric function on a full cylinder, or a function on
a half-cylinder whose even extension is caloric, its ordinary
mean oscillation satisfies
\begin{equation}
\|w-[w]_\gamma\|_{L_t^qL_x^r(Q_\gamma)}
\leq C(\gamma/R)^{2/q+3/r+1}\|w-b\|_{L_t^qL_x^r(Q_R)}
\label{eq:caloric-ordinary-oscillation}
\end{equation}
for every constant $b$. Here the norms and the space--time means
are taken on the corresponding full or half cylinders.
\end{enumerate}
All assertions hold componentwise and with constants depending only
on $q,r$. They extend to $0<\gamma<R$ after increasing $C$.
\end{lemma}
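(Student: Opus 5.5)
By parabolic scaling $(x,t)\mapsto(x/R,t/R^2)$ I would reduce to $R=1$. A function on $Q_\gamma$ or $Q_\gamma^+$ has $L^q_tL^r_x$ norm scaling like $\gamma^{2/q+3/r}$ times the sup norm. So every assertion follows from one pointwise estimate: the relevant quantity minus its normalization is $O(\gamma)$ in $L^\infty(Q_\gamma)$, with constant controlled by the $L^1(Q_1)$ norm. The only tool is the classical interior estimate for caloric functions,
\[
\sup_{Q_{1/2}}\bigl(|h|+|\nabla h|+|\nabla^2h|+|\partial_th|\bigr)\leq C\|h\|_{L^1(Q_1)},
\]
together with Hölder's inequality $\|h\|_{L^1(Q_1)}\leq C\|h\|_{L^q_tL^r_x(Q_1)}$. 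On the reflected cylinder, the norm of an even or odd extension is comparable to (at most twice) the norm on the half-cylinder.

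For (i), let $V$ be the odd extension of $v$. Since $V$ is caloric and odd in $x_3$, it vanishes on $\{x_3=0\}$. Hence for $(x,t)\in Q_\gamma^+$ with $\gamma\leq1/2$ we have $|V(x,t)|\leq x_3\sup_{Q_{1/2}}|\partial_3V|\leq C\gamma\|V\|_{L^1(Q_1)}$. Taking the $L^q_tL^r_x(Q_\gamma^+)$ norm gives $C\gamma^{2/q+3/r+1}\|v\|_{L^q_tL^r_x(Q_1^+)}$.

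For (ii), let $Z$ be the even extension of $z$. Then $\partial_1Z,\partial_2Z$ are even caloric functions, and $\partial_3Z$ is an odd caloric function vanishing on the plane. Fix any $b$ with $b_3=0$. The functions $\partial_jZ-b_j$ are still caloric, and they keep the parities above because $b_3=0$. So $\nabla Z-b$ on $Q_1$ is the parity extension of $\nabla z-b$ on $Q_1^+$, and its norm is controlled by that of $\nabla z-b$.
\begin{itemize}
\item For the normal component, the argument of (i) applied to $\partial_3Z$ gives the bound $C\gamma\|\partial_3z\|_{L^1(Q_1^+)}$, and $\partial_3z=\partial_3z-b_3$.
\item For the tangential components, I would compare with the competitor $b'=(\partial_1Z(0,0),\partial_2Z(0,0),0)$. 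The mean value theorem in $x$ and in $t$ gives $|\partial_jZ(x,t)-\partial_jZ(0,0)|\leq C(\gamma+\gamma^2)\sup_{Q_{1/2}}(|\nabla\partial_jZ|+|\partial_t\partial_jZ|)$. The interior estimate applied to the caloric function $\partial_jZ-b_j$, whose derivatives coincide with those of $\partial_jZ$, bounds this by $C\gamma\|\partial_jz-b_j\|_{L^1(Q_1^+)}$.
\end{itemize}
Thus $\Phi_z(\gamma)\leq\|\nabla z-b'\|_{L^q_tL^r_x(Q^+_\gamma)}\leq C\gamma^{2/q+3/r+1}\|\nabla z-b\|_{L^q_tL^r_x(Q_1^+)}$. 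Taking the infimum over admissible $b$ gives \eqref{eq:caloric-neumann-gradient}.

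For (iii), the function $w-b$ is caloric on the full cylinder, or on the even extension in the half-cylinder case. As before, $|w(x,t)-w(0,0)|\leq C\gamma\|w-b\|_{L^1(Q_1)}$ on $Q_\gamma$. For any constant $k$ one has $\|w-[w]_\gamma\|\leq2\|w-k\|$ on $Q_\gamma$ (or $Q_\gamma^+$). Applying this with $k=w(0,0)$ gives \eqref{eq:caloric-ordinary-oscillation}.

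Finally, for $R/2<\gamma<R$ the inequalities are trivial after enlarging $C$. In (i) this follows from $Q_\gamma^+\subset Q_R^+$. In (ii) use $\Phi_z(\gamma)\leq\Phi_z(R)$, since the infimum is over the same class on a smaller set. In (iii) use $\|w-[w]_\gamma\|\leq2\|w-b\|_{Q_\gamma}$. In each case $(\gamma/R)^{2/q+3/r+1}\geq2^{-(2/q+3/r+1)}$. Everything is componentwise, and the constants depend only on $q,r$. The step needing the most care is (ii): checking that subtracting $b$ with $b_3=0$ preserves both caloricity and the parity structure, so that the half-cylinder infimum is comparable to a full-cylinder norm of a caloric function.
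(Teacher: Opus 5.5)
Your proposal is correct and is essentially the paper's proof. Both rescale and apply the interior $L^1$-to-derivative bound for caloric functions; both use the vanishing of the odd extension on $\{x_3=0\}$ for (i); and both compare with the admissible constant $\nabla z^*(0,0)$ in (ii), whose third component vanishes by oddness, and with $w(0,0)$ in (iii).

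The differences are cosmetic. You treat $\partial_3 z$ in (ii) by the zero-trace argument of (i), and you quote the interior estimate as classical. The paper proves that estimate by a cutoff--Duhamel argument, and also notes that the uniform derivative bounds give the one-sided limits needed to evaluate at the vertex $t=0$; you use this implicitly.
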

\begin{proof}
The interior derivative estimate for a caloric function $W$ is
\begin{equation}
\|\nabla W\|_{L^\infty(Q_{R/2})}
+R\|\partial_tW\|_{L^\infty(Q_{R/2})}
\leq C R^{-1-2/q-3/r}\|W\|_{L_t^qL_x^r(Q_R)}.
\label{eq:caloric-interior-derivatives}
\end{equation}
For completeness, at unit scale take a spatial cutoff
$\chi\in C_c^\infty(B_1)$ equal to one on $B_{3/4}$ and a time
cutoff $\zeta$ vanishing near $-1$ and equal to one on $[-9/16,0]$.
Since $W$ is caloric, $V=\chi\zeta W$ has zero initial value and
\[
(\partial_t-\Delta)V
=(\chi\zeta'+\zeta\Delta\chi)W
 -2\operatorname{div}(\zeta W\nabla\chi).
\]
Duhamel's formula and integration by parts place all spatial
derivatives on the heat kernel and the cutoffs. At evaluation
points in $Q_{1/2}$, the term containing $\zeta'$ is separated
in time, while all other source terms are separated in space.
The heat kernel and its required derivatives are therefore
uniformly bounded on these source regions, even as the time lag
tends to zero. This bounds the indicated derivatives by
$C\|W\|_{L^1(Q_1)}$. H\"older's inequality gives the mixed-norm
bound. The argument applies to distributional caloric functions
by local smoothing. Parabolic scaling yields
\eqref{eq:caloric-interior-derivatives}. The estimates, initially
obtained below time zero, also give the one-sided continuous limits
needed at that time.

For (i), apply this estimate to the odd extension $v^*$.
Since $v^*(x',0,t)=0$, integration in $x_3$ gives
$|v(x,t)|\leq C\gamma R^{-1-2/q-3/r}\|v\|_{L_t^qL_x^r(Q_R^+)}$
on $Q_\gamma^+$. The mixed norm of a constant there is a constant
multiple of $\gamma^{2/q+3/r}$, proving \eqref{eq:caloric-zero-trace}.

For (ii), let $z^*$ be the even extension and fix $b_3=0$.
Every component of $W=\nabla z^*-b$ is caloric. The first two
components are even and the third is odd, so
\[
\|W\|_{L_t^qL_x^r(Q_R)}
=2^{1/r}\|\nabla z-b\|_{L_t^qL_x^r(Q_R^+)}.
\]
Moreover, $\partial_3z^*(0,0)=0$. With
$b_\gamma=\nabla z^*(0,0)$, the derivative estimate and
$|t|\leq\gamma^2$ imply
\[
\|\nabla z-b_\gamma\|_{L_t^qL_x^r(Q_\gamma^+)}
\leq C(\gamma/R)^{2/q+3/r+1}\|\nabla z-b\|_{L_t^qL_x^r(Q_R^+)}.
\]
Taking the infimum over the admissible $b$ proves (ii).
For (iii), apply the same argument to $w-b$ and use
\[
\|f-[f]_a\|_{L_t^qL_x^r(Q_a)}\leq2\|f-b\|_{L_t^qL_x^r(Q_a)}.
\]
This last inequality follows from H\"older's inequality for the
space--time average. In the even case every scalar constant
preserves parity. Finally, restriction of norms and the same
mean estimate give the assertions for $R/2<\gamma<R$.
\end{proof}

\section{Proof of Theorem~\ref{thm:half-space-existence}}
\label{sec:existence}

This section proves the global existence theorem stated in
Section~\ref{subsec:solution-framework}. We work on the whole
half-space and use boundary-compatible approximation, uniform
entropy and pressure estimates, and compactness. The local
energy inequality is obtained with test functions supported up
to $\Gamma$, without introducing an artificial boundary condition
on the spherical part of a half-ball.

Throughout this section, the initial data satisfy
\eqref{eq:initial-data-half-space}, and $L_\sigma^q(\mathbb H^3)$
and $H(s)=(s+1)\log(s+1)$ have the meanings fixed in
Section~\ref{subsec:solution-framework}.
In the notation of \cite{CLWW2023}, the chemotactic sensitivity
is identically one and the consumption rate is linear, with coefficient one.
Unless a subscript is displayed, every generic constant $C$
is independent of $\varepsilon$ and $\tau$. Dependence on the
regularization parameters is recorded explicitly, for example
by $C_\varepsilon$, $C_\tau$, or $C_{\varepsilon,\tau}$.

\begin{remark}[Compatibility of the boundary test functions]
\label{rem:test-function-compatibility}

The condition $\partial_3\psi=0$ on $\Gamma$ in
Definition~\ref{def:suitable-half-space}(iv) is built into the
local energy calculation below. For the scalar Neumann equations,
the integrations by parts that convert
$\nabla(n\log n)\cdot\nabla\psi$ and the corresponding
concentration terms into Laplacian terms would otherwise
produce additional integrals on $\Gamma$. The prescribed
compatibility condition removes these boundary-flux terms.

\end{remark}

\subsection{Boundary-compatible approximation}
\label{subsec:boundary-compatible-approximation}

We use the half-space heat semigroups. Let
\[
G(x,t):=(4\pi t)^{-3/2}\exp\left(-\frac{|x|^2}{4t}\right).
\]
The Neumann and Dirichlet semigroups on $\mathbb H^3$ are given by
\begin{equation}\label{eq:half-space-heat-kernels}
S_{N,D}(t)f(x)
=\int_{\mathbb H^3}
\bigl[G(x-y,t)\pm G(x-(y',-y_3),t)\bigr]f(y)\,dy,
\end{equation}
where the plus sign corresponds to $S_N$ and the minus sign to $S_D$.
For \(1\leq r\leq q\leq\infty\) and every integer \(m\geq0\),
\begin{equation}
\label{eq:half-space-heat-estimates}
\|\nabla^mS_{N,D}(t)f\|_{L^q(\mathbb H^3)}\leq C t^{-\frac m2-\frac32(\frac1r-\frac1q)}\|f\|_{L^r(\mathbb H^3)}.
\end{equation}
If \(F\cdot\nu=0\) on \(\Gamma\), the solution operator for \(z_t-\Delta z=-\nabla\cdot F\), \(\partial_3z=0\), is given by
\begin{equation}
\label{eq:neumann-divergence-operator}
\mathcal T_N(t)F(x)
:=\int_{\mathbb H^3}\nabla_y
\bigl[G(x-y,t)+G(x-(y',-y_3),t)\bigr]\cdot F(y)\,dy,
\end{equation}
and it satisfies the estimate obtained from \eqref{eq:half-space-heat-estimates} with one spatial derivative.

The velocity cannot be regularized by a standard reflection followed by convolution. Such an operation does not, in general, preserve both the full no-slip trace and the solenoidal constraint. We instead use the Stokes semigroup. Let \(\mathbb P_+\) be the Helmholtz projection in \(\mathbb H^3\), let \(A=-\mathbb P_+\Delta\) be the Dirichlet Stokes operator, and define
\begin{equation}
\label{eq:stokes-smoother}
\mathcal J_\varepsilon v:=e^{-\varepsilon A}v.
\end{equation}
The following properties are the reason for this choice.

\begin{lemma}[Boundary-compatible velocity smoothing]
\label{lem:stokes-smoothing}
Let \(1<q<\infty\), \(m\in\mathbb N_0\), and \(v\in L^q_\sigma(\mathbb H^3)\). Then
\begin{equation}
\label{eq:stokes-smoothing-properties}
\nabla\cdot\mathcal J_\varepsilon v=0\quad\text{in }\mathbb H^3,
\qquad
\mathcal J_\varepsilon v=0\quad\text{on }\Gamma,
\end{equation}
and
\begin{equation}
\label{eq:stokes-smoothing-estimates}
\|\mathcal J_\varepsilon v\|_{L^q}\leq C_q\|v\|_{L^q},
\qquad
\|\nabla^m\mathcal J_\varepsilon v\|_{L^q}\leq C_{m,q}\varepsilon^{-m/2}\|v\|_{L^q}.
\end{equation}
Moreover, \(\mathcal J_\varepsilon v\to v\) strongly in \(L^q_\sigma(\mathbb H^3)\) as \(\varepsilon\to0\). If \(v\in H^1_0(\mathbb H^3)\cap L^2_\sigma(\mathbb H^3)\), then
\begin{equation}
\label{eq:stokes-smoothing-difference}
\|\mathcal J_\varepsilon v-v\|_{L^2}\leq C\varepsilon^{1/2}\|\nabla v\|_{L^2}.
\end{equation}
\end{lemma}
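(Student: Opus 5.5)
The plan is to use the analytic semigroup theory of the Dirichlet Stokes operator $A$ on $L^q_\sigma(\mathbb H^3)$, $1<q<\infty$. The key input, which I will quote rather than reprove, is that $-A$ generates a bounded analytic semigroup on $L^q_\sigma(\mathbb H^3)$. This is due to Ukai, Borchers--Miyakawa, and Desch--Hieber--Pr\"uss. The same references give the gradient estimates
\[
\|\nabla^m e^{-tA}v\|_{L^q}\le C_{m,q}t^{-m/2}\|v\|_{L^q},\qquad t>0.
\]
For $m\ge2$ I would get these by writing $e^{-tA}=e^{-(t/2)A}e^{-(t/2)A}$. Then I would combine analyticity, which gives $\|A^{k}e^{-sA}\|\le Cs^{-k}$, with the half-space Stokes resolvent estimate $\|\nabla^2 w\|_{L^q}\le C(\|Aw\|_{L^q}+\|w\|_{L^q})$. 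Scaling removes the lower-order term, since the half-space is dilation invariant. Odd $m$ then follow by interpolation. Setting $t=\varepsilon$ yields \eqref{eq:stokes-smoothing-estimates}; the case $m=0$ is just uniform boundedness of the semigroup.

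For \eqref{eq:stokes-smoothing-properties}, I note that $e^{-\varepsilon A}v$ lies in $D(A)=W^{2,q}\cap W^{1,q}_0\cap L^q_\sigma$ for $\varepsilon>0$, by analyticity. Membership in $L^q_\sigma$ gives $\nabla\cdot\mathcal J_\varepsilon v=0$ in the distributional sense, hence pointwise a.e. Membership in $W^{1,q}_0$ gives the vanishing trace on $\Gamma$. The strong convergence $\mathcal J_\varepsilon v\to v$ in $L^q_\sigma$ is strong continuity of the $C_0$-semigroup. If one wants it directly, the argument is as follows: it holds for $v\in D(A)$, since $\|e^{-\varepsilon A}v-v\|\le\varepsilon\|Av\|$. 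Then $D(A)$ is dense, and the uniform bound $C_q$ gives the general case by an $\varepsilon/3$ argument.

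For \eqref{eq:stokes-smoothing-difference}, I work in $L^2_\sigma$, where $A$ is nonnegative self-adjoint. Its form domain is $D(A^{1/2})=H^1_0\cap L^2_\sigma$, with $\|A^{1/2}v\|_{L^2}=\|\nabla v\|_{L^2}$. By the spectral theorem,
\[
\|e^{-\varepsilon A}v-v\|_{L^2}^2=\int_0^\infty(1-e^{-\varepsilon\lambda})^2\,d\|E_\lambda v\|^2 .
\]
Since $(1-e^{-\varepsilon\lambda})^2\le\varepsilon\lambda$, the right side is at most $\varepsilon\|A^{1/2}v\|^2=\varepsilon\|\nabla v\|_{L^2}^2$. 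This gives the estimate with $C=1$.

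The only genuinely nontrivial step is the $L^q$ theory for $q\ne2$: analyticity and the higher-derivative bounds on the half-space. My plan is to cite these from the literature rather than derive them. If a self-contained route is required, I would use the explicit Ukai/Solonnikov representation of the half-space Stokes semigroup, which reduces the estimates to kernel bounds.
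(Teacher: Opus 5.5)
Your overall route matches the paper's proof: the analytic Dirichlet Stokes semigroup on $L^q_\sigma(\mathbb H^3)$; membership of $e^{-\varepsilon A}v$ in $D(A)$ for the solenoidal and zero-trace properties; strong continuity for the convergence; and the self-adjoint $L^2$ calculus with $\|A^{1/2}v\|_{L^2}=\|\nabla v\|_{L^2}$ for the difference estimate.

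\textbf{Gap in the derivative estimates.} Your sketch does not close for $m\ge3$. The factorization $e^{-tA}=e^{-(t/2)A}e^{-(t/2)A}$, combined only with the second-order bound $\|\nabla^2w\|_{L^q}\le C\|Aw\|_{L^q}$, yields nothing beyond $m=2$. The obstruction is that you cannot move $\nabla^2$ past the first factor. Normal derivatives do not commute with the Dirichlet Stokes semigroup, because $\partial_3$ does not preserve the no-slip condition: $\partial_3w_1,\partial_3w_2$ need not vanish on $\Gamma$. What you need, and what the paper uses explicitly, is the higher-order homogeneous estimate
\[
\|\nabla^{2k}z\|_{L^q}\le C_{k,q}\|A^kz\|_{L^q},\qquad z\in D(A^k).
\]
This follows by iterating the stationary half-space Stokes estimate $\|\nabla^{j+2}w\|_{L^q}\le C\|\nabla^jf\|_{L^q}$ with $f=Aw$. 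With it, $\|\nabla^{2k}e^{-tA}v\|_{L^q}\le C\|A^ke^{-tA}v\|_{L^q}\le Ct^{-k}\|v\|_{L^q}$ follows directly, with no factorization. Since your odd-order step uses the bound at order $m+1$, this gap affects every $m\ge3$. That includes $m=3$, which the later bound $\|\mathcal J_\varepsilon u\|_{W^{1,\infty}}\le C_\varepsilon\|u\|_{L^2}$ relies on.

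\textbf{Differences from the paper.} With that ingredient supplied, your argument is correct, and in two places it is slightly leaner than the paper's.
\begin{itemize}
\item For odd $m$, you interpolate the bounds on $w=e^{-tA}v$ itself through the half-space Gagliardo--Nirenberg inequality $\|\nabla^mw\|_{L^q}\le C\|\nabla^{m-1}w\|_{L^q}^{1/2}\|\nabla^{m+1}w\|_{L^q}^{1/2}$. The paper instead asserts $\|\nabla^mz\|_{L^q}\le C\|A^{m/2}z\|_{L^q}$ by interpolating graph norms. For $q\ne2$ that tacitly identifies $D(A_q^{m/2})$ as a complex interpolation space, which needs bounded imaginary powers of the Stokes operator. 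Your version needs only a real-variable inequality.
\item For the $L^2$ difference, your spectral bound $(1-e^{-\varepsilon\lambda})^2\le\varepsilon\lambda$ gives the estimate with $C=1$ in one line. The paper writes $(I-e^{-\varepsilon A})v=\int_0^\varepsilon A^{1/2}e^{-sA}A^{1/2}v\,ds$ and uses $\|A^{1/2}e^{-sA}\|\le Cs^{-1/2}$. The two arguments are equivalent.
\end{itemize}
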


\begin{proof}
For clarity, let \(A_q\) denote the Dirichlet Stokes operator on \(L^q_\sigma(\mathbb H^3)\). The semigroups generated by \(-A_q\) are consistent on the intersections of the spaces \(L^q_\sigma\), so we continue to write \(A\). If \(w(s):=e^{-sA_q}v\), then \((w,\pi)\) is the unique solution of
\begin{equation}
\label{eq:homogeneous-stokes-for-smoother}
\partial_sw-\Delta w+\nabla\pi=0,\qquad \nabla\cdot w=0\quad\text{in }\mathbb H^3,\qquad w|_\Gamma=0,
\end{equation}
with initial value \(v\). Hence \(e^{-sA_q}L^q_\sigma\subset L^q_\sigma\) for every \(s\geq0\), and, for \(s>0\), analyticity gives \(w(s)\in D(A_q^k)\) for every integer \(k\geq1\). Since every element of \(D(A_q)\) has zero Dirichlet trace and is solenoidal, this proves both identities in \eqref{eq:stokes-smoothing-properties}.

The analytic-semigroup estimate reads
\begin{equation}
\label{eq:analytic-stokes-power}
\|A_q^\theta e^{-sA_q}\|_{\mathcal L(L^q_\sigma)}\leq C_{q,\theta}s^{-\theta},\qquad s>0,\quad \theta\geq0.
\end{equation}
For integer \(k\geq1\), the homogeneous half-space Stokes estimate gives
\begin{equation}
\label{eq:stokes-elliptic-graph}
\|\nabla^{2k}z\|_{L^q}\leq C_{k,q}\|A_q^kz\|_{L^q},\qquad z\in D(A_q^k).
\end{equation}
Interpolation of the homogeneous graph norms yields, for every integer \(m\geq1\),
\[
\|\nabla^m z\|_{L^q}\leq C_{m,q}\|A_q^{m/2}z\|_{L^q},\qquad z\in D(A_q^{m/2}).
\]
Applying this inequality to \(z=e^{-\varepsilon A_q}v\) and then using \eqref{eq:analytic-stokes-power} with \(\theta=m/2\) gives
\[
\|\nabla^m e^{-\varepsilon A_q}v\|_{L^q}\leq C_{m,q}\varepsilon^{-m/2}\|v\|_{L^q},\qquad \varepsilon>0.
\]
The case \(m=0\) follows from boundedness of the analytic semigroup. This proves the full family of estimates in \eqref{eq:stokes-smoothing-estimates}.

Because \(-A_q\) generates a strongly continuous semigroup on \(L^q_\sigma\), we have \(e^{-\varepsilon A_q}v\to v\) strongly in \(L^q_\sigma\) as \(\varepsilon\downarrow0\). It remains to prove the quantitative \(L^2\) difference estimate. The operator \(A=A_2\) is positive and self-adjoint on \(L^2_\sigma\), and its closed quadratic form is
\[
(Az,z)_{L^2}=\|\nabla z\|_{L^2}^2,
\qquad D(A^{1/2})=H^1_0(\mathbb H^3;\mathbb R^3)\cap L^2_\sigma(\mathbb H^3).
\]
For \(v\in D(A^{1/2})\), the fundamental theorem of calculus and commutation of the spectral powers of \(A\) give
\begin{align*}
(I-e^{-\varepsilon A})v
&=\int_0^\varepsilon Ae^{-sA}v\,ds
=\int_0^\varepsilon A^{1/2}e^{-sA}A^{1/2}v\,ds.
\end{align*}
Using \(\|A^{1/2}e^{-sA}\|_{\mathcal L(L^2,L^2)}\leq Cs^{-1/2}\), we conclude that
\[
\|(I-e^{-\varepsilon A})v\|_{L^2}
\leq C\int_0^\varepsilon s^{-1/2}\,ds\,\|A^{1/2}v\|_{L^2}
\leq C\varepsilon^{1/2}\|\nabla v\|_{L^2}.
\]
This is \eqref{eq:stokes-smoothing-difference}.
\end{proof}

\begin{remark}
\label{rem:no-reflection-mollifier}
The normal component is the obstruction to the reflection-convolution construction. Even if an even extension of \(u_3\) vanishes at \(x_3=0\), its convolution with a symmetric kernel need not vanish there. The operator \(\mathcal J_\varepsilon\) avoids this problem and gives both statements in \eqref{eq:stokes-smoothing-properties} exactly, not merely in the limit.
\end{remark}

For \(\tau>0\), define
\begin{equation}
\label{eq:regularizing-functions}
a_\tau(s):=\frac{s}{1+\tau s},\qquad h_\tau(s):=\frac1\tau\log(1+\tau s),\qquad s\geq0.
\end{equation}
The two functions are linked by
\begin{equation}
\label{eq:regularizing-identity}
a_\tau(s)=s h_\tau'(s),\qquad 0\leq a_\tau(s)\leq s,\qquad 0\leq h_\tau(s)\leq s,
\qquad s\bigl(h_\tau'(s)\bigr)^2\leq h_\tau(s).
\end{equation}
The first identity is essential for the cancellation in the local energy inequality. For the fixed-point argument only, choose smooth extensions \(\widehat a_\tau,\widehat h_\tau\in C^3(\mathbb R)\) which agree with \(a_\tau,h_\tau\) on \([0,\infty)\), satisfy \(\widehat a_\tau(0)=\widehat h_\tau(0)=0\), and obey
\begin{equation}
\label{eq:regularizing-extensions}
|\widehat a_\tau(s)|+|\widehat h_\tau(s)|\leq C_\tau|s|\quad(s\leq0),
\qquad
\sum_{j=1}^3\bigl(\|\widehat a_\tau^{(j)}\|_{L^\infty}+\|\widehat h_\tau^{(j)}\|_{L^\infty}\bigr)\leq C_\tau.
\end{equation}
For example, choose $\chi\in C^\infty(\mathbb R)$ with $0\leq\chi\leq1$,
$\chi=0$ on $(-\infty,-1/2]$ and $\chi=1$ on $[-1/4,\infty)$, and set
\[
\bigl(\widehat a_\tau(s),\widehat h_\tau(s)\bigr)
:=\begin{cases}
\displaystyle\chi(\tau s)\left(\frac{s}{1+\tau s},
\frac{\log(1+\tau s)}{\tau}\right),&s>-\dfrac1{2\tau},\\[6pt]
(0,0),&s\leq-\dfrac1{2\tau}.
\end{cases}
\]
The cutoff and all its derivatives vanish at $s=-1/(2\tau)$,
where the rational and logarithmic factors are smooth.
Thus the two branches join smoothly. Near $s=0$ the cutoff is identically
one, so the extensions agree with the same analytic formulas on both
sides of zero. In particular, they belong to $C^\infty(\mathbb R)$,
hence to $C^3(\mathbb R)$.
Their first three derivatives are bounded: on the negative half-line
they vanish outside $[-1/(2\tau),0]$, and on $[0,\infty)$ they are
the bounded derivatives of the original formulas. Their vanishing at zero and the mean value theorem then
give the first bound in \eqref{eq:regularizing-extensions}.
Once nonnegativity of \(n^{\varepsilon,\tau}\) is proved, the extensions
coincide with the original regularizing functions and disappear from the equations.

We also use the componentwise Dirichlet heat smoother
\begin{equation}
\label{eq:force-smoother}
\mathcal K_\varepsilon F:=S_D(\varepsilon)F.
\end{equation}
It is bounded on every \(L^q\), is strongly continuous as \(\varepsilon\to0\) for $1\leq q<\infty$, vanishes on \(\Gamma\), and supplies the half-space analogue of the mollification of the buoyancy force used in \cite{CLWW2023}.

Let
\begin{equation}
\label{eq:regularized-initial-data}
n_0^\varepsilon:=S_N(\varepsilon)n_0,
\qquad
c_0^\varepsilon:=\bigl(S_N(\varepsilon)\sqrt{c_0}\bigr)^2,
\qquad
u_0^\varepsilon:=\mathcal J_\varepsilon u_0.
\end{equation}
These data are smooth, preserve positivity, satisfy the required boundary conditions, and converge to the original data in their natural spaces. The Markov property of \(S_N\), the \(L^2\)-contractivity of \(S_N\), and the contractivity of the Stokes semigroup imply
\begin{equation}
\label{eq:initial-data-uniform-control}
\begin{aligned}
&\|n_0^\varepsilon\|_{L^1}=\|n_0\|_{L^1},
\qquad \int_{\mathbb H^3}H(n_0^\varepsilon)\,dx\leq\int_{\mathbb H^3}H(n_0)\,dx,\\
&\|c_0^\varepsilon\|_{L^1}\leq\|c_0\|_{L^1},
\qquad \|c_0^\varepsilon\|_{L^\infty}\leq\|c_0\|_{L^\infty},
\qquad \|\nabla\sqrt{c_0^\varepsilon}\|_{L^2}\leq\|\nabla\sqrt{c_0}\|_{L^2},\\
&\|u_0^\varepsilon\|_{L^2}\leq\|u_0\|_{L^2}.
\end{aligned}
\end{equation}

The proof of existence of suitable weak solutions is based on the following global well-posedness result of the following regularized system:
\begin{equation}
\label{eq:regularized-half-space-system}
\left\{
\begin{aligned}
&\partial_t n^{\varepsilon,\tau}+(\mathcal J_\varepsilon u^{\varepsilon,\tau})\cdot\nabla n^{\varepsilon,\tau}-\Delta n^{\varepsilon,\tau}
=-\nabla\cdot\left(a_\tau(n^{\varepsilon,\tau})\nabla c^{\varepsilon,\tau}\right),\\
&\partial_t c^{\varepsilon,\tau}+u^{\varepsilon,\tau}\cdot\nabla c^{\varepsilon,\tau}-\Delta c^{\varepsilon,\tau}
=-h_\tau(n^{\varepsilon,\tau})c^{\varepsilon,\tau},\\
&\partial_t u^{\varepsilon,\tau}+(\mathcal J_\varepsilon u^{\varepsilon,\tau})\cdot\nabla u^{\varepsilon,\tau}-\Delta u^{\varepsilon,\tau}+\nabla p^{\varepsilon,\tau}
=-\mathcal K_\varepsilon(n^{\varepsilon,\tau}\nabla\phi),\\
&\nabla\cdot u^{\varepsilon,\tau}=0,
\end{aligned}
\right.
\end{equation}
The boundary and initial conditions are
\begin{equation}
\label{eq:regularized-boundary-initial}
u^{\varepsilon,\tau}=0,
\qquad
\partial_3n^{\varepsilon,\tau}=\partial_3c^{\varepsilon,\tau}=0
\quad\text{on }\Gamma,
\qquad
(n^{\varepsilon,\tau},c^{\varepsilon,\tau},u^{\varepsilon,\tau})|_{t=0}
=(n_0^\varepsilon,c_0^\varepsilon,u_0^\varepsilon).
\end{equation}
Notice that the transport term in the concentration equation contains \(u^{\varepsilon,\tau}\), not \(\mathcal J_\varepsilon u^{\varepsilon,\tau}\). This is the same choice as in \cite{CLWW2023}; it prevents an additional local-energy term involving \(\nabla\mathcal J_\varepsilon u^{\varepsilon,\tau}\), which cannot be absorbed uniformly by the dissipation of \(u^{\varepsilon,\tau}\).

\paragraph{Cutoff at spatial infinity.}
Whenever an integration by parts is performed on the whole half-space,
we first insert a smooth radial cutoff
\[
\chi_R\in C_c^\infty(\overline{\mathbb H^3}),
\qquad
0\leq \chi_R\leq1,
\]
such that
\[
\chi_R\equiv1\quad\text{on }B_R^+,
\qquad
\chi_R\equiv0\quad\text{outside }B_{2R}^+,
\]
and
\[
|\nabla\chi_R|\leq \frac{C}{R},
\qquad
|\Delta\chi_R|\leq \frac{C}{R^2},
\qquad
\partial_\nu\chi_R=0
\quad\text{on }\Gamma.
\]
All integrations by parts are first carried out with $\chi_R$
present.  The flat-boundary terms are treated by the boundary
conditions, whereas the terms supported in
$B_{2R}^+\setminus B_R^+$ are sent to zero as $R\to\infty$
using the integrability estimates available at the corresponding
stage of the proof.

For a smooth scalar function \(z\), a vector field \(X\), a solenoidal vector field \(v\), and a smooth scalar function \(F\), the precise formulas used below are
\[
\int_{\mathbb H^3}\nabla\cdot X\,\zeta\,dx
=-\int_{\mathbb H^3}X\cdot\nabla\zeta\,dx
+\int_\Gamma (X\cdot\nu)\zeta\,dS,
\]
\[
\int_{\mathbb H^3}v\cdot\nabla F\,\zeta\,dx
=-\int_{\mathbb H^3}Fv\cdot\nabla\zeta\,dx
+\int_\Gamma F\zeta\,v\cdot\nu\,dS,
\qquad \nabla\cdot v=0,
\]
and
\[
\int_{\mathbb H^3}\bigl((\Delta z)^2-|\nabla^2z|^2\bigr)\,dx
=\int_\Gamma\left(\Delta z\,\partial_\nu z-\nabla z\cdot(\nabla^2z\,\nu)\right)dS.
\]
If \(\partial_\nu z=0\) on \(\Gamma\), then \(\partial_{i3}z=\partial_i(\partial_3z)=0\) on \(\Gamma\) for \(i=1,2\). Consequently, both terms in the last boundary integral vanish: the first contains \(\partial_\nu z\), while the second is
\[
-\partial_1z\,\partial_{13}z-\partial_2z\,\partial_{23}z-\partial_3z\,\partial_{33}z=0.
\]
Hence \(\|\nabla^2z\|_{L^2}=\|\Delta z\|_{L^2}\) for the smooth Neumann functions used below.

For a no-slip vector field \(v\), the diffusion and pressure integrations are
\[
-\int_{\mathbb H^3}\Delta v\cdot(v\zeta)\,dx
=\int_{\mathbb H^3}|\nabla v|^2\zeta\,dx
-\frac12\int_{\mathbb H^3}|v|^2\Delta\zeta\,dx
-\int_\Gamma\partial_\nu v\cdot v\,\zeta\,dS
+\frac12\int_\Gamma|v|^2\partial_\nu\zeta\,dS,
\]
\[
\int_{\mathbb H^3}\nabla p\cdot(v\zeta)\,dx
=-\int_{\mathbb H^3}p\,v\cdot\nabla\zeta\,dx
+\int_\Gamma p\zeta\,v\cdot\nu\,dS.
\]
Every displayed boundary integral is zero when \(v=0\) on \(\Gamma\). These formulas, rather than an implicit reflection argument, are used each time a boundary term occurs below.

\subsection{Global solvability of the regularized system}
\label{subsec:regularized-solvability}
We now define the fixed-point map. Fix $\alpha=7/8$ and let
$H_N^2:=\{z\in H^2(\mathbb H^3):\partial_3z|_\Gamma=0\}$.
For $T>0$, set
\[
\mathcal X_T
:=C\!\left([0,T];
(H^1\cap L^\infty)\times(H_N^2\cap W^{1,\infty})\times D(A^\alpha)
\right),
\]
where all spatial spaces are over $\mathbb H^3$,
with the norm
\begin{equation}
\label{eq:fixed-point-norm}
\|(n,c,u)\|_{\mathcal X_T}:=\sup_{0\leq t\leq T}
\left(\|n(t)\|_{H^1}+\|n(t)\|_{L^\infty}+\|c(t)\|_{H^2}+\|\nabla c(t)\|_{L^\infty}+\|u(t)\|_{D(A^\alpha)}\right).
\end{equation}
The fractional-domain embedding \(D(A^{7/8})\hookrightarrow H^{7/4}(\mathbb H^3)\) yields
\begin{equation}
\label{eq:fixed-point-embeddings}
\|u\|_{L^\infty}+\|\nabla u\|_{L^4}\leq C\|u\|_{D(A^\alpha)},
\qquad
\|c\|_{L^\infty}+\|\nabla c\|_{L^4}\leq C\|c\|_{H^2\cap W^{1,\infty}}.
\end{equation}
Moreover, Lemma~\ref{lem:stokes-smoothing} and Sobolev embedding imply, for fixed \(\varepsilon>0\),
\begin{equation}
\label{eq:fixed-point-smoothed-bounds}
\|\mathcal J_\varepsilon u\|_{W^{1,\infty}}\leq C_\varepsilon\|u\|_{L^2},
\qquad
\|\mathcal K_\varepsilon(n\nabla\phi)\|_{L^2}\leq C\|\nabla\phi\|_{L^\infty}\|n\|_{L^2}.
\end{equation}

For \(U=(n^{\varepsilon,\tau},c^{\varepsilon,\tau},u^{\varepsilon,\tau})\in\mathcal X_T\), define
\begin{equation}
\label{eq:fixed-point-map}
\Phi(U):=\bigl(\Phi_1(U),\Phi_2(U),\Phi_3(U)\bigr),
\end{equation}
where
\begin{equation}
\label{eq:fixed-point-components}
\begin{aligned}
\Phi_1(U)(t)&:=S_N(t)n_0^\varepsilon+\int_0^t\mathcal T_N(t-s)
\left(\widehat a_\tau(n^{\varepsilon,\tau})\nabla c^{\varepsilon,\tau}+(\mathcal J_\varepsilon u^{\varepsilon,\tau})n^{\varepsilon,\tau}\right)(s)\,ds,\\
\Phi_2(U)(t)&:=S_N(t)c_0^\varepsilon-\int_0^tS_N(t-s)
\left(u^{\varepsilon,\tau}\cdot\nabla c^{\varepsilon,\tau}+\widehat h_\tau(n^{\varepsilon,\tau})c^{\varepsilon,\tau}\right)(s)\,ds,\\
\Phi_3(U)(t)&:=e^{-tA}u_0^\varepsilon-\int_0^te^{-(t-s)A}\mathbb P_+
\left((\mathcal J_\varepsilon u^{\varepsilon,\tau})\cdot\nabla u^{\varepsilon,\tau}+\mathcal K_\varepsilon(n^{\varepsilon,\tau}\nabla\phi)\right)(s)\,ds.
\end{aligned}
\end{equation}
Thus the mild solution is not postulated in advance: it is, by definition, a fixed point of \(\Phi\), namely a triple \(U\) satisfying \(\Phi(U)=U\).

\begin{proposition}[Global strong solvability of the regularized problem]
\label{prop:regularized-global}
Let $\nabla\phi\in L^\infty(\mathbb H^3)$, and let
$(n_0^\varepsilon,c_0^\varepsilon,u_0^\varepsilon)$
be the boundary-compatible regularized initial data constructed above.
In particular,
\[
n_0^\varepsilon\in H^2(\mathbb H^3)\cap L^1(\mathbb H^3),
\qquad
c_0^\varepsilon\in H^3(\mathbb H^3)\cap L^1(\mathbb H^3),
\qquad
u_0^\varepsilon\in H^2(\mathbb H^3)\cap L_\sigma^2(\mathbb H^3),
\]
with $n_0^\varepsilon,c_0^\varepsilon\ge0$ and
\[
\partial_\nu n_0^\varepsilon
=\partial_\nu c_0^\varepsilon=0,
\qquad
u_0^\varepsilon=0
\quad\text{on }\Gamma.
\]
Then, for every $\varepsilon,\tau>0$, problem
\eqref{eq:regularized-half-space-system}--%
\eqref{eq:regularized-boundary-initial},
admits a unique global strong solution
$(n^{\varepsilon,\tau},c^{\varepsilon,\tau},u^{\varepsilon,\tau})$.

For every $0<T<\infty$, suppressing the approximation indices, the solution satisfies
\[
\begin{aligned}
n&\in C([0,T];H^1)\cap L^\infty(0,T;L^\infty)
       \cap L^2(0,T;H^2), & n_t&\in L^2(0,T;L^2),\\
c&\in C([0,T];H_N^2)\cap L^\infty(0,T;W^{1,\infty})
       \cap L^2(0,T;H^3), & c_t&\in L^2(0,T;H^1),\\
u&\in C([0,T];D(A^{7/8}))\cap L^2(0,T;D(A)),
       & u_t&\in L^2(0,T;L_\sigma^2).
\end{aligned}
\]
All spatial spaces are over $\mathbb H^3$. Uniqueness holds in the
fixed-point class \eqref{eq:fixed-point-norm}. The boundary conditions
hold in the trace sense and the initial data are attained in the displayed
continuous norms. The pressure, determined modulo functions of time,
satisfies $\nabla p\in L^2(0,T;L^2)$.
For every $0<T<\infty$,
\begin{equation}\label{eq:fixed-parameter-high-order}
\begin{aligned}
&\sup_{0<t<T}\bigl(\|n\|_{H^1}^2+\|n\|_{L^\infty}^2
+\|c\|_{H^2}^2+\|\nabla c\|_{L^\infty}^2+\|u\|_{D(A^{7/8})}^2\bigr)\\
&\quad+\int_0^T\bigl(\|n\|_{H^2}^2+\|c\|_{H^3}^2+\|u\|_{H^2}^2
+\|n_t\|_{L^2}^2+\|c_t\|_{H^1}^2+\|u_t\|_{L^2}^2\bigr)\,dt\\
&\leq C_{\varepsilon,\tau,T,\phi,n_0^\varepsilon,c_0^\varepsilon,u_0^\varepsilon}
<\infty.
\end{aligned}
\end{equation}
The dependence on the initial data is through
$\|n_0^\varepsilon\|_{H^2\cap L^1}$,
$\|c_0^\varepsilon\|_{H^3\cap L^1}$, and
$\|u_0^\varepsilon\|_{H^2}$.
Here the constant may depend on $T$; no polynomial growth rate is asserted.
These bounds suffice for continuation and for the entropy and compactness
arguments below.

Moreover, for every $t\ge0$,
\begin{equation}
\label{eq:positivity-mass-maximum}
\begin{aligned}
&n^{\varepsilon,\tau}(t)\ge0,
\qquad
0\le c^{\varepsilon,\tau}(t)
\le\|c_0^\varepsilon\|_{L^\infty},\\
&\|n^{\varepsilon,\tau}(t)\|_{L^1}
=\|n_0^\varepsilon\|_{L^1},\\
&\|c^{\varepsilon,\tau}(t)\|_{L^1}
+\int_0^t\int_{\mathbb H^3}
h_\tau(n^{\varepsilon,\tau}(x,s))
c^{\varepsilon,\tau}(x,s)\,dx\,ds
=\|c_0^\varepsilon\|_{L^1}.
\end{aligned}
\end{equation}
\end{proposition}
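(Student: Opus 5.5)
The proof has three stages: local existence by the contraction mapping principle for the map $\Phi$ in \eqref{eq:fixed-point-map}, invariance properties (positivity, maximum principle, mass) for the local solution, and a priori bounds at fixed $(\varepsilon,\tau)$ that rule out blow-up of the $\mathcal X_T$ norm in finite time.

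\textbf{Local existence.} For small $T$, I will show that $\Phi$ maps a closed ball of $\mathcal X_T$ into itself and is a contraction there. The tools are the following.
\begin{itemize}
\item[(a)] The Neumann semigroup bounds \eqref{eq:half-space-heat-estimates} and the divergence operator $\mathcal T_N$ handle $\Phi_1$ and $\Phi_2$. For $\mathcal T_N$ one needs the normal flux to vanish on $\Gamma$. Here $\widehat a_\tau(n)\partial_3 c=0$ because $\partial_3c=0$, and $(\mathcal J_\varepsilon u)_3 n=0$ by \eqref{eq:stokes-smoothing-properties}.
\item[(b)] The analytic Stokes estimate \eqref{eq:analytic-stokes-power} with $\theta=7/8<1$ handles $\Phi_3$. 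It gives an integrable singularity $(t-s)^{-7/8}$ against $L^2$ forcing, which is bounded by \eqref{eq:fixed-point-smoothed-bounds} and \eqref{eq:fixed-point-embeddings}.
\item[(c)] The nonlinearities $\widehat a_\tau,\widehat h_\tau$ are globally Lipschitz with bounded derivatives up to order three, by \eqref{eq:regularizing-extensions}.
\item[(d)] The $H^1\cap L^\infty$ control of $n$, and the $H^2\cap W^{1,\infty}$ control of $c$, follow from one-derivative smoothing with time factors $(t-s)^{-1/2}$ and $(t-s)^{-1/2-3/(2q)}$, using $q$ large.
\end{itemize}
Each such factor yields a power $T^\kappa$ with $\kappa>0$, which gives both the self-map property and the contraction. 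The contraction also yields uniqueness in the fixed-point class. The extra regularity ($L^2H^2$ for $n$, $L^2H^3$ for $c$, $L^2D(A)$ for $u$, and the time derivatives) then comes from maximal $L^2$ regularity. This applies to the Neumann Laplacian and to the Stokes operator, since the right-hand sides lie in $L^2H^1$, $L^2H^1$ and $L^2L^2$ respectively, with data in $H^2$, $H^3$ and $H^2$.

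\textbf{Invariance on the local interval.} Positivity of $n$ comes from testing the $n$-equation with $n_-$. Since $\widehat a_\tau(s)=0$ for $s\le -1/(2\tau)$ and $|\widehat a_\tau(s)|\le C_\tau|s|$, a Gronwall argument gives $n_-\equiv0$. After this, the extensions coincide with $a_\tau,h_\tau$. Then:
\begin{itemize}
\item[(a)] $0\le c\le\|c_0^\varepsilon\|_{L^\infty}$ follows by testing with $(c-k)_+$ and $c_-$, because $h_\tau(n)\ge0$ and $\nabla\cdot u=0$ with $u|_\Gamma=0$.
\item[(b)] The mass identities follow by integrating the equations with the cutoff $\chi_R$ and letting $R\to\infty$. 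The Neumann condition, the vanishing of $a_\tau(n)\partial_3c$, and $u\cdot\nu=0$ eliminate the boundary terms.
\end{itemize}

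\textbf{Global continuation (main obstacle).} I need a bound on the $\mathcal X_T$ norm on any finite $[0,T]$ with a constant $C_{\varepsilon,\tau,T}$. The key is that, for fixed $\tau$, the flux satisfies $|a_\tau(n)|\le n$ and also $|a_\tau(n)|\le 1/\tau$, so it is bounded. The steps are:
\begin{itemize}
\item[(a)] \emph{Velocity energy.} Testing the $u$-equation with $u$ gives $\|u\|_{L^\infty L^2}+\|\nabla u\|_{L^2L^2}\le C(T)$. This uses $\nabla\cdot\mathcal J_\varepsilon u=0$ and $\|n\nabla\phi\|_{L^2}\le M_\phi\|n\|_{L^2}$.
\item[(b)] \emph{$L^2$ bound for $n$.} A Gronwall estimate testing the $n$-equation with $n$ requires $\nabla c\in L^\infty$, while the $c$-bound in turn uses $\|n\|$. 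To avoid this circularity I rely on the bounded flux: testing with $n$ gives $\frac{d}{dt}\|n\|_2^2+\|\nabla n\|_2^2\le \tau^{-2}\|\nabla c\|_2^2$. The quantity $\|\nabla c\|_{L^2}$ is bounded by testing the $c$-equation with $-\Delta c$, using $|h_\tau(n)|\le \tau^{-1}\log(1+\tau n)$ and $c\le \|c_0\|_\infty$. The advection term $u\cdot\nabla c$ is handled through $\|\nabla u\|_{L^2}$, after checking that the Neumann boundary identity above kills the boundary term.
\item[(c)] \emph{Bootstrap.} $\mathcal J_\varepsilon u$ is bounded in $W^{1,\infty}$ by $C_\varepsilon\|u\|_{L^2}$. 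The mild formulas then bootstrap successively to $n\in L^\infty$, $c\in H^2\cap W^{1,\infty}$ and $u\in D(A^{7/8})$ on $[0,T]$.
\end{itemize}
The delicate point I expect is the $\nabla c$ estimate with the unsmoothed transport $u\cdot\nabla c$ in the $c$-equation. There I would first use the parabolic $L^p$ estimate for $c$, with $u\in L^2H^1\cap L^\infty L^2$ and the bounded absorption term, to gain $\nabla c\in L^4$ before reaching $W^{1,\infty}$. Since the local existence time depends only on these norms, continuation gives the global solution and \eqref{eq:fixed-parameter-high-order}. Finally, the pressure is recovered from $\nabla p=(I-\mathbb P_+)(\cdots)$ together with $u_t,\Delta u\in L^2L^2$.
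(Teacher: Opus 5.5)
Your local existence step (contraction for $\Phi$ in $\mathcal X_T$, then maximal regularity for the extra norms) and your positivity and maximum-principle arguments essentially match the paper's Steps 1--4. The problem is in the continuation argument, specifically the order of the bootstrap. You propose to handle the unsmoothed transport $u\cdot\nabla c$ using only the energy information $u\in L^\infty(0,T;L^2)\cap L^2(0,T;H^1)$. You also aim to reach $n\in L^\infty$ and $\nabla c\in L^\infty$ before $u\in D(A^{7/8})$.

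That step fails. With energy-class $u$ and $0\le c\le\|c_0^\varepsilon\|_{L^\infty}$, your estimates place $\nabla c$ only in spaces $L^q_tL^p_x$ with $2/q+3/p\ge 5/4$. The best of these is $L^4_{t,x}$, obtained from $\|\nabla c\|_{L^4}^2\le C\|c\|_{L^\infty}\|D^2c\|_{L^2}$. The truncation $a_\tau\le\tau^{-1}$ gives the chemotactic flux no better integrability than $\nabla c$ has.

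An $L^\infty$ bound for $n$ through $\mathcal T_N$ (or a Moser iteration) needs fluxes with $2/q+3/p<1$, and so does an $L^\infty$ bound for $\nabla c$ through the Neumann mild formula. Concretely, for a flux in $L^4_{t,x}$ the kernel $(t-s)^{-7/8}$ is not in $L^{4/3}_t$. Likewise $u\cdot\nabla c\in L^{20/11}_{t,x}$ produces the nonintegrable kernel $(t-s)^{-53/40}$ in the gradient estimate for $c$.

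The missing idea is to upgrade the velocity \emph{before} the scalars, which is possible precisely because the momentum convection is smoothed. Once $n\in L^\infty(0,T;L^2)$, the drift $\mathcal J_\varepsilon u$ is bounded in $L^\infty(0,T;W^{1,\infty})$ by $C_\varepsilon\|u\|_{L^\infty L^2}$. The force $\mathcal K_\varepsilon(n\nabla\phi)$ is bounded in $L^\infty(0,T;L^2)$. So the momentum equation is a linear Stokes system with bounded coefficients. The $L^2$ Stokes estimate then gives $u\in L^\infty H^1\cap L^2H^2$ with $u_t,\nabla p\in L^2L^2$. The mild formula with $\alpha=7/8<1$ gives $u\in L^\infty(0,T;D(A^{7/8}))\subset L^\infty(0,T;L^\infty\cap W^{1,4})$.

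With $u$ bounded, the transport in the $c$-equation is harmless, and the scalars follow in this order:
\begin{enumerate}
\item the Neumann $L^2$ estimate gives $c\in L^\infty H^1\cap L^2H^2$;
\item the gradient mild formula with a fractional Gronwall inequality gives $\nabla c\in L^\infty(0,T;L^4)$;
\item $\mathcal T_N$ gives $n\in L^\infty L^4$ and then $n\in L^\infty L^\infty$;
\item finally $\nabla c\in L^\infty$ and the remaining Sobolev bounds.
\end{enumerate}
This is the order used in the paper's Step 6.

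There are two smaller points.

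First, your steps (a) and (b) are circular as written. Step (a) uses $\|n\|_{L^2}$, step (b) uses $\|\nabla c\|_{L^2}$, and your $-\Delta c$ test uses $\int\|\nabla u\|_{L^2}^2$ from (a). A weighted sum of the three inequalities plus Gronwall closes this. It is simpler, though, to test the concentration equation with $c$, which gives $\int_0^T\|\nabla c\|_{L^2}^2\le\frac12\|c_0^\varepsilon\|_{L^2}^2$ with no coupling, and that is all the $n$-test needs.

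Second, the fixed-point class carries no $L^1$ information. So before letting $R\to\infty$ in the $\chi_R$-tested equations, you must show $n(t),c(t)\in L^1$. With only $n\in L^2$, the error $\int n\,\mathcal J_\varepsilon u\cdot\nabla\chi_R$ is not small, since $\|\nabla\chi_R\|_{L^2}\sim R^{1/2}$. The paper first propagates $L^1$ using the weight $w_R=\exp(-\sqrt{1+|x|^2}/R)$. For this weight $|\nabla w_R|+|\Delta w_R|\le CR^{-1}w_R$, which closes a Gronwall estimate.
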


\begin{proof}
We divide the proof into six steps.

\emph{Step 1: the closed ball and the elementary nonlinear bounds.} Set
\[
R:=2C_0\bigl(\|n_0^\varepsilon\|_{H^1}+\|n_0^\varepsilon\|_{L^\infty}
+\|c_0^\varepsilon\|_{H^2}+\|\nabla c_0^\varepsilon\|_{L^\infty}
+\|u_0^\varepsilon\|_{D(A^\alpha)}\bigr)+1,
\]
where $C_0\geq1$ is a fixed constant large enough for the semigroup
and embedding bounds below. Let $T\in(0,1)$ and let \(\mathbb B_R\) be the closed radius-\(R\) ball in \(\mathcal X_T\). For \(U=(n^{\varepsilon,\tau},c^{\varepsilon,\tau},u^{\varepsilon,\tau})\in\mathbb B_R\), the embeddings of the fixed-point space give
\begin{equation}
\label{eq:ball-embeddings}
\|n^{\varepsilon,\tau}(t)\|_{L^\infty}+\|n^{\varepsilon,\tau}(t)\|_{H^1}+\|\nabla c^{\varepsilon,\tau}(t)\|_{L^\infty}+\| c^{\varepsilon,\tau}(t)\|_{H^2}+\|u^{\varepsilon,\tau}(t)\|_{D(A^\alpha)}
\leq C_{\varepsilon,R}.
\end{equation}
Define
\[
\begin{aligned}
F_U&:=\widehat a_\tau(n^{\varepsilon,\tau})\nabla c^{\varepsilon,\tau}+(\mathcal J_\varepsilon u^{\varepsilon,\tau})n^{\varepsilon,\tau},\\
G_U&:=u^{\varepsilon,\tau}\cdot\nabla c^{\varepsilon,\tau}+\widehat h_\tau(n^{\varepsilon,\tau})c^{\varepsilon,\tau},\\
K_U&:=(\mathcal J_\varepsilon u^{\varepsilon,\tau})\cdot\nabla u^{\varepsilon,\tau}+\mathcal K_\varepsilon(n^{\varepsilon,\tau}\nabla\phi).
\end{aligned}
\]
Using \eqref{eq:regularizing-extensions} and \eqref{eq:ball-embeddings}, we obtain
\begin{equation}
\label{eq:nonlinear-source-bounds}
\|F_U\|_{L^4}+\|F_U\|_{L^2}+\|\nabla\cdot F_U\|_{L^2}
+\|G_U\|_{L^4}+\|G_U\|_{L^2}+\|\nabla G_U\|_{L^2}
+\|K_U\|_{L^2}
\leq C_{\varepsilon,\tau,R,\phi}.
\end{equation}
For example,
\begin{align*}
\nabla\cdot F_U
&=\widehat a_\tau'(n^{\varepsilon,\tau})\nabla n^{\varepsilon,\tau}\cdot\nabla c^{\varepsilon,\tau}+\widehat a_\tau(n^{\varepsilon,\tau})\Delta c^{\varepsilon,\tau}+(\mathcal J_\varepsilon u^{\varepsilon,\tau})\cdot\nabla n^{\varepsilon,\tau},\\
\nabla G_U
&=(\nabla u^{\varepsilon,\tau})^{\mathsf T}\nabla c^{\varepsilon,\tau}+u^{\varepsilon,\tau}\cdot\nabla^2c^{\varepsilon,\tau}
+\widehat h_\tau'(n^{\varepsilon,\tau})c\nabla n^{\varepsilon,\tau}+\widehat h_\tau(n^{\varepsilon,\tau})\nabla c^{\varepsilon,\tau},
\end{align*}
and every product on the right belongs to \(L^2\) by \eqref{eq:ball-embeddings}. This proves \eqref{eq:nonlinear-source-bounds}.

\emph{Step 2: self-mapping.} The half-space kernel estimates imply
\begin{equation}
\label{eq:Phi1-self-map}
\begin{aligned}
\|\Phi_1(U)(t)\|_{L^\infty}
&\leq\|n_0^\varepsilon\|_{L^\infty}+C\int_0^t(t-s)^{-7/8}\|F_U(s)\|_{L^4}\,ds
\leq\|n_0^\varepsilon\|_{L^\infty}+C_{\varepsilon,\tau,R}T^{1/8},\\
\|\Phi_1(U)(t)\|_{L^2}
&\leq\|n_0^\varepsilon\|_{L^2}+C\int_0^t(t-s)^{-1/2}\|F_U(s)\|_{L^2}\,ds
\leq\|n_0^\varepsilon\|_{L^2}+C_{\varepsilon,\tau,R}T^{1/2},\\
\|\nabla\Phi_1(U)(t)\|_{L^2}
&\leq\|\nabla n_0^\varepsilon\|_{L^2}+C\int_0^t(t-s)^{-1/2}\|\nabla\cdot F_U(s)\|_{L^2}\,ds\\
\qquad&\leq\|\nabla n_0^\varepsilon\|_{L^2}+C_{\varepsilon,\tau,R}T^{1/2}.
\end{aligned}
\end{equation}
In the last line we use \(\Phi_1=S_N(t)n_0^\varepsilon-\int_0^tS_N(t-s)\nabla\cdot F_U(s)\,ds\); this avoids a nonintegrable two-derivative kernel.\\
For $\Phi_2$ we use
\[
\|\nabla S_N(t)g\|_{L^\infty}\leq Ct^{-7/8}\|g\|_{L^4},
\qquad
\|\nabla^2S_N(t)g\|_{L^2}\leq Ct^{-1/2}\|\nabla g\|_{L^2}.
\]
These estimates give
\begin{equation}
\label{eq:Phi2-self-map}
\begin{aligned}
\|\Phi_2(U)(t)\|_{L^2}
&\leq \|S_N(t)c_0^\varepsilon\|_{L^2}+\int_0^t\|S_N(t-s)G_U(s)\|_{L^2}\,ds\\
&\leq \|c_0^\varepsilon\|_{L^2}+\int_0^t\|G_U(s)\|_{L^2}\,ds
\leq \|c_0^\varepsilon\|_{L^2}+C_{\varepsilon,\tau,R}T,\\
\|\nabla\Phi_2(U)(t)\|_{L^\infty}
&\leq \|\nabla S_N(t)c_0^\varepsilon\|_{L^\infty}
+C\int_0^t(t-s)^{-7/8}\|G_U(s)\|_{L^4}\,ds\\
&\leq \|\nabla c_0^\varepsilon\|_{L^\infty}+C_{\varepsilon,\tau,R}\int_0^t(t-s)^{-7/8}\,ds
\leq \|\nabla c_0^\varepsilon\|_{L^\infty}+C_{\varepsilon,\tau,R}T^{1/8},\\
\|\nabla^2\Phi_2(U)(t)\|_{L^2}
&\leq \|\nabla^2S_N(t)c_0^\varepsilon\|_{L^2}
+C\int_0^t(t-s)^{-1/2}\|\nabla G_U(s)\|_{L^2}\,ds\\
&\leq \|\nabla^2c_0^\varepsilon\|_{L^2}+C_{\varepsilon,\tau,R}\int_0^t(t-s)^{-1/2}\,ds
\leq \|\nabla^2c_0^\varepsilon\|_{L^2}+C_{\varepsilon,\tau,R}T^{1/2}.
\end{aligned}
\end{equation}
For the velocity component, analyticity of the Stokes semigroup yields
\[
\|A^\alpha e^{-(t-s)A}\mathbb P_+g\|_{L^2}\leq C(t-s)^{-\alpha}\|g\|_{L^2},
\qquad \alpha=\frac78.
\]
Since the graph norm of \(D(A^\alpha)\) is equivalent to \(\|z\|_{L^2}+\|A^\alpha z\|_{L^2}\), we have
\begin{equation}
\label{eq:Phi3-self-map}
\begin{aligned}
\|\Phi_3(U)(t)\|_{D(A^\alpha)}
&\leq \|e^{-tA}u_0^\varepsilon\|_{D(A^\alpha)}
+C\int_0^t\bigl(1+(t-s)^{-\alpha}\bigr)\|K_U(s)\|_{L^2}\,ds\\
&\leq \|u_0^\varepsilon\|_{D(A^\alpha)}
+C_{\varepsilon,R,\phi}\left(\int_0^t1\,ds+\int_0^t(t-s)^{-7/8}\,ds\right)\\
&\leq \|u_0^\varepsilon\|_{D(A^\alpha)}+C_{\varepsilon,R,\phi}\bigl(T+T^{1/8}\bigr).
\end{aligned}
\end{equation}
Combining \eqref{eq:Phi1-self-map}--\eqref{eq:Phi3-self-map}, we obtain
\begin{equation}
\label{eq:self-map-summary}
\|\Phi(U)\|_{\mathcal X_T}
\leq \frac{R-1}{2}+C_{\varepsilon,\tau,R,\phi}\bigl(T^{1/8}+T^{1/2}+T\bigr).
\end{equation}
Choose \(T_0\in(0,1]\) so that the second term on the right is at most \((R+1)/2\). Then \(\Phi(\mathbb B_R)\subset\mathbb B_R\).

\emph{Step 3: contraction.} Let \(U_j=(n_j,c_j,u_j)\in\mathbb B_R\), \(j=1,2\). Since \(\widehat a_\tau,\widehat h_\tau\in C^3\) and the functions \(n_j\) are uniformly bounded, the Moser estimates give
\begin{equation}
\label{eq:composition-difference}
\begin{aligned}
&\|\widehat a_\tau(n_1)-\widehat a_\tau(n_2)\|_{L^\infty}
+\|\nabla(\widehat a_\tau(n_1)-\widehat a_\tau(n_2))\|_{L^2}\\
&\quad
+\|\widehat h_\tau(n_1)-\widehat h_\tau(n_2)\|_{L^\infty}
+\|\nabla(\widehat h_\tau(n_1)-\widehat h_\tau(n_2))\|_{L^2}
\leq C_{\tau,R}\|n_1-n_2\|_{H^1\cap L^\infty}.
\end{aligned}
\end{equation}
For instance,
\[
\widehat a_\tau(n_1)\nabla c_1-\widehat a_\tau(n_2)\nabla c_2
=\bigl(\widehat a_\tau(n_1)-\widehat a_\tau(n_2)\bigr)\nabla c_1
+\widehat a_\tau(n_2)\nabla(c_1-c_2),
\]
and
\[
u_1\cdot\nabla c_1-u_2\cdot\nabla c_2
=(u_1-u_2)\cdot\nabla c_1+u_2\cdot\nabla(c_1-c_2).
\]
To make the Lipschitz estimate explicit, we expand the differences directly.
Then we have
\begin{align*}
F_{U_1}-F_{U_2}
&=\bigl(\widehat a_\tau(n_1)-\widehat a_\tau(n_2)\bigr)\,\nabla c_1+\widehat a_\tau(n_2)\nabla(c_1-c_2)
\\ &\quad+\mathcal J_\varepsilon(u_1-u_2)\,n_1+(\mathcal J_\varepsilon u_2)(n_1-n_2),\\
G_{U_1}-G_{U_2}
&=(u_1-u_2)\cdot\nabla c_1+u_2\cdot\nabla(c_1-c_2)
\\ &\quad+\bigl(\widehat h_\tau(n_1)-\widehat h_\tau(n_2)\bigr)\,c_1+\widehat h_\tau(n_2)(c_1-c_2),\\
K_{U_1}-K_{U_2}
&=\mathcal J_\varepsilon(u_1-u_2)\cdot\nabla u_1+(\mathcal J_\varepsilon u_2)\cdot\nabla(u_1-u_2)\\ &\quad+\mathcal K_\varepsilon((n_1-n_2)\nabla\phi).
\end{align*}
Moreover, because \(\nabla\cdot (\mathcal J_\varepsilon u_j)=0\),
\begin{align*}
\nabla\cdot(F_{U_1}-F_{U_2})={}&
\bigl(\widehat a_\tau'(n_1)-\widehat a_\tau'(n_2)\bigr)\nabla n_1\cdot\nabla c_1
+\widehat a_\tau'(n_2)\nabla(n_1-n_2)\cdot\nabla c_1\\
&+\widehat a_\tau'(n_2)\nabla n_2\cdot\nabla(c_1-c_2)
\\ &+\bigl(\widehat a_\tau(n_1)-\widehat a_\tau(n_2)\bigr)\,\Delta c_1+\widehat a_\tau(n_2)\Delta(c_1-c_2)\\
&+\mathcal J_\varepsilon(u_1-u_2)\cdot\nabla n_1+(\mathcal J_\varepsilon u_2)\cdot\nabla(n_1-n_2).
\end{align*}
The corresponding differentiated identity for the reaction part is
\begin{align*}
\nabla\bigl(\widehat h_\tau(n_1)c_1-\widehat h_\tau(n_2)c_2\bigr)
=&\nabla\bigl(\widehat h_\tau(n_1)-\widehat h_\tau(n_2)\bigr)c_1+\bigl(\widehat h_\tau(n_1)-\widehat h_\tau(n_2)\bigr)\,\nabla c_1\\
&+\widehat h_\tau'(n_2)\nabla n_2(c_1-c_2)
+\widehat h_\tau(n_2)\nabla(c_1-c_2),
\end{align*}
where
\[
\nabla\bigl(\widehat h_\tau(n_1)-\widehat h_\tau(n_2)\bigr)
=\bigl(\widehat h_\tau'(n_1)-\widehat h_\tau'(n_2)\bigr)\nabla n_1
+\widehat h_\tau'(n_2)\nabla(n_1-n_2).
\]
Using these formulas, \eqref{eq:composition-difference}, the embeddings in \eqref{eq:fixed-point-embeddings}, and boundedness of \(\mathcal J_\varepsilon\) and \(\mathcal K_\varepsilon\), each product is estimated in the same space as its counterpart in \eqref{eq:nonlinear-source-bounds}. We obtain
\begin{equation}
\label{eq:source-difference-bounds}
\begin{aligned}
&\|F_{U_1}-F_{U_2}\|_{L^4}+\|F_{U_1}-F_{U_2}\|_{L^2}
+\|\nabla\cdot(F_{U_1}-F_{U_2})\|_{L^2}\\
&\quad
+\|G_{U_1}-G_{U_2}\|_{L^4}+\|G_{U_1}-G_{U_2}\|_{L^2}
+\|\nabla(G_{U_1}-G_{U_2})\|_{L^2}
+\|K_{U_1}-K_{U_2}\|_{L^2}\\
&\leq C_{\varepsilon,\tau,R,\phi}\|U_1-U_2\|_{\mathcal X_T}.
\end{aligned}
\end{equation}
Applying the kernel estimates component by component gives
\begin{align*}
&\|\Phi_1(U_1)-\Phi_1(U_2)\|_{L^\infty(0,T;L^\infty)}
\leq C_{\varepsilon,\tau,R}T^{1/8}\|U_1-U_2\|_{\mathcal X_T},\\
&\|\Phi_1(U_1)-\Phi_1(U_2)\|_{L^\infty(0,T;H^1)}
\leq C_{\varepsilon,\tau,R}T^{1/2}\|U_1-U_2\|_{\mathcal X_T},\\
&\|\Phi_2(U_1)-\Phi_2(U_2)\|_{L^\infty(0,T;L^2)}
\leq C_{\varepsilon,\tau,R}T\|U_1-U_2\|_{\mathcal X_T},\\
&\|\nabla(\Phi_2(U_1)-\Phi_2(U_2))\|_{L^\infty(0,T;L^\infty)}
\leq C_{\varepsilon,\tau,R}T^{1/8}\|U_1-U_2\|_{\mathcal X_T},\\
&\|\nabla^2(\Phi_2(U_1)-\Phi_2(U_2))\|_{L^\infty(0,T;L^2)}
\leq C_{\varepsilon,\tau,R}T^{1/2}\|U_1-U_2\|_{\mathcal X_T},\\
&\|\Phi_3(U_1)-\Phi_3(U_2)\|_{L^\infty(0,T;D(A^\alpha))}
\leq C_{\varepsilon,R,\phi}(T^{1/8}+T)\|U_1-U_2\|_{\mathcal X_T}.
\end{align*}
Consequently,
\begin{equation}
\label{eq:contraction-summary}
\|\Phi(U_1)-\Phi(U_2)\|_{\mathcal X_T}
\leq C_{\varepsilon,\tau,R,\phi}\bigl(T^{1/8}+T^{1/2}+T\bigr)
\|U_1-U_2\|_{\mathcal X_T}.
\end{equation}
We choose \(T_0\in(0,1]\) so that simultaneously
\[
C_{\varepsilon,\tau,R,\phi}\bigl(T_0^{1/8}+T_0^{1/2}+T_0\bigr)
\leq \min\{(R+1)/2,\tfrac12\}.
\]
Then \(\Phi\) maps \(\mathbb B_R\) into itself and has Lipschitz constant at most \(1/2\) there. Banach's fixed-point theorem therefore gives a unique \(U=(n,c,u)\in\mathbb B_R\) satisfying
\begin{equation}
\label{eq:fixed-point-identity}
\Phi(n^{\varepsilon,\tau},c^{\varepsilon,\tau},u^{\varepsilon,\tau})=(n^{\varepsilon,\tau},c^{\varepsilon,\tau},u^{\varepsilon,\tau}).
\end{equation}
By \eqref{eq:fixed-point-components}, this fixed point is the mild solution of the auxiliary system with \(\widehat a_\tau,\widehat h_\tau\). Standard parabolic and Stokes bootstrap, using the smooth initial data and the fixed positive parameters \(\varepsilon,\tau\), makes the solution classical on \((0,T_0]\).

\emph{Step 4: nonnegativity.} Let \(n^{\varepsilon,\tau}_-:=\max\{-n^{\varepsilon,\tau},0\}\). We multiply the auxiliary cell equation by \(-n_-\). With all boundary terms on the left, the transport, diffusion, and chemotactic integrations produce the flat-boundary contribution
\[
\frac12\int_\Gamma (n^{\varepsilon,\tau}_-)^2 (\mathcal J_\varepsilon u^{\varepsilon,\tau})\cdot\nu\,dS
+\int_\Gamma \partial_\nu n^{\varepsilon,\tau}\,n^{\varepsilon,\tau}_-\,dS
-\int_\Gamma \widehat a_\tau(n^{\varepsilon,\tau})\partial_\nu c^{\varepsilon,\tau}\,n^{\varepsilon,\tau}_-\,dS.
\]
By Lemma~\ref{lem:stokes-smoothing}, \((\mathcal J_\varepsilon u^{\varepsilon,\tau})=0\) on \(\Gamma\), while \eqref{eq:regularized-boundary-initial} gives \(\partial_\nu n^{\varepsilon,\tau}=\partial_\nu c^{\varepsilon,\tau}=0\). Hence all three boundary integrals vanish. Since \(\nabla\cdot (\mathcal J_\varepsilon u^{\varepsilon,\tau})=0\), the interior transport integral is also zero, and we obtain
\[
\frac12\frac{d}{dt}\|n^{\varepsilon,\tau}_-\|_{L^2}^2+\|\nabla n^{\varepsilon,\tau}_-\|_{L^2}^2
=-\int_{\{n<0\}}\widehat a_\tau(n^{\varepsilon,\tau})\nabla c^{\varepsilon,\tau}\cdot\nabla n^{\varepsilon,\tau}_-\,dx.
\]
By \eqref{eq:regularizing-extensions}, \(|\widehat a_\tau(n^{\varepsilon,\tau})|\leq C_\tau n^{\varepsilon,\tau}_-\) on \(\{n^{\varepsilon,\tau}<0\}\). Hence
\[
\frac{d}{dt}\|n^{\varepsilon,\tau}_-\|_{L^2}^2+\|\nabla n^{\varepsilon,\tau}_-\|_{L^2}^2
\leq C_\tau\|\nabla c^{\varepsilon,\tau}\|_{L^\infty}^2\|n^{\varepsilon,\tau}_-\|_{L^2}^2.
\]
Since \(n_0^\varepsilon\geq0\), Gronwall's inequality gives \(n^{\varepsilon,\tau}_-=0\). Thus \(n\geq0\), and the auxiliary functions coincide with \(a_\tau,h_\tau\).

Next let \(c^{\varepsilon,\tau}_-:=\max\{-c^{\varepsilon,\tau},0\}\) and test the concentration equation by \(-c^{\varepsilon,\tau}_-\). With the same convention, the only flat-boundary contribution is
\[
\frac12\int_\Gamma (c^{\varepsilon,\tau}_-)^2u\cdot\nu\,dS
+\int_\Gamma\partial_\nu c^{\varepsilon,\tau}\,c^{\varepsilon,\tau}_-\,dS=0,
\]
because \(u^{\varepsilon,\tau}=0\) and \(\partial_\nu c^{\varepsilon,\tau}=0\) on \(\Gamma\). The interior transport term is zero because \(\nabla\cdot u^{\varepsilon,\tau}=0\). Therefore
\[
\frac12\frac{d}{dt}\|c^{\varepsilon,\tau}_-\|_{L^2}^2+\|\nabla c^{\varepsilon,\tau}_-\|_{L^2}^2
+\int_{\mathbb H^3}h_\tau(n^{\varepsilon,\tau})(c^{\varepsilon,\tau}_-)^2\,dx=0.
\]
Since \(c_0^\varepsilon\geq0\), it follows that \(c^{\varepsilon,\tau}_-=0\), and hence \(c^{\varepsilon,\tau}\geq0\).

\par\medskip
\noindent\textbf{Step 5. Mass conservation for $n$, mass balance
for $c$, and the maximum principle.}

We first justify the propagation of spatial integrability.
Fix a compact time interval $[0,S]$ contained in the interval
of existence, and suppress the approximation indices.
By nonnegativity, the cell equation can be written as
\[
\partial_t n-\Delta n
+\nabla\cdot\left[n\left(\mathcal J_\varepsilon u
+\frac{\nabla c}{1+\tau n}\right)\right]=0.
\]
The fixed-point bounds imply that $\mathcal J_\varepsilon u+\nabla c/(1+\tau n)$ and $u$ are bounded
on $\mathbb H^3\times(0,S)$.

For $R\geq1$, set
\[
w_R(x)=\exp\!\left(-\frac{\sqrt{1+|x|^2}}{R}\right).
\]
Then $\partial_\nu w_R=0$ on $\Gamma$ and
\[
|\nabla w_R|+|\Delta w_R|\leq \frac{C}{R}w_R.
\]
Testing the two scalar equations with $w_R$, first inserting
an additional compact spatial cutoff, gives
\[
\begin{aligned}
\frac{d}{dt}\int_{\mathbb H^3}n\,w_R\,dx
&\leq\frac{C_S}{R}\int_{\mathbb H^3}n\,w_R\,dx,\\
\frac{d}{dt}\int_{\mathbb H^3}c\,w_R\,dx
&\leq\frac{C_S}{R}\int_{\mathbb H^3}c\,w_R\,dx.
\end{aligned}
\]
In the concentration estimate, the nonnegative reaction term is discarded.
Gronwall's inequality and monotone convergence as $R\to\infty$
therefore imply
\[
\sup_{0\leq t\leq S}\|n(t)\|_{L^1}
\leq \|n_0^\varepsilon\|_{L^1},
\qquad
\sup_{0\leq t\leq S}\|c(t)\|_{L^1}
\leq \|c_0^\varepsilon\|_{L^1}.
\]
The compact-cutoff calculations below are consequently
justified.

\par Let $\chi_R$ be the cutoff introduced above.  Testing the cell
equation by $\chi_R$ and using
\[
\nabla\cdot (\mathcal J_\varepsilon u^{\varepsilon,\tau})=0,
\qquad
(\mathcal J_\varepsilon u^{\varepsilon,\tau})=0,
\qquad
\partial_\nu n^{\varepsilon,\tau}
=
\partial_\nu c^{\varepsilon,\tau}=0
\quad\text{on }\Gamma,
\]
we obtain
\begin{align}
\frac{d}{dt}
\int_{\mathbb H^3} n^{\varepsilon,\tau}\chi_R\,dx
={}&
\int_{\mathbb H^3}
n^{\varepsilon,\tau}\Delta\chi_R\,dx
+
\int_{\mathbb H^3}
n^{\varepsilon,\tau}
(\mathcal J_\varepsilon u^{\varepsilon,\tau})\cdot\nabla\chi_R\,dx
\nonumber\\
&+
\int_{\mathbb H^3}
a_\tau(n^{\varepsilon,\tau})
\nabla c^{\varepsilon,\tau}\cdot\nabla\chi_R\,dx.
\label{eq:mass-cutoff-n}
\end{align}
For fixed $\varepsilon,\tau>0$,
\[
n^{\varepsilon,\tau}\in L^1\cap L^2,
\qquad
(\mathcal J_\varepsilon u^{\varepsilon,\tau})\in L^\infty,
\qquad
a_\tau(n^{\varepsilon,\tau})\nabla c^{\varepsilon,\tau}
\in L^1,
\]
on every finite time interval.  Hence, by the bounds on
$\chi_R$,
the three terms on the right-hand side of
\eqref{eq:mass-cutoff-n} converge to zero as $R\to\infty$.
Therefore
\[
\frac{d}{dt}
\int_{\mathbb H^3}n^{\varepsilon,\tau}(x,t)\,dx=0,
\]
and consequently
\begin{equation}
\|n^{\varepsilon,\tau}(t)\|_{L^1(\mathbb H^3)}
=
\|n_0^\varepsilon\|_{L^1(\mathbb H^3)}.
\label{eq:n-mass-conservation}
\end{equation}

Similarly, testing the concentration equation by $\chi_R$ gives
\begin{align*}
\frac{d}{dt}
\int_{\mathbb H^3}c^{\varepsilon,\tau}\chi_R\,dx
+
\int_{\mathbb H^3}
h_\tau(n^{\varepsilon,\tau})
c^{\varepsilon,\tau}\chi_R\,dx
={}&
\int_{\mathbb H^3}
c^{\varepsilon,\tau}\Delta\chi_R\,dx\\
&+
\int_{\mathbb H^3}
c^{\varepsilon,\tau}
u^{\varepsilon,\tau}\cdot\nabla\chi_R\,dx.
\end{align*}
Letting $R\to\infty$ yields
\[
\|c^{\varepsilon,\tau}(t)\|_{L^1}
+
\int_0^t\int_{\mathbb H^3}
h_\tau(n^{\varepsilon,\tau})c^{\varepsilon,\tau}\,dxds
=
\|c_0^\varepsilon\|_{L^1}.
\]
The $L^\infty$ estimate follows from the maximum principle:
\[
0\leq c^{\varepsilon,\tau}(x,t)
\leq \|c_0^\varepsilon\|_{L^\infty}.
\]

\emph{Step 6: continuation in the fixed-point spaces.}
Fix $\varepsilon,\tau>0$ and suppress these indices. Let $T_*$ be the
maximal existence time. Fix $0<T<\infty$ and
$0<S<\min\{T,T_*\}$. Throughout this step, all time norms are taken
over $(0,S)$, and $K_T$ denotes a positive constant, which may change
from line to line, depending only on
$\varepsilon,\tau,T,\|\nabla\phi\|_{L^\infty}$ and the regularized
initial data, but not on $S$. The energy calculations below are
justified by the local classical regularity and the cutoff procedure
described above. The flat-boundary terms vanish by
$u=(\mathcal J_\varepsilon u)=0$ and $\partial_\nu n=\partial_\nu c=0$.

Testing the concentration equation by $c$, using
$\nabla\cdot u=0$ and $h_\tau(n)\geq0$, gives
\begin{equation}\label{eq:fixed-c-L2}
\sup_{0<t<S}\|c(t)\|_{L^2}^2
+\int_0^S\|\nabla c\|_{L^2}^2\,dt
\leq C\|c_0^\varepsilon\|_{L^2}^2.
\end{equation}
Similarly, testing the cell equation by $n$ and using
$0\leq a_\tau(n)\leq\tau^{-1}$ yields
\[
\frac12\frac{d}{dt}\|n\|_{L^2}^2+\|\nabla n\|_{L^2}^2
=\int_{\mathbb H^3}a_\tau(n)\nabla c\cdot\nabla n\,dx
\leq\frac12\|\nabla n\|_{L^2}^2
+\frac{1}{2\tau^2}\|\nabla c\|_{L^2}^2.
\]
Together with \eqref{eq:fixed-c-L2}, this implies
\begin{equation}\label{eq:fixed-n-L2}
\sup_{0<t<S}\|n(t)\|_{L^2}^2
+\int_0^S\|\nabla n\|_{L^2}^2\,dt
\leq C\bigl(\|n_0^\varepsilon\|_{L^2}^2
+\tau^{-2}\|c_0^\varepsilon\|_{L^2}^2\bigr).
\end{equation}
For the velocity equation, the $L^{6/5}$-boundedness of
$\mathcal K_\varepsilon$, Sobolev embedding, and interpolation give
\[
\begin{aligned}
\left|\int_{\mathbb H^3}\mathcal K_\varepsilon(n\nabla\phi)\cdot u\,dx\right|
&\leq C\|\nabla\phi\|_{L^\infty}
\|n\|_{L^{6/5}}\|\nabla u\|_{L^2}\\
&\leq\frac12\|\nabla u\|_{L^2}^2
+C\|\nabla\phi\|_{L^\infty}^2
\|n\|_{L^1}^{4/3}\|n\|_{L^2}^{2/3}.
\end{aligned}
\]
Testing by $u$ and using mass conservation and
\eqref{eq:fixed-n-L2}, we therefore obtain
\begin{equation}\label{eq:fixed-u-L2}
\sup_{0<t<S}\|u(t)\|_{L^2}^2
+\int_0^S\|\nabla u\|_{L^2}^2\,dt
\leq K_T.
\end{equation}
The smoothing properties of $\mathcal J_\varepsilon$ and the
$L^2$-boundedness of $\mathcal K_\varepsilon$ now imply
\begin{equation}\label{eq:fixed-epsilon-Linfty}
\sup_{0<t<S}\bigl(\|(\mathcal J_\varepsilon u)(t)\|_{W^{1,\infty}}
+\|\mathcal K_\varepsilon(n\nabla\phi)(t)\|_{L^2}\bigr)\leq K_T.
\end{equation}

We use the standard linear regularity estimates for the Neumann
heat equation and the no-slip Stokes system; see
\cite{DHP2003,DHP2001}. For $z_t-\Delta z=F$ with homogeneous
Neumann boundary condition, and $m=0,1$, these estimates read
\begin{equation}\label{eq:Neumann-high-order-estimate}
\begin{aligned}
&\|z\|_{L^\infty H^{m+1}}^2
+\|z\|_{L^2 H^{m+2}}^2+\|z_t\|_{L^2 H^m}^2\\
&\qquad\leq C\bigl(\|z(0)\|_{H^{m+1}}^2
+\|F\|_{L^2 H^m}^2+\|z\|_{L^2L^2}^2\bigr),
\end{aligned}
\end{equation}
where $z(0)\in H_N^2$ is required when $m=1$.
For the Stokes system, we have
\begin{equation}\label{eq:Stokes-high-order-estimate}
\begin{aligned}
&\|u\|_{L^\infty H^1}^2+\|u\|_{L^2H^2}^2
+\|u_t\|_{L^2L^2}^2+\|\nabla p\|_{L^2L^2}^2\\
&\qquad\leq C\bigl(\|u_0^\varepsilon\|_{H^1}^2
+\|\bigl((\mathcal J_\varepsilon u)\cdot\nabla u+\mathcal K_\varepsilon(n\nabla\phi)\bigr)\|_{L^2L^2}^2+\|u\|_{L^2L^2}^2\bigr).
\end{aligned}
\end{equation}
The constants in these estimates can be chosen independently of $S$.

First, \eqref{eq:fixed-u-L2} and
\eqref{eq:fixed-epsilon-Linfty} give
\[
\|\bigl((\mathcal J_\varepsilon u)\cdot\nabla u+\mathcal K_\varepsilon(n\nabla\phi)\bigr)\|_{L^2L^2}
\leq\|(\mathcal J_\varepsilon u)\|_{L^\infty L^\infty}\|\nabla u\|_{L^2L^2}
+S^{1/2}\|\mathcal K_\varepsilon(n\nabla\phi)\|_{L^\infty L^2}\leq K_T.
\]
Applying \eqref{eq:Stokes-high-order-estimate}, we obtain
\[
\|u\|_{L^\infty H^1}+\|u\|_{L^2H^2}
+\|u_t\|_{L^2L^2}+\|\nabla p\|_{L^2L^2}\leq K_T.
\]
Consequently,
\[
\|\bigl((\mathcal J_\varepsilon u)\cdot\nabla u+\mathcal K_\varepsilon(n\nabla\phi)\bigr)\|_{L^\infty L^2}
\leq\|(\mathcal J_\varepsilon u)\|_{L^\infty L^\infty}
\|\nabla u\|_{L^\infty L^2}
+\|\mathcal K_\varepsilon(n\nabla\phi)\|_{L^\infty L^2}\leq K_T.
\]
The Stokes mild formula, the boundedness of the Helmholtz projection
on $L^2$, and $\alpha=7/8<1$ therefore yield
\begin{equation}\label{eq:fixed-u-hierarchy-explicit}
\begin{aligned}
\sup_{0<t<S}\|u(t)\|_{D(A^\alpha)}
&\leq C\|u_0^\varepsilon\|_{D(A^\alpha)}
+K_T\int_0^T(1+s^{-\alpha})\,ds\\
&\leq K_T.
\end{aligned}
\end{equation}
Since $D(A^{7/8})\hookrightarrow H^{7/4}
\hookrightarrow L^\infty\cap W^{1,4}$, it follows that
\[
\|u\|_{L^\infty L^\infty}
+\|\nabla u\|_{L^\infty L^4}\leq K_T.
\]

For the concentration equation,
\eqref{eq:fixed-c-L2}--\eqref{eq:fixed-n-L2} and the preceding
velocity bound give
\[
\begin{aligned}
\|\bigl(u\cdot\nabla c+h_\tau(n)c\bigr)\|_{L^2L^2}
&\leq\|u\|_{L^\infty L^\infty}\|\nabla c\|_{L^2L^2}
+S^{1/2}\|c_0^\varepsilon\|_{L^\infty}\|n\|_{L^\infty L^2}\\
&\leq K_T.
\end{aligned}
\]
Applying \eqref{eq:Neumann-high-order-estimate} with $m=0$ to $c$
therefore gives
\[
\|c\|_{L^\infty H^1}+\|c\|_{L^2H^2}
+\|c_t\|_{L^2L^2}\leq K_T.
\]
Next, use the concentration mild formula and
$0\leq h_\tau(n)c\leq\|c_0^\varepsilon\|_{L^\infty}n$.
The $L^4$ gradient estimate for the Neumann heat semigroup gives,
for $0<t<S$,
\[
\|\nabla c(t)\|_{L^4}
\leq K_T
+K_T\int_0^t(t-s)^{-1/2}\|\nabla c(s)\|_{L^4}\,ds
+K_T\int_0^t(t-s)^{-7/8}\,ds.
\]
Here the transport term is estimated in $L^4$, whereas the reaction
term is estimated in $L^2$ using \eqref{eq:fixed-n-L2}.
The fractional Gronwall inequality gives a uniform $L^4$ bound
for $\nabla c$. Combining this with the preceding $H^1$ bound yields
\begin{equation}\label{eq:fixed-c-gradient-powers}
\sup_{0<t<S}\bigl(\|\nabla c(t)\|_{L^2}
+\|\nabla c(t)\|_{L^4}\bigr)\leq K_T.
\end{equation}

The cell flux satisfies
\[
\sup_{0<t<S}\|a_\tau(n)\nabla c+(\mathcal J_\varepsilon u)n\|_{L^2}
\leq\tau^{-1}\|\nabla c\|_{L^\infty L^2}
+\|(\mathcal J_\varepsilon u)\|_{L^\infty L^\infty}\|n\|_{L^\infty L^2}
\leq K_T.
\]
Its normal component vanishes on $\Gamma$. Hence the cell mild
formula and the $L^2$ to $L^4$ estimate for $\mathcal T_N$ imply
\[
\|n(t)\|_{L^4}
\leq C\|n_0^\varepsilon\|_{L^4}
+K_T\int_0^t(t-s)^{-7/8}\,ds\leq K_T.
\]
Together with \eqref{eq:fixed-c-gradient-powers}, this bounds the
same flux in $L^\infty(0,S;L^4)$. The $L^4$ to $L^\infty$
estimate for $\mathcal T_N$, again with singularity
$(t-s)^{-7/8}$, then gives $\|n\|_{L^\infty L^\infty}\leq K_T$.
Moreover,
\[
\sup_{0<t<S}\|u\cdot\nabla c+h_\tau(n)c\|_{L^4}
\leq\|u\|_{L^\infty L^\infty}\|\nabla c\|_{L^\infty L^4}
+\|c_0^\varepsilon\|_{L^\infty}\|n\|_{L^\infty L^4}
\leq K_T.
\]
Applying the $L^4$ to $L^\infty$ gradient heat estimate to the
concentration mild formula, we conclude that
\begin{equation}\label{eq:fixed-scalar-Linfty}
\sup_{0<t<S}\bigl(\|n(t)\|_{L^\infty}
+\|\nabla c(t)\|_{L^\infty}\bigr)\leq K_T.
\end{equation}

It remains to control the other Sobolev norms in the fixed-point space.
The cell equation reads
\[
n_t-\Delta n=-(\mathcal J_\varepsilon u)\cdot\nabla n-a_\tau'(n)\nabla n\cdot\nabla c
-a_\tau(n)\Delta c.
\]
Since $0\leq a_\tau'(n)\leq1$, we have
\[
\begin{aligned}
&\|(\mathcal J_\varepsilon u)\cdot\nabla n+a_\tau'(n)\nabla n\cdot\nabla c+a_\tau(n)\Delta c\|_{L^2L^2}\\
&\leq\bigl(\|(\mathcal J_\varepsilon u)\|_{L^\infty L^\infty}
+\|\nabla c\|_{L^\infty L^\infty}\bigr)
\|\nabla n\|_{L^2L^2}
+\tau^{-1}\|\Delta c\|_{L^2L^2}\\
&\leq K_T.
\end{aligned}
\]
Applying \eqref{eq:Neumann-high-order-estimate} with $m=0$ to $n$ gives
\[
\|n\|_{L^\infty H^1}+\|n\|_{L^2H^2}
+\|n_t\|_{L^2L^2}\leq K_T.
\]

Finally, $0\leq h_\tau'(n)\leq1$ and $h_\tau(n)\leq n$ give
\[
\begin{aligned}
\|\nabla \bigl(u\cdot\nabla c+h_\tau(n)c\bigr)\|_{L^2L^2}
\leq{}&S^{1/2}\|\nabla u\|_{L^\infty L^4}
\|\nabla c\|_{L^\infty L^4}
+\|u\|_{L^\infty L^\infty}\|D^2c\|_{L^2L^2}\\
&+\|c\|_{L^\infty L^\infty}\|\nabla n\|_{L^2L^2}
+\|n\|_{L^\infty L^\infty}\|\nabla c\|_{L^2L^2}
\leq K_T.
\end{aligned}
\]
Thus $\|\bigl(u\cdot\nabla c+h_\tau(n)c\bigr)\|_{L^2H^1}\leq K_T$. Since
$c_0^\varepsilon\in H_N^2$, applying
\eqref{eq:Neumann-high-order-estimate} with $m=1$ yields
\[
\|c\|_{L^\infty H^2}+\|c\|_{L^2H^3}
+\|c_t\|_{L^2H^1}\leq K_T.
\]
The standard time-continuity part of the linear regularity estimates
gives $n\in C([0,S];H^1)$ and $c\in C([0,S];H_N^2)$.
The mild formulas give continuity in the remaining fixed-point
norms: their time singularities are integrable, and the regularized
initial data are smooth and boundary compatible.

We have therefore bounded every component of
\eqref{eq:fixed-point-norm}, as well as all the norms in
\eqref{eq:fixed-parameter-high-order}, by constants independent of
$S<\min\{T,T_*\}$. If $T_*<\infty$, choose $T>T_*$ and let
$S\uparrow T_*$. The local contraction argument can then be
restarted at any $t_0<T_*$ with an existence time bounded below by
a positive constant depending only on the preceding uniform bounds
and the fixed parameters. Choosing $t_0$ sufficiently close to
$T_*$ extends the solution beyond $T_*$; uniqueness ensures agreement
with the original solution on their common interval. This
contradicts the maximality of $T_*$. Hence $T_*=\infty$, and
\eqref{eq:fixed-parameter-high-order} follows for every finite $T$.
These strong-solution bounds, together with spatial cutoffs and
time regularization, also justify the subsequent energy calculations.

\end{proof}

\subsection{Uniform entropy and integrability estimates}
\label{subsec:uniform-estimates-pressure}

The constants in this subsection are independent of
$\varepsilon$ and $\tau$. We use the time-independent data constant
$C_{\rm in}$ introduced in \eqref{eq:data-constant}; its value may
increase from line to line, with the same dependence on the data.

\begin{proposition}[Uniform entropy-energy estimate with explicit time dependence]
\label{prop:uniform-energy}
Let $(n_0^\varepsilon,c_0^\varepsilon,u_0^\varepsilon)$ satisfy
\begin{equation}
\left\{
\begin{aligned}
& n_0^\varepsilon \in L^1,\qquad
(n_0^\varepsilon+1)\log(n_0^\varepsilon+1)\in L^1,
\qquad u_0^\varepsilon\in L_\sigma^2,\\
& \nabla\sqrt{\tilde{c_0}^\varepsilon}\in L^2,\qquad
c_0^\varepsilon\in L^1\cap L^\infty,\\
& n_0^\varepsilon\geq 0,\qquad c_0^\varepsilon\geq 0.
\end{aligned}
\right.
\end{equation}

Assume that $(n^{\varepsilon,\tau},c^{\varepsilon,\tau},
u^{\varepsilon,\tau})$ is a solution of \eqref{eq:regularized-half-space-system}.
Then there exists a constant $C>0$ independent of
$\varepsilon$ and $\tau$ such that for every \(T<\infty\),
\begin{equation}
\label{eq:uniform-energy}
\begin{aligned}
&\sup_{0<t<T}\left\{\int_{\mathbb H^3}n^{\varepsilon,\tau}(t)+H(n^{\varepsilon,\tau}(t))\,dx
+\|\nabla \sqrt{\tilde{c}^{\varepsilon,\tau}}(t)\|_{L^2}^2
+\|u^{\varepsilon,\tau}(t)\|_{L^2}^2\right\}\\
&\quad+\int_0^T\int_{\mathbb H^3}\frac{|\nabla n^{\varepsilon,\tau}|^2}{1+n^{\varepsilon,\tau}}\,dxdt
+\int_0^T\|\nabla^2\sqrt{\tilde{c}^{\varepsilon,\tau}}\|_{L^2}^2\,dt
+\int_0^T\|\nabla u^{\varepsilon,\tau}\|_{L^2}^2\,dt\\
&\quad+\int_0^T\int_{\mathbb H^3}(\sqrt{\tilde{c}^{\varepsilon,\tau}})^{-2}|\nabla \sqrt{\tilde{c}^{\varepsilon,\tau}}|^4\,dxdt
+\int_0^T\int_{\mathbb H^3}h_\tau(n^{\varepsilon,\tau})|\nabla \sqrt{\tilde{c}^{\varepsilon,\tau}}|^2\,dxdt\\
&\leq C_{\rm in}(1+T),
\end{aligned}
\end{equation}
where the constant is independent of \(\varepsilon\) and \(\tau\). Since \(\sqrt{\tilde{c}^{\varepsilon,\tau}}=\sqrt{1+c^{\varepsilon,\tau}}\geq1\), every negative power in \eqref{eq:uniform-energy} is bounded and no \(c+\delta\) regularization is required.
\end{proposition}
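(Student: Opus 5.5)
The estimate combines three differential identities for the regularized system \eqref{eq:regularized-half-space-system}: the truncated entropy identity for $n$, a Bregman-type identity for $|\nabla\sqrt{\tilde c}|^2$, and the kinetic energy identity for $u$. The cross terms cancel exactly because of the link $a_\tau(s)=sh_\tau'(s)$ in \eqref{eq:regularizing-identity}. All computations are done on the strong solutions of Proposition~\ref{prop:regularized-global}, with the cutoff $\chi_R$ inserted first and then $R\to\infty$. Flat-boundary integrals vanish by $u=\mathcal J_\varepsilon u=0$ and $\partial_\nu n=\partial_\nu c=0$, using the boundary formulas listed in Section~\ref{subsec:boundary-compatible-approximation}. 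Write $n,c,u$ for $n^{\varepsilon,\tau},c^{\varepsilon,\tau},u^{\varepsilon,\tau}$.

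\emph{Step 1: density entropy.} Test the $n$-equation with $\log(1+n)$, and in a second version with $h_\tau$-adapted weights. The transport term drops because $\nabla\cdot\mathcal J_\varepsilon u=0$ and the trace vanishes. This gives
\[
\frac{d}{dt}\int H(n)+\int\frac{|\nabla n|^2}{1+n}
=\int \frac{a_\tau(n)}{1+n}\nabla c\cdot\nabla n .
\]
To obtain an exact cancellation with the signal term, I would instead run the standard Winkler-type functional on $\int n\log n$ (or $\int (1+n)\log(1+n)$). The chemotactic term is then $\int h_\tau'(n)\,n^{-1}a_\tau(n)\ldots$. My working choice is to rewrite it as $-\int h_\tau(n)\Delta c$ plus a harmless remainder, via $\nabla\cdot(a_\tau(n)\nabla c)$ and $a_\tau=sh_\tau'$. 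Mass conservation from \eqref{eq:positivity-mass-maximum} controls $\int n$.

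\emph{Step 2: signal gradient.} Set $w=\sqrt{\tilde c}$. Then $w_t+u\cdot\nabla w-\Delta w=|\nabla w|^2/w-h_\tau(n)\tilde c^{-1}c\,w/2$. Testing with $-\Delta w$ (or differentiating $\int|\nabla w|^2$) gives the dissipation $\|\nabla^2 w\|_{L^2}^2$, where $\|\Delta w\|=\|\nabla^2 w\|$ by the Neumann identity above. The weighted-curvature inequality gives the coercive $\int w^{-2}|\nabla w|^4$, with boundary term zero on the flat boundary. The convective term $\int \nabla w\cdot\nabla u\cdot\nabla w$ must be handled with $|w|\le M_c^{1/2}$. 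The reaction produces $\int h_\tau(n)|\nabla w|^2$ as a good term, plus the cross term $\int h_\tau(n)\,c\tilde c^{-1}\Delta w\ldots$. After multiplying by a suitable constant, this cross term cancels the entropy cross term up to pieces bounded by $\frac12$ of the dissipations.

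\emph{Step 3: velocity, and closing.} Test with $u$. Since $\nabla\cdot\mathcal J_\varepsilon u=0$, $\frac12\frac{d}{dt}\|u\|^2+\|\nabla u\|^2=-\int\mathcal K_\varepsilon(n\nabla\phi)\cdot u$. This is bounded by $M_\phi\|n\|_{L^{6/5}}\|u\|_{L^6}$. I would interpolate $\|n\|_{L^{6/5}}$ between $\|n\|_{L^1}$ and $\|\sqrt{1+n}\|_{L^6}^2$. That yields $\le\delta\|\nabla u\|^2+\delta\int|\nabla n|^2/(1+n)+C(M_\phi,\|n_0\|_{L^1})$, and the additive constant is exactly what produces the factor $(1+T)$. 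I would add the three identities with weights (small weight on the velocity), absorb, and integrate in time. The initial terms are bounded through \eqref{eq:initial-data-uniform-control}. Since $\nabla\sqrt{\tilde c_0}$ is controlled by $\nabla\sqrt{c_0}$ because $\sqrt{1+c}\ge\sqrt c$ pointwise in the gradient ratio, this gives $C_{\rm in}(1+T)$.

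The main obstacle is Step 2: making the cross terms cancel exactly with the truncations $a_\tau,h_\tau$ and the shift $\tilde c=1+c$. The reaction term contains $c/\tilde c$ rather than $1$, so the cancellation is not literal. I expect to use $0\le c/\tilde c\le 1$ and $s(h_\tau')^2\le h_\tau$ from \eqref{eq:regularizing-identity}, together with Young's inequality, to absorb the mismatch into $\int h_\tau(n)|\nabla w|^2$ and the Fisher information. This is where the specific constants $2$ and $4/7$ appearing in \eqref{le} would be fixed.
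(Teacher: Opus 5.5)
Your architecture is the paper's: an entropy test, the signal functional for $w=\sqrt{\tilde c}$, kinetic energy, the link $a_\tau=sh_\tau'$, and an $L^{6/5}$ interpolation for the force. The closing absorption fails as you state it, however.

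\textbf{The cross-term mismatch.} The mismatch is not only the factor $c/\tilde c$. You must use $H(n)=(1+n)\log(1+n)$: on $\mathbb H^3$ the negative part of $\int n\log n$ is not controlled by the mass, and the statement bounds $\int H(n)$. The entropy cross term is then $\int\frac{n}{1+n}\nabla h_\tau(n)\cdot\nabla c$, not $\int\nabla h_\tau(n)\cdot\nabla c$. Adding $\frac47$ times \eqref{eq:approx-c-combined}, the cross terms combine exactly into
\[
4\int\frac{h_\tau'(n)(n-c)}{\sqrt{1+n}\,\sqrt{\tilde c}}\,\nabla\sqrt{1+n}\cdot\nabla w-2\int\frac{h_\tau(n)}{\tilde c}|\nabla w|^2 .
\]
Young's inequality, with $(n-c)^2/(1+n)\le n+c^2$ and $n(h_\tau')^2\le h_\tau$, bounds this by
\[
2\|\nabla\sqrt{1+n}\|_{L^2}^2+2\int\frac{c^2}{1+c}|\nabla w|^2 .
\]
The last term carries no factor $h_\tau(n)$; note $h_\tau(0)=0$ but $h_\tau'(0)=1$. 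So it can be absorbed neither into $\int h_\tau(n)|\nabla w|^2$ nor into the Fisher information, which is what your plan proposes. The paper bounds it by $\frac12\|\nabla c\|_{L^2}^2$. It then closes with an estimate missing from your plan: the separate $L^2$ identity for $c$, which gives $\int_0^T\|\nabla c\|_{L^2}^2\le\frac12\|c_0^\varepsilon\|_{L^2}^2$. Alternatively, Young's inequality against the quartic dissipation together with $c\in L^1\cap L^\infty$ would also work, at the cost of a $CT$ term.

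\textbf{The velocity weight.} The velocity must carry a large weight, not a small one. The coupling $\int\nabla u:(\nabla w\otimes\nabla w)$ can only be absorbed as in \eqref{eq:approx-fluid-coupling}, namely by $\eta\int\tilde c^{-1}|\nabla w|^4+\frac{M_c}{4\eta}\|\nabla u\|_{L^2}^2$. The kinetic energy therefore needs weight of order $M_c$ relative to the signal functional; the paper uses $112M_c$ against $2$. The force term still closes with this large weight, because the coefficient in front of $\|\nabla\sqrt{1+n}\|_{L^2}^2$ in your interpolation can be taken as small as needed. Interpolate with $\sqrt{1+n}-1$, however, since $\sqrt{1+n}\notin L^6(\mathbb H^3)$.
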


\begin{proof}

We omit the approximation indices and recall that
$H(n)=(1+n)\log(1+n)$. The entropy test gives
\[
\frac{d}{dt}\int H(n)
+4\int|\nabla\sqrt{1+n}|^2
=\int\frac{n h_\tau'(n)}{1+n}\nabla n\cdot\nabla c.
\]
Use \eqref{eq:approx-c-combined}, proved below by integrations by parts,
with cutoffs tending to one, multiply it by $4/7$, and estimate the
fluid coupling by \eqref{eq:approx-fluid-coupling}. Adding the velocity
energy identity multiplied by $112M_c$, we obtain
\begin{align*}
&\frac{d}{dt}\int
\bigl(H(n)+2|\nabla\sqrt{1+c}|^2+112M_c|u|^2\bigr)
\\ &\qquad+4\|\nabla\sqrt{1+n}\|_{L^2}^2
+\frac47\|D^2\sqrt{1+c}\|_{L^2}^2
+2\int h_\tau(n)|\nabla\sqrt{1+c}|^2\\
&\qquad
+\frac14\int\frac{|\nabla\sqrt{1+c}|^4}{1+c}
+112M_c\|\nabla u\|_{L^2}^2
\\
&\leq\int\frac{4h_\tau'(n)(n-c)}{\sqrt{1+n}\sqrt{1+c}}
\nabla\sqrt{1+n}\cdot\nabla\sqrt{1+c}\\
&\qquad
-2\int\frac{h_\tau(n)}{1+c}|\nabla\sqrt{1+c}|^2
-224M_c\int \mathcal K_\varepsilon(n\nabla\phi)\cdot u.
\end{align*}
In particular, the factor $n/(1+n)$ in the entropy equation is retained.
Since $0\leq h_\tau'(n)\leq1$,
$n(h_\tau'(n))^2\leq h_\tau(n)$ and
$(n-c)^2/(1+n)\leq n+c^2$, Young's inequality gives
\begin{align*}
&\int\frac{4h_\tau'(n)(n-c)}{\sqrt{1+n}\sqrt{1+c}}
\nabla\sqrt{1+n}\cdot\nabla\sqrt{1+c}
-2\int\frac{h_\tau(n)}{1+c}|\nabla\sqrt{1+c}|^2\\
&\leq2\|\nabla\sqrt{1+n}\|_{L^2}^2\\
&\quad+2\int
\frac{(h_\tau'(n))^2(n-c)^2/(1+n)-h_\tau(n)}{1+c}
|\nabla\sqrt{1+c}|^2\\
&\leq2\|\nabla\sqrt{1+n}\|_{L^2}^2
+2\int\frac{c^2}{1+c}|\nabla\sqrt{1+c}|^2\\
&\leq2\|\nabla\sqrt{1+n}\|_{L^2}^2
+\tfrac12\|\nabla c\|_{L^2}^2.
\end{align*}

For the force, mass conservation gives
\[
\|\sqrt{1+n}-1\|_{L^2}^2\leq\|n_0^\varepsilon\|_{L^1},
\qquad
\nabla(\sqrt{1+n}-1)=\nabla\sqrt{1+n}.
\]
Splitting at $n=1$, applying the half-space Gagliardo--Nirenberg
inequality to $\sqrt{1+n}-1$, and interpolating between $L^1$
and $L^{5/3}$ yield
\[
\begin{aligned}
\int n^{5/3}
&\leq C\|n_0^\varepsilon\|_{L^1}^{2/3}
\|\nabla\sqrt{1+n}\|_{L^2}^2
+C\|n_0^\varepsilon\|_{L^1},\\
\|n\|_{L^{6/5}}^2
&\leq\|n_0^\varepsilon\|_{L^1}^{7/6}
\left(\int n^{5/3}\right)^{1/2}.
\end{aligned}
\]
Consequently, for each $\eta>0$,
\[
\|n\|_{L^{6/5}}^2
\leq\eta\|\nabla\sqrt{1+n}\|_{L^2}^2
+C_{\eta,\|n_0^\varepsilon\|_{L^1}}.
\]
Sobolev embedding and the $L^{6/5}$-boundedness of
$\mathcal K_\varepsilon$ therefore imply
\[
224M_c\left|\int \mathcal K_\varepsilon(n\nabla\phi)\cdot u\right|
\leq56M_c\|\nabla u\|_{L^2}^2
+\|\nabla\sqrt{1+n}\|_{L^2}^2+C_{\rm in}.
\]
Finally, the concentration $L^2$ identity gives
\[
\int_0^T\|\nabla c\|_{L^2}^2\,dt
\leq\frac12\|c_0^\varepsilon\|_{L^2}^2.
\]
Integration, mass conservation, and
\eqref{eq:initial-data-uniform-control} prove \eqref{eq:uniform-energy},
after enlarging $C_{\rm in}$. Every constant in this argument is
independent of $\varepsilon$ and $\tau$.

\end{proof}

\paragraph{Explicit spacetime consequences.}
For every \(T<\infty\), uniformly in \(\varepsilon,\tau\),
\begin{equation}
\label{eq:uniform-integrability}
\begin{aligned}
&\|n^{\varepsilon,\tau}\|_{L^{5/3}((0,T)\times\mathbb H^3)}
\leq C_{\rm in}(1+T)^{3/5},\\
&\|u^{\varepsilon,\tau}\|_{L^{10/3}((0,T)\times\mathbb H^3)}
+\|\mathcal J_\varepsilon u^{\varepsilon,\tau}\|_{L^{10/3}((0,T)\times\mathbb H^3)}
\leq C_{\rm in}(1+T)^{1/2},\\
&\|u^{\varepsilon,\tau}\|_{L^{10}(0,T;L^{30/13})}
+\|\mathcal J_\varepsilon u^{\varepsilon,\tau}\|_{L^{10}(0,T;L^{30/13})}
\leq C_{\rm in}(1+T)^{1/2}.
\end{aligned}
\end{equation}

To prove the global $L^{5/3}$ estimate for
$n^{\varepsilon,\tau}$, set
\[
w^{\varepsilon,\tau}
:=
\sqrt{1+n^{\varepsilon,\tau}}-1.
\]
Then
\[
|w^{\varepsilon,\tau}|^2
\leq n^{\varepsilon,\tau},
\qquad
\nabla w^{\varepsilon,\tau}
=
\nabla\sqrt{1+n^{\varepsilon,\tau}},
\]
and therefore, by mass conservation,
\[
\|w^{\varepsilon,\tau}(t)\|_{L^2(\mathbb H^3)}^2
\leq
\|n_0^\varepsilon\|_{L^1(\mathbb H^3)}
\leq C_{\rm in}.
\]
The Gagliardo--Nirenberg inequality on the half-space gives
\[
\|w^{\varepsilon,\tau}\|_{L^{10/3}}^{10/3}
\leq
C
\|w^{\varepsilon,\tau}\|_{L^2}^{4/3}
\|\nabla w^{\varepsilon,\tau}\|_{L^2}^{2}.
\]
Since
\[
(n^{\varepsilon,\tau})^{5/3}
\leq
C|w^{\varepsilon,\tau}|^{10/3}
+
Cn^{\varepsilon,\tau},
\]
we obtain
\[
\int_{\mathbb H^3}
(n^{\varepsilon,\tau})^{5/3}\,dx
\leq
C_{\rm in}
\int_{\mathbb H^3}
\left|
\nabla\sqrt{1+n^{\varepsilon,\tau}}
\right|^2dx
+
C_{\rm in}.
\]
Integrating in time and using
\eqref{eq:uniform-energy} yields
\[
\int_0^T\int_{\mathbb H^3}
(n^{\varepsilon,\tau})^{5/3}\,dxdt
\leq
C_{\rm in}(1+T).
\]

This proves the first estimate in \eqref{eq:uniform-integrability}.
We claim that, for every \(T<\infty\),
\begin{equation}
\label{eq:global-pressure-source-bounds}
\begin{aligned}
&\left\|
(\mathcal J_\varepsilon u^{\varepsilon,\tau})\cdot\nabla u^{\varepsilon,\tau}
\right\|_
{L^{5/3}(0,T;L^{15/14}(\mathbb H^3))}
\leq
C_{\rm in}(1+T),
\\
&\left\|
\mathcal K_\varepsilon(n^{\varepsilon,\tau}\nabla\phi)
\right\|_
{L^{5/3}(0,T;L^{15/14}(\mathbb H^3))}
\leq
C_{\rm in}(1+T)^{3/5}.
\end{aligned}
\end{equation}
In particular,
\begin{equation}
\label{eq:global-G-bound}
\left\|
\bigl((\mathcal J_\varepsilon u^{\varepsilon,\tau})\cdot\nabla u^{\varepsilon,\tau}+\mathcal K_\varepsilon(n^{\varepsilon,\tau}\nabla\phi)\bigr)
\right\|_
{L^{5/3}(0,T;L^{15/14}(\mathbb H^3))}
\leq
C_{\rm in}(1+T),
\end{equation}
uniformly with respect to \(\varepsilon\) and \(\tau\).

Indeed, by global
energy estimate,
\[
\begin{aligned}
&
\left\|
(\mathcal J_\varepsilon u^{\varepsilon,\tau})\cdot\nabla u^{\varepsilon,\tau}
\right\|_
{L^{5/3}(0,T;L^{15/14}(\mathbb H^3))}
\\
&\qquad\leq
\left\|
(\mathcal J_\varepsilon u^{\varepsilon,\tau})
\right\|_
{L^{10}(0,T;L^{30/13}(\mathbb H^3))}
\left\|
\nabla u^{\varepsilon,\tau}
\right\|_
{L^2(0,T;L^2(\mathbb H^3))}
\\
&\qquad\leq
C_{\rm in}(1+T).
\end{aligned}
\]

For the force term, mass conservation and interpolation between
\(L^1(\mathbb H^3)\) and \(L^{5/3}(\mathbb H^3)\) give, for almost
every \(t\),
\[
\left\|n^{\varepsilon,\tau}(t)\right\|_{L^{15/14}}
\leq
\left\|n^{\varepsilon,\tau}(t)\right\|_{L^1}^{5/6}
\left\|n^{\varepsilon,\tau}(t)\right\|_{L^{5/3}}^{1/6}.
\]
Consequently,
\begin{align*}
&
\left\|
n^{\varepsilon,\tau}
\right\|_
{L^{5/3}(0,T;L^{15/14}(\mathbb H^3))}
\\
&\qquad\leq
\left(
\sup_{0<t<T}
\left\|n^{\varepsilon,\tau}(t)\right\|_{L^1}
\right)^{5/6}
T^{1/2}
\left\|
n^{\varepsilon,\tau}
\right\|_
{L^{5/3}((0,T)\times\mathbb H^3)}^{1/6}
\\
&\qquad\leq
C_{\rm in}(1+T)^{3/5}.
\end{align*}
Since $\mathcal K_\varepsilon$ is bounded on
$L^{15/14}(\mathbb H^3)$,
we obtain
\[
\left\|
\mathcal K_\varepsilon(n^{\varepsilon,\tau}\nabla\phi)
\right\|_
{L^{5/3}(0,T;L^{15/14}(\mathbb H^3))}
\leq
C\|\nabla\phi\|_{L^\infty}
\left\|
n^{\varepsilon,\tau}
\right\|_
{L^{5/3}(0,T;L^{15/14}(\mathbb H^3))},
\]
which proves \eqref{eq:global-pressure-source-bounds}.

\subsection{Local pressure estimates away from the initial time}
\label{subsec:local-pressure-estimates}

\begin{proposition}[Uniform local pressure estimate away from the initial time]
\label{prop:uniform-pressure}
Let
\[
0<\sigma<T<\infty,
\]
and let $D\subset\mathbb H^3$ be a bounded Lipschitz domain whose closure
is compact in $\overline{\mathbb H^3}$. If $\overline D\cap\Gamma\neq\varnothing$,
we assume that the corresponding portion of $\partial D$ agrees locally
with the flat boundary $\Gamma$. Then
\begin{equation}
\label{eq:pressure-five-thirds}
\left\|
p^{\varepsilon,\tau}
-
[p^{\varepsilon,\tau}]_D(t)
\right\|_
{L^{5/3}((\sigma,T)\times D)}
\leq
C_{\sigma,D,C_{\rm in}}(1+T),
\end{equation}
where
\[
[p^{\varepsilon,\tau}]_D(t)
:=
\fint_D p^{\varepsilon,\tau}(x,t)\,dx,
\]
and the constant is independent of $\varepsilon$ and $\tau$.
No uniform $L^{5/3}$ estimate for the pressure at the genuine initial
time $t=0$ is asserted.
\end{proposition}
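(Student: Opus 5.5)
We split the velocity with the global half-space Stokes decomposition announced in Section~\ref{subsec:key-ingredients}. Write $u^{\varepsilon,\tau}=v_1+v_2$, where $v_2(t)=e^{-tA}u_0^\varepsilon$ is the homogeneous Stokes evolution of the initial velocity with associated pressure $\pi_2$. The remainder $v_1$ solves the no-slip Stokes system in $\mathbb H^3\times(0,T)$ with zero initial data and right-hand side
\[
G:=-(\mathcal J_\varepsilon u^{\varepsilon,\tau})\cdot\nabla u^{\varepsilon,\tau}-\mathcal K_\varepsilon(n^{\varepsilon,\tau}\nabla\phi),
\]
and has pressure $\pi_1$. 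Then $p^{\varepsilon,\tau}=\pi_1+\pi_2$ modulo functions of time, and it suffices to bound $\pi_j-[\pi_j]_D(t)$ in $L^{5/3}((\sigma,T)\times D)$ for $j=1,2$ separately.

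\emph{Forced part.} By \eqref{eq:global-G-bound}, $G$ is bounded in $L^{5/3}(0,T;L^{15/14}(\mathbb H^3))$ by $C_{\rm in}(1+T)$, uniformly in $\varepsilon,\tau$. Maximal $L^q_tL^r_x$ regularity for the half-space Stokes system with zero initial data \cite{DHP2003} gives
\[
\|\nabla\pi_1\|_{L^{5/3}(0,T;L^{15/14})}\leq C\|G\|_{L^{5/3}(0,T;L^{15/14})}.
\]
The constant here is independent of $T$ by scaling invariance of the half-space problem. Since $D$ is a bounded Lipschitz domain, the Poincaré inequality gives $\|\pi_1-[\pi_1]_D\|_{L^{15/14}(D)}\leq C_D\|\nabla\pi_1\|_{L^{15/14}(D)}$. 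This is not yet $L^{5/3}$ in space, so we instead use the Sobolev--Poincaré inequality $W^{1,15/14}(D)\hookrightarrow L^{15/8}(D)$: the exponent $15/14$ satisfies $1/(15/14)-1/3=14/15-5/15=9/15$, so the embedding lands in $L^{5/3}$ with room to spare. This yields $\|\pi_1-[\pi_1]_D(t)\|_{L^{5/3}((0,T)\times D)}\leq C_DC_{\rm in}(1+T)$, already up to $t=0$.

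\emph{Homogeneous part.} This is the reason $\sigma>0$ is needed, since $u_0^\varepsilon$ is only controlled in $L^2_\sigma$ uniformly in $\varepsilon$. For $t\geq\sigma$, analyticity of the Stokes semigroup and the half-space Stokes resolvent estimates give
\[
\|\nabla\pi_2(t)\|_{L^2}\leq C\|Ae^{-tA}u_0^\varepsilon\|_{L^2}\leq Ct^{-1}\|u_0\|_{L^2}.
\]
The first inequality holds because $\nabla\pi_2=(I-\mathbb P_+)\Delta v_2$ is bounded by $\|\nabla^2v_2\|+\|Av_2\|$, and \eqref{eq:stokes-elliptic-graph} applies. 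Hence on $(\sigma,T)$ we get $\|\nabla\pi_2(t)\|_{L^2}\leq C\sigma^{-1}\|u_0\|_{L^2}$. The Poincaré inequality on $D$, together with $L^2(D)\subset L^{5/3}(D)$, then bounds $\|\pi_2-[\pi_2]_D\|_{L^{5/3}((\sigma,T)\times D)}$ by $C_{\sigma,D}T^{3/5}\|u_0\|_{L^2}$. Summing the two parts proves \eqref{eq:pressure-five-thirds}.

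\emph{Main obstacle.} I expect the hardest step to be the rigorous justification of the maximal-regularity and pressure-gradient estimates in the half-space with these exponents and $T$-independent constants, in particular identifying the pressure of the smooth approximate solution with $\pi_1+\pi_2$ modulo time functions. I would handle this through uniqueness for the Stokes problem within the class $\nabla p\in L^2L^2$ furnished by Proposition~\ref{prop:regularized-global}. A related point is checking that the mixed-exponent estimates hold for data merely in $L^{15/14}$, which is close to the endpoint $L^1$. If the half-space maximal regularity in $L^{15/14}$ is problematic, I would fall back on $L^{5/4}_tL^{5/4}_x$-type bounds for $G$ restricted to a neighbourhood of $D$, combined with Lemma~\ref{stokes} and Lemma~\ref{Seregin} localizations.
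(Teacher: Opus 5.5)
Your proposal is correct and follows the paper's proof essentially step for step. The paper splits the solution on the whole half-space into two parts. The first is a zero-initial-data forced Stokes problem, controlled by maximal $L^{5/3}_tL^{15/14}_x$ regularity together with the Sobolev--Poincar\'e inequality on $D$. The second is the homogeneous evolution $e^{-tA}u_0^\varepsilon$, whose pressure gradient obeys $\|\nabla p_2(t)\|_{L^2}\leq Ct^{-1}\|u_0^\varepsilon\|_{L^2}$; this is exactly where $\sigma>0$ enters.

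One arithmetic slip needs correcting. Since $\frac{14}{15}-\frac13=\frac35$ exactly, $W^{1,15/14}(D)$ embeds into $L^{5/3}(D)$ at the critical Sobolev exponent, not into $L^{15/8}(D)$ ``with room to spare''. The critical embedding is still precisely what the argument needs, so your conclusion stands.
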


\begin{proof}
By \eqref{eq:global-G-bound},
\begin{equation}
\label{eq:G-global-pressure}
\left\|
\bigl((\mathcal J_\varepsilon u^{\varepsilon,\tau})\cdot\nabla u^{\varepsilon,\tau}+\mathcal K_\varepsilon(n^{\varepsilon,\tau}\nabla\phi)\bigr)
\right\|_
{L^{5/3}(0,T;L^{15/14}(\mathbb H^3))}
\leq
C_{\rm in}(1+T).
\end{equation}

The crucial point is that the Stokes decomposition is performed on
the whole half-space, before any spatial localization.  This avoids
using the nonlocal Helmholtz projection on data known only in a
bounded subdomain.

Let
\((v^{\varepsilon,\tau},p_1^{\varepsilon,\tau})\)
be the solution of the zero-initial-data Stokes problem
\begin{equation}
\label{eq:global-pressure-stokes-forced}
\left\{
\begin{aligned}
\partial_t v^{\varepsilon,\tau}
-\Delta v^{\varepsilon,\tau}
+\nabla p_1^{\varepsilon,\tau}
&=
-\bigl((\mathcal J_\varepsilon u^{\varepsilon,\tau})\cdot\nabla u^{\varepsilon,\tau}+\mathcal K_\varepsilon(n^{\varepsilon,\tau}\nabla\phi)\bigr)
&&\text{in }(0,T)\times\mathbb H^3,
\\
\nabla\cdot v^{\varepsilon,\tau}&=0
&&\text{in }(0,T)\times\mathbb H^3,
\\
v^{\varepsilon,\tau}&=0
&&\text{on }(0,T)\times\Gamma,
\\
v^{\varepsilon,\tau}(\cdot,0)&=0
&&\text{in }\mathbb H^3.
\end{aligned}
\right.
\end{equation}
By the maximal $L^{5/3}_tL^{15/14}_x$-regularity of the
Dirichlet Stokes system in the half-space
(see \cite[Theorem~6.3 and the subsequent discussion,
p.~136]{DHP2001}), we obtain
\begin{equation}
\label{eq:global-stokes-maximal-pressure}
\begin{aligned}
&
\left\|
\partial_t v^{\varepsilon,\tau}
\right\|_
{L^{5/3}(0,T;L^{15/14})}
+
\left\|
\nabla^2 v^{\varepsilon,\tau}
\right\|_
{L^{5/3}(0,T;L^{15/14})}
\\
&\qquad
+
\left\|
\nabla p_1^{\varepsilon,\tau}
\right\|_
{L^{5/3}(0,T;L^{15/14})}\\
&\leq
C
\left\|
\bigl((\mathcal J_\varepsilon u^{\varepsilon,\tau})\cdot\nabla u^{\varepsilon,\tau}+\mathcal K_\varepsilon(n^{\varepsilon,\tau}\nabla\phi)\bigr)
\right\|_
{L^{5/3}(0,T;L^{15/14})}
\leq
C_{\rm in}(1+T),
\end{aligned}
\end{equation}
where the spatial norms in this estimate are over
\(\mathbb H^3\).

Since $14/15-1/3=3/5$,
the Sobolev--Poincar\'e inequality on the bounded domain \(D\) gives
\begin{equation}
\label{eq:p1-local-five-thirds}
\left\|
p_1^{\varepsilon,\tau}
-
[p_1^{\varepsilon,\tau}]_D(t)
\right\|_
{L^{5/3}(0,T;L^{5/3}(D))}
\leq
C_D
\left\|
\nabla p_1^{\varepsilon,\tau}
\right\|_
{L^{5/3}(0,T;L^{15/14}(\mathbb H^3))}
\leq
C_{D,C_{\rm in}}(1+T).
\end{equation}

For fixed $\varepsilon,\tau>0$, the strong-solution estimates
also give $\bigl((\mathcal J_\varepsilon u^{\varepsilon,\tau})\cdot\nabla u^{\varepsilon,\tau}+\mathcal K_\varepsilon(n^{\varepsilon,\tau}\nabla\phi)\bigr)\in L^2(0,T;L^2(\mathbb H^3))$ for the forcing
in the Stokes problem above. The Stokes semigroups and Helmholtz
projections are consistent on intersections of their finite-$L^r$
spaces \cite{DHP2001}. Consequently the zero-initial-data solution
$v^{\varepsilon,\tau}$ constructed by mixed-norm maximal regularity
coincides with the $L^2$ Stokes solution. This permits the following
identification in the global energy class.

We next isolate the part generated by the initial velocity.  Define
\[
w^{\varepsilon,\tau}
:=
u^{\varepsilon,\tau}
-
v^{\varepsilon,\tau},
\qquad
p_2^{\varepsilon,\tau}
:=
p^{\varepsilon,\tau}
-
p_1^{\varepsilon,\tau}.
\]
Then
\((w^{\varepsilon,\tau},p_2^{\varepsilon,\tau})\)
solves the homogeneous Stokes system
\begin{equation}
\label{eq:global-homogeneous-stokes-pressure}
\left\{
\begin{aligned}
\partial_t w^{\varepsilon,\tau}
-\Delta w^{\varepsilon,\tau}
+\nabla p_2^{\varepsilon,\tau}
&=0
&&\text{in }(0,T)\times\mathbb H^3,
\\
\nabla\cdot w^{\varepsilon,\tau}&=0,
\\
w^{\varepsilon,\tau}&=0
&&\text{on }(0,T)\times\Gamma,
\\
w^{\varepsilon,\tau}(\cdot,0)
&=
u_0^\varepsilon.
\end{aligned}
\right.
\end{equation}
Thus
\[
w^{\varepsilon,\tau}(t)
=
e^{-tA}u_0^\varepsilon,
\]
with the same Dirichlet Stokes operator $A$ on $L_\sigma^2(\mathbb H^3)$.

Analyticity of the Stokes semigroup gives
\[
\left\|
Ae^{-tA}u_0^\varepsilon
\right\|_{L^2}
\leq
Ct^{-1}
\left\|u_0^\varepsilon\right\|_{L^2},
\qquad t>0.
\]
Together with the half-space stationary Stokes estimate, this yields
\begin{equation}
\label{eq:homogeneous-pressure-decay}
\left\|
\nabla p_2^{\varepsilon,\tau}(t)
\right\|_{L^2(\mathbb H^3)}
\leq
Ct^{-1}
\left\|u_0^\varepsilon\right\|_{L^2},
\qquad t>0.
\end{equation}
Therefore, since \(D\) is bounded,
\begin{align}
&
\left\|
p_2^{\varepsilon,\tau}
-
[p_2^{\varepsilon,\tau}]_D(t)
\right\|_
{L^{5/3}(\sigma,T;L^{5/3}(D))}
\nonumber\\
&\qquad\leq
C_D
\left\|
\nabla p_2^{\varepsilon,\tau}
\right\|_
{L^{5/3}(\sigma,T;L^{15/14}(D))}
\nonumber\\
&\qquad\leq
C_D
\left\|
\nabla p_2^{\varepsilon,\tau}
\right\|_
{L^{5/3}(\sigma,T;L^2(\mathbb H^3))}
\nonumber\\
&\qquad\leq
C_D
\left(
\int_\sigma^T t^{-5/3}\,dt
\right)^{3/5}
\left\|u_0^\varepsilon\right\|_{L^2}
\nonumber\\
&\qquad\leq
C_{\sigma,D,C_{\rm in}}.
\label{eq:p2-local-five-thirds}
\end{align}

Finally,
\[
p^{\varepsilon,\tau}
-
[p^{\varepsilon,\tau}]_D(t)
=
\left(
p_1^{\varepsilon,\tau}
-
[p_1^{\varepsilon,\tau}]_D(t)
\right)
+
\left(
p_2^{\varepsilon,\tau}
-
[p_2^{\varepsilon,\tau}]_D(t)
\right).
\]
Combining
\eqref{eq:p1-local-five-thirds}
and
\eqref{eq:p2-local-five-thirds}
proves
\eqref{eq:pressure-five-thirds}.
\end{proof}

\begin{remark}
\label{rem:pressure-away-initial}
For every $0<\sigma<T<\infty$, Proposition~\ref{prop:uniform-pressure}
provides a uniform local pressure estimate on the time interval
$(\sigma,T)$, with a constant that may depend on $\sigma$ but is
independent of the regularization parameters $\varepsilon$ and $\tau$.
Consequently, for every compact interval
$I\Subset(0,\infty)$ and every bounded domain
$D\subset\mathbb H^3$ whose closure is compact in
$\overline{\mathbb H^3}$, the normalized pressures $p^{\varepsilon,\tau}
-
[p^{\varepsilon,\tau}]_D(t)$ are weakly
compact in $L^{5/3}(I\times D)$.
This local-in-time pressure control is sufficient for the subsequent
compactness and regularity arguments, since all distributional test
functions and all test functions entering the local energy inequality
are supported away from the genuine initial time $t=0$.
\end{remark}


\subsection{The local energy inequality for approximate solutions}
\label{subsec:approximate-local-energy}

We first record the localized energy inequality for the regularized solutions.

Let \(\psi\geq0\) be smooth on the closure of a bounded half-cylinder, vanish at the initial time and in a neighborhood of the nonflat lateral boundary, and satisfy \(\partial_3\psi=0\) on the flat boundary.
Here \(\overline p(s)\) denotes any spatially constant normalization on a ball containing the support of \(\psi(\cdot,s)\).

\begin{lemma}[Local energy inequality for the approximation]
\label{lem:approximate-local-energy}
To keep the displayed inequality compact, only in this lemma and its proof we omit the superscript \((\varepsilon,\tau)\) and write
\[
\begin{aligned}
(n,c,u,p)&=(n^{\varepsilon,\tau},c^{\varepsilon,\tau},u^{\varepsilon,\tau},p^{\varepsilon,\tau}),
&\qquad \tilde c&=1+c,\\
a&=a_\tau(n),\qquad h=h_\tau(n).
\end{aligned}
\]
For almost every \(t>0\),
\begin{align}
&\int_{\mathbb H^3}\!\big[n\log n+2|\nabla\sqrt{\tilde c}|^2+112M_c|u|^2\big]\psi(t)\,dx \nonumber\\
&\quad+\int_{0}^{t}\int_{\mathbb H^3}\!\Big[2|\nabla\sqrt n|^2+\frac47|\nabla^2\sqrt{\tilde c}|^2
+2h|\nabla\sqrt{\tilde c}|^2
+\frac14\tilde c^{-1}|\nabla\sqrt{\tilde c}|^4
+112M_c|\nabla u|^2\Big]\psi\,dxds \nonumber\\
&\leq\int_{0}^{t}\int_{\mathbb H^3}\!\Big[n\log n\big(\partial_s\psi+\Delta\psi+(\mathcal J_\varepsilon u)\cdot\nabla\psi\big)
+a(1+\log n)\nabla c\cdot\nabla\psi \nonumber\\
&\quad+2|\nabla\sqrt{\tilde c}|^2
\big(\partial_s\psi+\Delta\psi+u\cdot\nabla\psi\big) \nonumber\\
&\quad-\frac47\tilde c^{-1/2}|\nabla\sqrt{\tilde c}|^2
\nabla\sqrt{\tilde c}\cdot\nabla\psi
-\frac{16}{7}\tilde c^{-1/2}\sum_{i=1}^3
(\partial_i\sqrt{\tilde c})^3\partial_i\psi \nonumber\\
&\quad+112M_c\Big\{|u|^2
\big(\partial_s\psi+\Delta\psi+(\mathcal J_\varepsilon u)\cdot\nabla\psi\big)
\nonumber\\
&\qquad+2(p-\overline p(s))u\cdot\nabla\psi-2\mathcal K_\varepsilon(n\nabla\phi)\cdot u\,\psi\Big\}\Big]\,dxds.
\label{eq:approximate-local-energy}
  \end{align}
\end{lemma}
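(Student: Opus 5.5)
We derive \eqref{eq:approximate-local-energy} by testing the three regularized equations against explicit multipliers times $\psi$ and adding. The approximate solution is a smooth strong solution by Proposition~\ref{prop:regularized-global}, with $n\geq0$ and $0\le c\le M_c-1$. Since $\log n$ and $\nabla\sqrt n$ are singular where $n=0$, we first work with $n+\delta$ in place of $n$ and let $\delta\downarrow0$ at the end. The terms $(n+\delta)\log(n+\delta)$ converge by dominated convergence, since $n\in L^\infty$ locally. The dissipation term $|\nabla\sqrt{n+\delta}|^2=|\nabla n|^2/(4(n+\delta))$ increases monotonically to $|\nabla\sqrt n|^2$, which is what we want on the left-hand side. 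Because $\psi$ has compact support in the half-cylinder, no cutoff at infinity is needed. We then integrate in time over $(0,t)$, using $\psi(\cdot,0)=0$.

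\emph{Entropy part.} We multiply the cell equation by $(1+\log n)\psi$. The time and diffusion terms give $\frac{d}{dt}\int n\log n\,\psi$, the dissipation $\int 4|\nabla\sqrt n|^2\psi$, and $\int n\log n\,\Delta\psi$. Transport by $\mathcal J_\varepsilon u$ gives $\int n\log n\,(\mathcal J_\varepsilon u)\cdot\nabla\psi$, using the solenoidality of $\mathcal J_\varepsilon u$. The chemotactic term gives $\int a(1+\log n)\nabla c\cdot\nabla\psi+\int a\,n^{-1}\nabla n\cdot\nabla c\,\psi$. All flat-boundary integrals vanish because $\partial_3 n=\partial_3 c=0$, $\mathcal J_\varepsilon u=0$ on $\Gamma$, and $\partial_3\psi=0$ on $\Gamma$; the latter kills the term $\int_\Gamma n\log n\,\partial_\nu\psi$ produced when $\Delta$ is moved onto $\psi$. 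By \eqref{eq:regularizing-identity}, $a/n=h'_\tau(n)$, so the cross term is $\int h'(n)\nabla n\cdot\nabla c\,\psi$.

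\emph{Signal part.} Writing $w=\sqrt{\tilde c}$, the signal equation becomes
\[
w_t+u\cdot\nabla w-\Delta w=w^{-1}|\nabla w|^2-\tfrac12 h\,(w-w^{-1}).
\]
We differentiate this equation, test with $\nabla w\,\psi$, and run the Winkler/CLWW-type Bochner computation. Two integrations by parts produce $|\nabla^2w|^2\psi$. The term $w^{-1}|\nabla w|^2$ produces the good quantity $\frac14 w^{-2}|\nabla w|^4$ together with the cubic flux terms carrying $\nabla\psi$ and the specific constants $4/7$ and $16/7$. The identity $\|\nabla^2 z\|=\|\Delta z\|$ and the vanishing of $\nabla z\cdot\nabla^2z\,\nu$ on $\Gamma$, both shown earlier, together with $\partial_3\psi=0$, remove every boundary term. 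The reaction term yields $-h|\nabla w|^2\psi$, which we move to the left, and a cross term $-\int h'(n)\,\frac{\tilde c-1}{\tilde c}\,\nabla n\cdot\nabla w\,w\,\psi$. Multiplied by $2$, its leading part $-\int h'(n)\nabla n\cdot\nabla c\,\psi$ cancels the entropy cross term exactly; this cancellation is why $a_\tau=nh'_\tau$ was built in. The remainder, a term like $\int h' \nabla n\cdot\nabla c\,\tilde c^{-1}\psi$, is absorbed by Young's inequality. We use $n(h')^2\le h$ and $|\nabla n|^2/n=4|\nabla\sqrt n|^2$, which is why only $2|\nabla\sqrt n|^2$ (rather than $4$) and $2h|\nabla w|^2$ remain on the left.

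\emph{Fluid part.} We test the velocity equation by $224M_c\,u\psi$, using the no-slip boundary formulas recorded before Section~\ref{subsec:regularized-solvability}. Since $\nabla\cdot u=0$, we may replace $p$ by $p-\overline p(s)$. Adding the three parts and discarding the nonnegative leftovers from Young's inequality gives the claimed inequality, in fact an identity up to those discarded terms.

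\emph{Main obstacle.} The hardest step is the signal computation: getting the exact coefficients ($\tfrac47$, $\tfrac14$, $2$, $\tfrac{16}7$) while ensuring the $\tilde c$-shift leaves a cross term that is absorbable rather than of the wrong sign, with $M_c$ entering through $w^2\le M_c$. I would first follow CLWW2023's whole-space algebra verbatim with $c$ replaced by $\tilde c$, and then check separately that each boundary term carries a factor $\partial_3 w$, $\partial_{3i}w$ ($i=1,2$), or $\partial_3\psi$.
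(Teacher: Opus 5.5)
Your plan follows the paper's route: the multiplier $(1+\log n)\psi$, a Bochner-type test of the equation for $w=\sqrt{\tilde c}$ against $-\nabla\cdot(\psi\nabla w)$, the test $u\psi$, and exact cancellation of $\nabla h\cdot\nabla c$ through $a_\tau=nh_\tau'$. As written, however, it does not close, because you never account for the fluid--signal coupling.

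Differentiating the transport term gives $\nabla(u\cdot\nabla w)\cdot\nabla w=\nabla u:(\nabla w\otimes\nabla w)+\frac12u\cdot\nabla|\nabla w|^2$. The second piece yields the flux term $2|\nabla\sqrt{\tilde c}|^2u\cdot\nabla\psi$ in the statement. The first leaves $-4\iint\nabla u:(\nabla w\otimes\nabla w)\psi$, at the level of the energy $2|\nabla w|^2$. This term has no sign and is absent from the right-hand side of \eqref{eq:approximate-local-energy}, so ``adding the three parts'' does not give the claim.

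It must be absorbed as in \eqref{eq:approx-fluid-coupling}, using $|\nabla u||\nabla w|^2\le\frac1{112}w^{-2}|\nabla w|^4+28w^2|\nabla u|^2$ and $w^2=\tilde c\le M_c$. This is where $M_c$ actually enters, and why the velocity weight must be proportional to $M_c$. It also explains why testing with $224M_c\,u\psi$ leaves only $112M_c|\nabla u|^2$ on the left, and why the quartic coefficient ends at $\frac14=\frac27-\frac1{28}$. The shift remainder, to which you attribute the appearance of $M_c$, needs no $M_c$ at all.

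Two further steps are asserted rather than derived.

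First, $\nabla\bigl[-\frac12h(w-w^{-1})\bigr]\cdot\nabla w$ contains $-\frac12h(1+\tilde c^{-1})|\nabla w|^2$, not only $-\frac12h|\nabla w|^2$. The extra $h\tilde c^{-1}|\nabla w|^2$ is exactly what absorbs the Young bound $2h_\tau'(n)w^{-1}|\nabla n\cdot\nabla w|\le2|\nabla\sqrt n|^2+2h\tilde c^{-1}|\nabla w|^2$ of the shift remainder. Without it you cannot keep both $2|\nabla\sqrt n|^2$ and $2h|\nabla w|^2$ on the left.

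Second, the nonlinear diffusion term $w^{-1}|\nabla w|^2$ does not simply ``produce'' the good quartic. It contributes $I=-\iint w^{-1}|\nabla w|^2\Delta w\,\psi$, which has no sign, and plain Young's inequality puts the quartic on the wrong side. The paper integrates $I$ by parts again and splits diagonal and off-diagonal second derivatives to get \eqref{eq:approx-I-refined}. It then adds \eqref{eq:approx-c-basic} to six times \eqref{eq:approx-c-refined}, using the rough bound \eqref{eq:approx-I-rough}. This combination is the source of the factor $7$, and hence of $\frac47$ and $\frac{16}7$. Each of these integrations by parts must also be checked to produce no flat-boundary term. This is the heart of the lemma, and your proposal leaves it as an assertion deferred to the whole-space algebra.
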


\begin{proof}
We keep every modification caused by the regularization. In this proof we suppress \((\varepsilon,\tau)\) and write
\[
a=a_\tau(n),\qquad h=h_\tau(n),\qquad \tilde c=1+c.
\]
We use \(\iint\) for integration over \((0,t)\times\mathbb H^3\) and \(\iint_\Gamma\) for integration over \((0,t)\times\Gamma\). The calculation is first performed with a compact spatial cutoff, so the only boundary that remains is \(\Gamma\). Its outward normal is \(\nu=-e_3\). On \(\Gamma\),
\[
(\mathcal J_\varepsilon u)=u=0,\qquad \partial_\nu n=\partial_\nu c=\partial_\nu \sqrt{\tilde{c}}=0,
\qquad \partial_\nu\psi=0.
\]
Since the solution is smooth, \(\partial_{i3}\sqrt{\tilde{c}}=\partial_i(\partial_3\sqrt{\tilde{c}})=0\) on \(\Gamma\) for \(i=1,2\). We do not suppress any boundary integral below: each one is displayed and then evaluated by these conditions. Notice also that \(\sqrt{\tilde{c}}\geq1\). Therefore the factors \(\sqrt{\tilde{c}}^{-1}\) and \(\sqrt{\tilde{c}}^{-2}\) are bounded and the concentration calculation contains no singular negative power.

\emph{The cell equation.} Multiply
\[
\partial_tn+(\mathcal J_\varepsilon u)\cdot\nabla n-\Delta n+\nabla\cdot(a\nabla c)=0
\]
by \((1+\log n)\psi\). If \(n\) can vanish, replace \(\log n\) by \(\log(n+\delta)\) and let \(\delta\downarrow0\) at the end. At the $\delta$-level the chemotactic term contains the factor
\[
\frac{n}{n+\delta}h_\tau'(n),
\]
rather than $h_\tau'(n)$.  Since
\[
0\leq\frac{n}{n+\delta}\leq1
\]
and
\[
\frac{n}{n+\delta}\longrightarrow1
\qquad\text{a.e. on }\{n>0\},
\]
all identities used below are first derived at fixed $\delta>0$,
and the displayed cancellation is recovered after letting
$\delta\downarrow0$ by dominated convergence.

Since \(\psi(\cdot,0)=0\), the time term is
\[
\iint\partial_sn\,(1+\log n)\psi=\int_{\mathbb H^3}n\log n\,\psi(t)\,dx-\iint n\log n\,\partial_s\psi.
\]
For the transport term,
\begin{align*}
&\iint (\mathcal J_\varepsilon u)\cdot\nabla(n\log n)\,\psi\\
&=\iint_\Gamma n\log n\,\psi\,(\mathcal J_\varepsilon u)\cdot\nu
-\iint n\log n\,(\mathcal J_\varepsilon u)\cdot\nabla\psi\\
&=-\iint n\log n\,(\mathcal J_\varepsilon u)\cdot\nabla\psi,
\end{align*}
because \((\mathcal J_\varepsilon u)=0\) on \(\Gamma\). For the diffusion term,
\begin{align*}
&-\iint\Delta n\,(1+\log n)\psi\\
&={}-\iint_\Gamma\partial_\nu n(1+\log n)\psi
+\iint\frac{|\nabla n|^2}{n}\psi
+\iint(1+\log n)\nabla n\cdot\nabla\psi\\
&={}4\iint|\nabla\sqrt n|^2\psi
-\iint n\log n\,\Delta\psi
+\iint_\Gamma n\log n\,\partial_\nu\psi\\
&={}4\iint|\nabla\sqrt n|^2\psi
-\iint n\log n\,\Delta\psi.
\end{align*}
The first boundary integral is zero because \(\partial_\nu n=0\), and the second is zero because \(\partial_\nu\psi=0\). Finally,
\begin{align*}
&\iint\nabla\cdot(a\nabla c)(1+\log n)\psi\\
&={}\iint_\Gamma a\partial_\nu c(1+\log n)\psi
-\iint\frac{a}{n}\nabla n\cdot\nabla c\,\psi
-\iint a(1+\log n)\nabla c\cdot\nabla\psi\\
&={}-\iint\nabla h\cdot\nabla c\,\psi
-\iint a(1+\log n)\nabla c\cdot\nabla\psi,
\end{align*}
because \(\partial_\nu c=0\) on \(\Gamma\).
Here we used \(a=nh_\tau'(n)\), hence \(a\nabla n/n=\nabla h\). Adding the four contributions gives
\begin{align}
&\int_{\mathbb H^3}n\log n\,\psi(t)\,dx+4\iint|\nabla\sqrt n|^2\psi\notag\\
&\quad=\iint n\log n(\partial_s\psi+\Delta\psi)+\iint n\log n\,(\mathcal J_\varepsilon u)\cdot\nabla\psi
+\iint\nabla h\cdot\nabla c\,\psi\notag\\&\quad
+\iint a(1+\log n)\nabla c\cdot\nabla\psi.
\label{eq:approx-cell-local}
\end{align}

\emph{The concentration equation.} Because \(\tilde c=1+c\),
\[
\partial_t\tilde c+u\cdot\nabla\tilde c-\Delta\tilde c=-h\tilde c+h.
\]
Using \(\Delta\tilde c=2|\nabla\sqrt{\tilde{c}}|^2+2\sqrt{\tilde{c}}\Delta \sqrt{\tilde{c}}\), we obtain
\begin{equation}
\label{eq:approx-s-equation}
\partial_t\sqrt{\tilde{c}}+u\cdot\nabla \sqrt{\tilde{c}}-(\sqrt{\tilde{c}})^{-1}|\nabla \sqrt{\tilde{c}}|^2-\Delta \sqrt{\tilde{c}}=-\frac12h\sqrt{\tilde{c}}+\frac12h(\sqrt{\tilde{c}})^{-1}.
\end{equation}
Multiply \eqref{eq:approx-s-equation} by \(-\partial_i(\partial_i\sqrt{\tilde{c}}\,\psi)\), sum over \(i\), and integrate.

For the time derivative,
\begin{align*}
&-\iint\partial_s\sqrt{\tilde c}\,\nabla\cdot(\psi\nabla\sqrt{\tilde c})\\
&=-\iint_\Gamma\partial_s\sqrt{\tilde{c}}\,\partial_\nu \sqrt{\tilde{c}}\,\psi
+\frac12\int_{\mathbb H^3}|\nabla \sqrt{\tilde{c}}|^2\psi(t)\,dx
-\frac12\iint|\nabla \sqrt{\tilde{c}}|^2\partial_s\psi\\
&=\frac12\int_{\mathbb H^3}|\nabla \sqrt{\tilde{c}}|^2\psi(t)\,dx
-\frac12\iint|\nabla \sqrt{\tilde{c}}|^2\partial_s\psi,
\end{align*}
because \(\partial_\nu \sqrt{\tilde{c}}=0\). For the transport term, two spatial integrations give
\begin{align*}
-\iint(u\cdot\nabla\sqrt{\tilde c})\nabla\cdot(\psi\nabla\sqrt{\tilde c})
={}&-\iint_\Gamma(u\cdot\nabla \sqrt{\tilde{c}})\partial_\nu \sqrt{\tilde{c}}\,\psi
+\iint\nabla u:(\nabla \sqrt{\tilde{c}}\otimes\nabla \sqrt{\tilde{c}})\psi\\
&-\frac12\iint|\nabla \sqrt{\tilde{c}}|^2u\cdot\nabla\psi
+\frac12\iint_\Gamma|\nabla \sqrt{\tilde{c}}|^2\psi\,u\cdot\nu\\
={}&\iint\nabla u:(\nabla \sqrt{\tilde{c}}\otimes\nabla \sqrt{\tilde{c}})\psi
-\frac12\iint|\nabla \sqrt{\tilde{c}}|^2u\cdot\nabla\psi.
\end{align*}
Both boundary terms vanish because \(u=0\) on \(\Gamma\); the first also contains \(\partial_\nu \sqrt{\tilde{c}}\).

The nonlinear diffusion term is expanded algebraically and creates no boundary integral:
\[
\iint(\sqrt{\tilde c})^{-1}|\nabla\sqrt{\tilde c}|^2\nabla\cdot(\psi\nabla\sqrt{\tilde c})=\iint (\sqrt{\tilde{c}})^{-1}|\nabla \sqrt{\tilde{c}}|^2\Delta \sqrt{\tilde{c}}\,\psi
+\iint (\sqrt{\tilde{c}})^{-1}|\nabla \sqrt{\tilde{c}}|^2\nabla \sqrt{\tilde{c}}\cdot\nabla\psi.
\]
For the Laplacian term,
\begin{align*}
&\iint\Delta\sqrt{\tilde c}\,\nabla\cdot(\psi\nabla\sqrt{\tilde c})\\
&={}\iint_\Gamma\Delta \sqrt{\tilde{c}}\,\partial_\nu \sqrt{\tilde{c}}\,\psi
-\sum_{i,j}\iint\partial_{ijj}\sqrt{\tilde{c}}\,\partial_i\sqrt{\tilde{c}}\,\psi\\
&={}\iint_\Gamma\Delta \sqrt{\tilde{c}}\,\partial_\nu \sqrt{\tilde{c}}\,\psi
-\sum_{i,j}\iint_\Gamma\partial_{ij}\sqrt{\tilde{c}}\,\nu_j\partial_i\sqrt{\tilde{c}}\,\psi
+\sum_{i,j}\iint\partial_{ij}\sqrt{\tilde{c}}\,\partial_j(\partial_i\sqrt{\tilde{c}}\,\psi)\\
&={}\iint|\nabla^2\sqrt{\tilde{c}}|^2\psi-\frac12\iint|\nabla \sqrt{\tilde{c}}|^2\Delta\psi.
\end{align*}
The first boundary term is zero because \(\partial_\nu \sqrt{\tilde{c}}=0\). In the second, \(\nu_j=0\) unless \(j=3\); for \(i=1,2\), \(\partial_{i3}\sqrt{\tilde c}=0\), while for \(i=3\), the factor \(\partial_3\sqrt{\tilde{c}}\) is zero. The final integration of \(\partial_{ij}\sqrt{\tilde{c}}\,\partial_i\sqrt{\tilde{c}}\,\partial_j\psi\) produces the boundary term \(\frac12\iint_\Gamma|\nabla \sqrt{\tilde{c}}|^2\partial_\nu\psi\), which is zero because \(\partial_\nu\psi=0\).

The two reaction terms satisfy
\begin{align*}
-\frac12\iint h\sqrt{\tilde c}\,\nabla\cdot(\psi\nabla\sqrt{\tilde c})
&=-\frac12\iint_\Gamma h\sqrt{\tilde{c}}\,\partial_\nu \sqrt{\tilde{c}}\,\psi
+\frac12\iint\nabla(h\sqrt{\tilde{c}})\cdot\nabla \sqrt{\tilde{c}}\,\psi\\
&=\frac12\iint h|\nabla \sqrt{\tilde{c}}|^2\psi+\frac14\iint\nabla h\cdot\nabla c\,\psi,
\end{align*}
and
\begin{align*}
\frac12\iint h(\sqrt{\tilde c})^{-1}\nabla\cdot(\psi\nabla\sqrt{\tilde c})
&=\frac12\iint_\Gamma h(\sqrt{\tilde{c}})^{-1}\partial_\nu \sqrt{\tilde{c}}\,\psi
-\frac12\iint\nabla(h(\sqrt{\tilde{c}})^{-1})\cdot\nabla \sqrt{\tilde{c}}\,\psi\\
&=-\frac12\iint\nabla(h(\sqrt{\tilde{c}})^{-1})\cdot\nabla \sqrt{\tilde{c}}\,\psi.
\end{align*}
Both boundary integrals vanish because \(\partial_\nu \sqrt{\tilde{c}}=0\). Adding the six contributions therefore gives
\begin{align}
&\frac12\int|\nabla \sqrt{\tilde{c}}|^2\psi(t)+\iint|\nabla^2\sqrt{\tilde{c}}|^2\psi
\notag\\&=\frac12\iint|\nabla \sqrt{\tilde{c}}|^2(\partial_s\psi+\Delta\psi)
+\frac12\iint|\nabla \sqrt{\tilde{c}}|^2u\cdot\nabla\psi\notag\\
&-\iint\nabla u:(\nabla \sqrt{\tilde{c}}\otimes\nabla \sqrt{\tilde{c}})\psi
-\iint (\sqrt{\tilde{c}})^{-1}|\nabla \sqrt{\tilde{c}}|^2\Delta \sqrt{\tilde{c}}\,\psi\notag\\
&-\iint (\sqrt{\tilde{c}})^{-1}|\nabla \sqrt{\tilde{c}}|^2\nabla \sqrt{\tilde{c}}\cdot\nabla\psi
-\frac12\iint h|\nabla \sqrt{\tilde{c}}|^2\psi\notag\\
&-\frac14\iint\nabla h\cdot\nabla c\,\psi
+\frac12\iint\nabla(h(\sqrt{\tilde{c}})^{-1})\cdot\nabla \sqrt{\tilde{c}}\,\psi.
\label{eq:approx-c-basic}
\end{align}

We now treat
\[
I:=-\iint (\sqrt{\tilde{c}})^{-1}|\nabla \sqrt{\tilde{c}}|^2\Delta \sqrt{\tilde{c}}\,\psi
=-\sum_{i,j}\iint (\sqrt{\tilde{c}})^{-1}(\partial_j\sqrt{\tilde{c}})^2\partial_{ii}\sqrt{\tilde{c}}\,\psi.
\]
An integration by parts in \(x_i\) yields
\begin{align*}
I={}&-\sum_{i,j}\iint_\Gamma (\sqrt{\tilde{c}})^{-1}(\partial_j\sqrt{\tilde{c}})^2\partial_i\sqrt{\tilde{c}}\,\nu_i\psi
-\sum_{i,j}\iint (\sqrt{\tilde{c}})^{-2}(\partial_i\sqrt{\tilde{c}})^2(\partial_j\sqrt{\tilde{c}})^2\psi\\
&+\iint (\sqrt{\tilde{c}})^{-1}|\nabla \sqrt{\tilde{c}}|^2\nabla \sqrt{\tilde{c}}\cdot\nabla\psi
+2\left(\sum_{i=j}+\sum_{i\neq j}\right)\iint (\sqrt{\tilde{c}})^{-1}\partial_{ij}\sqrt{\tilde{c}}\,\partial_i\sqrt{\tilde{c}}\,\partial_j\sqrt{\tilde{c}}\,\psi.
\end{align*}
The displayed boundary term is zero: \(\nu_i=0\) unless \(i=3\), and then it contains \(\partial_3\sqrt{\tilde{c}}=0\).
Using the diagonal part of this identity to eliminate \(\sum_i (\sqrt{\tilde{c}})^{-1}\partial_{ii}\sqrt{\tilde{c}}(\partial_i\sqrt{\tilde{c}})^2\), we obtain
\begin{align*}
I={}&-\frac13\sum_{i,j}\iint (\sqrt{\tilde{c}})^{-2}(\partial_i\sqrt{\tilde{c}})^2(\partial_j\sqrt{\tilde{c}})^2\psi
+\frac13\iint (\sqrt{\tilde{c}})^{-1}|\nabla \sqrt{\tilde{c}}|^2\nabla \sqrt{\tilde{c}}\cdot\nabla\psi\\
&+\frac23\sum_{i\neq j}\iint (\sqrt{\tilde{c}})^{-1}\left(\partial_{ij}\sqrt{\tilde{c}}\,\partial_i\sqrt{\tilde{c}}\,\partial_j\sqrt{\tilde{c}}-\partial_{ii}\sqrt{\tilde{c}}(\partial_j\sqrt{\tilde{c}})^2\right)\psi.
\end{align*}
For the first off-diagonal term, Young's inequality gives
\begin{align*}
\frac23\sum_{i\neq j}\iint (\sqrt{\tilde{c}})^{-1}\partial_{ij}\sqrt{\tilde{c}}\,\partial_i\sqrt{\tilde{c}}\,\partial_j\sqrt{\tilde{c}}\,\psi&
\leq\frac13\sum_{i\neq j}\iint (\sqrt{\tilde{c}})^{-2}|\partial_i\sqrt{\tilde{c}}|^2|\partial_j\sqrt{\tilde{c}}|^2\psi\\&\quad
+\frac13\sum_{i\neq j}\iint|\partial_{ij}\sqrt{\tilde{c}}|^2\psi.
\end{align*}
Integrating the second off-diagonal term by parts once more gives
\begin{align*}
&-\frac23\sum_{i\neq j}\iint (\sqrt{\tilde{c}})^{-1}\partial_{ii}\sqrt{\tilde{c}}(\partial_j\sqrt{\tilde{c}})^2\psi\\
&=-\frac23\sum_{i\neq j}\iint_\Gamma (\sqrt{\tilde{c}})^{-1}\partial_i\sqrt{\tilde{c}}(\partial_j\sqrt{\tilde{c}})^2\nu_i\psi-\frac23\sum_{i\neq j}\iint (\sqrt{\tilde{c}})^{-2}(\partial_i\sqrt{\tilde{c}})^2(\partial_j\sqrt{\tilde{c}})^2\psi\\
&\quad+\frac43\sum_{i\neq j}\iint (\sqrt{\tilde{c}})^{-1}\partial_{ij}\sqrt{\tilde{c}}\,\partial_i\sqrt{\tilde{c}}\,\partial_j\sqrt{\tilde{c}}\,\psi+\frac23\sum_{i\neq j}\iint (\sqrt{\tilde{c}})^{-1}|\partial_j\sqrt{\tilde{c}}|^2\partial_i\sqrt{\tilde{c}}\,\partial_i\psi\\
&\leq\frac23\sum_{i\neq j}\iint|\partial_{ij}\sqrt{\tilde{c}}|^2\psi
+\frac23\sum_{i\neq j}\iint (\sqrt{\tilde{c}})^{-1}|\partial_j\sqrt{\tilde{c}}|^2\partial_i\sqrt{\tilde{c}}\,\partial_i\psi.
\end{align*}

The boundary term in the first equality is zero because \(\nu_i\neq0\) only for \(i=3\), in which case \(\partial_i\sqrt{\tilde{c}}=\partial_3\sqrt{\tilde{c}}=0\). Combining the last three formulas yields
\begin{align}
I\leq{}&-\frac13\sum_i\iint (\sqrt{\tilde{c}})^{-2}(\partial_i\sqrt{\tilde{c}})^4\psi
+\sum_{i\neq j}\iint|\partial_{ij}\sqrt{\tilde{c}}|^2\psi\notag\\
&+\frac23\sum_{i\neq j}\iint (\sqrt{\tilde{c}})^{-1}|\partial_j\sqrt{\tilde{c}}|^2\partial_i\sqrt{\tilde{c}}\,\partial_i\psi
+\frac13\iint (\sqrt{\tilde{c}})^{-1}|\nabla \sqrt{\tilde{c}}|^2\nabla \sqrt{\tilde{c}}\cdot\nabla\psi.
\label{eq:approx-I-refined}
\end{align}
Substitution into \eqref{eq:approx-c-basic}, together with
\(\frac13|\Delta \sqrt{\tilde{c}}|^2\leq\sum_i|\partial_{ii}\sqrt{\tilde{c}}|^2\), gives
\begin{align}
&\frac12\int|\nabla \sqrt{\tilde{c}}|^2\psi(t)
+\frac13\iint|\Delta \sqrt{\tilde{c}}|^2\psi\notag\\
&\quad+\frac12\iint h|\nabla \sqrt{\tilde{c}}|^2\psi
+\frac13\sum_i\iint (\sqrt{\tilde{c}})^{-2}
(\partial_i\sqrt{\tilde{c}})^4\psi\notag\\
&\leq\frac12\iint|\nabla \sqrt{\tilde{c}}|^2(\partial_s\psi+\Delta\psi)
+\frac12\iint|\nabla \sqrt{\tilde{c}}|^2u\cdot\nabla\psi\notag\\
&\quad-\iint\nabla u:
(\nabla \sqrt{\tilde{c}}\otimes\nabla \sqrt{\tilde{c}})\psi
+\frac12\iint\nabla(h(\sqrt{\tilde{c}})^{-1})
\cdot\nabla \sqrt{\tilde{c}}\,\psi\notag\\
&\quad-\frac14\iint\nabla h\cdot\nabla c\,\psi
-\frac23\sum_{i=1}^3\iint
(\sqrt{\tilde{c}})^{-1}|\partial_i\sqrt{\tilde{c}}|^2
\partial_i\sqrt{\tilde{c}}\,\partial_i\psi.
\label{eq:approx-c-refined}
\end{align}
We also need the rough estimate
\begin{equation}
\label{eq:approx-I-rough}
I\leq\frac32\iint|\Delta \sqrt{\tilde{c}}|^2\psi
+\frac12\sum_{i=1}^3\iint (\sqrt{\tilde{c}})^{-2}(\partial_i\sqrt{\tilde{c}})^4\psi,
\end{equation}
which follows directly from Young's inequality and
\(|\nabla\sqrt{\tilde c}|^4\leq3\sum_i(\partial_i\sqrt{\tilde c})^4\).
The off-diagonal quartic term in the first Young estimate cancels
the corresponding negative term; hence only the diagonal sum remains
in \eqref{eq:approx-I-refined}--\eqref{eq:approx-c-refined}.
Adding \eqref{eq:approx-c-basic} to six times \eqref{eq:approx-c-refined}
and using \eqref{eq:approx-I-rough} leaves, in addition to the full Hessian,
\[
\frac12\iint|\Delta\sqrt{\tilde c}|^2\psi
+\frac32\sum_i\iint(\sqrt{\tilde c})^{-2}(\partial_i\sqrt{\tilde c})^4\psi
\geq\frac12\iint(\sqrt{\tilde c})^{-2}|\nabla\sqrt{\tilde c}|^4\psi.
\]
Thus the following combined inequality, with its original coefficients,
is still valid:

\begin{align}
&\frac72\int|\nabla \sqrt{\tilde{c}}|^2\psi(t)+\iint|\nabla^2\sqrt{\tilde{c}}|^2\psi+\frac72\iint h|\nabla \sqrt{\tilde{c}}|^2\psi
+\frac12\iint (\sqrt{\tilde{c}})^{-2}|\nabla \sqrt{\tilde{c}}|^4\psi\notag\\
&\leq\frac72\iint|\nabla \sqrt{\tilde{c}}|^2(\partial_s\psi+\Delta\psi)
+\frac72\iint|\nabla \sqrt{\tilde{c}}|^2u\cdot\nabla\psi-7\iint\nabla u:(\nabla \sqrt{\tilde{c}}\otimes\nabla \sqrt{\tilde{c}})\psi\notag\\
&\quad-\iint (\sqrt{\tilde{c}})^{-1}|\nabla \sqrt{\tilde{c}}|^2\nabla \sqrt{\tilde{c}}\cdot\nabla\psi
-\frac74\iint\nabla h\cdot\nabla c\,\psi+\frac72\iint\nabla(h(\sqrt{\tilde{c}})^{-1})\cdot\nabla \sqrt{\tilde{c}}\,\psi\notag\\
&\quad-4\sum_{i=1}^3\iint (\sqrt{\tilde{c}})^{-1}(\partial_i\sqrt{\tilde{c}})^3\partial_i\psi.
\label{eq:approx-c-combined}
\end{align}

The two coupling terms are estimated exactly at this point. Since \((\sqrt{\tilde{c}})^2=1+c\leq M_c\),
\begin{equation}
\label{eq:approx-fluid-coupling}
\left|\iint\nabla u:(\nabla \sqrt{\tilde{c}}\otimes\nabla \sqrt{\tilde{c}})\psi\right|
\leq\frac1{112}\iint (\sqrt{\tilde{c}})^{-2}|\nabla \sqrt{\tilde{c}}|^4\psi
+28M_c\iint|\nabla u|^2\psi.
\end{equation}
Moreover, using \(n(h_\tau'(n))^2\leq h_\tau(n)=h\),
\begin{align}
&\frac72\iint\nabla(h(\sqrt{\tilde{c}})^{-1})\cdot\nabla \sqrt{\tilde{c}}\,\psi\notag\\
&=-\frac72\iint h(\sqrt{\tilde{c}})^{-2}|\nabla \sqrt{\tilde{c}}|^2\psi
+\frac72\iint (\sqrt{\tilde{c}})^{-1}h_\tau'(n)\nabla n\cdot\nabla \sqrt{\tilde{c}}\,\psi\notag\\
&\leq\frac72\iint|\nabla\sqrt n|^2\psi.
\label{eq:approx-reaction-coupling}
\end{align}
Indeed, \(\nabla n=2\sqrt n\nabla\sqrt n\) and \(\sqrt n\,h_\tau'(n)\leq\sqrt h\), so the second integrand is bounded by twice the product of \(\sqrt h\,(\sqrt{\tilde{c}})^{-1}|\nabla \sqrt{\tilde{c}}|\) and \(|\nabla\sqrt n|\). Substituting \eqref{eq:approx-fluid-coupling}--\eqref{eq:approx-reaction-coupling} into \eqref{eq:approx-c-combined} yields
\begin{align}
&\frac72\int|\nabla \sqrt{\tilde{c}}|^2\psi(t)+\iint|\nabla^2\sqrt{\tilde{c}}|^2\psi+\frac72\iint h|\nabla \sqrt{\tilde{c}}|^2\psi
+\frac7{16}\iint (\sqrt{\tilde{c}})^{-2}|\nabla \sqrt{\tilde{c}}|^4\psi\notag\\
&\leq\frac72\iint|\nabla \sqrt{\tilde{c}}|^2(\partial_s\psi+\Delta\psi)
+\frac72\iint|\nabla \sqrt{\tilde{c}}|^2u\cdot\nabla\psi-\frac74\iint\nabla h\cdot\nabla c\,\psi\notag\\
&\quad
-\iint (\sqrt{\tilde{c}})^{-1}|\nabla \sqrt{\tilde{c}}|^2\nabla \sqrt{\tilde{c}}\cdot\nabla\psi\notag+196M_c\iint|\nabla u|^2\psi
+\frac72\iint|\nabla\sqrt n|^2\psi\notag\\
&\quad-4\sum_{i=1}^3\iint (\sqrt{\tilde{c}})^{-1}(\partial_i\sqrt{\tilde{c}})^3\partial_i\psi.
\label{eq:approx-c-final}
\end{align}

Multiply \eqref{eq:approx-c-final} by \(4/7\) and add \eqref{eq:approx-cell-local}. The mixed terms \(\iint\nabla h\cdot\nabla c\,\psi\) cancel exactly. Moving the resulting \(2\iint|\nabla\sqrt n|^2\psi\) from the right to the left gives
\begin{align}
&\int n\log n\,\psi(t)+2\int|\nabla \sqrt{\tilde{c}}|^2\psi(t)
+2\iint|\nabla\sqrt n|^2\psi\notag\\
&\quad+\frac47\iint|\nabla^2\sqrt{\tilde{c}}|^2\psi
+2\iint h|\nabla \sqrt{\tilde{c}}|^2\psi
+\frac14\iint (\sqrt{\tilde{c}})^{-2}|\nabla \sqrt{\tilde{c}}|^4\psi\notag\\
&\leq\iint n\log n(\partial_s\psi+\Delta\psi)
+\iint n\log n\,(\mathcal J_\varepsilon u)\cdot\nabla\psi+\iint a(1+\log n)\nabla c\cdot\nabla\psi\notag\\
&\quad+2\iint|\nabla \sqrt{\tilde{c}}|^2(\partial_s\psi+\Delta\psi)
+2\iint|\nabla \sqrt{\tilde{c}}|^2u\cdot\nabla\psi-\frac47\iint (\sqrt{\tilde c})^{-1}|\nabla \sqrt{\tilde{c}}|^2\nabla \sqrt{\tilde{c}}\cdot\nabla\psi\notag\\
&\quad-\frac{16}{7}\sum_{i=1}^3\iint (\sqrt{\tilde{c}})^{-1}(\partial_i\sqrt{\tilde{c}})^3\partial_i\psi+112M_c\iint|\nabla u|^2\psi.
\label{eq:approx-scalar-local-energy}
\end{align}
This is the regularized analogue of the scalar part of the local energy inequality. The only changes are \((\mathcal J_\varepsilon u)\) in the cell transport, \(a\) in the chemotactic flux, and \(h\) in the concentration reaction; the identity \(a=nh'\) is what preserves the cancellation.

\emph{The momentum equation.} Multiply
\[
\partial_tu+(\mathcal J_\varepsilon u)\cdot\nabla u-\Delta u+\nabla p=-\mathcal K_\varepsilon(n\nabla\phi)
\]
by \(2u\psi\). The convective term satisfies
\[
2\iint((\mathcal J_\varepsilon u)\cdot\nabla u)\cdot u\psi
=\iint_\Gamma |u|^2\psi\,(\mathcal J_\varepsilon u)\cdot\nu
-\iint|u|^2(\mathcal J_\varepsilon u)\cdot\nabla\psi
=-\iint|u|^2(\mathcal J_\varepsilon u)\cdot\nabla\psi,
\]
because \((\mathcal J_\varepsilon u)=0\) on \(\Gamma\). The diffusion term is
\begin{align*}
-2\iint\Delta u\cdot u\psi
={}&2\iint|\nabla u|^2\psi-\iint|u|^2\Delta\psi\\
&-2\iint_\Gamma\partial_\nu u\cdot u\psi
+\iint_\Gamma|u|^2\partial_\nu\psi.
\end{align*}
Both boundary integrals vanish because \(u=0\) on \(\Gamma\). For the pressure, after subtracting an arbitrary spatial constant \(\overline p(s)\),
\begin{align*}
2\iint\nabla(p-\overline p(s))\cdot u\psi
={}&-2\iint(p-\overline p(s))u\cdot\nabla\psi\\
&+2\iint_\Gamma(p-\overline p(s))\psi\,u\cdot\nu
=-2\iint(p-\overline p(s))u\cdot\nabla\psi,
\end{align*}
because \(\nabla\cdot u=0\) and \(u=0\) on \(\Gamma\). The time integration has no initial term because \(\psi(\cdot,0)=0\). Consequently,
\begin{align}
\int|u|^2\psi(t)+2\iint|\nabla u|^2\psi
={}&\iint|u|^2(\partial_s\psi+\Delta\psi)
+\iint|u|^2(\mathcal J_\varepsilon u)\cdot\nabla\psi\notag\\
&+2\iint(p-\overline p(s))u\cdot\nabla\psi
-2\iint \mathcal K_\varepsilon(n\nabla\phi)\cdot u\,\psi.
\label{eq:approx-momentum-local}
\end{align}
Multiply \eqref{eq:approx-momentum-local} by \(112M_c\) and add it to \eqref{eq:approx-scalar-local-energy}. The inequality has $ 112M_c\iint|\nabla u|^2\psi$ on its right, whereas the multiplied momentum identity contributes \(224M_c\iint|\nabla u|^2\psi\) on its left. After moving the former term to the left, the remaining coefficient is \(112M_c\). The resulting inequality is exactly \eqref{eq:approximate-local-energy}.
\end{proof}
\begin{remark}
\label{rem:approx-lei-test}
Lemma~\ref{lem:approximate-local-energy} is proved for the larger class of
boundary-compatible test functions that merely vanish at the initial
time. In the compactness argument below, however, we only pass to the
limit for the class of test functions appearing in
Definition~\ref{def:suitable-half-space} , whose temporal supports are compactly
contained in $(0,T)$. This restriction is needed only for the pressure
term, since the uniform pressure estimate of
Proposition~\ref{prop:uniform-pressure} is available locally in time away from
the genuine initial time.
\end{remark}

\subsection{Compactness and passage to the limit}
\label{subsec:compactness-local-energy}

The compactness argument below applies to subsequences with both
parameters tending to zero. For the eventual regularity conclusion,
Section~\ref{sec:eventual-regularity} refines this choice by first
taking $\varepsilon\to0$ at fixed $\tau$, then taking $\tau\to0$,
and selecting a diagonal that retains the additional global estimates.

We now choose a sequence \((\varepsilon_k,\tau_k)\to(0,0)\).  Set
\[
(n_k,c_k,u_k,p_k)(t):=(n^{\varepsilon_k,\tau_k},c^{\varepsilon_k,\tau_k},u^{\varepsilon_k,\tau_k},p^{\varepsilon_k,\tau_k})(t),
\qquad 0<t<\infty,
\]
and write
\[
a_k:=a_{\tau_k}(n_k),\qquad h_k:=h_{\tau_k}(n_k).
\]
For the velocity equation, we introduce the solenoidal test space
\[
V_\sigma(D)
:=
\left\{
\varphi\in W_0^{1,2}(D;\mathbb R^3):
\nabla\cdot\varphi=0
\right\},
\]
equipped with the $W^{1,2}$ norm, and denote its dual by $V_\sigma(D)^*$.

Fix $T>0$ and a bounded Lipschitz domain $D\subset\mathbb H^3$;
its closure may meet $\Gamma$. All bounds in this paragraph are
uniform in $k$, and may depend on $D,T,C_{\rm in}$. It suffices
to establish them on a larger half-ball and then restrict to $D$.
The equations and the estimates already proved give
\begin{equation}
\label{eq:time-derivative-bounds}
\begin{aligned}
&\|\partial_tn_k\|_{L^{10/9}(0,T;W^{-1,10/9}(D))}
 \leq C_{D,T,C_{\rm in}},\\
&\|\partial_t\nabla c_k\|_{L^{4/3}(0,T;W^{-1,2}(D))}
 \leq C_{D,T,C_{\rm in}},\\
&\|\partial_t\nabla\sqrt{1+c_k}\|_{L^{4/3}(0,T;W^{-1,2}(D))}
 \leq C_{D,T,C_{\rm in}},\\
&\|\partial_tu_k\|_{L^{4/3}(0,T;V_\sigma(D)^*)}
 \leq C_{D,T,C_{\rm in}}.
\end{aligned}
\end{equation}
Here no prescribed growth rate in $T$ is needed for compactness.
To justify the density estimate, write
\[
\partial_tn_k=\nabla\cdot
  (\nabla n_k-a_k\nabla c_k-(\mathcal J_{\varepsilon_k}u_k)n_k).
\]
By \eqref{eq:uniform-energy}--\eqref{eq:uniform-integrability},
$n_k\in L^{5/3}$ and $(\mathcal J_{\varepsilon_k}u_k),\nabla c_k\in L^{10/3}$ locally,
while
\[
\nabla n_k=2\sqrt{1+n_k}\,\nabla\sqrt{1+n_k}
\quad\hbox{is bounded in }L^{5/4}((0,T)\times D).
\]
Since $a_k\leq n_k$, the flux is bounded in $L^{10/9}$
on this finite cylinder, proving the first estimate.

For the concentration estimates, set $s_k=\sqrt{1+c_k}$.
The same bounds imply
\[
\sqrt{1+n_k},\ u_k,\ \nabla s_k
\quad\hbox{bounded in }
L^\infty(0,T;L^2(D))\cap L^2(0,T;H^1(D)).
\]
The Gagliardo--Nirenberg inequality
\[
\|w\|_{L^4(D)}
\leq C_D\|w\|_{L^2(D)}^{1/4}\|w\|_{H^1(D)}^{3/4}
\]
therefore bounds all three quantities in $L^{8/3}(0,T;L^4(D))$.
Consequently,
\[
n_k,\quad u_k\cdot\nabla c_k,\quad
u_k\cdot\nabla s_k,\quad s_k^{-1}|\nabla s_k|^2
\quad\hbox{are bounded in }L^{4/3}(0,T;L^2(D)).
\]
We used $1\leq s_k\leq\sqrt{M_c}$ and
$\nabla c_k=2s_k\nabla s_k$ here. Moreover,
\[
\nabla^2c_k=2\nabla s_k\otimes\nabla s_k+2s_k\nabla^2s_k
\]
is bounded in $L^2((0,T)\times D)$ by the quartic and Hessian
terms in \eqref{eq:uniform-energy}. The equations
\[
\begin{split}
\partial_tc_k-\Delta c_k&=-u_k\cdot\nabla c_k-h_kc_k,\\
\partial_ts_k-\Delta s_k&=-u_k\cdot\nabla s_k
 +s_k^{-1}|\nabla s_k|^2-\tfrac12h_k(s_k-s_k^{-1})
\end{split}
\]
and $0\leq h_k\leq n_k$ now bound $\partial_tc_k$ and
$\partial_ts_k$ in $L^{4/3}(0,T;L^2(D))$.
Applying the distributional spatial gradient, which maps $L^2(D)$
continuously into $W^{-1,2}(D)$, proves the second and third
estimates in \eqref{eq:time-derivative-bounds}.

For the velocity equation, it is important to work in the dual of the
solenoidal test space. Indeed, for every
$\varphi\in V_\sigma(D)$, the pressure term vanishes and the weak
form of the momentum equation gives
\begin{align}\label{3.99a}
\left\langle\partial_tu_k,\varphi\right\rangle
&=
-\int_D\nabla u_k:\nabla\varphi\,dx
+
\int_D((\mathcal J_{\varepsilon_k}u_k)\otimes u_k):\nabla\varphi\,dx
-
\int_D \mathcal K_{\varepsilon_k}(n_k\nabla\phi)\cdot\varphi\,dx.
\end{align}

Here we used
\[
\nabla\cdot (\mathcal J_{\varepsilon_k}u_k)=0
\]
and
\[
\int_D\nabla p_k\cdot\varphi\,dx
=
-\int_Dp_k\nabla\cdot\varphi\,dx
=0.
\]

By interpolation between
\[
u_k\in L^\infty(0,T;L^2(D))
\]
and
\[
u_k\in L^2(0,T;L^6(D)),
\]
we have
\[
u_k\in L^4(0,T;L^3(D)).
\]
The same estimate holds for $\mathcal J_{\varepsilon_k}u_k$. Hence
\[
\|(\mathcal J_{\varepsilon_k}u_k)\otimes u_k\|_{L^{4/3}(0,T;L^2(D))}
\le
\|(\mathcal J_{\varepsilon_k}u_k)\|_{L^4(0,T;L^3(D))}
\|u_k\|_{L^2(0,T;L^6(D))}
\le C.
\]
Moreover,
\[
\nabla u_k\in L^2(0,T;L^2(D))
\subset
L^{4/3}(0,T;L^2(D)),
\]
and, by \eqref{eq:uniform-integrability},
\[
\mathcal K_{\varepsilon_k}(n_k\nabla\phi)\in L^{5/3}((0,T)\times D)
\subset
L^{4/3}(0,T;L^{6/5}(D)).
\]
Since
\[
W_0^{1,2}(D)\hookrightarrow L^6(D),
\]
\eqref{3.99a} yields
\[
\|\partial_tu_k\|_
{L^{4/3}(0,T;V_\sigma(D)^*)}
\le C_{D,C_{\rm in},T},
\]
as asserted in \eqref{eq:time-derivative-bounds}.
We next establish the compactness of the approximate solutions.
We begin with the cell density. From \eqref{eq:uniform-energy} and \eqref{eq:uniform-integrability},
for every bounded $D\Subset\mathbb H^3$,
\[
\sqrt{1+n_k}
\quad\text{is bounded in}\quad
L^{10/3}((0,T)\times D),
\]
while
\[
\nabla\sqrt{1+n_k}
\quad\text{is bounded in}\quad
L^2((0,T)\times D).
\]
Since
\[
\nabla n_k
=
2\sqrt{1+n_k}\,
\nabla\sqrt{1+n_k},
\]
H\"older's inequality gives
\begin{equation}
\label{eq:compactness-density-gradient-bound}
\|\nabla n_k\|_{L^{5/4}((0,T)\times D)}
\le
2
\|\sqrt{1+n_k}\|_{L^{10/3}((0,T)\times D)}
\|\nabla\sqrt{1+n_k}\|_{L^2((0,T)\times D)}
\le C_{D,T}.
\end{equation}
Together with the $L^{5/3}$ bound for $n_k$, this implies
\[
n_k
\quad\text{is bounded in}\quad
L^{5/4}(0,T;W^{1,5/4}(D)).
\]
On the other hand, \eqref{eq:time-derivative-bounds} gives
\[
\partial_tn_k
\quad\text{bounded in}\quad
L^{10/9}(0,T;W^{-1,10/9}(D)).
\]
Since
\[
W^{1,5/4}(D)
\Subset
L^1(D)
\hookrightarrow
W^{-1,10/9}(D),
\]
the Aubin--Lions lemma yields
\begin{align}\label{n-strong}
n_k\to n
\qquad\text{strongly in }L^1((0,T)\times D).
\end{align}
Interpolating the strong convergence in \eqref{n-strong} with the uniform
$L^{5/3}$ bound in \eqref{eq:uniform-integrability} gives
\begin{equation}
\label{eq:compactness-density-strong-lq}
n_k\to n
\qquad\text{strongly in }
L^q_{\rm loc}\bigl((0,T)\times\mathbb H^3\bigr),
\qquad
1\le q<\frac53.
\end{equation}

We next consider the velocity. Throughout the remainder of this passage,
$D$ denotes a bounded Lipschitz subdomain of $\mathbb H^3$;
its closure is allowed to meet $\Gamma$. All the estimates below can
first be established on the half-balls $B_R^+$ and then restricted to
$D\subset B_R^+$. Thus no separation of $D$ from the flat boundary
is imposed. We fix $T>0$ and take a common subsequence by a diagonal
argument over an increasing sequence of half-balls and time intervals.

Fix $0<\sigma<T$. Proposition~\ref{prop:uniform-pressure} and the full
momentum equation imply that
\[
\partial_tu_k\quad\hbox{is bounded in}\quad
L^{5/3}(\sigma,T;W^{-1,5/3}(D)).
\]
Indeed, $(\mathcal J_{\varepsilon_k}u_k)\otimes u_k$ and $p_k-[p_k]_D(t)$ are bounded in
$L^{5/3}((\sigma,T)\times D)$, whereas $\nabla u_k$ is bounded in
$L^2((0,T)\times D)$ and $\mathcal K_{\varepsilon_k}(n_k\nabla\phi)$ in $L^{5/3}((0,T)\times D)$.
For the last assertion, use the $L^{5/3}$-boundedness of
$\mathcal K_{\varepsilon_k}$ and the global $L^{5/3}$ bound for $n_k$.
Here the pressure estimate on $D$ follows, if necessary, by restriction
from a larger half-ball and subtraction of the mean over $D$.
Since
\[
H^1(D)\Subset L^2(D)\hookrightarrow W^{-1,5/3}(D),
\]
the Aubin--Lions lemma gives
\[
u_k\longrightarrow u
\qquad\text{strongly in }L^2((\sigma,T)\times D).
\]
The uniform $L^\infty(0,T;L^2)$ bound, inherited by the weak limit,
also gives
\[
\sup_k\int_0^\sigma\int_D|u_k-u|^2\,dx\,dt
\le C_T\sigma.
\]
Letting first $k\to\infty$ and then $\sigma\downarrow0$, and
interpolating with the uniform $L^{10/3}$ bound, we obtain
\[
u_k\to u
\qquad\text{strongly in }L^q((0,T)\times D),
\qquad 2\le q<\frac{10}{3}.
\]

We make explicit how the interior compactness established above
extends to the spatial boundary and to the whole time interval.
Let $E_j$ be an increasing sequence of compact subsets of
$(0,T)\times D$ exhausting that open set. If $z_k\to z$ strongly
in $L^q(E_j)$ for every $j$, and $z_k$ is uniformly bounded in
$L^p((0,T)\times D)$ for some $q<p<\infty$, then $z$ has the same
$L^p$ bound and
\[
\int_{((0,T)\times D)\setminus E_j}|z_k-z|^q
\le C\bigl|((0,T)\times D)\setminus E_j\bigr|^{1-q/p}.
\]
The right-hand side tends to zero uniformly in $k$ as $j\to\infty$.
For $p=\infty$, the corresponding bound is
$C|((0,T)\times D)\setminus E_j|$.
Consequently, interior local strong convergence and these uniform
bounds imply strong convergence on the full cylinder $(0,T)\times D$.
This observation applies to the density convergence in
\eqref{n-strong} and its interpolation with
\eqref{eq:uniform-integrability}; in particular,
\[
n_k\to n
\qquad\text{strongly in }L^q((0,T)\times D),
\qquad 1\le q<\frac53.
\]

The previously established time-derivative estimates and the
Aubin--Lions lemma give interior local strong compactness for
$\nabla c_k$ and $\nabla\sqrt{\tilde c_k}$ in $L^2$.
Their spatial $H^1$ bounds follow from the dissipation estimates;
for $\nabla c_k$, use
\[
\nabla^2c_k
=2\nabla\sqrt{\tilde c_k}\otimes\nabla\sqrt{\tilde c_k}
 +2\sqrt{\tilde c_k}\,\nabla^2\sqrt{\tilde c_k},
\]
together with $1\le\tilde c_k\le M_c$ and the uniform bound for
$\iint(\tilde c_k)^{-1}|\nabla\sqrt{\tilde c_k}|^4$.
For $c_k$ itself, the equation gives a local bound for
$\partial_tc_k$ in $L^{5/3}(0,T;W^{-1,5/3})$:
write the transport as $\nabla\cdot(u_kc_k)$ and use
$0\le c_k\le\|c_0\|_{L^\infty}$ and $0\le h_k\le n_k$.
Thus $c_k$ is also strongly compact in interior local $L^2$.
The strong limits of the gradients are identified by testing against
smooth functions compactly supported in the interior.
Interpolation and the exhaustion argument above now yield
\[
c_k\to c
\qquad\text{strongly in }L^q((0,T)\times D),
\qquad 1\le q<\infty,
\]
and
\[
\nabla c_k\to\nabla c,
\qquad
\nabla\sqrt{\tilde c_k}\to\nabla\sqrt{\tilde c}
\qquad\text{strongly in }L^q((0,T)\times D),
\qquad 2\le q<\frac{10}{3}.
\]
After a further common subsequence, these strong convergences also
hold almost everywhere in $\mathbb H^3\times(0,T)$.
In particular, $0\le c\le\|c_0\|_{L^\infty}$ and $\tilde c=1+c$.

We also fix a common set of admissible terminal times. By the
strong space--time convergences just proved, one may choose a
further subsequence, diagonally over positive integer radii and positive integer
time horizons, such that for every integer $m\ge1$ and almost
every $t>0$,
\begin{equation}\label{eq:compactness-common-time-slices}
\begin{aligned}
n_k(t)&\longrightarrow n(t)
 &&\text{in }L^1(B_m^+),\\
u_k(t)&\longrightarrow u(t)
 &&\text{in }L^2(B_m^+),\\
\nabla\sqrt{\tilde c_k}(t)&\longrightarrow
 \nabla\sqrt{\tilde c}(t)
 &&\text{in }L^2(B_m^+).
\end{aligned}
\end{equation}
For example, choose the subsequence so that the corresponding
space--time errors, with the $L^2$ errors squared, are summable
on each such cylinder, and apply Tonelli's theorem. Fubini's
theorem applied to the almost-everywhere space--time convergence
also gives $n_k(t,x)\to n(t,x)$ for almost every $x$ at these
times. The intersection of the resulting countably many
full-measure sets is independent of the later test function
$\psi$.

Furthermore,
\[
n_k\log n_k\to n\log n
\qquad\text{strongly in }L^q((0,T)\times D),
\qquad 1\le q<\frac32.
\]
Indeed, for each such $q$, choose $q<r<3/2$. The inequality
$|z\log z|^r\le C_r(1+z^{5/3})$ for $z\ge0$ gives a uniform
$L^r((0,T)\times D)$ bound. Almost-everywhere convergence and
Vitali's theorem then give the asserted strong $L^q$ convergence.
We use $0\log0=0$ throughout.

The pressure is treated separately and only away from the genuine
initial time. To obtain a single pressure on the whole half-space,
normalize $p_k$ by subtracting $[p_k]_{B_1^+}(t)$.
For each integer $m\ge1$, the oscillation estimate on $B_m^+$
also bounds this normalized pressure on
$(\sigma,T)\times B_m^+$: indeed,
\[
[p_k]_{B_1^+}-[p_k]_{B_m^+}
=\fint_{B_1^+}(p_k-[p_k]_{B_m^+})\,dx.
\]
H\"older's inequality controls this difference of means by the
oscillation on $B_m^+$. A diagonal extraction therefore gives a
single limit pressure $p$, normalized by $[p]_{B_1^+}=0$.
Restriction to $D$ and continuity of spatial averaging imply
\begin{equation}
\label{eq:compactness-pressure-weak}
p_k-[p_k]_D(t)\rightharpoonup p-[p]_D(t)
\qquad\text{weakly in }L^{5/3}((\sigma,T)\times D),
\qquad 0<\sigma<T.
\end{equation}
No pressure bound on $(0,T)\times D$ is asserted.

We now identify the weak limits of the cell dissipation.
By \eqref{eq:uniform-energy} and
$|\sqrt{1+a}-\sqrt{1+b}|^2\le |a-b|$ for $a,b\ge0$,
\[
\sqrt{1+n_k}\to\sqrt{1+n}
\qquad\text{strongly in }L^2((0,T)\times D).
\]
Testing weak derivatives in the open set $(0,T)\times D$ identifies
the weak limit and gives
\begin{equation}
\label{eq:compactness-shifted-fisher-weak}
\nabla\sqrt{1+n_k}\rightharpoonup\nabla\sqrt{1+n}
\qquad\text{weakly in }L^2((0,T)\times D).
\end{equation}

For $\sqrt{n_k}$, we first establish the local Fisher-information
bound on the entire interval $(0,T)$. Choose a nonnegative radial
function $\chi\in C_c^\infty(\mathbb R^3)$ which equals one in a
neighborhood of $\overline D$, and restrict it to the half-space.
Then $\partial_\nu\chi=0$ on $\Gamma$.
Repeat the scalar calculation leading to
\eqref{eq:approx-scalar-local-energy} with the time-independent
test function $\psi(x,s)=\chi(x)$, retaining the initial term
\[
\int_{\mathbb H^3}
\left(n_0^{\varepsilon_k}\log n_0^{\varepsilon_k}
+2|\nabla\sqrt{1+c_0^{\varepsilon_k}}|^2\right)\chi\,dx.
\]
This is the scalar energy inequality, and it contains no pressure.
The initial term is uniformly bounded by the initial entropy and
concentration-energy estimates. Here
$|z\log z|\le H(z)+e^{-1}$, and
$|\nabla\sqrt{1+c_0^{\varepsilon_k}}|
\le |\nabla\sqrt{c_0^{\varepsilon_k}}|$.
On the bounded support of $\chi$, the entropy transport and
chemotactic terms are integrable by the uniform $L^{3/2}$ bound
for $n_k\log n_k$ and the $L^{10/3}$ bounds for
$(\mathcal J_{\varepsilon_k}u_k)$ and $\nabla c_k$; indeed $2/3+3/10<1$.
The cubic concentration terms and the concentration transport term
are controlled by the $L^{10/3}$ bounds for
$\nabla\sqrt{\tilde c_k}$ and $u_k$, while
$\iint|\nabla u_k|^2\chi$ is uniformly bounded.
The negative part of the terminal entropy is at most
$e^{-1}\int_{\mathbb H^3}\chi\,dx$.
Dropping the other nonnegative terms and letting the terminal time
increase to $T$, we obtain
\begin{equation}
\label{eq:compactness-local-fisher-bound}
\sup_k\int_0^T\int_D|\nabla\sqrt{n_k}|^2\,dx\,dt
\le C_{D,T,C_{\rm in}}.
\end{equation}
This argument uses the scalar calculation with its initial term,
rather than applying an inequality whose test functions vanish
at the initial time directly to $\chi$.

On the other hand,
\begin{equation}
\label{eq:compactness-sqrt-density-strong}
\|\sqrt{n_k}-\sqrt n\|_{L^2((0,T)\times D)}^2
\le\|n_k-n\|_{L^1((0,T)\times D)}\longrightarrow0.
\end{equation}
By the uniform $L^2$ bound, the gradients have a weakly convergent
subsequence. For every $\varphi\in C_c^\infty((0,T)\times D)$ and
$i=1,2,3$, strong convergence of $\sqrt{n_k}$ gives
\[
\begin{aligned}
\lim_{k\to\infty}\int_0^T\int_D
\partial_i\sqrt{n_k}\,\varphi\,dx\,dt
&=-\lim_{k\to\infty}\int_0^T\int_D
\sqrt{n_k}\,\partial_i\varphi\,dx\,dt\\
&=-\int_0^T\int_D\sqrt n\,\partial_i\varphi\,dx\,dt.
\end{aligned}
\]
There is no boundary term because these tests are compactly supported
in the open domain $D$. Thus every weak limit is the distributional
gradient of $\sqrt n$, and
\begin{equation}
\label{eq:compactness-fisher-weak}
\nabla\sqrt{n_k}\rightharpoonup\nabla\sqrt n
\qquad\text{weakly in }L^2((0,T)\times D).
\end{equation}
In particular, this proves the Fisher-information requirement
up to the genuine initial time in
Definition~\ref{def:suitable-half-space}(iii), also on domains
whose closures meet $\Gamma$.
The uniform estimates likewise imply
\[
\nabla u_k\rightharpoonup\nabla u,
\qquad
\nabla^2\sqrt{\tilde c_k}\rightharpoonup\nabla^2\sqrt{\tilde c}
\qquad\text{weakly in }L^2((0,T)\times D).
\]

The velocity smoother converges to the same limit.
Lemma~\ref{lem:stokes-smoothing} and \eqref{eq:uniform-energy} give
\begin{equation}
\label{eq:smoothed-velocity-convergence}
\|\mathcal{J}_{\varepsilon_k}u_k-u_k\|_{L^2(0,T;L^2(\mathbb H^3))}
\le C\varepsilon_k^{1/2}
\|\nabla u_k\|_{L^2(0,T;L^2(\mathbb H^3))}\longrightarrow0.
\end{equation}
Combining this with the strong convergence of $u_k$, and then
interpolating with the uniform $L^{10/3}$ bounds, yields
\begin{equation}
\label{eq:b-convergence}
(\mathcal J_{\varepsilon_k}u_k)\to u\qquad\text{strongly in }L^q((0,T)\times D),
\qquad 2\le q<\frac{10}{3}.
\end{equation}
Since $a_\tau(z)$ and $h_\tau(z)$ converge to $z$ uniformly on
bounded subsets of $[0,\infty)$, the almost-everywhere convergence
of $n_k$, together with $0\le a_k,h_k\le n_k$ and the uniform
$L^{5/3}$ bound, gives
\begin{equation}
\label{eq:regularizing-functions-convergence}
a_k\to n,\qquad h_k\to n
\qquad\text{strongly in }L^q((0,T)\times D),
\qquad 1\le q<\frac53.
\end{equation}
Similarly, using $|a_k\log n_k|\le n_k|\log n_k|$ and the
same uniform-integrability argument as for $n_k\log n_k$, we obtain
\begin{equation}
\label{eq:regularized-entropy-flux-convergence}
a_k\log n_k\to n\log n
\qquad\text{strongly in }L^q((0,T)\times D),
\qquad 1\le q<\frac32.
\end{equation}
The product $a_k\log n_k$ is set to zero where $n_k=0$.

The regularized force satisfies
\begin{equation}
\label{eq:force-convergence}
\mathcal K_{\varepsilon_k}(n_k\nabla\phi)\to n\nabla\phi
\qquad\text{strongly in }L^q((0,T)\times D),
\qquad 1\le q<\frac53.
\end{equation}
Here local convergence of the input must be combined with a
far-field estimate, since the heat smoother is nonlocal.
Choose $0\le\chi\le1$ in $C_c^\infty(\mathbb R^3)$ which equals
one in a neighborhood of $\overline D$, and restrict it to
$\overline{\mathbb H^3}$, so that
\[
\operatorname{dist}\bigl(\overline D,
\operatorname{supp}_{\overline{\mathbb H^3}}(1-\chi)\bigr)>0.
\]
On $(0,T)\times D$, decompose
\begin{equation}
\label{eq:compactness-force-decomposition}
\begin{aligned}
\mathcal K_{\varepsilon_k}(n_k\nabla\phi)-n\nabla\phi
={}&\mathcal{K}_{\varepsilon_k}\bigl(\chi(n_k-n)\nabla\phi\bigr)
 +(\mathcal{K}_{\varepsilon_k}-I)\bigl(\chi n\nabla\phi\bigr)\\
&+\mathcal{K}_{\varepsilon_k}\bigl((1-\chi)n_k\nabla\phi\bigr)
.
\end{aligned}
\end{equation}
Fix $1\le q<5/3$ and choose $R$ so that
$\operatorname{supp}\chi\cap\mathbb H^3\subset B_R^+$.
The strong density convergence on $(0,T)\times B_R^+$ gives
\begin{align}
\label{3.105b}
\|\mathcal K_{\varepsilon_k}\bigl(\chi(n_k-n)\nabla\phi\bigr)\|_{L^q((0,T)\times D)}
&\le C_q\|\chi(n_k-n)\nabla\phi\|_{L^q((0,T)\times\mathbb H^3)}
\nonumber\\
&\le C_q\|\nabla\phi\|_{L^\infty}
\|n_k-n\|_{L^q((0,T)\times B_R^+)}\longrightarrow0.
\end{align}
Since $\chi n\nabla\phi\in L^q((0,T)\times\mathbb H^3)$,
strong continuity and uniform boundedness of $S_D(s)$ on
$L^q(\mathbb H^3)$, followed by dominated convergence in time, give
\begin{equation}
\label{eq:compactness-force-semigroup-limit}
\|(\mathcal K_{\varepsilon_k}-I)\bigl(\chi n\nabla\phi\bigr)\|_{L^q((0,T)\times\mathbb H^3)}\longrightarrow0.
\end{equation}
The Dirichlet heat kernel satisfies
\[
|G(x-y,s)-G(x-(y',-y_3),s)|\le Cs^{-3/2}
\exp\left(-\frac{|x-y|^2}{Cs}\right).
\]
For $x\in D$ and $y\in\operatorname{supp}_{\overline{\mathbb H^3}}(1-\chi)$,
the separation is bounded below by a positive constant depending only
on $D$ and $\chi$. Mass conservation therefore yields
\[
|\mathcal K_{\varepsilon_k}\bigl((1-\chi)n_k\nabla\phi\bigr)(x,t)|
\le C\|\nabla\phi\|_{L^\infty}\|n_0\|_{L^1}
\varepsilon_k^{-3/2}
\exp\left(-\frac{1}{C_{D,\chi}\varepsilon_k}\right).
\]
Consequently,
\begin{equation}
\label{3.105d}
\|\mathcal K_{\varepsilon_k}\bigl((1-\chi)n_k\nabla\phi\bigr)\|_{L^q((0,T)\times D)}\longrightarrow0.
\end{equation}
Combining \eqref{3.105b},
\eqref{eq:compactness-force-semigroup-limit}, and \eqref{3.105d}
proves \eqref{eq:force-convergence}.

We now pass to the equations and to the right-hand side of the
local energy inequality. All strong convergences used below hold
on $(0,T)\times D$, including when $\overline D$ meets $\Gamma$.
Pressure convergence is used only on $(\sigma,T)\times D$ with
$\sigma>0$.

\emph{The cell equation.}
Using strong convergence with exponents $3<10/3$ and $8/5<5/3$,
together with $1/3+5/8=23/24<1$, H\"older's inequality gives
\[
(\mathcal J_{\varepsilon_k}u_k)n_k\to un,\qquad a_k\nabla c_k\to n\nabla c
\qquad\text{strongly in }L^1((0,T)\times D).
\]
The linear diffusion term passes distributionally; in the
boundary weak formulation, one may also use the weak
$L^{5/4}((0,T)\times D)$ convergence of $\nabla n_k$.
Indeed, its uniform bound follows from
$\nabla n_k=2\sqrt{1+n_k}\nabla\sqrt{1+n_k}$,
and its weak limit is identified by the strong convergence of $n_k$.
Thus
\[
\partial_tn+\nabla\cdot(nu)-\Delta n
=-\nabla\cdot(n\nabla c)
\]
in distributions.

\emph{The concentration equation.}
The strong $L^2$ convergences of $u_k$ and $\nabla c_k$ imply
$u_k\cdot\nabla c_k\to u\cdot\nabla c$ in
$L^1((0,T)\times D)$. Moreover, the strong convergence of $h_k$
in $L^{3/2}$ and that of $c_k$ in $L^3$ give
\[
h_kc_k\to nc\qquad\text{strongly in }L^1((0,T)\times D).
\]
Hence $\partial_tc+u\cdot\nabla c-\Delta c=-nc$.

\emph{The momentum equation.}
The strong convergences of $(\mathcal J_{\varepsilon_k}u_k)$ and $u_k$ imply
\[
(\mathcal J_{\varepsilon_k}u_k)\otimes u_k\to u\otimes u
\qquad\text{strongly in }L^q((0,T)\times D),
\qquad 1\le q<\frac53.
\]
Let $\varphi\in C_c^\infty(\mathbb H^3\times(0,T);\mathbb R^3)$.
Choose $D=B_R^+$ and $0<\sigma<T$ so that the spatial support of
$\varphi$ is contained in $D$ and its temporal support in $(\sigma,T)$.
By \eqref{eq:compactness-pressure-weak}, the normalized pressures
converge weakly in $L^{5/3}((\sigma,T)\times D)$.
Subtraction of $[p_k]_D(t)$ does not change the pressure term,
since $\int_D\nabla\cdot\varphi(x,t)\,dx=0$.
Together with \eqref{eq:force-convergence}, this proves
\[
\partial_tu+\nabla\cdot(u\otimes u)-\Delta u+\nabla p
=-n\nabla\phi,\qquad \nabla\cdot u=0
\]
in distributions in $\mathbb H^3\times(0,T)$.
The zero trace of $u$ follows from the weak convergence
\[
u_k\rightharpoonup u
\quad\text{in }L^2(0,T;H_0^1(\mathbb H^3)).
\]
For the scalar equations, the same convergence statements allow
tests supported up to $\Gamma$, since the spatial supports of
such tests lie in the closure of a sufficiently large half-ball.
The resulting weak identities retain the scalar no-flux boundary
conditions of Definition~\ref{def:suitable-half-space}(ii).
They do not assert the existence of a classical normal derivative
of $n$ on $\Gamma$.

\emph{Terms on the right-hand side of the local energy inequality.}
We now pass to the limit in \eqref{eq:approximate-local-energy} to verify the local energy
inequality in Definition~\ref{def:suitable-half-space}(iv).
Fix $0<t_1<T$ and a nonnegative test function
\[
\psi\in C_c^\infty
\bigl(\overline{\mathbb H^3}\times(t_1,T)\bigr),
\qquad
\partial_\nu\psi=0 \quad\text{on }\Gamma.
\]
Choose $D=B_R^+$ so that $\psi$ vanishes near the artificial
spherical boundary, and fix $0<\sigma<t_1$.

For each $t\in(t_1,T)$, we consider the right-hand side of
\eqref{eq:approximate-local-energy} integrated up to time $t$.
Here $T$ is the fixed time horizon, whereas $t$ is the terminal
time in the energy inequality; in particular, $\psi(\cdot,t)$
need not vanish.
Since $\psi$, $\nabla\psi$, and $\partial_s\psi$ vanish for
$s\le t_1$, all these integrals reduce to $(t_1,t)\times D$.
The convergence statements established above remain valid
on this smaller cylinder.

The terms involving $n_k\log n_k$ without transport converge in
$L^1$. For the entropy transport, use the strong convergences with
exponents $16/11<3/2$ and $16/5<10/3$;
their reciprocals sum to one. Thus
\[
n_k\log n_k\,(\mathcal J_{\varepsilon_k}u_k)\to n\log n\,u
\qquad\text{strongly in }L^1((t_1,t)\times D).
\]
Split the chemotactic flux as
\[
a_k(1+\log n_k)\nabla c_k
=a_k\nabla c_k+(a_k\log n_k)\nabla c_k.
\]
The first term converges strongly in $L^1$ by
\eqref{eq:regularizing-functions-convergence}; the second does so
by \eqref{eq:regularized-entropy-flux-convergence} and the same
pair of exponents $16/11$ and $16/5$.

The gradient convergence gives
\[
|\nabla\sqrt{\tilde c_k}|^2\to|\nabla\sqrt{\tilde c}|^2
\qquad\text{strongly in }L^r((t_1,t)\times D),
\qquad 1\le r<\frac53.
\]
Consequently, the terms containing
$|\nabla\sqrt{\tilde c_k}|^2(\partial_t\psi+\Delta\psi)$ converge
in $L^1$. For their transport terms use strong $L^3$ convergence
of both $\nabla\sqrt{\tilde c_k}$ and $u_k$.
Since $(\sqrt{\tilde c_k})^{-1}\le1$ and these factors converge
almost everywhere, strong $L^3$ convergence of the gradients and
dominated convergence against the limiting cubic terms give
\[
\begin{aligned}
(\sqrt{\tilde c_k})^{-1}|\nabla\sqrt{\tilde c_k}|^2
\nabla\sqrt{\tilde c_k}
&\to(\sqrt{\tilde c})^{-1}|\nabla\sqrt{\tilde c}|^2
\nabla\sqrt{\tilde c},\\
(\sqrt{\tilde c_k})^{-1}(\partial_i\sqrt{\tilde c_k})^3
&\to(\sqrt{\tilde c})^{-1}(\partial_i\sqrt{\tilde c})^3
\end{aligned}
\qquad\text{strongly in }L^1((t_1,t)\times D).
\]
Similarly, $|u_k|^2\to|u|^2$ strongly in
$L^r((t_1,t)\times D)$ for $1\le r<5/3$, and
$|u_k|^2(\mathcal J_{\varepsilon_k}u_k)\to|u|^2u$ strongly in $L^1$.

For the pressure term, its integral over $(0,t)$ equals that over
$(t_1,t)$ because $\nabla\psi=0$ for $s\le t_1$.
The support of $\nabla\psi$ need not be contained in $(t_1,t)$.
The subtraction of the spatial mean is legitimate because
\[
\int_Du_k\cdot\nabla\psi\,dx
=\int_Du\cdot\nabla\psi\,dx=0
\]
for almost every time: the velocities are divergence free with
zero trace on $\Gamma$, and $\psi$ vanishes near the artificial
spherical boundary. The following weak and strong convergences
have conjugate exponents $5/3$ and $5/2$, respectively, and therefore
allow passage to the pressure term by weak--strong pairing:
\[
p_k-[p_k]_D(s)\rightharpoonup p-[p]_D(s)
\quad\text{weakly in }L^{5/3}((\sigma,t)\times D),
\]
and
\[
u_k\cdot\nabla\psi\to u\cdot\nabla\psi
\quad\text{strongly in }L^{5/2}((\sigma,t)\times D).
\]

Therefore,
\[
\int_0^t\!\!\int_D
\bigl(p_k-[p_k]_D(s)\bigr)
u_k\cdot\nabla\psi\,dx\,ds
\longrightarrow
\int_0^t\!\!\int_D
\bigl(p-[p]_D(s)\bigr)
u\cdot\nabla\psi\,dx\,ds .
\]
Hence
\[
\int_0^t\int_D(p_k-[p_k]_D)u_k\cdot\nabla\psi
\longrightarrow
\int_0^t\int_D(p-[p]_D)u\cdot\nabla\psi.
\]
Finally, \(\mathcal K_{\varepsilon_k}(n_k\nabla\phi)\to n\nabla\phi\) in \(L^{3/2}_{\rm loc}\) and \(u_k\to u\) in \(L^3_{\rm loc}\), so the force term also converges.

\emph{Terms on the left-hand side.} Choose $t$ in the common
full-measure set from \eqref{eq:compactness-common-time-slices}.
Since $z\log z\ge-e^{-1}$ and $\psi(t)\ge0$ has compact spatial
support, Fatou's lemma applied after adding $e^{-1}\psi(t)$ gives
\[
\int n\log n\,\psi(t)
\le\liminf_{k\to\infty}\int n_k\log n_k\,\psi(t).
\]
The strong $L^2$ convergence on this time slice gives convergence
of the two weighted quadratic terms involving
$\nabla\sqrt{\tilde c_k}(t)$ and $u_k(t)$. This use of a common
time-slice subsequence is essential; space--time weak convergence
alone would not suffice. The space--time weak convergences of
$\nabla\sqrt{n_k}$, $\nabla^2\sqrt{\tilde c_k}$, and $\nabla u_k$
give the three quadratic dissipation terms by weak lower
semicontinuity on $(0,t)$.

For the reaction-weighted term, the local energy inequality bounds
\(\sqrt{h_k}\nabla \sqrt{\tilde{c_k}}\) in \(L^2_{\rm loc}\). Moreover,
\[
\|\sqrt{h_k}-\sqrt n\|_{L^2(K)}^2\leq\|h_k-n\|_{L^1(K)}\to0
\]
for every compact \(K\). Testing against smooth vector fields and using the weak convergence of \(\nabla \sqrt{\tilde{c_k}}\), we identify the weak limit as \(\sqrt n\nabla \sqrt{\tilde{c}}\). Therefore
\[
\int_0^t\int n|\nabla \sqrt{\tilde{c}}|^2\psi
\leq\liminf_{k\to\infty}\int_0^t\int h_k|\nabla \sqrt{\tilde{c_k}}|^2\psi.
\]
For the quartic term, the local energy bound makes
$ (\sqrt{\tilde{c_k}})^{-1}|\nabla\sqrt{\tilde{c_k}}|^2 $
weakly compact in $L^2_{\rm loc}$, while the strong \(L^2\) convergence of \(\nabla \sqrt{\tilde{c_k}}\) and the bounded convergence of \((\sqrt{\tilde{c_k}})^{-1}\) identify its weak limit as \((\sqrt{\tilde{c}})^{-1}|\nabla (\sqrt{\tilde{c}})|^2\). Hence
\[
\int_0^t\int (\sqrt{\tilde{c}})^{-2}|\nabla \sqrt{\tilde{c}}|^4\psi
\leq\liminf_{k\to\infty}\int_0^t\int (\sqrt{\tilde{c_k}})^{-2}|\nabla \sqrt{\tilde{c_k}}|^4\psi.
\]
Taking the lower limit in \eqref{eq:approximate-local-energy}, using the convergences established
above, we obtain precisely the local
boundary energy inequality \eqref{le} for the limiting quadruple
\((n,c,u,p)\), for almost every \(t\in(t_1,T)\). In the pressure term
we may take
\[
\bar p(s)=[p]_D(s),
\]
since the local energy inequality is unchanged if the pressure is
modified by an arbitrary spatial constant depending only on time.

Moreover, since
\[
\operatorname{supp}\psi
   \Subset \overline{\mathbb H^3}\times(t_1,T),
\]
all terms containing \(\psi\), \(\nabla\psi\), or
\(\partial_s\psi\) vanish on \((0,t_1)\). Hence, after choosing
\(0<\sigma<t_1\), the local pressure compactness on
\((\sigma,t)\times D\) is sufficient for the above passage to the
limit.

\medskip
\noindent
\emph{Attainment of the initial data.}
Let
$\zeta\in C_c^\infty(\overline{\mathbb H^3})$
and let
$\varphi\in C_c^\infty(\mathbb H^3;\mathbb R^3)$
satisfy $\nabla\cdot\varphi=0$.
The approximate equations and the uniform estimates imply,
for $0\leq s<t\leq T$ with $t-s\leq1$,
\[
\begin{aligned}
&|\langle n_k(t)-n_k(s),\zeta\rangle|
 +|\langle c_k(t)-c_k(s),\zeta\rangle|\\
&\qquad
 +|\langle u_k(t)-u_k(s),\varphi\rangle|
 \leq C_{\zeta,\varphi,T}|t-s|^{1/10},
\end{aligned}
\]
where the constant is independent of $k$.
Indeed, the cell fluxes are locally bounded in $L^{10/9}$;
the concentration and momentum terms have stronger
time-integrability. The pressure term vanishes against
$\varphi$.

Together with the convergence of the regularized initial data,
these estimates identify representatives of the limits
satisfying
\[
\begin{aligned}
\lim_{t\downarrow0}\int_{\mathbb H^3}n(t)\zeta\,dx
 &=\int_{\mathbb H^3}n_0\zeta\,dx,\\
\lim_{t\downarrow0}\int_{\mathbb H^3}c(t)\zeta\,dx
 &=\int_{\mathbb H^3}c_0\zeta\,dx,\\
\lim_{t\downarrow0}\int_{\mathbb H^3}u(t)\cdot\varphi\,dx
 &=\int_{\mathbb H^3}u_0\cdot\varphi\,dx.
\end{aligned}
\]
The global $L^\infty(0,T;L^2)$ bound and density of smooth
solenoidal test fields further give
$u(t)\rightharpoonup u_0$ in $L^2_\sigma(\mathbb H^3)$
as $t\downarrow0$.

\begin{proof}[Proof of Theorem~\ref{thm:half-space-existence}]
Proposition~\ref{prop:regularized-global} supplies global smooth approximate solutions, while Proposition~\ref{prop:uniform-energy} gives the explicit uniform bound \(C_{\rm in}(1+T)\).
A diagonal compactness argument on time intervals separated from the
genuine initial time yields a global distributional solution satisfying
\eqref{eq:half-space-CNS}, attaining the initial data and the boundary conditions in the usual
weak sense.

The global estimates also pass to this limit. The approximate mass
bounds and maximum principle give
\eqref{eq:limit-global-scalar-bounds} by Fatou's lemma on the common
time slices chosen above, shrinking this full-measure set if necessary
so that $c_k(t,x)\to c(t,x)$ for almost every $x$ as well.
To pass to \eqref{eq:uniform-energy}, first
restrict the nonnegative integrands to $(0,T)\times B_R^+$.
The convergences in \eqref{eq:compactness-common-time-slices}
give the entropy term by Fatou's lemma and the quadratic energy
terms by strong $L^2$ convergence for almost every time.
Weak lower semicontinuity gives the dissipation terms involving
$\nabla\sqrt{1+n}$, $\nabla^2\sqrt{1+c}$, and $\nabla u$.
Moreover, $h_k\to n$, $c_k\to c$, and
$\nabla\sqrt{1+c_k}\to\nabla\sqrt{1+c}$ almost everywhere, so Fatou's
lemma gives the quartic and reaction-weighted terms. Taking the
essential supremum in time and then letting $R\to\infty$ by monotone
convergence proves \eqref{eq:limit-global-energy}, with the same
dependence $C_{\rm in}(1+T)$. Thus the limit satisfies the global
requirements in Definition~\ref{def:suitable-half-space}(i).
The $L^{5/3}$ bound for $n$ follows from
\eqref{eq:uniform-integrability} by Fatou's lemma. Applying the
half-space Gagliardo--Nirenberg inequality to $u$ and
$\nabla\sqrt{1+c}$ gives their $L^{10/3}$ bounds. Finally,
$\nabla c=2\sqrt{1+c}\,\nabla\sqrt{1+c}$ and $1\leq1+c\leq M_c$
give
\[
|\nabla c|^4
\leq16M_c^3\frac{|\nabla\sqrt{1+c}|^4}{1+c},
\]
which proves the asserted global $L^4$ integrability of $\nabla c$.

Moreover, Proposition~\ref{prop:uniform-pressure} implies that, for
every $0<\sigma<T<\infty$ and every bounded spatial domain $D$,
\[
p_k-[p_k]_D(t)
\rightharpoonup
p-[p]_D(t)
\quad\text{weakly in }
L^{5/3}\bigl((\sigma,T)\times D\bigr),
\]
after passing to a diagonal subsequence. Consequently, for every bounded domain
$D\subset\mathbb H^3$ whose closure is compact in
$\overline{\mathbb H^3}$ and every compact interval
$I\Subset(0,\infty)$,
\[
p\in L^{5/3}(I\times D).
\]
Since all distributional test functions and all test functions in the
local energy inequality are supported away from the genuine initial
time, this local pressure compactness is sufficient for the passage to
the limit carried out above. Therefore the limiting quadruple satisfies
all the requirements of Definition \ref{def:suitable-half-space} on
$\mathbb H^3\times(0,\infty)$.

The refinement of this construction used in the eventual regularity
assertion is proved in Lemma~\ref{lem:ev-selection}, with its
additional scalar estimates in Lemma~\ref{lem:ev-scalar-decay}.

This completes the existence argument.
\end{proof}

   	\section{Proof of Proposition \ref{main1}}\label{section4}

  This section is devoted to the proof of Proposition \ref{main1}. 
  We introduce the abbreviation
  \begin{align*}
  	D(\rho)=D_{\tilde{q},\tilde{r}}(\rho)=\frac1\rho\|p-[p]_\rho(t)\|_{L_t^{\tilde{q}}L_x^{\tilde{r}}(\mathcal{Q}_\rho(z_0))},
  \end{align*}
  where $\tilde{q}\in(1,2)$, $\tilde{r}\in (\frac32,3)$, and $\frac2{\tilde{q}}+\frac3{\tilde{r}}=3$. We first establish several preliminary lemmas.

  \subsection{Interpolation and local energy estimates}
\label{subsec:interpolation-local-energy}
  \begin{lemma}\label{interpolation}
  	For any $\rho>0$ and any pair of exponents $(q,r)$ satisfying
  	\begin{align*}
  		\frac2q+\frac3r=\frac32,\quad 2\leq q\leq \infty,\quad 2\leq r\leq 6,
  	\end{align*}
  	we have
  	\begin{align*}
  		\|f\|^2_{L_t^qL_x^r(\mathcal{Q})}\leq C\left(\|f\|^2_{L_t^\infty L_x^2(\mathcal{Q})}+\|\nabla f\|^2_{L_t^2 L_x^2(\mathcal{Q})}\right).
  	\end{align*}
  	In particular, the following scale-invariant estimates hold:
  	\begin{align*}
  		&\rho^{-\frac12}\|u\|_{L_t^qL_x^r(\mathcal{Q}_\rho(z_0))}\leq C(A_u(\rho)+E_u(\rho))^\frac12,\\
  		&\rho^{-\frac12}\|\sqrt{n}\|_{L_t^qL_x^r(\mathcal{Q}_\rho(z_0))}\leq C(A_{\sqrt{n}}(\rho)+E_{\sqrt{n}}(\rho))^\frac12,\\
	  		&\rho^{-\frac12}\|\nabla\sqrt{\tilde{c}}\|_{L_t^qL_x^r(\mathcal{Q}_\rho(z_0))}\leq C\bigl(A_{\nabla\sqrt{\tilde{c}}}(\rho)+E_{\nabla\sqrt{\tilde{c}}}(\rho)\bigr)^\frac12,
  	\end{align*}
  	where $C>0$ is a universal constant.
  	\begin{proof}
  		The results follow from standard interpolation, the Sobolev embedding theorem, and H\"{o}lder's inequality.
  	\end{proof}
  \end{lemma}

\begin{lemma}\label{AE} 
Let $(n,c,u,p)$ be a suitable weak solution. Write 
\[
A(r):=A_{u,\sqrt n,\nabla\sqrt{\tilde c}}(r),
\qquad
E(r):=E_{u,\sqrt n,\nabla\sqrt{\tilde c}}(r),
\]
and
\[
X(r):=A(r)+E(r),
\qquad
U(r):=A_u(r)+E_u(r).
\]
For $0<\gamma\leq1/2$, every admissible $0<\rho\leq1$, and 
$0<\alpha<1/6$, we have, with $X=X(\rho)$, 
\begin{equation}\label{eq:energy-decay-lemma} 
\begin{aligned} 
X(\gamma\rho)\leq{}&
C_\alpha\gamma^2 
\bigl(X+X^{1+\alpha}+X^{1-\alpha}\bigr)\\ 
&+C_\alpha\gamma^{-2} 
\bigl(X^{3/2}+X^{3/2+\alpha}+X^{3/2-\alpha} 
+U(\rho)^{1/2}D(\rho)\bigr)\\ 
&+C_\alpha\gamma^2
\bigl(X+\gamma^{-4}X^{3/2}\bigr)^{1-\alpha}. 
\end{aligned} 
\end{equation} 
The constant depends on $M_c,M_\phi,\alpha$ and the fixed pressure 
exponents, and is independent of $\rho$ and $\gamma$. 
\end{lemma}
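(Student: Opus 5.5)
The plan is to run the Caffarelli--Kohn--Nirenberg argument with a backward heat kernel weight, reduced to unit scale. The mass identity is used alongside the entropy--energy inequality \eqref{le} to control the sign-changing entropy.

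\emph{Normalization.} All quantities in $X$, $U$, $D$ are invariant under the natural scaling, so the first step rescales $\mathcal Q_\rho(z_0)$ to $\mathcal Q_1$. After rescaling, $\|\nabla\phi_\rho\|_{L^\infty}\leq\rho M_\phi\leq M_\phi$ and $1\leq\sqrt{\tilde c}\leq M_c^{1/2}$ are unchanged. Rescaling does produce the extra term $2\log\rho\,n_\rho$ in $n_\rho\log n_\rho$. We avoid it by working directly with the rescaled solution, which is again a suitable weak solution on $\mathcal Q_1$, and never comparing entropies across scales. Consequently only $n\log n$ of the rescaled solution appears, and we may take $\rho=1$.

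\emph{Test function and its bounds.} Fix a cutoff $\chi$ equal to one on $\mathcal Q_{1/2}$ and supported in $\mathcal Q_1$. We take $\chi$ radial in $x$, so $\partial_3\chi=0$ on $\Gamma$. Define
\[
\psi=\chi\,\Gamma_\gamma,\qquad \Gamma_\gamma(x,t)=(4\pi(\gamma^2-t))^{-3/2}e^{-|x|^2/(4(\gamma^2-t))},
\]
which also satisfies $\partial_3\psi=0$ on $\Gamma$. The standard bounds on $\mathcal Q_1$ are
\[
\psi\geq c\gamma^{-3}\ \text{on }\mathcal Q_\gamma,\quad \psi\leq C\gamma^{-3},\quad |\nabla\psi|\leq C\gamma^{-4},\quad |\partial_t\psi+\Delta\psi|\leq C,
\]
and $\int\psi(\cdot,t)\,dx\leq C$.

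\emph{Weighted mass estimate.} Test the weak density equation with $\psi$. This is legitimate for tests up to $\Gamma$ by Definition~\ref{def:suitable-half-space}(ii), after a time cutoff. It gives
\[
M_\psi(t)=\int n\psi(t)\leq C\iint n\bigl(1+|u||\nabla\psi|+|\nabla c||\nabla\psi|\bigr).
\]
With $\|n\|_{L^1}$ and $\|\sqrt n\|^2$ bounded in mixed norms via Lemma~\ref{interpolation}, and $|\nabla c|\leq 2M_c^{1/2}|\nabla\sqrt{\tilde c}|$, this yields $\sup_t M_\psi\leq C(X+\gamma^{-4}X^{3/2})$. Since $\psi\geq c\gamma^{-3}$ on $\mathcal Q_\gamma$, it follows that $A_{\sqrt n}(\gamma)\leq C\gamma^2\sup_tM_\psi$.

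\emph{Entropy--energy inequality.} Next insert $\psi$ into \eqref{le}. The left side controls $\gamma^{-2}$ times the remaining parts of $X(\gamma)$, namely $A_u(\gamma)$, $A_{\nabla\sqrt{\tilde c}}(\gamma)$ and $E(\gamma)$. The exception is $\int n\log n\,\psi(\tau)$, which may be negative. We split this term at $n=1$, keep $\int_{\{n\geq 1\}}n\log n\,\psi\geq0$, and bound the negative part by
\[
\int_{\{n<1\}}(-n\log n)\psi\leq C_\alpha M_\psi^{1-\alpha}\Bigl(\int\psi\Bigr)^{\alpha}.
\]
Together with the mass bound, this produces the last line $C_\alpha\gamma^2(X+\gamma^{-4}X^{3/2})^{1-\alpha}$ after multiplying by $\gamma^2$. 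The right-hand side terms are estimated one at a time.
\begin{itemize}
\item Terms with $\partial_t\psi+\Delta\psi$ are bounded by $C\iint\bigl(|n\log n|+|\nabla\sqrt{\tilde c}|^2+|u|^2\bigr)$. Using $|n\log n|\leq C_\alpha(n^{1+\alpha}+n^{1-\alpha})$ and interpolation of $\sqrt n$ in $L^{2(1\pm\alpha)}$, which is allowed since $\alpha<1/6$ keeps the exponents inside the range of Lemma~\ref{interpolation}, they contribute $C(X+X^{1+\alpha}+X^{1-\alpha})$.
\item The transport and chemotactic terms $n\log n\,u\cdot\nabla\psi$, $n\log n\nabla\tilde c\cdot\nabla\psi$ and $n\nabla\tilde c\cdot\nabla\psi$, together with the cubic terms in $|u|^2u$, $|\nabla\sqrt{\tilde c}|^2u$ and $(\partial_i\sqrt{\tilde c})^3$, are bounded by $C\gamma^{-4}$ times $L^3$-type norms. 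These give $\gamma^{-4}(X^{3/2}+X^{3/2+\alpha}+X^{3/2-\alpha})$.
\item The pressure term gives $\gamma^{-4}\|p-[p]_1\|_{L^{\tilde q}L^{\tilde r}}\|u\|_{L^{\tilde q'}L^{\tilde r'}}\leq C\gamma^{-4}U^{1/2}D$. This uses $2/\tilde q+3/\tilde r=3$, so the dual pair satisfies $2/\tilde q'+3/\tilde r'=2$ and lies in the admissible interpolation range.
\item The force term is bounded by $M_\phi\gamma^{-3}\iint n|u|$. Since $n|u|=\sqrt n\cdot\sqrt n|u|$, it is controlled by $\gamma^{-4}X^{3/2}$.
\end{itemize}
Multiplying the whole inequality by $\gamma^2$ gives \eqref{eq:energy-decay-lemma}.

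\emph{Main obstacle.} The delicate step is the negative entropy. The bound must use the same weight $\psi$ in both the entropy inequality and the mass identity, and the exponent $\alpha<1/6$ must keep every power of $n$ within what $A_{\sqrt n}+E_{\sqrt n}$ controls. A second point is checking that the boundary flux terms really vanish for $\psi$ with $\partial_3\psi=0$, which holds by the suitability class.
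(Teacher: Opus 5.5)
Your proposal is correct and follows essentially the same route as the paper. Both arguments use the backward heat-kernel weight $\psi=\chi\Gamma_\gamma$ with $\partial_3\psi=0$ on the flat boundary and test the weak mass identity with that same weight, which bounds $A_{\sqrt n}(\gamma)$ and, through weighted H\"older, the negative entropy; they then estimate the remaining terms one at a time, with $\alpha<1/6$ entering in the products $n^{1+\alpha}|u|$ and $n^{1+\alpha}|\nabla c|$.

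One point in your normalization step needs tightening. The rescaled quadruple is not automatically suitable, because $n\log n$ does not scale. It does satisfy \eqref{le} in the same form, since adding $2\log\rho$ times the weak mass identity (tested with the same $\psi$) exactly cancels the extra $2\log\rho\,n_\rho$ contributions in the terminal, $\partial_t\psi+\Delta\psi$, and flux terms. This cancellation is precisely how the paper normalizes, so you are not avoiding the extra term but using the mass identity to remove it.
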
 

\begin{proof} 
Normalize the cylinder by translation and parabolic scaling, so that 
$\rho=1$ and the terminal point is the origin. The rescaled density is 
\[
n_\rho(y,s)=\rho^2n(x_0+\rho y,t_0+\rho^2s).
\]
The additional term $2\log\rho\,n_\rho$ in $n_\rho\log n_\rho$ is 
accounted for by adding $2\log\rho$ times the weak mass identity to 
the rescaled entropy inequality. Thus the normalized suitable 
inequality has the same form. The rescaled force contains 
$\nabla\phi_\rho=\rho\nabla\phi$, which is bounded by $M_\phi$ for 
$\rho\leq1$.

We give the boundary proof. Set 
\[
\Gamma_\gamma(x,t)
=[4\pi(\gamma^2-t)]^{-3/2}
\exp\bigl(-|x|^2/[4(\gamma^2-t)]\bigr),
\qquad 
\psi=\Gamma_\gamma\varphi, 
\]
where $0\leq\varphi\leq1$ is an even-in-$x_3$ smooth cutoff supported 
in $B_1\times(-1,1)$ and equal to one on $Q_{1/2}$. 
It is chosen to vanish near $t=-1$. Then $\partial_3\psi=0$ on 
the flat boundary, and 
\[
\begin{gathered} 
\psi\geq c\gamma^{-3}\quad\hbox{on }Q_\gamma^+, 
\qquad 
\|\psi(t)\|_{L^1(B^+)}\leq1,\\ 
|\psi|\leq C\gamma^{-3},
\qquad 
|\nabla\psi|\leq C\gamma^{-4},
\qquad 
|\psi_t+\Delta\psi|\leq C
\quad\text{in }(-1,0)\times\mathbb R^3.
\end{gathered} 
\]
The last bound follows because the derivatives of the cutoff are 
supported away from the spatial and temporal pole of the kernel.

Put $X=X(1)$ and $U=U(1)$. For almost every terminal time 
$t\in(-\gamma^2,0)$, all weak identities and inequalities below are 
first integrated on $(-1,t)$. By interpolation, 
\[
\|n\|_{L^{5/3}(Q^+)}\leq CX,
\qquad 
\|u\|_{L^{10/3}(Q^+)}
+\|\nabla\sqrt{\tilde c}\|_{L^{10/3}(Q^+)}
\leq CX^{1/2}.
\]
Since
\[
|n\log n|
\leq C_\alpha\bigl(n^{1+\alpha}+n^{1-\alpha}\bigr),
\]
H\"older's inequality gives 
\[
\begin{aligned} 
\int_{Q^+}|n\log n|\,dxdt
&\leq C_\alpha
\bigl(X^{1+\alpha}+X^{1-\alpha}\bigr),\\ 
\int_{Q^+}|n\log n|(|u|+|\nabla c|)\,dxdt 
&\leq C_\alpha
\bigl(X^{3/2+\alpha}+X^{3/2-\alpha}\bigr).
\end{aligned} 
\]
Here $\alpha<1/6$ ensures that the H\"older exponents are admissible. 
All other scalar and velocity cubic terms are bounded by $CX^{3/2}$. 
For the pressure term, 
\[
\int |p-[p]_1(t)||u|\,dxdt 
\leq
D(1)
\|u\|_{L_t^{\tilde q'}L_x^{\tilde r'}(Q^+)}
\leq
CD(1)U^{1/2}.
\]
The last embedding follows from the energy interpolation line 
$2/q+3/r=3/2$ and the finite measure of $Q_1^+$. 
Thus the absolute sum of the right-hand terms in the tested local 
energy inequality is at most 
\begin{equation}\label{jbn} 
\begin{aligned} 
\mathcal R_\gamma:={}&
C_\alpha\bigl(X+X^{1+\alpha}+X^{1-\alpha}\bigr)\\ 
&+C_\alpha\gamma^{-4} 
\bigl(
X^{3/2}+X^{3/2+\alpha}+X^{3/2-\alpha}
+U^{1/2}D(1)
\bigr). 
\end{aligned} 
\end{equation} 

The negative part of the entropy must be estimated with the same 
weight. The weak mass identity, with its upper limit still $t$, is 
\[
M_\psi(t)
:=\int_{B^+}n(t)\psi(t)\,dx
=
\int_{-1}^t\int_{B^+}
\bigl[
n(\psi_s+\Delta\psi)
+n(u+\nabla c)\cdot\nabla\psi
\bigr]\,dxds.
\]
Consequently, 
\begin{equation}\label{eq:weighted-mass-repair} 
0\leq M_\psi(t)
\leq C\bigl(X+\gamma^{-4}X^{3/2}\bigr). 
\end{equation} 
H\"older's inequality with respect to the measure $\psi(t,x)\,dx$ gives 
\begin{equation}\label{eq:weighted-negative-entropy} 
\begin{aligned} 
\int_{B^+\cap\{0<n<1\}}
(-n\log n)\psi\,dx 
&\leq
C_\alpha
\int_{B^+\cap\{0<n<1\}}
n^{1-\alpha}\psi\,dx\\
&\leq
C_\alpha M_\psi(t)^{1-\alpha}
\left(\int_{B^+}\psi\,dx\right)^\alpha\\
&\leq
C_\alpha
\bigl(X+\gamma^{-4}X^{3/2}\bigr)^{1-\alpha}.
\end{aligned} 
\end{equation} 
In particular, the weight cannot be discarded in this step.

Add the bound for the negative entropy to the tested energy inequality, 
and use \eqref{eq:weighted-mass-repair} for the density contribution 
to $A$. First take the essential supremum over $t\in(-\gamma^2,0)$ 
for the terminal energies; then let admissible terminal times increase 
to zero for the nonnegative dissipation integrals. Multiplication 
by $\gamma^2$ gives 
\begin{equation}\label{11} 
\begin{aligned} 
&X(\gamma)
+\gamma^{-1}
\operatorname*{ess\,sup}_{-\gamma^2<t<0}
\int_{B_\gamma^+}n|\log n|(t)\,dx\\
&\qquad
+\gamma^{-1}
\iint_{Q_\gamma^+}
\frac{|\nabla\sqrt{\tilde c}|^4}{\tilde c}\,dxdt\\
&\qquad\leq
C\gamma^2\mathcal R_\gamma
+C_\alpha\gamma^2
\bigl(X+\gamma^{-4}X^{3/2}\bigr)^{1-\alpha}. 
\end{aligned} 
\end{equation} 
Discard the last two nonnegative terms and rescale to obtain 
\eqref{eq:energy-decay-lemma}. The interior argument uses a compactly 
supported cutoff in $Q_1$, with no symmetry restriction, and is identical. 
The cubic terms have critical scaling and acquire no extra factor $\rho$; 
only the gravitational force has the additional factor described above. 
\end{proof}

\begin{corollary}\label{corollary1} 
Assume that $A(\rho)+E(\rho)$ is sufficiently small. Then
\begin{align*} 
A(\gamma\rho)+E(\gamma\rho)
\leq{}&
C_1\gamma^2
\bigl(A(\rho)+E(\rho)\bigr)^{1-\delta}\\
&+C_1\gamma^{-2}
\bigl(A(\rho)+E(\rho)\bigr)^{\frac32-\delta}\\
&+C_1\gamma^{-2}
\bigl(A(\rho)+E(\rho)\bigr)^{1/2}D(\rho),
\end{align*} 
where $C_1=C(M_c,M_\phi,\delta)$,
$0<\delta<\frac14$, and $0<\gamma\leq\frac12$, with the pressure 
exponents fixed.

\begin{proof} 
It suffices to assume
\[
A(\rho)+E(\rho)\leq1.
\]
Apply Lemma~\ref{AE} with
\[
\alpha=\frac{2\delta}{3},
\qquad 0<\delta<\frac14.
\]
Since $\alpha<1/6$, this choice is admissible. Writing
\[
X=X(\rho)=A(\rho)+E(\rho),
\]
concavity gives
\[
\begin{aligned}
\gamma^2
\bigl(X+\gamma^{-4}X^{3/2}\bigr)^{1-\alpha}
&\leq
\gamma^2X^{1-\alpha}
+\gamma^{-2+4\alpha}
X^{3/2-3\alpha/2}\\
&\leq
\gamma^2X^{1-\delta}
+\gamma^{-2}X^{3/2-\delta}.
\end{aligned}
\]
Indeed, $\alpha=2\delta/3<\delta$, while
\[
\frac32-\frac{3\alpha}{2}
=\frac32-\delta,
\]
and $0<\gamma\leq1/2$ implies
\[
\gamma^{-2+4\alpha}\leq\gamma^{-2}.
\]

Since $X\leq1$ and $\alpha\leq\delta$, we also have
\[
X+X^{1+\alpha}+X^{1-\alpha}
\leq C X^{1-\delta},
\]
and
\[
X^{3/2}+X^{3/2+\alpha}+X^{3/2-\alpha}
\leq C X^{3/2-\delta}.
\]
Finally,
\[
U(\rho)\leq X(\rho)=A(\rho)+E(\rho),
\]
and hence
\[
U(\rho)^{1/2}D(\rho)
\leq
\bigl(A(\rho)+E(\rho)\bigr)^{1/2}D(\rho).
\]
Substituting these estimates into
\eqref{eq:energy-decay-lemma} gives the asserted inequality.
\end{proof} 
 
\end{corollary}

  \subsection{Pressure estimates}
\label{subsec:regularity-pressure-estimates}
  \begin{lemma}\label{p}
  	Let $(n,c,u,p)$ be a suitable weak solution of \eqref{che}--\eqref{bound}. For any $\gamma\in(0,\frac12]$, $\rho>0$, and $\epsilon_1\in(0,\frac{3}{\tilde{r}}-1)$ is sufficiently small, there exist a constant $C=C(\tilde{q},\tilde{r},\epsilon_1)$ such that
  	\begin{align}
  		D(\gamma\rho)\leq& C\gamma^{1+\epsilon_1}\Big[D_{\tilde{q},1}(\rho)+A_u(\rho)+E_u(\rho)+\rho\left(A_{\sqrt{n}}(\rho)+E_{\sqrt{n}}(\rho)\right)\nonumber\\
  		& \qquad\qquad+\left(A_u(\rho)+E_u(\rho)\right)^\frac12\Big]\nonumber\\
  		&+C\gamma^{-1}\left[A_u(\rho)+E_u(\rho)+\rho M_\phi\left(A_{\sqrt{n}}(\rho)+E_{\sqrt{n}}(\rho)\right)\right], \label{D_1}
  	\end{align}
  	and
  	\begin{align}
  		D(\gamma\rho)\leq& C\gamma^{1+\epsilon_1}\Big[D(\rho)+A_u(\rho)+E_u(\rho)+\rho\left(A_{\sqrt{n}}(\rho)+E_{\sqrt{n}}(\rho)\right)+\left(A_u(\rho)+E_u(\rho)\right)^\frac12\Big]\nonumber\\
  		&+C\gamma^{-1}\left[A_u(\rho)+E_u(\rho)+\rho M_\phi\left(A_{\sqrt{n}}(\rho)+E_{\sqrt{n}}(\rho)\right)\right]. \label{D_2}
  	\end{align}
  	\begin{proof}
  		We prove the boundary case first. Without loss of generality, by scaling we assume $\rho=1$. Fix once and for all a smooth domain
$\tilde B$ satisfying $B_{3/4}^+\subset\tilde B\subset B_1^+$,
and set $\tilde Q=(-1,0)\times\tilde B$. This leaves a fixed
geometric gap for the homogeneous Stokes estimate from
$Q_{3/4}^+$ to $Q_{1/2}^+$. Define $r^*$ via $\frac{1}{r^*}=\frac{1}{\tilde{r}}+\frac13<1$. H\"{o}lder's inequality implies
  		\begin{align*}
  			\|u\cdot\nabla u\|_{L_t^{\tilde{q}}L_x^{r^*}(Q^+)}\leq \|\nabla u\|_{L_t^2L_x^2(Q^+)}\|u\|_{L_t^{q_1}L_x^{r_1}(Q^+)},
  		\end{align*}
  		where $\frac{1}{\tilde{q}}=\frac12+\frac{1}{q_1}$ and $\frac{1}{r^*}=\frac{1}{2}+\frac{1}{r_1}$.
  		Under the conditions on $(\tilde{q},\tilde{r})$, we have
  		\begin{align*}
  			&\frac{2}{\tilde{q}}+\frac{3}{r^*}=\frac{3}{\tilde{r}}+1+\frac{2}{\tilde{q}}=4,\\
  			&\frac{2}{\tilde{q}}+\frac{3}{r^*}=\frac{2}{q_1}+1+\frac{3}{r_1}+\frac32=4.
  		\end{align*}
  		Since $q_1\in(2,\infty)$, $r_1\in(2,6)$, and $\frac{2}{q_1}+\frac{3}{r_1}=\frac32$, Lemma \ref{interpolation} yields
  		\begin{align*}
  			\|u\cdot\nabla u\|_{L_t^{\tilde{q}}L_x^{r^*}(Q^+)}\leq C(A_u(1)+E_u(1)).
  		\end{align*}
  		Similarly,
  		\begin{align*}
  			\|n\|_{L_t^{\tilde{q}}L_x^{r^*}(Q^+)}\leq C\|n\|_{L_t^{\tilde{q}}L_x^{\tilde{r}}(Q^+)}= C\|\sqrt{n}\|^2_{L_t^{2\tilde{q}}L_x^{2\tilde{r}}(Q^+)}.
  		\end{align*}
  		Since $2\tilde{q}\in (2,4)$ and $2\tilde{r}\in (3,6)$, the scaling relation $\frac{2}{2\tilde{q}}+\frac{3}{2\tilde{r}}=\frac32$ holds. Thus,
  		\begin{align*}
  			\|n\|_{L_t^{\tilde{q}}L_x^{r^*}(Q^+)}\leq C(A_{\sqrt{n}}(1)+E_{\sqrt{n}}(1)).
  		\end{align*}
  		By Lemma \ref{stokes}, there exists a unique pair
  		\begin{align*}
  			v\in W^{1,2}_{\tilde{q},r^*}(\tilde{Q})\quad \text{and}\quad p_1\in W^{0,1}_{\tilde{q},r^*}(\tilde{Q})
  		\end{align*}
  		solving the initial boundary value problem:
  		\begin{align*}
  			\begin{cases}
  				\partial_tv-\Delta v+\nabla p_1=-u\cdot\nabla u-n\nabla\phi \quad&\text{in}~\tilde{Q},\\
  				\nabla\cdot v=0   &\text{in}~\tilde{Q},\\
  				[p_1]_{\tilde{B}}(t)=0 &\text{for almost everywhere (a.e.)}~t\in[-1,0],\\
  				v=0 &\text{on}~ \partial_p\tilde{Q}.
  			\end{cases}
  		\end{align*}
  		Moreover, the solution satisfies
  		\begin{align}
  			&\||v|+|\nabla v|+|p_1|+|\nabla p_1|\|_{L_t^{\tilde{q}}L_x^{r^*}(\tilde{Q})}\nonumber\\
  			\leq& C\left(\|u\cdot\nabla u\|_{L_t^{\tilde{q}}L_x^{r^*}(Q^+)}+\|\nabla\phi\|_{L^\infty_{t,x}(Q^+)}\|n\|_{L_t^{\tilde{q}}L_x^{r^*}(Q^+)}\right)\nonumber\\
  			\leq& C(A_u(1)+E_u(1))+C\|\nabla\phi\|_{L^\infty_{t,x}(Q^+)}(A_{\sqrt{n}}(1)+E_{\sqrt{n}}(1)). \label{E1}
  		\end{align}
  		The Sobolev-Poincar\'{e} inequality then yields
  		\begin{align}
  			&\|p_1\|_{L_t^{\tilde{q}}L_x^{\tilde{r}}(\tilde{Q})}\leq C\|\nabla p_1\|_{L_t^{\tilde{q}}L_x^{r^*}(\tilde{Q})}\nonumber\\ 
  			\leq&C(A_u(1)+E_u(1))+C\|\nabla\phi\|_{L^\infty_{t,x}(Q^+)}(A_{\sqrt{n}}(1)+E_{\sqrt{n}}(1)). \label{p_1}
  		\end{align}
  		Setting $w=u-v$ and $p_2=p-p_1$, the system for $(w, p_2)$ is
  		\begin{align*}
  			\begin{cases}
  				\partial_tw-\Delta w+\nabla p_2=0\quad &\text{in}~\tilde{Q},\\
  				\nabla\cdot w=0 &\text{in}~\tilde{Q},\\
  				w=0 &\text{on}~[-1,0)\times(\partial\tilde{B}\cap \{x_3=0\}).
  			\end{cases}
  		\end{align*}
  		Since $\tilde{q}\in (1,2)$, using \eqref{E1}, a simple calculation verifies that 
  		\[
  		w \in W^{0,1}_{\tilde{q}, 1}(\tilde{Q}) \quad \text{and}\; p_2 \in L^{\tilde{q}}_t L^1_x(\tilde{Q}).
  		\]
  		By H\"{o}lder's inequality, Sobolev-Poincar\'{e} inequality, and Lemma \ref{Seregin}, we obtain
  		\begin{align}
  			&\frac{1}{\gamma}\|p_2-[p_2]_\gamma\|_{L_t^{\tilde{q}}L_x^{\tilde{r}}(Q^+(\gamma))}\leq C\|\nabla p_2\|_{L_t^{\tilde{q}}L_x^{\tilde{r}}(Q^+(\gamma))}\nonumber\\
  			\leq & C\gamma^{3(\frac{1}{\tilde{r}}-\frac{1}{r'})}\|\nabla p_2\|_{L_t^{\tilde{q}}L_x^{r'}(Q^+(\gamma))}\leq C\gamma^{3(\frac{1}{\tilde{r}}-\frac{1}{r'})}\|\nabla p_2\|_{L_t^{\tilde{q}}L_x^{r'}(Q^+(\frac12))}\nonumber\\
  			\leq &C\gamma^{3(\frac{1}{\tilde{r}}-\frac{1}{r'})}\left(\|p_2-[p_2]_{\tilde{B}}\|_{L_t^{\tilde{q}}L_x^1(\tilde{Q})}+\|w\|_{L_t^{\tilde{q}}L_x^1(\tilde{Q})}\right)\nonumber\\
  			\leq & C\gamma^{3(\frac{1}{\tilde{r}}-\frac{1}{r'})}\left(D_{\tilde{q},1}(1)+\|p_1\|_{L_t^{\tilde{q}}L_x^1(\tilde{Q})}+\|v\|_{L_t^{\tilde{q}}L_x^1(\tilde{Q})}+\|u\|_{L_t^{\tilde{q}}L_x^1(Q^+)}\right)\nonumber\\
  			\leq &C\gamma^{3(\frac{1}{\tilde{r}}-\frac{1}{r'})}\Big[D_{\tilde{q},1}(1)+A_u(1)+E_u(1)+M_\phi\big(A_{\sqrt{n}}(1)+E_{\sqrt{n}}(1)\big)\nonumber\\
  			&\qquad\qquad\quad+ A_u(1)^{(\frac{1}{2\tilde{r}}-\frac{1}{6\tilde{q}})}E_u(1)^{\frac{1}{2\tilde{q}}}\Big], \label{p_2}
  		\end{align}
  		where $r'>\tilde{r}$ is arbitrary. Combining (\ref{p_1}) and (\ref{p_2}) yields
  		\begin{align*}
  			D(\gamma)\leq& C\gamma^{3(\frac{1}{\tilde{r}}-\frac{1}{r'})}\Big[D_{\tilde{q},1}(1)+A_u(1)+E_u(1)+M_\phi\big(A_{\sqrt{n}}(1)+E_{\sqrt{n}}(1)\big)\\
  			& \qquad\qquad\quad+\left(A_u(1)+E_u(1)\right)^\frac12\Big]\\
  			&+C\gamma^{-1}\Big[A_u(1)+E_u(1)+M_\phi\big(A_{\sqrt{n}}(1)+E_{\sqrt{n}}(1)\big)\Big].
  		\end{align*}
  		Rescaling this inequality gives
  		\begin{align*}
  			D(\gamma\rho)\leq& C\gamma^{3(\frac{1}{\tilde{r}}-\frac{1}{r'})}\Big[D_{\tilde{q},1}(\rho)+A_u(\rho)+E_u(\rho)+\rho M_\phi\big(A_{\sqrt{n}}(\rho)+E_{\sqrt{n}}(\rho)\big)\\
  			&\qquad\qquad\quad+\left(A_u(\rho )+E_u(\rho)\right)^\frac12\Big]\\
  			&+C\gamma^{-1}\left[A_u(\rho)+E_u(\rho)+\rho M_\phi\big(A_{\sqrt{n}}(\rho)+E_{\sqrt{n}}(\rho)\big)\right].
  		\end{align*}
  		Since $\tilde{r}<3$ and $r'$ can be chosen arbitrarily large, $\epsilon_1$ may range in $(0, \frac{3}{\tilde{r}} - 1)$. The force terms in the decaying bracket retain the
factor $\rho M_\phi$. To obtain the stated form of \eqref{D_1},
absorb these terms into the last bracket using
$\gamma^{1+\epsilon_1}\rho M_\phi
\leq\gamma^{-1}\rho M_\phi$; the additional nonnegative density
term in the first bracket may then be kept.
Finally, \eqref{D_2} follows from \eqref{D_1} and H\"older's inequality.

        We now consider the interior case. Fix an interior point
$z_0=(x_0,t_0)$ and an admissible radius $\rho$.
By translation and parabolic rescaling, we may assume that
$z_0=(0,0)$ and $\rho=1$, and retain the same notation for
the rescaled functions. We work on the full cylinder
$Q=(-1,0)\times B$, where $B=B_1$, using the notation
introduced in Section~\ref{subsec:notation-scaling}. In particular, all the
scale-invariant quantities below are evaluated on full
balls and cylinders.

For any spatially integrable function $f$, write
\[
[f]_r(t):=\frac{1}{|B_r|}\int_{B_r}f(x,t)\,dx.
\]
Let $\chi\in C_c^\infty(B)$ satisfy
\[
0\le\chi\le1,\qquad \chi\equiv1\quad\text{in }B_{3/4},
\]
and set $\bar u(t):=[u]_1(t)$ for almost every $t\in(-1,0)$.
Taking the divergence of the momentum equation and using
$\nabla\cdot u=0$, we have in the sense of distributions
\[
    -\Delta p
    =
    \partial_i\partial_j(u_i u_j)
    +
    \nabla\cdot(n\nabla\phi).
\]
Since $\bar u(t)$ is independent of $x$ and $\nabla\cdot u=0$,
\[
    \partial_i\partial_j
    \big((u_i-\bar u_i)(u_j-\bar u_j)\big)
    =
    \partial_i\partial_j(u_i u_j).
\]

For almost every fixed $t$, define on $\mathbb R^3$
\[
\begin{aligned}
p_1
={}&
(-\Delta)^{-1}\partial_i\partial_j
\Big[
\chi
(u_i-\bar u_i)(u_j-\bar u_j)
\Big]+
(-\Delta)^{-1}\nabla\cdot
\big(\chi n\nabla\phi\big),
\end{aligned}
\]
and put
\[
    p_2=p-p_1.
\]
Since $\chi\equiv1$ in $B_{3/4}$, comparison with the pressure
equation above yields
\[
    \Delta p_2=0
    \qquad\text{in }B_{3/4}
\]
for almost every $t\in(-1,0)$.

We first estimate $p_1$. Write
\[
    p_1=p_1^u+p_1^n,
\]
where $p_1^u$ and $p_1^n$ denote respectively the contributions of the
convective term and the external force. By the Calder\'on--Zygmund singular-integral estimate
(see Stein~\cite{Stein}),
\[
\begin{aligned}
\|p_1^u\|_
{L_t^{\tilde q}L_x^{\tilde r}
((-1,0)\times\mathbb R^3)}
&\leq
C
\|(u-\bar u)\otimes(u-\bar u)\|_
{L_t^{\tilde q}L_x^{\tilde r}(Q)}
=
C
\|u-\bar u\|_
{L_t^{2\tilde q}L_x^{2\tilde r}(Q)}^2.
\end{aligned}
\]
Since
\[
    \frac{2}{2\tilde q}
    +\frac{3}{2\tilde r}
    =\frac32,
\]
Lemma \ref{interpolation}, together with the Poincar\'e inequality, gives
\[
    \|u-\bar u\|_
    {L_t^{2\tilde q}L_x^{2\tilde r}(Q)}^2
    \leq
    C\big(A_u(1)+E_u(1)\big).
\]
Hence
\[
    \|p_1^u\|_
    {L_t^{\tilde q}L_x^{\tilde r}
    ((-1,0)\times\mathbb R^3)}
    \leq
    C\big(A_u(1)+E_u(1)\big).
\]

For the contribution generated by $n\nabla\phi$, define
$\tilde r_*$ by
\[
    \frac1{\tilde r_*}
    =
    \frac1{\tilde r}+\frac13.
\]
Since $3/2<\tilde r<3$, one has
\[
    1<\tilde r_*<\frac32.
\]
The Hardy--Littlewood--Sobolev inequality
(see~\cite{Stein}) therefore yields
\[
\begin{aligned}
\|p_1^n\|_
{L_t^{\tilde q}L_x^{\tilde r}
((-1,0)\times\mathbb R^3)}
&\leq
C
\|\chi n\nabla\phi\|_
{L_t^{\tilde q}L_x^{\tilde r_*}(Q)}
\leq
CM_\phi
\|n\|_
{L_t^{\tilde q}L_x^{\tilde r_*}(Q)}.
\end{aligned}
\]
Since $B$ has finite measure and
$\tilde r_*<\tilde r$,
\[
    \|n\|_
    {L_t^{\tilde q}L_x^{\tilde r_*}(Q)}
    \leq
    C
    \|n\|_
    {L_t^{\tilde q}L_x^{\tilde r}(Q)}
    =
    C\|\sqrt n\|_
    {L_t^{2\tilde q}L_x^{2\tilde r}(Q)}^2
    \leq
    C\big(A_{\sqrt n}(1)+E_{\sqrt n}(1)\big).
\]
Consequently,
\[
    \|p_1^n\|_
    {L_t^{\tilde q}L_x^{\tilde r}
    ((-1,0)\times\mathbb R^3)}
    \leq
    CM_\phi
    \big(A_{\sqrt n}(1)+E_{\sqrt n}(1)\big).
\]
Combining the preceding two estimates, we obtain
\[
\begin{aligned}
\|p_1\|_
{L_t^{\tilde q}L_x^{\tilde r}
((-1,0)\times\mathbb R^3)}
\leq
C\Big[
A_u(1)+E_u(1)+
M_\phi\big(A_{\sqrt n}(1)+E_{\sqrt n}(1)\big)
\Big].
\end{aligned}
\]

It remains to estimate the harmonic part $p_2$. Since
$p_2(\cdot,t)$ is harmonic in $B_{3/4}$, the standard interior
estimate for harmonic functions implies
\[
    \|\nabla p_2(\cdot,t)\|_{L^\infty(B_{1/2})}
    \leq
    C
    \|p_2(\cdot,t)-[p_2]_{3/4}(t)\|_
    {L^1(B_{3/4})}.
\]
Hence, for every $\gamma\in(0,1/2]$,
the Poincar\'e inequality gives
\[
\begin{aligned}
&\|p_2-[p_2]_{\gamma}(t)\|_
{L^{\tilde r}(B_\gamma)}
\leq C\gamma\|\nabla p_2(\cdot,t)\|_{L^{\tilde{r}}(B_{\gamma})}\leq C\gamma^{1+\frac3{\tilde r}}\|\nabla p_2(\cdot,t)\|_{L^\infty(B_{1/2})}\\&\qquad\qquad\qquad\qquad\quad\leq
C\gamma^{1+\frac3{\tilde r}}
\|p_2-[p_2]_{3/4}(t)\|_
{L^1(B_{3/4})}.
\end{aligned}
\]
Taking the $L^{\tilde q}$ norm in time, and using
$(-\gamma^2,0)\subset(-1,0)$, we obtain
\[
\begin{aligned}
\|p_2-[p_2]_{\gamma}(t)\|_
{L_t^{\tilde q}L_x^{\tilde r}(Q_\gamma)}
\leq
C\gamma^{1+\frac3{\tilde r}}
\|p_2-[p_2]_{3/4}(t)\|_
{L_t^{\tilde q}L_x^1
((-1,0)\times B_{3/4})}.
\end{aligned}
\]

Since $p_2=p-p_1$, the elementary estimate
\[
    \|f-[f]_{B_{3/4}}\|_{L^1(B_{3/4})}
    \leq
    2\|f-c(t)\|_{L^1(B_{3/4})}
\]
valid for every function $c(t)$, applied with
$c(t)=[p]_1(t)$, gives
\[
\begin{aligned}
\|p_2-[p_2]_{3/4}(t)\|_
{L_t^{\tilde q}L_x^1
((-1,0)\times B_{3/4})}
\leq
C D_{\tilde q,1}(1)
+
C
\|p_1\|_
{L_t^{\tilde q}L_x^{\tilde r}
((-1,0)\times\mathbb R^3)}.
\end{aligned}
\]
Therefore,
\[
\begin{aligned}
&\frac1\gamma
\|p_2-[p_2]_{\gamma}(t)\|_
{L_t^{\tilde q}L_x^{\tilde r}(Q_\gamma)}
\\&\leq{}
C\gamma^{\frac3{\tilde r}}
D_{\tilde q,1}(1)
+
C\gamma^{\frac3{\tilde r}}
\Big[
A_u(1)+E_u(1)
+
M_\phi\big(A_{\sqrt n}(1)+E_{\sqrt n}(1)\big)
\Big].
\end{aligned}
\]

For the localized part $p_1$, we simply use
\[
\begin{aligned}
\frac1\gamma
\|p_1-[p_1]_{\gamma}(t)\|_
{L_t^{\tilde q}L_x^{\tilde r}(Q_\gamma)}
&\leq
C\gamma^{-1}
\|p_1\|_
{L_t^{\tilde q}L_x^{\tilde r}
((-1,0)\times\mathbb R^3)}
\\
&\leq
C\gamma^{-1}
\Big[
A_u(1)+E_u(1)
+
M_\phi\big(A_{\sqrt n}(1)+E_{\sqrt n}(1)\big)
\Big].
\end{aligned}
\]
Since $\gamma^{3/\tilde r}\leq\gamma^{-1}$ for
$0<\gamma\leq1/2$, combining the estimates for $p_1$ and $p_2$
gives
\[
\begin{aligned}
D(\gamma)
\leq{}
C\gamma^{\frac3{\tilde r}}
D_{\tilde q,1}(1)
+
C\gamma^{-1}
\Big[
A_u(1)+E_u(1)
+
M_\phi\big(A_{\sqrt n}(1)+E_{\sqrt n}(1)\big)
\Big].
\end{aligned}
\]

A standard parabolic rescaling now yields
\[
\begin{aligned}
D(\gamma\rho)
\leq{}
C\gamma^{\frac3{\tilde r}}
D_{\tilde q,1}(\rho)
+
C\gamma^{-1}
\Big[
A_u(\rho)+E_u(\rho)
+
\rho M_\phi
\big(A_{\sqrt n}(\rho)+E_{\sqrt n}(\rho)\big)
\Big].
\end{aligned}
\]
The factor $\rho$ in front of $\|\nabla\phi\|_{L^\infty}$ follows from
the rescaling
\[
    \phi_\rho(x)=\phi(\rho x),
    \qquad
    \|\nabla\phi_\rho\|_{L^\infty}
    =
    \rho\|\nabla\phi\|_{L^\infty}.
\]

Finally, H\"older's inequality in space and
$ \frac{2}{\tilde q}+\frac{3}{\tilde r}=3$
   give
\[
    D_{\tilde q,1}(\rho)
    \leq
    C D(\rho).
\]
Thus
\[
\begin{aligned}
D(\gamma\rho)
\leq{}
C\gamma^{\frac3{\tilde r}}D(\rho)
+
C\gamma^{-1}
\Big[
A_u(\rho)+E_u(\rho)
+
\rho M_\phi
\big(A_{\sqrt n}(\rho)+E_{\sqrt n}(\rho)\big)
\Big].
\end{aligned}
\]
Since $\tilde r<3$, one has $3/\tilde r>1$, and hence for
every $ 0<\epsilon_1<\frac3{\tilde r}-1$
and every $\gamma\in(0,1/2]$, $\gamma^{3/\tilde r}
    \leq
    \gamma^{1+\epsilon_1}$.
This proves the asserted interior pressure estimate.
  	\end{proof}
  \end{lemma}
  
  \begin{corollary}\label{corollary2}
  	Let $(n,c,u,p)$ be a suitable weak solution of \eqref{che}--\eqref{bound}. For any $\gamma\in(0,\frac12]$, $0<\rho\leq1$, and $\epsilon_1\in(0,\frac{3}{\tilde{r}}-1)$, if
  	\begin{align*}
  		A_{u,\sqrt{n}}(\rho)+E_{u,\sqrt{n}}(\rho)\leq 1,
  	\end{align*}
  	then
  	\begin{align*}
  		&D(\gamma\rho)\leq C_2\Big[\gamma^{1+\epsilon_1}\Big(D_{\tilde{q},1}(\rho)+\big(A_{u,\sqrt{n}}(\rho)+E_{u,\sqrt{n}}(\rho)\big)^\frac12 \Big)\\&\qquad\qquad\qquad+\gamma^{-1}\Big(A_{u,\sqrt{n}}(\rho)+E_{u,\sqrt{n}}(\rho)\Big)\Big],
  	\end{align*}
  	where $C_2=C(\tilde{q},\tilde{r},\epsilon_1,M_\phi)$.
  \end{corollary}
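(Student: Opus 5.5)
The plan is to read Corollary~\ref{corollary2} off the first pressure estimate \eqref{D_1} in Lemma~\ref{p}. We only simplify the bracketed terms, using the smallness hypothesis $A_{u,\sqrt n}(\rho)+E_{u,\sqrt n}(\rho)\le1$ and the restriction $0<\rho\le1$.

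Write $Y:=A_{u,\sqrt n}(\rho)+E_{u,\sqrt n}(\rho)$. Then $A_u(\rho)+E_u(\rho)\le Y$ and $A_{\sqrt n}(\rho)+E_{\sqrt n}(\rho)\le Y$.

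\emph{The decaying bracket.} This is the term multiplied by $\gamma^{1+\epsilon_1}$. Since $Y\le1$ we have $Y\le Y^{1/2}$, so
\[
A_u(\rho)+E_u(\rho)\le Y^{1/2}.
\]
Since also $\rho\le1$,
\[
\rho\bigl(A_{\sqrt n}(\rho)+E_{\sqrt n}(\rho)\bigr)\le Y\le Y^{1/2}.
\]
Finally, $\bigl(A_u(\rho)+E_u(\rho)\bigr)^{1/2}\le Y^{1/2}$. Hence the whole bracket is bounded by $D_{\tilde q,1}(\rho)+3Y^{1/2}$.

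\emph{The bracket multiplied by $\gamma^{-1}$.} Using $\rho\le1$ again,
\[
A_u(\rho)+E_u(\rho)+\rho M_\phi\bigl(A_{\sqrt n}(\rho)+E_{\sqrt n}(\rho)\bigr)\le(1+M_\phi)Y.
\]

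Inserting both bounds into \eqref{D_1} gives the claimed inequality with $C_2=3(1+M_\phi)C$. Here $C=C(\tilde q,\tilde r,\epsilon_1)$ is the constant of Lemma~\ref{p}, so $C_2$ depends only on $\tilde q,\tilde r,\epsilon_1,M_\phi$, as stated.

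There is no genuine obstacle. The only point needing care is tracking where $M_\phi$ enters. In Lemma~\ref{p} the force term inside the decaying bracket was already absorbed, so that bracket carries only the plain factor $\rho$. The $M_\phi$-dependence therefore arises solely from the $\gamma^{-1}$ bracket, and it is removed by $\rho\le1$.
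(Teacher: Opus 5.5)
Your proposal is correct and is essentially the paper's route: the paper states Corollary~\ref{corollary2} without a separate proof, as an immediate consequence of \eqref{D_1}. Your derivation supplies exactly the missing bookkeeping: with $Y:=A_{u,\sqrt n}(\rho)+E_{u,\sqrt n}(\rho)\le 1$ you use $Y\le Y^{1/2}$, and $\rho\le1$ disposes of the factor $\rho$ in both brackets, so that $C_2=3(1+M_\phi)C$ depends only on $\tilde q,\tilde r,\epsilon_1,M_\phi$.
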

  \begin{corollary}[Pressure estimate at the exponent $5/3$]
\label{cor:pressure-53}
Let $(n,c,u,p)$ be a suitable weak solution of
\eqref{che}--\eqref{bound}, Set
\[
\bar P(r)
:=
r^{-5/3}
\int_{\mathcal Q_r}
|p-[p]_r(t)|^{5/3}\,dx\,dt.
\]
Then, for every $0<4r\leq\rho\leq1$,
\begin{align}
\bar P(r)
&\le
C\left(\frac r\rho\right)^{5/3}\bar P(\rho)+
C\left(\frac r\rho\right)^{5/3}
\left[
A_{u,\sqrt n}(\rho)^{2/3}E_{u,\sqrt n}(\rho)
+A_u(\rho)^{1/3}E_u(\rho)^{1/2}
\right]
\nonumber\\
&\quad+
C\left(\frac{\rho}{r}\right)^{5/3}
A_{u,\sqrt n}(\rho)^{2/3}E_{u,\sqrt n}(\rho)+
C\left(\frac{\rho}{r}\right)^{5/3}
\rho ^{5/3}A_{\sqrt n}(\rho)^{5/3}.\label{eq:pressure-53-common}
\end{align}
where $C=C(M_\phi)>0$.
\end{corollary}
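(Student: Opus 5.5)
The plan is to repeat the decomposition $p=p_1+p_2$ from the proof of Lemma~\ref{p}, now at the exponent pair $(5/3,5/3)$. We track the source terms through the mixed interpolation norms $A^{2/3}E$ rather than through $A+E$. By scaling we take $\rho=1$ and put $\gamma=r/\rho\le 1/4$.

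In the boundary case, $(v,p_1)$ is the Stokes solution on $\tilde Q$ with zero data and right-hand side $-u\cdot\nabla u-n\nabla\phi$, given by Lemma~\ref{stokes} with exponents $(5/3,\,15/14)$.
\begin{itemize}
\item For the convection term we use H\"older's inequality and Lemma~\ref{interpolation}:
\[
\|u\cdot\nabla u\|_{L^{5/3}L^{15/14}}\le\|u\|_{L^{10}L^{30/13}}\|\nabla u\|_{L^2L^2}.
\]
Interpolating $\|u\|_{L^{30/13}}\le\|u\|_{L^2}^{4/5}\|u\|_{L^6}^{1/5}$ gives $\|u\|_{L^{10}L^{30/13}}\le C A_u^{2/5}E_u^{1/10}$. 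Raising to the power $5/3$ bounds the contribution by $A_u^{2/3}E_u$.
\item For the force term, $\|n\|_{L^{5/3}L^{15/14}}$ is interpolated between $L^1$ and $L^{5/3}$ in space, exactly as in Section~\ref{sec:existence}. Here $\int_{B^+}n\le A_{\sqrt n}$ and $\|n\|_{L^{5/3}(Q^+)}^{5/3}\le CA_{\sqrt n}^{2/3}(A_{\sqrt n}+E_{\sqrt n})$. This yields a term $M_\phi^{5/3}A_{\sqrt n}^{2/3}E_{\sqrt n}$ together with a pure-mass term. After rescaling, the pure-mass term is $\rho^{5/3}A_{\sqrt n}^{5/3}$, because $\nabla\phi_\rho=\rho\nabla\phi$.
\item Sobolev--Poincar\'e from $L^{15/14}$ to $L^{5/3}$ then gives
\[
\|p_1\|_{L^{5/3}(\tilde Q)}^{5/3}\le C\bigl[A_{u,\sqrt n}^{2/3}E_{u,\sqrt n}+\rho^{5/3}A_{\sqrt n}^{5/3}\bigr].
\]
\item On $\mathcal Q_r$ we bound $p_1-[p_1]_r$ crudely by $\|p_1\|$. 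Dividing by $r^{5/3}$ produces the factor $(\rho/r)^{5/3}$ in the last two terms of \eqref{eq:pressure-53-common}.
\end{itemize}

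For the remainder $(w,p_2)=(u-v,p-p_1)$ we proceed as in \eqref{p_2}.
\begin{itemize}
\item Lemma~\ref{Seregin}, with a large spatial exponent, bounds $\nabla p_2$ on $Q_{1/2}^+$ by $\|p_2-[p_2]\|_{L^{5/3}L^1}+\|w\|_{L^{5/3}L^1}$.
\item Poincar\'e on $B_r^+$ and H\"older give
\[
r^{-5/3}\|p_2-[p_2]_r\|_{L^{5/3}(\mathcal Q_r)}^{5/3}\le Cr^{5/3}\sup\|\nabla p_2\|^{5/3}\cdot r^{3}\cdot r^{-5/3},
\]
which is the decaying factor $(r/\rho)^{5/3}$ up to the gain from the large exponent.
\item The inputs are controlled as follows: $\|p_2-[p_2]\|\le \bar P(1)^{3/5}+\|p_1\|$, and $\|v\|\le$ the same bound as $p_1$. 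For $\|u\|_{L^{5/3}L^1}$ we use interpolation with mean zero removed via Poincar\'e; this is how the $A_u^{1/3}E_u^{1/2}$ term arises, from $\|u\|_{L^2_x}^{2/3}$-type interpolation against $\|\nabla u\|_{L^2}$.
\item The $p_1$ and $v$ pieces inside this decaying term are bounded by $A^{2/3}E+\rho^{5/3}A^{5/3}$ and may be kept with coefficient $(r/\rho)^{5/3}\le(\rho/r)^{5/3}$. The only exception is the pure-mass term, which we move into the last bracket.
\end{itemize}

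The interior case is identical except for the pressure decomposition. We use the Calder\'on--Zygmund and Hardy--Littlewood--Sobolev splitting of Lemma~\ref{p} with the cutoff $\chi$, and apply the harmonic estimate to $p_2$.

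The main obstacle is the exponent bookkeeping. The force must produce exactly $A^{2/3}E$ plus a $\rho^{5/3}A_{\sqrt n}^{5/3}$ remainder, and not the cruder $(A+E)^{5/3}$. This requires the $L^1$–$L^{5/3}$ interpolation for $n$ in the $L^{15/14}$ norm. It also requires checking that Lemma~\ref{Seregin} applies with $q_0=5/3$, since $w\in W^{0,1}_{5/3,1}$ follows from $v\in W^{1,2}_{5/3,15/14}$ and $u\in L^2H^1$. If the decaying part yields only a power $r^{5/3-\epsilon}$, we take the spatial exponent in Lemma~\ref{Seregin} larger so that the full power $(r/\rho)^{5/3}$ is recovered.
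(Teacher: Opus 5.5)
Your proposal is correct and follows the paper's proof essentially step for step. The shared steps are: the Stokes decomposition on $\tilde Q$ with exponents $(5/3,15/14)$; the crude bound for $p_1$, which produces the $(\rho/r)^{5/3}$ terms; Lemma~\ref{Seregin} with spatial exponent at least $15/4$ for the decaying remainder; moving the pure-mass term into the last bracket; and the Calder\'on--Zygmund/Hardy--Littlewood--Sobolev/harmonic splitting in the interior.

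There are two small points.
\begin{itemize}
\item The paper bounds the force directly via $\|n\|_{L^{5/3}(\mathcal Q)}\le C[A_{\sqrt n}^{2/5}E_{\sqrt n}^{3/5}+A_{\sqrt n}]$. Your $L^1$--$L^{5/3}$ interpolation instead gives $\|n\|_{L^{5/3}_tL^{15/14}_x}^{5/3}\le C(A_{\sqrt n}^{3/2}E_{\sqrt n}^{1/6}+A_{\sqrt n}^{5/3})$, so you need one Young step to reach the stated form.
\item The bound $\|u\|_{L^{5/3}_tL^1_x}\le CA_u^{1/5}E_u^{3/10}$ should come from the zero trace of $u$ on $\Gamma$, not from subtracting a mean. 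Subtracting a mean would leave an extra $A_u^{5/6}$ term that does not appear in the statement.
\end{itemize}
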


\begin{proof}
By translation and parabolic rescaling, we first assume
$\rho=1$ and write $\gamma=r/\rho\le1/4$.
The local Sobolev inequality and interpolation give
\[
\|n\|_{L^{5/3}(\mathcal{Q})}
=
\|\sqrt n\|_{L^{10/3}(\mathcal{Q})}^2
\le
C\left[
A_{\sqrt n}(1)^{2/5}E_{\sqrt n}(1)^{3/5}
+A_{\sqrt n}(1)
\right].
\]
The last term is necessary because $\sqrt n$ has neither
zero trace nor zero spatial mean.

For a boundary cylinder, use the Stokes decomposition
from Lemma~\ref{p} with
\[
\tilde q=\tilde r=\frac53,
\qquad r_*=\frac{15}{14},
\qquad r'=\frac{15}{4}.
\]
The auxiliary fields $v,p_1$ are defined on $\tilde Q$ from
Lemma~\ref{p}; their global norms below are taken on this
domain. The original fields and forcing terms are estimated
on $\mathcal Q=Q_1^+$, and the smaller cylinders are contained
in $\tilde Q$.
Since $u=0$ on the flat boundary,
Sobolev--Poincar\'e interpolation yields
\[
\|u\cdot\nabla u\|_{L^{5/3}_tL^{15/14}_x(\mathcal{Q})}
\le C A_u(1)^{2/5}E_u(1)^{3/5},
\qquad
\|u\|_{L^{5/3}_tL^1_x(\mathcal{Q})}
\le C A_u(1)^{1/5}E_u(1)^{3/10}.
\]
Moreover,
\[
\|n\nabla\phi\|_{L^{5/3}_tL^{15/14}_x(\mathcal{Q})}
\le CM_\phi
\left[
A_{\sqrt n}(1)^{2/5}E_{\sqrt n}(1)^{3/5}
+A_{\sqrt n}(1)
\right].
\]
Applying the Stokes estimate to the localized problem,
with the preceding bounds for its forcing terms, gives
\begin{align*}
&\|p_1\|_{L^{5/3}(\tilde Q)}
+\|v\|_{L^{5/3}_tL^1_x(\tilde Q)}
\\
&\quad\le
C A_u(1)^{2/5}E_u(1)^{3/5}
+CM_\phi
\left[
A_{\sqrt n}(1)^{2/5}E_{\sqrt n}(1)^{3/5}
+A_{\sqrt n}(1)
\right].
\end{align*}
For the homogeneous remainder $(w,p_2)=(u-v,p-p_1)$,
the boundary Stokes estimate with $r'=15/4$ yields
\begin{align*}
\gamma^{-1}
\|p_2-[p_2]_\gamma\|_{L^{5/3}(\mathcal{Q}_{\gamma})}
\le C\gamma
\left[
\bar P(1)^{3/5}
+\|p_1\|_{L^{5/3}(\tilde Q)}
+\|v\|_{L^{5/3}_tL^1_x(\tilde Q)}
+\|u\|_{L^{5/3}_tL^1_x(\mathcal{Q})}
\right].
\end{align*}
Combining these estimates with
\[
\gamma^{-1}
\|p_1-[p_1]_\gamma\|_{L^{5/3}(\mathcal{Q}_{\gamma})}
\le
C\gamma^{-1}\|p_1\|_{L^{5/3}(\tilde Q)}
\]
and raising to the power $5/3$ proves the asserted
unit-scale estimate. Here the pure density term with
coefficient $\gamma^{5/3}$ is absorbed into its counterpart
with coefficient $\gamma^{-5/3}$.

For an interior cylinder, use the decomposition
$p=p_1^u+p_1^n+p_2$ from Lemma~\ref{p}.
The Calder\'on--Zygmund inequality and interpolation
for $u-[u]_1$ give
\[
\|p_1^u\|_{L^{5/3}(\mathcal{Q})}
\le C A_u(1)^{2/5}E_u(1)^{3/5}.
\]
The Hardy--Littlewood--Sobolev inequality gives
\begin{align*}
\|p_1^n\|_{L^{5/3}(\mathcal{Q})}
\le CM_\phi
\|n\|_{L^{5/3}_tL^{15/14}_x(\mathcal{Q})}
\le CM_\phi
\left[
A_{\sqrt n}(1)^{2/5}E_{\sqrt n}(1)^{3/5}
+A_{\sqrt n}(1)
\right].
\end{align*}
Since $p_2$ is harmonic in $B_{3/4}$, its interior
oscillation estimate implies
\begin{align*}
\bar P(\gamma)
\le C\gamma^3\bar P(1)
+C\gamma^{-5/3}
\Bigl[
A_u(1)^{2/3}E_u(1)
+M_\phi^{5/3}
\bigl(
A_{\sqrt n}(1)^{2/3}E_{\sqrt n}(1)
+A_{\sqrt n}(1)^{5/3}
\bigr)
\Bigr].
\end{align*}
This implies the asserted unit-scale estimate because
$\gamma^3\le\gamma^{5/3}$.

Rescaling back replaces $M_\phi$ by $\rho M_\phi$,
while $A$, $E$, and $\bar P$ remain invariant.
Using $\rho\le1$ and
\[
A_u^{2/3}E_u+A_{\sqrt n}^{2/3}E_{\sqrt n}
\le A_{u,\sqrt n}^{2/3}E_{u,\sqrt n}
\]
at radius $\rho$ completes the proof.
\end{proof}

\subsection{Decay of the scale-invariant quantities}
\label{subsec:decay-estimates}  	
  \begin{lemma}\label{key} 
There exists a constant 
$\varepsilon_2=\varepsilon_2(\tilde q,M_c,M_\phi)>0$ such that if 
\begin{equation}\label{4.9} 
A(1)+E(1)+D_{\tilde q,1}(1)\leq\varepsilon_2,
\end{equation} 
then there exist constants $\eta_1,\eta_2>0$ and $\rho_0>0$ such that 
\begin{equation}\label{4.10} 
A(\rho)+E(\rho)\leq\rho^{2-\eta_1},
\qquad 
D(\rho)\leq\rho^{1+\eta_2}
\end{equation} 
for every $\rho\in(0,\rho_0]$.
\end{lemma}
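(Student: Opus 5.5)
The plan is a coupled iteration of Corollary~\ref{corollary1} and Corollary~\ref{corollary2} along the geometric scales $\rho_k=\theta^k$, for a small $\theta\in(0,1/2]$ fixed once and for all. Set $X_k:=A(\theta^k)+E(\theta^k)$ and $D_k:=D(\theta^k)$. Since $D_{\tilde q,1}\le CD$ by H\"older's inequality, Corollary~\ref{corollary2} with $\gamma=\theta$ will give
\[
D_{k+1}\le C_2\theta^{1+\epsilon_1}\bigl(D_k+X_k^{1/2}\bigr)+C_2\theta^{-1}X_k ,
\]
and Corollary~\ref{corollary1} with $\gamma=\theta$ will give
\[
X_{k+1}\le C_1\theta^2X_k^{1-\delta}+C_1\theta^{-2}X_k^{3/2-\delta}+C_1\theta^{-2}X_k^{1/2}D_k .
\]
The first step uses \eqref{4.9}, with $\varepsilon_2$ small relative to $\theta$, to show that $X_k$ and $D_k$ remain in the smallness regime of both corollaries for every $k$. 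For this I would argue by induction on a bound of the form $X_k\le\varepsilon_2^{1/2}$, $D_k\le\varepsilon_2^{1/4}$, exploiting that every term on the right is superlinear or carries a small power of $\theta$.

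The second step produces decay. The target is the induction hypothesis
\[
X_k\le\theta^{k(2-\eta_1)},\qquad D_k\le\theta^{k(1+\eta_2)},
\]
with $\eta_2<\epsilon_1$ and $2\eta_2<\eta_1$. Inserting it, the pressure recursion closes easily. The term $\theta^{1+\epsilon_1}D_k$ has room $\theta^{\epsilon_1-\eta_2}$. The term $\theta^{1+\epsilon_1}X_k^{1/2}\le\theta^{1+\epsilon_1}\theta^{k(1-\eta_1/2)}$ is admissible because $1-\eta_1/2\ge1+\eta_2-\epsilon_1$ once the exponents are chosen suitably. The term $\theta^{-1}X_k$ is of order $\theta^{k(2-\eta_1)-1}$, which is much smaller for large $k$; for the first few $k$ it is handled by smallness of $\varepsilon_2$. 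In the energy recursion, the cubic term $\theta^{-2}X_k^{3/2-\delta}$ and the pressure term $\theta^{-2}X_k^{1/2}D_k\le\theta^{-2}\theta^{k(2-\eta_1/2+\eta_2)}$ both gain a positive power $\theta^{k\cdot(\text{positive})}$. They are therefore absorbed, using $\varepsilon_2$ small for the initial finitely many $k$. The conclusion for intermediate $\rho\in(\theta^{k+1},\theta^k]$ then follows by comparing cylinders, at the cost of a factor $\theta^{-3}$ that is absorbed by slightly enlarging $\eta_1,\eta_2$.

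The main obstacle is the linear-order term $C_1\theta^2X_k^{1-\delta}$, which comes from the negative entropy through \eqref{eq:weighted-negative-entropy}. With $\delta$ fixed it loses a factor $X_k^{-\delta}$ per step, and this ruins the exponent $2-\eta_1$ over many steps. My first attempt would be to make $\delta$ depend on the scale. Since $C_\alpha$ in Lemma~\ref{AE} grows only like a negative power of $\alpha$, taking $\alpha\sim1/|\log X_k|$ gives $X_k^{1-\alpha}\le eX_k$ with a constant polynomial in $|\log X_k|\sim k$. This reduces the loss to a factor $Ck^{m}\theta^{\eta_1}$ per step. I would then absorb this factor by iterating only on the subsequence of scales where $X$ has not already dropped below the target, and apply Lemma~\ref{it1} to the function $f(\rho)=X(\rho)$, where the polynomial-in-$k$ loss becomes a sub-power correction.

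If this fails, the fallback is to return to \eqref{eq:weighted-mass-repair} and the discarded term in \eqref{11}. In the second part of the iteration the density contribution $A_{\sqrt n}$ is genuinely linear, and one can try to bound the negative entropy by $M_\psi^{1-\alpha}$ with $M_\psi$ itself controlled by the already decaying quantities.
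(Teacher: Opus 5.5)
Your plan has a genuine gap: no iteration with a fixed ratio $\theta$ can yield the rates in \eqref{4.10}, and the log-dependent choice of $\alpha$ does not repair this. Throughout, the target is $X\le\rho^{2-\eta}$ and $D\le\rho^{1+\eta}$.

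\textbf{The sublinear entropy term.} With $\gamma=\theta$ and $\delta$ fixed, the term $C_1\theta^2X_k^{1-\delta}$ already breaks your first step. The map $x\mapsto C_1\theta^2x^{1-\delta}$ has the positive fixed point $(C_1\theta^2)^{1/\delta}$. So $X_k\le\varepsilon_2^{1/2}$ does not propagate once $\varepsilon_2^{\delta/2}<C_1\theta^2$, and this recursion by itself gives no decay.

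Taking $\alpha\sim1/|\log X_k|$ is optimal but still loses a full logarithm, because $C_\alpha\sim\alpha^{-1}$. Equivalently, Jensen bounds the negative entropy only by $M_\psi\log(1/M_\psi)$. The loss is intrinsic: it is the term $2\log\rho\,n_\rho$ created by rescaling.

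Near the target $T_k=\theta^{k(2-\eta_1)}$ this gives $X_{k+1}\lesssim k|\log\theta|\,\theta^2T_k$. The ratio to $T_{k+1}$ is then about $Ck\theta^{\eta_1}$, which is unbounded in $k$. Iterating produces a factor $k!$, which is not a sub-power correction of $\theta^k$. Restricting to scales where $X$ exceeds the target does not help either. Since $x\log(1/x)$ is increasing near $0$, $X_k\le T_k$ still only yields $X_{k+1}\lesssim k\theta^{\eta_1}T_{k+1}$. Finally, Lemma~\ref{it1} does not apply to an inequality of this form.

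\textbf{The pressure recursion.} Independently, your exponent check here is wrong. You need $\theta^{1+\epsilon_1}X_k^{1/2}\approx\theta^{1+\epsilon_1}\theta^{k(1-\eta_1/2)}\le\theta^{(k+1)(1+\eta_2)}$, that is, $\theta^{\epsilon_1-\eta_2}\le\theta^{k(\eta_1/2+\eta_2)}$. This fails for large $k$: the deficit between the exponents $1-\eta_1/2$ and $1+\eta_2$ grows linearly in $k$, while $\theta^{\epsilon_1}$ is a fixed gain. In general, passing from $X(\rho)^{1/2}\approx\rho^{1-\eta/2}$ to $D(\gamma\rho)\le(\gamma\rho)^{1+\eta}$ through Corollary~\ref{corollary2} forces $\gamma^{\epsilon_1-\eta}\lesssim\rho^{3\eta/2}$. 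So the ratio itself must be a power of the scale.

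\textbf{The missing idea.} This is how the paper proceeds:
\begin{itemize}
\item It uses super-geometric radii $\rho_{k+1}=\rho_k^{1+\beta}$, i.e.\ $\gamma=\rho_k^\beta$ in Corollary~\ref{corollary1}. The gain $\gamma^{\eta}=\rho_k^{\beta\eta}$ then beats the entropy loss $X_k^{-\delta}\approx\rho_k^{-(2-\eta)\delta}$ whenever $\beta\eta>(2-\eta)\delta$.
\item In Corollary~\ref{corollary2} it uses the larger ratio $\gamma=\rho_{k-l}^{\tilde\beta}$ at the delayed scale $\rho_{k-l}$, with $(1+\beta)^{l+1}=1+\tilde\beta$ and $(\epsilon_1-\eta)\tilde\beta>\frac32\eta$.
\item Both $\beta$ and $\tilde\beta$ are kept small enough that the losses $\gamma^{-2}$ and $\gamma^{-1}$ in the superlinear terms are still beaten.
\item One fixes $\rho_0$ first, then chooses $\varepsilon_2$ so that crude inclusion bounds initialize the first $l+1$ scales.
\end{itemize}
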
 
 
\begin{proof} 
Choose 
\[
\epsilon_1
=
\min\left\{
\frac1{100},
\frac12\left(\frac3{\tilde r}-1\right)
\right\},
\qquad 
\eta=\epsilon_1^2,
\qquad 
\beta=\frac{\eta}{4},
\qquad 
\delta=\frac{\eta^2}{16},
\]
and set 
\[
l=\left\lceil\frac{12}{\epsilon_1}\right\rceil,
\qquad 
\tilde\beta=(1+\beta)^{l+1}-1.
\]
These parameters depend only on $\tilde q$, since 
\[
\frac2{\tilde q}+\frac3{\tilde r}=3.
\]
Moreover, Bernoulli's inequality and
\[
e^x-1
\leq\sum_{j=1}^{\infty}\frac{x^j}{j!}
\leq\sum_{j=1}^{\infty}x^j
=\frac{x}{1-x},
\qquad 0\leq x<1,
\]
give
\[
3\epsilon_1
\leq(l+1)\beta
\leq\tilde\beta
\leq e^{3\epsilon_1+\epsilon_1^2/2}-1
\leq4\epsilon_1.
\]

For an initial radius $\rho_0\in(0,1/2)$ to be chosen below, define
\[
\rho_k=\rho_0^{(1+\beta)^k},
\qquad k\geq0.
\]
We shall prove the stronger discrete estimates
\begin{equation}\label{4.11} 
A(\rho_k)+E(\rho_k)
\leq\frac14\rho_k^{2-\eta},
\qquad 
D(\rho_k)\leq\frac14\rho_k^{1+\eta}.
\end{equation} 
We first derive the inductive estimates, then choose $\rho_0$ and 
initialize the induction.

\par\medskip
\noindent
\textbf{Step 1. Inductive estimates.}\quad
Suppose that \eqref{4.11} holds for all indices
$0\le j\le k$, where $k\ge l$.
We take $\rho_0$ sufficiently small that the smallness
conditions in Corollaries~\ref{corollary1}
and~\ref{corollary2} hold at these scales and
$\rho_0^\beta\le1/2$.
The precise choice will be made in Step~2.

For the energy estimate, take $\gamma=\rho_k^\beta$.
Then $\gamma\rho_k=\rho_{k+1}$, and
Corollary~\ref{corollary1} gives
\begin{align*}
A(\rho_{k+1})+E(\rho_{k+1})
&\le
C_1\rho_k^{2\beta}
[A(\rho_k)+E(\rho_k)]^{1-\delta}
\\
&\quad+
C_1\rho_k^{-2\beta}
[A(\rho_k)+E(\rho_k)]^{3/2-\delta}
\\
&\quad+
C_1\rho_k^{-2\beta}
[A(\rho_k)+E(\rho_k)]^{1/2}D(\rho_k)
\\
&\le
C\left[
\rho_k^{2\beta+(2-\eta)(1-\delta)}
+
\rho_k^{-2\beta+(2-\eta)(3/2-\delta)}
+
\rho_k^{-2\beta+2+\eta/2}
\right].
\end{align*}
Here and below, the constants in the inductive estimates
are independent of $k$, $\rho_0$, and $\varepsilon_2$.
The required strict inequality between exponents is
\begin{equation}\label{4.12}
\begin{split}
\min\Bigl\{
&2\beta+(2-\eta)(1-\delta),\,
-2\beta+(2-\eta)(3/2-\delta),\\
&-2\beta+2+\eta/2
\Bigr\}
>
(2-\eta)(1+\beta).
\end{split}
\end{equation}
Indeed, subtracting the right-hand side from the three
exponents on the left gives, respectively,
\[
\beta\eta-(2-\eta)\delta
=\frac{\eta^2(2+\eta)}{16}>0,
\]
\[
(2-\eta)(1/2-\delta)-(4-\eta)\beta
=
1-\frac32\eta+\frac18\eta^2+\frac1{16}\eta^3>0,
\]
and
\[
\frac32\eta-(4-\eta)\beta
=
\frac12\eta+\frac14\eta^2>0.
\]

For the pressure estimate, we use the delayed scale
$\rho_{k-l}$, as in the iteration scheme of
\cite[Lemma 3.4]{DW2020}.
The definition of $\tilde\beta$ ensures that
\[
\rho_{k-l}^{1+\tilde\beta}=\rho_{k+1}.
\]
Thus we apply Corollary~\ref{corollary2} with
$\rho=\rho_{k-l}$ and
$\gamma=\rho_{k-l}^{\tilde\beta}$.
Since $\tilde\beta>\beta$, this scaling factor is also
at most $1/2$.
Using
$D_{\tilde q,1}(\rho)\le CD(\rho)$ and \eqref{4.11},
we obtain
\begin{equation}\label{eq:D_estimate}
D(\rho_{k+1})
\le
C\left[
\rho_{k-l}^{(1+\epsilon_1)\tilde\beta+1+\eta}
+
\rho_{k-l}^{(1+\epsilon_1)\tilde\beta+1-\eta/2}
+
\rho_{k-l}^{-\tilde\beta+2-\eta}
\right].
\end{equation}
The required pressure exponent condition is
\begin{equation}\label{eq:exponent_condition}
\begin{split}
\min\Bigl\{
&(1+\epsilon_1)\tilde\beta+1+\eta,\,
(1+\epsilon_1)\tilde\beta+1-\eta/2,\\
&-\tilde\beta+2-\eta
\Bigr\}
>
(1+\eta)(1+\tilde\beta).
\end{split}
\end{equation}
The corresponding exponent differences are
\[
(\epsilon_1-\eta)\tilde\beta,
\qquad
(\epsilon_1-\eta)\tilde\beta-\frac32\eta,
\qquad
1-2\eta-(2+\eta)\tilde\beta.
\]
They are all positive. In fact,
\[
(\epsilon_1-\eta)\tilde\beta-\frac32\eta
\ge
\left(\frac32-3\epsilon_1\right)\epsilon_1^2>0,
\]
and
\[
1-2\eta-(2+\eta)\tilde\beta
\ge
1-2\epsilon_1^2
-4\epsilon_1(2+\epsilon_1^2)>0.
\]
Here we used
$3\epsilon_1\le\tilde\beta\le4\epsilon_1$
and $0<\epsilon_1\le1/100$.

The strict exponent inequalities
\eqref{4.12} and \eqref{eq:exponent_condition} therefore
give, for some $\xi>0$ and $C_*>0$ independent of
$k$, $\rho_0$, and $\varepsilon_2$,
\[
A(\rho_{k+1})+E(\rho_{k+1})
\le C_*\rho_{k+1}^{2-\eta+\xi},
\qquad
D(\rho_{k+1})
\le C_*\rho_{k+1}^{1+\eta+\xi}.
\]

\par\medskip
\noindent
\textbf{Step 2. Choice of radius and initialization.}\quad
Fix $\rho_0\in(0,1/2)$ sufficiently small that
\[
\rho_0^\beta\le\frac12,
\qquad
C_*\rho_0^\xi\le\frac14,
\]
and that $\frac14\rho_0^{2-\eta}$ lies below the
smallness thresholds required in
Corollaries~\ref{corollary1} and~\ref{corollary2}.
This fixes all the radii $\rho_k$ before
$\varepsilon_2$ is chosen.

We now initialize \eqref{4.11} for $0\le j\le l$.
By \eqref{4.9}, domain inclusion gives
\[
A(1/2)+E(1/2)\le2\varepsilon_2.
\]
Applying Corollary~\ref{corollary2} at scale one
with $\gamma=1/2$, and taking $\varepsilon_2\le1$,
yields
\[
D(1/2)\le C_0\varepsilon_2^{1/2}.
\]
Consequently, for $0<r\le1/2$, domain inclusion
and comparison of the pressure averages give
\[
A(r)+E(r)\le\frac{\varepsilon_2}{r},
\qquad
D(r)\le\frac{C_0\varepsilon_2^{1/2}}{r},
\]
after enlarging $C_0$ if necessary.

Since $\rho_l\le\rho_j$ for $0\le j\le l$,
it suffices to choose $\varepsilon_2>0$ so small that
\[
\varepsilon_2\le\frac14\rho_l^{3-\eta},
\qquad
C_0\varepsilon_2^{1/2}
\le\frac14\rho_l^{2+\eta}.
\]
These inequalities imply \eqref{4.11} for
$j=0,\ldots,l$.

The estimates in Step~1 and
$C_*\rho_0^\xi\le1/4$ then show that, whenever
\eqref{4.11} holds up to an index $k\ge l$,
\[
A(\rho_{k+1})+E(\rho_{k+1})
\le\frac14\rho_{k+1}^{2-\eta},
\qquad
D(\rho_{k+1})
\le\frac14\rho_{k+1}^{1+\eta}.
\]
Thus \eqref{4.11} holds for every $k\ge0$.
All parameter choices depend only on
$\tilde q$, $M_c$, and $M_\phi$, so the same is true
of the resulting threshold $\varepsilon_2$.

\par\medskip
\noindent
\textbf{Step 3. Passage to arbitrary radii.}\quad
Let $0<r\le\rho_0$, and choose $k\ge0$ such that
\[
\rho_{k+1}<r\le\rho_k.
\]
By the definitions and domain inclusion,
\[
A(r)+E(r)
\le
\frac{\rho_k}{r}
\bigl(A(\rho_k)+E(\rho_k)\bigr)
\le
\frac{\rho_k^{3-\eta}}{4r}.
\]
For the pressure, comparison of spatial averages gives,
for almost every time,
\[
\|p-[p]_r\|_{L^{\tilde r}(B_r)}
\le
2\|p-[p]_{\rho_k}\|_{L^{\tilde r}(B_r)}
\le
2\|p-[p]_{\rho_k}\|_{L^{\tilde r}(B_{\rho_k})}.
\]
Restricting the time interval accordingly, we obtain
\[
D(r)
\le
2\frac{\rho_k}{r}D(\rho_k)
\le
\frac{\rho_k^{2+\eta}}{2r}.
\]
Since
$r>\rho_{k+1}=\rho_k^{1+\beta}$, it follows that
\[
A(r)+E(r)
\le
\frac14 r^{(3-\eta)/(1+\beta)-1}
\le r^{2-\eta_1},
\]
and
\[
D(r)
\le
\frac12 r^{(2+\eta)/(1+\beta)-1}
\le r^{1+\eta_2},
\]
where
\[
\eta_1=\frac{\eta+3\beta}{1+\beta}>0,
\qquad
\eta_2=\frac{\eta-2\beta}{1+\beta}>0.
\]
The positivity of $\eta_2$ follows from
$\beta=\eta/4$.
This proves \eqref{4.10}.
\end{proof}
  	
\subsection{H\"older regularity and completion of the proof}
\label{subsec:holder-regularity}

   With the algebraic decay estimates for the scale-invariant quantities established in Lemma \ref{key}, we are now equipped to prove the full regularity of the suitable weak solution. We proceed to the proof of Proposition \ref{main1}.
   
	   	\begin{proof}[Proof of Proposition \ref{main1}] 

We use the normalized reference cylinders
$\mathcal Q_r=(-r^2,0)\times\mathcal B_r$
introduced in Section~\ref{subsec:notation-scaling}, and write $\mathcal Q=\mathcal Q_1$.
After a spatial translation and a time shift, we take the center
to be $(0,0)$; in the boundary case, the spatial translation
is tangential to $\Gamma$.

We first give the preliminary estimates common to both cases.
We then establish the local boundary estimates, treat the
interior case, and finally complete the boundary case
by a covering argument. For any integrable scalar- or vector-valued function $f$, set
\[
[f]_\rho:=\fint_{\mathcal Q_\rho}f(x,t)\,dx\,dt,
\]
where vector-valued averages are taken componentwise.
For the pressure, we retain the spatial-average notation
\[
[p]_\rho(t):=\fint_{\mathcal B_\rho}p(x,t)\,dx.
\]
Whenever the argument $t$ is omitted, $[p]_\rho$ still denotes
this time-dependent spatial average.
	   		 If the exponent in the statement satisfies $\tilde q\geq2$, set $q_*=3/2$.  Since the time interval has length one,
	   		 \[
	   		 \|p-[p]_1(t)\|_{L_t^{q_*}L_x^1(\mathcal{Q})}\leq
	   		 \|p-[p]_1(t)\|_{L_t^{\tilde q}L_x^1(\mathcal{Q})}.
	   		 \]
	   		 Thus the result for $q_*$ implies the stated result after decreasing the threshold.  It is therefore enough to prove the proposition for $1<\tilde q<2$, which is precisely the range used in the preceding pressure lemmas; $\tilde r\in(3/2,3)$ is then determined by $2/\tilde q+3/\tilde r=3$.
	   		 First, we observe that the smallness condition \eqref{14} in Proposition \ref{main1} implies the initial condition \eqref{4.9} of Lemma \ref{key}. Indeed, by the definitions of the scale-invariant quantities, the total energy at the unit scale satisfies
   		 \begin{align*}
   		 A_{u,\sqrt{n},\nabla \sqrt{\tilde{c}}}(1) + E_{u,\sqrt{n},\nabla \sqrt{\tilde{c}}}(1) + D_{\tilde{q},1}(1) \leq C \left( \varepsilon_0 + \varepsilon_0^{1/\tilde{q}} \right)
   		 \end{align*}
	   		 where $C=C(\tilde q,M_c,M_\phi)>1$. Therefore, by choosing $\varepsilon_0 = \varepsilon_0(\tilde{q},M_c,M_\phi) > 0$ sufficiently small such that $C ( \varepsilon_0 + \varepsilon_0^{1/\tilde{q}} ) \leq \varepsilon_2$, the condition \eqref{4.9} holds.

             We now consider the boundary case, so that
$\mathcal Q_r=Q_r^+$ and $\mathcal B_r=B_r^+$.
   		 Consequently, applying Lemma \ref{key}, we have the following estimates for $\rho \in (0, \rho_0]$:
   		\begin{align}\label{rho-estimate}
   			&\|p-[p]_{\rho}\|_{L_t^{\tilde{q}}L_x^{\tilde{r}}(Q^+_\rho)} \leq \rho^{2+\eta_2}, \quad \||u|^2\|_{L_t^{\tilde{q}}L_x^{\tilde{r}}(Q^+_\rho)} \leq C\rho^{3-\eta_1}, \nonumber\\
&\||\nabla\sqrt{\tilde{c}}|^2\|_{L_t^{\tilde{q}}L_x^{\tilde{r}}(Q^+_\rho)} \leq C\rho^{3-\eta_1}, \quad \|n\|_{L_t^{\tilde{q}}L_x^{\tilde{r}}(Q^+_\rho)} \leq C\rho^{3-\eta_1}.
   		\end{align}
The last three bounds also use Lemma~\ref{interpolation};
their constant $C$ depends only on the fixed parameters and is
independent of $\rho$.
    
    For $\rho\in(0,\rho_0]$ and each $i\in\{1,2,3\}$, put
$f^i=-n\partial_i\phi$ on $Q_\rho^+$.
Throughout the following construction, a superscript $*$,
placed after the component index, denotes the corresponding
function on the full ball or cylinder.

For almost every $t\in(-\rho^2,0)$, let $f^{i,*}$ be the odd
extension of $f^i$ across $\{x_3=0\}$, and let
$h^i\in W^{2,\tilde r}(B_\rho)\cap W_0^{1,\tilde r}(B_\rho)$
solve
\begin{equation}\label{eq:poisson}
    \Delta h^i=f^{i,*}\quad\text{in }B_\rho,
    \qquad h^i=0\quad\text{on }\partial B_\rho.
\end{equation}
Uniqueness and symmetry imply that $h^i$ is odd in $x_3$.
Set
\[
G^{i,*}:=\nabla h^i,
\qquad
G^i:=G^{i,*}|_{Q_\rho^+},
\qquad
G:=(G_j^i)_{i,j=1}^3.
\]
Then $\operatorname{div}G^i=f^i$ in $Q_\rho^+$.
Moreover, $G_1^{i,*}$ and $G_2^{i,*}$ are odd in $x_3$,
whereas $G_3^{i,*}$ is even.
The elliptic estimate, followed by integration in time, gives
\begin{equation}\label{eq:force-divergence-bound}
    \|G\|_{L_t^{\tilde q}L_x^{\tilde r}(Q_\rho^+)}
    \leq C\rho\|\nabla\phi\|_{L^\infty(Q_\rho^+)}
    \|n\|_{L_t^{\tilde q}L_x^{\tilde r}(Q_\rho^+)}
    \leq C\|n\|_{L_t^{\tilde q}L_x^{\tilde r}(Q_\rho^+)}.
\end{equation}

For each $i\in\{1,2,3\}$, define the vector field
$\mathcal F^i=(\mathcal F_j^i)_{j=1}^3$ on $Q_\rho^+$ by
\[
\mathcal F_j^i
=-u^iu^j-\delta_{ij}(p-[p]_\rho)+G_j^i.
\]
Let $\mathcal F^{i,*}$ be its extension to $Q_\rho$ obtained
by extending $\mathcal F_1^i,\mathcal F_2^i$ oddly and
$\mathcal F_3^i$ evenly across $\{x_3=0\}$.
Then $\operatorname{div}\mathcal F^{i,*}$ is odd in $x_3$
in the sense of distributions, and
\[
\|\mathcal F^{i,*}\|_{L_t^{\tilde q}L_x^{\tilde r}(Q_\rho)}
\leq C
\|\mathcal F^i\|_{L_t^{\tilde q}L_x^{\tilde r}(Q_\rho^+)}.
\]

Extend $\mathcal F^{i,*}$ by zero outside $B_\rho$ and define
\[
\omega^{i,*}:=\mathcal T_\rho\mathcal F^{i,*},
\qquad \omega^i:=\omega^{i,*}|_{Q_\rho^+},
\qquad \omega=(\omega^1,\omega^2,\omega^3),
\]
using Lemma~\ref{lem:heat-potentials}.
Then $\omega^{i,*}$ is odd and
\begin{equation}\label{eq:omega}
\partial_t\omega^i-\Delta\omega^i
=-\partial_j(u^iu^j)-n\partial_i\phi-\partial_i(p-[p]_\rho)
\quad\hbox{in }Q_\rho^+.
\end{equation}
In particular, \eqref{eq:heat-potential-div} gives directly
\begin{align}\label{omega}
\|\omega\|_{L_t^{\tilde q}L_x^{\tilde r}(Q_\rho^+)}
&\leq C\rho\left(
 \||u|^2\|_{L_t^{\tilde q}L_x^{\tilde r}(Q_\rho^+)}
 +\|p-[p]_\rho\|_{L_t^{\tilde q}L_x^{\tilde r}(Q_\rho^+)}
 +\|G\|_{L_t^{\tilde q}L_x^{\tilde r}(Q_\rho^+)}\right)\notag\\
&\leq C\rho(\rho^{3-\eta_1}+\rho^{2+\eta_2})
\leq C\rho^{3+\eta_2},
\end{align}
where $\eta_1+\eta_2<1$ and $\rho\leq1$.

Let $u^{i,*}$ be the odd extension of $u^i$, component by
component; no solenoidal property of this extension is required.
Its scalar momentum equation has right-hand side
$\operatorname{div}\mathcal F^{i,*}$ in $Q_\rho$.
Indeed, test the half-cylinder equation with the odd part of a
full-cylinder test function. That part vanishes on $\Gamma$;
the zero trace of $u^i$ removes the diffusion boundary term, and
the prescribed parities give exactly the reflected divergence.
The identity follows for the present weak solutions by approximation
of such zero-trace tests and the local integrability of
$\nabla u,u\otimes u,p,n$.
Thus $v^{i,*}=u^{i,*}-\omega^{i,*}$ is odd and caloric in $Q_\rho$.
In particular, its smooth interior representative vanishes on
the flat boundary. Writing $v=u-\omega$ and using
Lemma~\ref{lem:caloric-parity-decay}(i) with
$2/\tilde q+3/\tilde r=3$, we have
\begin{equation}\label{v}
\|v\|_{L_t^{\tilde q}L_x^{\tilde r}(Q_\gamma^+)}
\leq C(\gamma/\rho)^4
\|v\|_{L_t^{\tilde q}L_x^{\tilde r}(Q_\rho^+)},
\qquad 0<\gamma<\rho.
\end{equation}
This estimate uses the zero boundary value itself, so no subtraction
of an incompatible constant at $\Gamma$ is involved.

Combining \eqref{omega} and \eqref{v} gives
\[
\|u\|_{L_t^{\tilde q}L_x^{\tilde r}(Q_\gamma^+)}
\leq C(\gamma/\rho)^4
\|u\|_{L_t^{\tilde q}L_x^{\tilde r}(Q_\rho^+)}
+C\rho^{3+\eta_2}.
\]
The norm $\|u\|_{L_t^{\tilde q}L_x^{\tilde r}(Q_{\rho_0}^+)}$
is bounded by \eqref{rho-estimate} and H\"older's inequality.
Lemma~\ref{it1}, since $3+\eta_2<4$, yields
\[
\|u\|_{L_t^{\tilde q}L_x^{\tilde r}(Q_\gamma^+)}
\leq C\gamma^{3+\eta_2}.
\]

The mixed-norm bound for the averaging operator then implies
\begin{equation}\label{eq:u-campanato-decay}
\|u-[u]_\gamma\|_{L^1(Q_\gamma^+)}
\leq C\gamma^2\|u-[u]_\gamma\|_{L_t^{\tilde q}L_x^{\tilde r}(Q_\gamma^+)}
\leq C\gamma^{5+\eta_2}.
\end{equation}

We next estimate $\nabla\tilde c=\nabla c$.
On $Q_\rho^+$ set
\[
g=-n\tilde c+n-u\cdot\nabla\tilde c.
\]
Extend $g$ evenly across $\Gamma$ and by zero outside $B_\rho$;
denote the resulting function by $g^*$. Define
\[
c_1^*=\mathcal S_\rho g^*,\qquad
c_1=c_1^*|_{Q_\rho^+},\qquad c_2=\tilde c-c_1.
\]
These definitions specify the auxiliary functions uniquely, without
requiring zero data on the artificial sphere. The even extension
$\tilde c^*$ satisfies
$(\partial_t-\Delta)\tilde c^*=g^*$ inside $Q_\rho$.
This follows by testing the weak scalar equation with the even
part of a full-cylinder test function. The weak Neumann condition
and $u=0$ on $\Gamma$ remove the boundary flux.
Consequently $c_2^*:=\tilde c^*-c_1^*$ is even and caloric,
and hence $\partial_3c_2^*=0$ on the flat boundary.

By \eqref{rho-estimate}, $1\leq\tilde c\leq M_c$, and
$\nabla\tilde c=2\sqrt{\tilde c}\nabla\sqrt{\tilde c}$,
\[
\begin{split}
\|g\|_{L_t^{\tilde q}L_x^{\tilde r}(Q_\rho^+)}
\leq C(M_c)\left(
\|n\|_{L_t^{\tilde q}L_x^{\tilde r}(Q_\rho^+)}
 +\||u|^2\|_{L_t^{\tilde q}L_x^{\tilde r}(Q_\rho^+)}^{1/2}
  \||\nabla\sqrt{\tilde c}|^2\|_{L_t^{\tilde q}L_x^{\tilde r}(Q_\rho^+)}^{1/2}
\right)\leq C\rho^{3-\eta_1}.
\end{split}
\]
The explicit gradient estimate \eqref{eq:heat-potential-scalar}
therefore gives
\[
\|\nabla c_1\|_{L_t^{\tilde q}L_x^{\tilde r}(Q_\rho^+)}
\leq C\rho\|g\|_{L_t^{\tilde q}L_x^{\tilde r}(Q_\rho^+)}
\leq C\rho^{4-\eta_1}.
\]

To preserve the boundary symmetry when subtracting constants, set
\[
\Phi_c(r)=\inf_{a\in\mathbb R^3,\ a_3=0}
 \|\nabla\tilde c-a\|_{L_t^{\tilde q}L_x^{\tilde r}(Q_r^+)},
\qquad
\Phi_{c_2}(r)=\inf_{a\in\mathbb R^3,\ a_3=0}
 \|\nabla c_2-a\|_{L_t^{\tilde q}L_x^{\tilde r}(Q_r^+)}.
\]
Lemma~\ref{lem:caloric-parity-decay}(ii) gives
\begin{equation}\label{eq:c2-gradient-decay}
\Phi_{c_2}(\gamma)
\leq C(\gamma/\rho)^4\Phi_{c_2}(\rho),
\qquad 0<\gamma<\rho.
\end{equation}
The triangle inequality, taking the infimum over $a_3=0$,
implies
\[
\Phi_c(\gamma)\leq\Phi_{c_2}(\gamma)+C\rho^{4-\eta_1},
\qquad
\Phi_{c_2}(\rho)\leq\Phi_c(\rho)+C\rho^{4-\eta_1}.
\]
Hence
\begin{equation}\label{eq:grad-c-campanato-iteration}
\Phi_c(\gamma)
\leq C(\gamma/\rho)^4\Phi_c(\rho)+C\rho^{4-\eta_1}.
\end{equation}
The initial value $\Phi_c(\rho_0)$ is bounded by taking $a=0$.
Since $4-\eta_1<4$, Lemma~\ref{it1} yields
$\Phi_c(\gamma)\leq C\gamma^{4-\eta_1}$.
For every admissible constant $a$ the ordinary averaging estimate gives
$\|\nabla\tilde c-[\nabla\tilde c]_\gamma\|
\leq2\|\nabla\tilde c-a\|$ in the same mixed norm.
Thus
\begin{equation}\label{eq:grad-c-mixed-campanato}
\|\nabla\tilde c-[\nabla\tilde c]_\gamma\|_
 {L_t^{\tilde q}L_x^{\tilde r}(Q_\gamma^+)}
\leq2\Phi_c(\gamma)\leq C\gamma^{4-\eta_1},
\qquad 0<\gamma\leq\rho_0.
\end{equation}

Since
\[
	\frac{2}{\tilde q}+\frac{3}{\tilde r}=3,
\]
H\"older's inequality yields
\begin{align}
	\|\nabla\tilde c-[\nabla\tilde c]_{\gamma}\|_
	{L^1(Q_\gamma^+)}
	&\leq
	C\gamma^2
	\|\nabla\tilde c-[\nabla\tilde c]_{\gamma}\|_
	{L_t^{\tilde q}L_x^{\tilde r}(Q_\gamma^+)}
	\nonumber\\
	&\leq C\gamma^{6-\eta_1}.
	\label{ccc}
\end{align}

We justify the uniformity in the center before applying
Campanato's characterization.
Recall that the reference cylinder in the present
boundary case is $Q_1^+(0,0)$.
For every boundary center
\[
z_b=((x_1,x_2,0),t),
\qquad |(x_1,x_2)|<1/8,\quad -1/64<t<0,
\]
we have
\[
Q_{1/2}^+(z_b)\subset Q_1^+(0,0).
\]
Write
\[
X(z,r):=
A_{u,\sqrt n,\nabla\sqrt{\tilde c}}(z,r)
+
E_{u,\sqrt n,\nabla\sqrt{\tilde c}}(z,r).
\]
By restriction to $Q_{1/2}^+(z_b)$ and comparison
of the spatial pressure averages, hypothesis \eqref{14} gives
\[
X(z_b,1/2)+D_{\tilde q,1}(z_b,1/2)
\leq C\bigl(\varepsilon_0+\varepsilon_0^{1/\tilde q}\bigr),
\]
with $C$ independent of $z_b$.
Choosing $\varepsilon_0$ sufficiently small, we may
therefore rescale each $Q_{1/2}^+(z_b)$ to a unit
half-cylinder and apply Lemma~\ref{key}, with the same
parameters and the same choice of $\rho_0$ for all
these centers.

Returning to the present coordinates, we obtain
\[
X(z_b,r)\leq Cr^{2-\eta_1},
\qquad
D(z_b,r)\leq Cr^{1+\eta_2},
\qquad 0<r\leq \rho_0/2,
\]
uniformly in $z_b$.

Repeating the preceding arguments gives, uniformly for the same
boundary centers and radii,
\[
\|u\|_{L_t^{\tilde q}L_x^{\tilde r}(Q_r^+(z_b))}
\leq Cr^{3+\eta_2},\qquad
\inf_{a_3=0}\|\nabla\tilde c-a\|_
 {L_t^{\tilde q}L_x^{\tilde r}(Q_r^+(z_b))}
\leq Cr^{4-\eta_1}.
\]
In particular, \eqref{eq:u-campanato-decay} and \eqref{ccc}
hold on these translated half-cylinders with common constants.

Fix
\[
0<\rho\leq \min\{1/8,\rho_0/6\},
\]
to be decreased further below if necessary.
For an interior center $z=(x,t)\in Q_\rho^+(0,0)$,
put
\[
d=x_3,\qquad s=d/2,\qquad
z_b=((x_1,x_2,0),t).
\]
Here $d$ is the distance to the flat boundary,
so $Q_s(z)$ is a full cylinder contained in the
half-space. Moreover,
\[
Q_s(z)\subset Q_{3s}^+(z_b),
\qquad
3s<3\rho/2\leq \rho_0/4.
\]
Thus the uniform boundary estimates apply at
the center $z_b$ and radius $3s$.
Comparison of the energy quantities and the
spatial pressure averages gives
\[
X(z,s)\leq Ks^{2-\eta_1},
\qquad
D(z,s)\leq Ks^{1+\eta_2},
\]
where $K$ is independent of the choice of
$z\in Q_\rho^+(0,0)$.

For $0<r\leq s$, all cylinders $Q_r(z)$ are full
interior cylinders.
Corollary~\ref{corollary1} applies whenever its
smallness condition holds, while the interior
pressure estimate in Lemma~\ref{p} gives
\[
D(z,\theta r)
\leq C\theta^{3/\tilde r}D(z,r)
+C\theta^{-1}X(z,r).
\]
The boundary comparison provides decay only at the
starting radius $s$.
To propagate this decay to $r<s$ with constants
independent of $s=d/2$, we repeat the induction
argument of Lemma~\ref{key}, retaining the above
powers of $s$ in the initial bounds.
Choose $\sigma>0$ sufficiently small that
\[
2\sigma<\eta_2,\qquad
\sigma(2+\eta_2)<1-\eta_1-\eta_2,
\]
and
\[
\sigma(4-\eta_1)
<
\min\left\{
\frac{2-\eta_1}{4},
\frac{\eta_1}{2}+\eta_2
\right\}.
\]
Then choose
\[
0<\delta<
\min\left\{
\frac14,
\frac{\sigma\eta_1}{2(2-\eta_1)}
\right\}.
\]
These choices are independent of the center $z$.
Since
\[
\eta_2<3/\tilde r-1,
\qquad \eta_1+\eta_2<1,
\]
the scale induction used in Lemma~\ref{key}, now with
$s_k=s^{(1+\sigma)^k}$ and the above interior pressure
estimate, yields
\[
X(z,s_k)\leq Ks_k^{2-\eta_1},
\qquad D(z,s_k)\leq Ks_k^{1+\eta_2}.
\]
Decrease $\rho$ so that the initial energy bounds
lie below the threshold in Corollary~\ref{corollary1},
$s^\sigma\leq 1/2$, and the positive powers of $s_k$
absorb the fixed constants in the induction.
This choice is independent of the center $z$.

Comparison of consecutive scales then gives, for $r\leq s$,
\[
X(z,r)\leq Cr^{2-\widehat\eta_1},
\qquad D(z,r)\leq Cr^{1+\widehat\eta_2},
\]
where
\[
\widehat\eta_1=\frac{\eta_1+3\sigma}{1+\sigma},
\qquad
\widehat\eta_2=\frac{\eta_2-2\sigma}{1+\sigma}.
\]
These exponents satisfy
$0<\widehat\eta_2<\eta_2$,
$\eta_1<\widehat\eta_1<1$, and
$\widehat\eta_1+\widehat\eta_2<1$.

For the interior cylinders $Q_r(z)$, $0<r\leq s$, use the same
heat potentials, with the data extended by zero outside the spatial
ball and without any reflection. For the force, solve the full-ball
Poisson problem in \eqref{eq:poisson} without taking an odd extension.
The remainders $v$ and $c_2$ are caloric in the full cylinder.
Define
\[
\Psi_f(z,r)=\left\|f-\fint_{Q_r(z)}f\right\|_
 {L_t^{\tilde q}L_x^{\tilde r}(Q_r(z))}.
\]
The just-proved energy and pressure decay, Lemma~\ref{lem:heat-potentials},
and Lemma~\ref{lem:caloric-parity-decay}(iii), applied also to
the caloric vector $\nabla c_2$, imply
\[
\Psi_f(z,\gamma)
\leq C(\gamma/r)^4\Psi_f(z,r)+Cr^{3+\alpha},
\qquad 0<\gamma<r\leq s,
\]
where $(f,\alpha)=(u,\widehat\eta_2)$ or
$(\nabla\tilde c,1-\widehat\eta_1)$.
The initial bound at the moving radius $s=d/2$ is uniform:
restriction from $Q_{3s}^+(z_b)$ and the boundary mixed-norm estimates
give
\[
\Psi_f(z,s)\leq Cs^{3+\alpha_b},\qquad
\alpha_b=\eta_2\ \hbox{for }f=u,
\quad \alpha_b=1-\eta_1\ \hbox{for }f=\nabla\tilde c.
\]
In both cases $\alpha_b>\alpha$. Applying Lemma~\ref{it1} with
starting radius $s$ therefore yields
\[
\begin{split}
\Psi_f(z,r)
&\leq C(r/s)^{3+\alpha}\Psi_f(z,s)+Cr^{3+\alpha}\\
&\leq C(1+s^{\alpha_b-\alpha})r^{3+\alpha}
\leq Cr^{3+\alpha},\qquad 0<r\leq s<1.
\end{split}
\]
This explicitly proves that the oscillation constants do not
deteriorate as $d\downarrow0$.

For $r\geq d/2$, the same bound on
\[
\widehat Q_r(z)
:=(B_r(x)\cap\mathbb H^3)\times(t-r^2,t)
\]
follows from
$\widehat Q_r(z)\subset Q_{3r}^+(z_b)$
and the boundary oscillation estimates.
Fix $0<\rho_1<\rho/8$. On $Q_{\rho_1}^+$,
H\"older's inequality gives
\[
\int_{\widehat Q_r(z)}
\left|f-\fint_{\widehat Q_r(z)}f\right|\,dx\,dt
\leq Cr^{5+\alpha},
\]
uniformly in the center and in sufficiently small radii.
The parabolic Campanato characterization implies
\[
u\in C^{\widehat\eta_2,\widehat\eta_2/2}
(\overline{Q_{\rho_1}^+}),
\qquad
\nabla c=\nabla\tilde c
\in C^{1-\widehat\eta_1,(1-\widehat\eta_1)/2}
(\overline{Q_{\rho_1}^+}).
\]
The chosen $\rho_1$ depends only on the fixed parameters,
and the preceding estimates hold uniformly for
centers in $Q_{\rho_1}^+$ and all sufficiently
small radii.
For the remainder of the proof, we retain the notation
$\eta_1,\eta_2$ for the weaker exponents
$\widehat\eta_1,\widehat\eta_2$. Thus we have 
   		 \begin{align}\label{yj}
   		 \|u\|_{L^\infty_{t,x}(Q^+_{\rho_1})} + \|\nabla\tilde{c}\|_{L^\infty_{t,x}(Q^+_{\rho_1})} \leq C.
   		 \end{align}
   		We next estimate $n$.
Recall that \eqref{yj} gives uniform bounds for
$u$ and $\nabla\tilde c$ on $Q_{\rho_1}^+$.
Here and below, $\eta_1$ denotes the exponent retained
after the preceding interior estimates.

For $0<\rho\leq\rho_1/2$, put
$F=-n(u+\nabla\tilde c)$ on $Q_\rho^+$.
Extend $F_1,F_2$ evenly and $F_3$ oddly across $\Gamma$, then
extend the resulting field $F^*$ by zero outside $B_\rho$.
Set
\[
n_1^*=\mathcal T_\rho F^*,\qquad
n_1=n_1^*|_{Q_\rho^+},\qquad n_2=n-n_1.
\]
The even extension $n^*$ satisfies
$(\partial_t-\Delta)n^*=\operatorname{div}F^*$ in $Q_\rho$:
this is exactly the weak no-flux identity tested with even functions.
One may move the derivative of $n$ onto the test function because
its even part has zero normal derivative on $\Gamma$.
Thus $n_2^*:=n^*-n_1^*$ is even and caloric. This construction
uses no classical normal trace of $n$ or $F$ and imposes no
artificial spherical boundary data.
By \eqref{yj}, the density bound following from the energy decay,
and \eqref{eq:heat-potential-div},
\[
\begin{split}
\|n_1\|_{L_t^{\tilde q}L_x^{\tilde r}(Q_\rho^+)}
\leq C\rho\|n(u+\nabla\tilde c)\|_{L_t^{\tilde q}L_x^{\tilde r}(Q_\rho^+)}
\leq C\rho\|n\|_{L_t^{\tilde q}L_x^{\tilde r}(Q_\rho^+)}
\leq C\rho^{4-\eta_1}.
\end{split}
\]

The even extension of $n_2$ solves the homogeneous
heat equation on the full cylinder $Q_\rho$.
Lemma~\ref{lem:caloric-parity-decay}(iii) therefore implies, for
$0<\gamma\leq\rho/2$,
\[
\|n_2-[n_2]_\gamma\|_
 {L_t^{\tilde q}L_x^{\tilde r}(Q_\gamma^+)}
\leq
C\left(\frac{\gamma}{\rho}\right)^4
\|n_2-[n]_\rho\|_
 {L_t^{\tilde q}L_x^{\tilde r}(Q_\rho^+)}.
\]
Using $n_2=n-n_1$ and the estimate for $n_1$, we obtain
\begin{align*}
\|n-[n]_\gamma\|_
 {L_t^{\tilde q}L_x^{\tilde r}(Q_\gamma^+)}
&\leq
\|n_2-[n_2]_\gamma\|_
 {L_t^{\tilde q}L_x^{\tilde r}(Q_\gamma^+)}
+2\|n_1\|_
 {L_t^{\tilde q}L_x^{\tilde r}(Q_\gamma^+)}\\
&\leq
C\left(\frac{\gamma}{\rho}\right)^4
\|n-[n]_\rho\|_
 {L_t^{\tilde q}L_x^{\tilde r}(Q_\rho^+)}
+C\rho^{4-\eta_1}.
\end{align*}
Applying Lemma~\ref{it1} with the fixed starting radius
$\rho_1/2$, and noting that $4-\eta_1<4$, yields
\[
\|n-[n]_\gamma\|_
 {L_t^{\tilde q}L_x^{\tilde r}(Q_\gamma^+)}
\leq C\gamma^{4-\eta_1},
\qquad 0<\gamma\leq\rho_1/2.
\]
The constant may depend on the fixed radius $\rho_1$,
but is independent of $\gamma$.

Finally, since $2/\tilde q+3/\tilde r=3$,
H\"older's inequality gives
\begin{align}
\|n-[n]_\gamma\|_{L^1(Q_\gamma^+)}
&\leq
C\gamma^2
\|n-[n]_\gamma\|_
 {L_t^{\tilde q}L_x^{\tilde r}(Q_\gamma^+)}
\leq C\gamma^{6-\eta_1}.
\label{nnn}
\end{align}

We spell out the uniformity for $n$ as well. For boundary centers
with $|x'|<\rho_1/8$ and $-\rho_1^2/64<t<0$, fix
$0<R_n\leq\rho_1/16$ small enough that the preceding uniform
energy bounds apply at all radii up to $R_n$. Their larger
cylinders lie in $Q_{\rho_1}^+$, where \eqref{yj} holds.
Repeating the preceding heat-potential argument gives the boundary
mixed-norm oscillation bound $Cr^{4-\eta_1}$ for $r\leq R_n$
with a common constant.
For an interior center $z$ in a sufficiently small fixed
half-cylinder, put $s=d/2$ and $a=\min\{s,R_n/3\}$.
If $s\leq R_n/3$, comparison with the boundary cylinder of radius
$3s\leq R_n$ gives $\Psi_n(z,a)\leq Ca^{4-\eta_1}$.
If $s>R_n/3$, the starting radius $a=R_n/3$ is fixed;
the uniform density norm then gives the same bound after changing
the constant, which may depend on $R_n$ but not on $z$.
On scales $r\leq a$, the full-cylinder heat-potential construction
and Lemma~\ref{lem:caloric-parity-decay}(iii) give
\[
\Psi_n(z,\gamma)
\leq C(\gamma/r)^4\Psi_n(z,r)+Cr^{4-\eta_1}.
\]
Here $\|n\|_{L_t^{\tilde q}L_x^{\tilde r}(Q_r(z))}
\leq Cr^{3-\eta_1}$ follows from the uniform energy decay above.
Lemma~\ref{it1} thus gives $\Psi_n(z,r)\leq Cr^{4-\eta_1}$
with a constant independent of $s$. For $s\leq r\leq R_n/3$,
restriction from the boundary cylinder of radius $3r$ gives the
same bound on the truncated cylinder. These two cases cover every
$0<r\leq R_n/3$. H\"older's inequality and Campanato's characterization
therefore give H\"older continuity of $n$ up to the boundary
on a fixed smaller half-cylinder.

 For an interior reference cylinder $Q_1$, we have
$Q_{1/4}(z)\subset Q_1$ for every $z\in Q_{1/2}$.
After decreasing $\varepsilon_0$ by a fixed factor,
Lemma~\ref{key} and the preceding full-cylinder
estimates give uniform oscillation decay at these
centers. Campanato's characterization therefore yields
H\"older continuity of $(n,\nabla c,u)$ in $Q_{1/2}$.

In the boundary case, cover $\overline{Q_{1/2}^+}$
by the closures of finitely many smaller boundary
half-cylinders and interior full cylinders, with
their larger cylinders contained in $Q_1^+$.
Fix this covering and decrease $\varepsilon_0$
so that, after rescaling, Lemma~\ref{key} applies
on every larger cylinder.
The local boundary and interior results proved above
then yield H\"older continuity of $(n,\nabla c,u)$
on $\overline{Q_{1/2}^+}$.
   	\end{proof}

\section{Lower-order regularity criteria}
\label{sec:lower-order-criteria}

We first prove a lower-order criterion with small density mass
in mixed space--time norms, valid in both full and half-cylinders.
We then use its energy estimates and a caloric decomposition to
prove Proposition~\ref{main2}, which permits an arbitrary finite
density mass bound in the equal-exponent case.

	\begin{theorem}[Small-mass criterion in mixed norms]\label{main3}
	For any triple of exponents $(q,r,\tilde{q})$ satisfying
	\begin{equation*}
		\frac2q+\frac3r<2,\quad 2<q<\infty,\quad \frac32<r<\infty,
	\end{equation*}
		and
		\begin{equation}\label{qq}
			\frac{1}{\tilde q}<F(q,r):=
\begin{cases}
1-\dfrac{1}{q}\dfrac{\frac{1}{r}-\frac{1}{2}}{\frac{1}{r}-\frac{1}{6}},
& 1\le r<2,\\[1.2ex]
1,
& r\ge 2.
\end{cases},
\end{equation}
		 Then
			there exists
			\[
			\varepsilon_1=\varepsilon_1(q,r,\tilde{q},M_c,M_\phi)>0
			\]
			such that if $(n,c,u,p)$ is a suitable weak solution to \eqref{che}--\eqref{bound}
in the sense of Definition~\ref{def:suitable-half-space} satisfying
	\begin{equation}
			C^{q,r}_{u,\nabla\sqrt{\tilde{c}}}(1)+\|n\|_{L^1_{t,x}(\mathcal{Q})}+D_{\tilde{q},1}(1)<\varepsilon_1,
	\end{equation}
		then $(n,\nabla c,u)$ is regular in $\overline{\mathcal Q_{1/2}}$.
\end{theorem}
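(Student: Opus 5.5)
The plan is to reduce Theorem~\ref{main3} to Proposition~\ref{main1}, through Lemma~\ref{key}. Concretely, we show that the lower-order smallness forces smallness of $A(\rho)+E(\rho)+D_{\tilde q,1}(\rho)$ at some fixed radius $\rho\in(0,1/2)$ and at every nearby center. Once this is done, the rescaled version of \eqref{4.9} holds on a covering of $\overline{\mathcal Q_{1/2}}$, and the Campanato argument of Section~\ref{subsec:holder-regularity} gives regularity. So the whole proof rests on a \emph{reverse energy estimate}: local energy and dissipation at a smaller scale are bounded by lower-order norms at the larger scale.

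\textbf{Step 1 (energy bound from lower-order quantities).} Test the local energy inequality \eqref{le} with a fixed cutoff $\psi$ supported in $\mathcal Q$, equal to one on $\mathcal Q_{3/4}$, and with $\partial_3\psi=0$ on $\Gamma$. As in Lemma~\ref{AE}, we add a multiple of the weak mass identity with the same weight. This controls the terminal mass and the negative part of $n\log n$, so no separate entropy quantity appears. The right-hand side is estimated term by term.
\begin{itemize}
\item The terms $|u|^2$, $|\nabla\sqrt{\tilde c}|^2$, $|u|^3$, the cubic concentration terms, and the transport terms are bounded by powers of $C^{q,r}_{u,\nabla\sqrt{\tilde c}}(1)$. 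Because the condition $2/q+3/r<2$ is subcritical, any excess integrability can be interpolated against the energy and dissipation on $\mathcal Q$ itself, with a small power that is absorbed.
\item The pressure term $(p-[p])u\cdot\nabla\psi$ is handled with the pressure decomposition of Lemma~\ref{p}. The harmonic/homogeneous Stokes part is controlled by $D_{\tilde q,1}(1)$. The local part is controlled by $u\otimes u$ and $n\nabla\phi$. The exponent restriction \eqref{qq} is exactly what makes the pairing of $L_t^{\tilde q}L^1_x$-type pressure with $u$ in $L^q_tL^r_x$ admissible after Sobolev improvement of the harmonic part.
\item The terms containing $n$, namely $n\log n\,(\partial_t\psi+\Delta\psi)$, $n\log n\,\nabla\tilde c\cdot\nabla\psi$, $n\nabla\tilde c\cdot\nabla\psi$, and the force $n\nabla\phi\cdot u\psi$, are bounded using $|n\log n|\le C(n^{1+\alpha}+n^{1-\alpha})$. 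We interpolate $n^{1+\alpha}$ between $\|n\|_{L^1}$ and $\|n\|_{L^{5/3}}\lesssim X$, so every such term has the form $\|n\|_{L^1}^\theta X^{\kappa}$ with $\kappa<1$.
\end{itemize}
Collecting these bounds should give an inequality of the form
\[
X(1/2)\le C\varepsilon_1^{\theta}\bigl(1+X(1)\bigr)^{\kappa}+\tfrac12X(1),
\]
with $\theta>0$ and $\kappa<1$.

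\textbf{Step 2 (main obstacle: closing the estimate).} The right-hand side of Step~1 still involves the energy at the larger scale, which is a priori finite but not small. I would resolve this with the standard radius-shrinking iteration, \cite[Ch.~V, Lemma~3.1]{G1983}-style, applied to $f(s)=X(s)$ for $s\in[1/2,1]$ with cutoffs between $s$ and $s'$. This needs all the powers of $X$ on the right to be strictly less than one; that requirement is precisely where $2/q+3/r<2$ and the restriction $\alpha<1/6$ enter. The iteration yields $X(1/2)\le C\varepsilon_1^{\theta'}$. The delicate point is the pure-density force term $M_\phi\|n\|_{L^1}$-type contribution at $\kappa$ close to one. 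If this term fails to be sublinear, I would fall back on the pressure decomposition with the force placed in the $p_1$ part, together with the extra factor $\rho M_\phi$ gained after rescaling.

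\textbf{Step 3 (pressure and conclusion).} With $X(1/2)$ small, apply Corollary~\ref{corollary2} with $\gamma=1/2$ to get $D(1/4)\lesssim D_{\tilde q,1}(1)+X(1/2)^{1/2}$, which is small. Repeat Steps~1--2 on translated cylinders $\mathcal Q_{1/2}(z)\subset\mathcal Q$ for $z$ in $\overline{\mathcal Q_{1/2}}$; at boundary centers the translations are taken tangentially. All constants are uniform in $z$. After rescaling, Lemma~\ref{key} and the covering argument at the end of the proof of Proposition~\ref{main1} then give Hölder continuity of $(n,\nabla c,u)$ on $\overline{\mathcal Q_{1/2}}$, after $\varepsilon_1$ is decreased by a fixed factor.
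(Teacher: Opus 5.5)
Your proposal is correct and follows essentially the paper's proof. The paper also tests the local energy inequality on nested cylinders, uses the weighted mass identity to control the terminal mass and the negative entropy, and bounds every term by a sublinear power of the energy times a positive power of the small lower-order norms, with the pressure split into a local Stokes part and a homogeneous remainder. It then absorbs over shrinking radii (summing $\delta_1^k(A_k+E_k)$ over $\rho_k=1-2^{-k}$ rather than invoking Giaquinta's lemma) and finishes with Proposition~\ref{main1} at radius $1/2$ and a covering. The force term you flag is already sublinear, $\int n|u|\le C\varepsilon_1^{1+\tau_1/2}X^{1-\tau_1/2}$ with $\tau_1>0$ coming from the gap in $2/q+3/r<2$, so no fallback is needed; the factor $\rho M_\phi$ would not help at a fixed scale anyway.
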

		
			\begin{proof}[Proof of Theorem \ref{main3}]
				We first prove the boundary case, Define
				\[
				L(q,r):=\max\left\{\frac78,1-\frac1q,\frac12+\frac1q\right\}.
				\]
				Under the hypotheses $2/q+3/r<2$ and $r>3/2$, one checks that
				$L(q,r)<F(q,r)$.  If the pressure exponent in the statement does not
				already satisfy $L(q,r)<1/\tilde q$, choose
				$\tilde q_0\in(1,\tilde q)$ such that
				\[
				L(q,r)<\frac1{\tilde q_0}<F(q,r).
				\]
				Because the time interval has finite length,
				$L_t^{\tilde q}L_x^1\hookrightarrow L_t^{\tilde q_0}L_x^1$ and the
				corresponding pressure smallness is preserved up to a fixed constant.
				We relabel $\tilde q_0$ as $\tilde q$ below.  This justifies all later
				uses of $L(q,r)<1/\tilde q<F(q,r)$ without strengthening the theorem's
				hypothesis.
				Let $\rho_k=1-2^{-k}$, $B_k^+=B^+(\rho_k)$, and $Q_k^+=Q^+(\rho_k)$ for any integer $k\geq 1$. 

                For each $k\geq0$, choose a smooth domain
$\tilde B_k\subset\mathbb H^3$ such that, for some universal
$\vartheta\in(0,1/8)$,
\[
B^+_{\rho_{k+1}+\vartheta 2^{-k}}
\subset
\tilde B_k
\subset
B^+_{\rho_{k+2}},
\]
and such that the flat part of $\partial\tilde B_k$ coincides
with a portion of $\Gamma$.  We set
\[
\tilde Q_k
:=
(-\rho_{k+2}^2,0)\times\tilde B_k.
\]
For each $k\ge1$, choose
\[
\psi_k\in C_c^\infty
\bigl(B_{\rho_{k+1}}\times(-\rho_{k+1}^2,1)\bigr),
\qquad 0\le\psi_k\le1,
\]
such that $\psi_k$ is even in $x_3$ and
\[
\psi_k\equiv1
\quad\text{on }
\overline{B_{\rho_k}}\times[-\rho_k^2,0].
\]
We use its restriction to the closed half-space, still denoted
by $\psi_k$. It satisfies
\[
|\partial_t\psi_k|\le C2^{2k},
\qquad
|\nabla\psi_k|\le C2^k,
\qquad
|\nabla^2\psi_k|\le C2^{2k},
\qquad
\partial_3\psi_k=0\quad\text{on }\Gamma.
\]
Thus $\psi_k$ vanishes near the initial time
$t=-\rho_{k+1}^2$ and near the spherical lateral boundary,
but may be nonzero on the flat boundary and at $t=0$.

Set
\[
J_k:=
\int_{Q_{k+1}^+}
n\bigl(1+|u|+|\nabla c|\bigr)\,dx\,ds.
\]
For almost every $t\in(-\rho_k^2,0)$, testing the cell
equation with $\psi_k$ up to time $t$ gives
\[
\begin{aligned}
\int_{B_k^+}n(x,t)\,dx
&\leq \int_{B^+}n(x,t)\psi_k(x,t)\,dx\\
&=
\int_{-\rho_{k+1}^2}^{t}\int_{B^+}
\left[
n(\partial_s\psi_k+\Delta\psi_k)
+n(u+\nabla c)\cdot\nabla\psi_k
\right]\,dx\,ds\leq C2^{2k}J_k.
\end{aligned}
\]

For $0<\delta<1$, weighted H\"older's inequality yields
\[
\begin{aligned}
\int_{B_k^+}|n\log n|(x,t)\,dx
&\leq
\int_{B^+}n\log n\,\psi_k(x,t)\,dx\\
&\quad+
C_\delta
\left(\int_{B^+}n\psi_k(x,t)\,dx\right)^{1-\delta}
\left(\int_{B^+}\psi_k(x,t)\,dx\right)^\delta\\
&\leq
\int_{B^+}n\log n\,\psi_k(x,t)\,dx
+C_\delta 2^{2k(1-\delta)}J_k^{1-\delta}.
\end{aligned}
\]

We combine these estimates with the local energy inequality,
estimating each term on its right-hand side in absolute value.
We then take essential suprema for the terminal energy terms
and let admissible terminal times increase to zero for the
nonnegative dissipation terms. Using the bounds for $\psi_k$
and discarding the remaining nonnegative terms gives

			\begin{align*}
				&\operatorname{ess\,sup}_{-\rho_k^2\leq t\leq 0}\int_{B_k^+}\left(n+|u|^2+|\nabla\sqrt{\tilde{c}}|^2\right)(\cdot,t)\,dx\\&\qquad\qquad\qquad\qquad+\int_{Q_k^+}\left(|\nabla\sqrt{n}|^2+|\nabla^2\sqrt{\tilde{c}}|^2+|\nabla u|^2\right)dxdt\\
				&\leq C2^{2k}\int_{Q_{k+1}^+}\left(n^{1+\delta}+n^{1-\delta}+|\nabla\sqrt{\tilde{c}}|^2+n+|u|^2\right)dxdt\\
				&\quad +C\|\nabla\phi\|_{L^\infty_{t,x}(Q_{k+1}^+)}\int_{Q_{k+1}^+}|n||u|\,dxdt +C2^k\int_{Q_{k+1}^+}\left(n^{1+\delta}+n^{1-\delta}\right)\left(|u|+|\nabla c|\right)\,dxdt\\
				&\quad +C2^k\int_{Q_{k+1}^+}\left(|n||\nabla c|+|\nabla\sqrt{\tilde{c}}|^2|u|+|u|^3+|p||u|+|\nabla\sqrt{\tilde{c}}|^3\right)dxdt\\
				&\quad +C2^{2k}\int_{Q_{k+1}^+}(|n|+|n||u|+|n||\nabla c|)\,dxdt\\
&\quad +C2^{2k}\left(\int_{Q_{k+1}^+}(|n|+|n||u|+|n||\nabla c|)\,dxdt\right)^{1-\delta}\\
				&:=\text{RHS}.
			\end{align*}
			Since $\rho_k\in[\frac{1}{2},1]$, we define $A_k=A_{u,\sqrt{n},\nabla\sqrt{\tilde{c}}}(\rho_k)$ and $E_k=E_{u,\sqrt{n},\nabla\sqrt{\tilde{c}}}(\rho_k)$. Thus, the above inequality can be rewritten as
			\begin{equation}\label{eq:section5-rhs}
				A_k+E_k\leq \text{RHS}.
			\end{equation}
			Without loss of generality, we replace $p-[p]_1(t)$ with $p$. By hypothesis, there exists $\tau^*>0$ such that $\frac{2}{q}+\frac{3}{r}=2-\tau^*$. 

        Consequently, we choose $\hat r\in(3/2,r)$ and $\tau_1>0$ sufficiently small so that $\frac2q+\frac3{\hat r}=2-\tau_1$, and
\[
0<\tau_1<
\min\left\{
\frac13,\,
\frac1{\hat r},\,
\frac1{\hat r'}-\frac13
\right\}.
\]
Then
\[
\frac1q+\frac2{\hat r}>1,
\qquad
\frac1q+\frac1{\hat r}>\frac23,
\]
and the interpolation exponents introduced below lie in the required
ranges.
			
			We first estimate the integral term involving $|u|^3$:
			\begin{align}
				\int_{Q^+_{k+1}}|u|^3\,dxdt&\leq \left(\|u\|_{L_t^{q_u}L_x^{r_u}(Q^+_{k+1})}^{\theta_u}\|u\|_{L_t^qL_x^{\hat{r}}(Q^+_{k+1})}^{1-\theta_u}\right)^3\nonumber\\
				&\leq C(A_{k+1}+E_{k+1})^\frac{3\theta_u}{2}\|u\|_{L_t^qL_x^{\hat{r}}(Q^+_{k+1})}^{3(1-\theta_u)}\nonumber\\&\leq C(A_{k+1}+E_{k+1})^\frac{3\theta_u}{2}\varepsilon_1^{3(1-\theta_u)},\label{eq:u-L3-estimate}
			\end{align}
			where the parameters satisfy
			\begin{equation*}
				\frac{\theta_u}{q_u}+\frac{1-\theta_u}{q}=\frac{\theta_u}{r_u}+\frac{1-\theta_u}{\hat{r}}=\frac13,\quad \theta_u\in[0,1].
			\end{equation*}
			To ensure the existence of exponents $q_u\in[2,\infty]$ and $r_u\in[2,6]$ satisfying $\frac{2}{q_u}+\frac{3}{r_u}=\frac{3}{2}$ and $\frac{3\theta_u}{2}<1$, we verify this via the relation
			\begin{equation*}
				(1-\theta_u)\left(\frac2q+\frac{3}{\hat{r}}\right)+\theta_u\left(\frac{2}{q_u}+\frac{3}{r_u}\right)=(2-\tau_1)(1-\theta_u)+\frac{3}{2}\theta_u=\frac{5}{3} \implies \theta_u=\frac{\frac13-\tau_1}{\frac12-\tau_1}<\frac23,
			\end{equation*}
			which yields
			\begin{equation*}
				q_u=\frac{\theta_u}{\frac13-\frac{1-\theta_u}{q}};\quad r_u=\frac{\theta_u}{\frac13-\frac{1-\theta_u}{\hat{r}}}.
			\end{equation*}
			Choosing $\hat{r}\in(\frac{3}{2},r)$ such that $\frac1q+\frac{2}{\hat{r}}>1$ and $\frac1q+\frac{1}{\hat{r}}>\frac23$ guarantees that $q_u\in[2,\infty]$ and $r_u\in[2,6]$.
			
			Similarly, the integral term involving $|\nabla\sqrt{\tilde{c}}|^3$ can be bounded as
			\begin{align}\label{eq:grad-c-L3-estimate}
				\int_{Q^+_{k+1}}|\nabla\sqrt{\tilde{c}}|^3\,dxdt\leq C(A_{k+1}+E_{k+1})^\frac{3\theta_u}{2}\varepsilon_1^{3(1-\theta_u)}.
			\end{align}
			Applying H\"{o}lder's inequality, we also have
			\begin{align*}
				\int_{Q_{k+1}^+}|u|^2\,dxdt&\leq C\left(\int_{Q^+_{k+1}}|u|^3\,dxdt\right)^\frac23\leq C\left((A_{k+1}+E_{k+1})^\frac{3\theta_u}{2}\varepsilon_1^{3(1-\theta_u)}\right)^\frac23,\\
				\int_{Q_{k+1}^+}|\nabla\sqrt{\tilde{c}}|^2\,dxdt&\leq C\left(\int_{Q^+_{k+1}}|\nabla\sqrt{\tilde{c}}|^3\,dxdt\right)^\frac23\leq C\left((A_{k+1}+E_{k+1})^\frac{3\theta_u}{2}\varepsilon_1^{3(1-\theta_u)}\right)^\frac23,\\
				\int_{Q_{k+1}^+}|\nabla\sqrt{\tilde{c}}|^2|u|\,dxdt&\leq \left(\int_{Q^+_{k+1}}|u|^3\,dxdt\right)^\frac13\left(\int_{Q^+_{k+1}}|\nabla\sqrt{\tilde{c}}|^3\,dxdt\right)^\frac23\\
				&\leq C(A_{k+1}+E_{k+1})^\frac{3\theta_u}{2}\varepsilon_1^{3(1-\theta_u)}.\alabel{cu23}
			\end{align*}     
			For the terms involving $n$, we obtain
			\begin{align*}
				&\int_{Q_{k+1}^+}|n|\,dxdt\leq \|n\|_{L^1_{t,x}(Q^+)}\leq \varepsilon_1,\\
				&\int_{Q_{k+1}^+}|n|^{1+\delta}\,dxdt\leq C\left(\|n\|_{L^1_{t,x}(Q_{k+1}^+)}^{\frac{2-3\delta}{2+2\delta}}\|n\|_{L_{t,x}^{5/3}(Q_{k+1}^+)}^{\frac{5\delta}{2+2\delta}}\right)^{1+\delta}\leq C(A_{k+1}+E_{k+1})^\frac{5\delta}{2}\varepsilon_1^{1-\frac{3\delta}{2}},\\
				&\int_{Q_{k+1}^+}|n|^{1-\delta}\,dxdt\leq C\left(\int_{Q_{k+1}^+}|n|\,dxdt\right)^{1-\delta}\leq C\varepsilon_1^{1-\delta}. \alabel{n1+-}
			\end{align*}
		
For the coupling term $|n||u|$, we first apply H\"{o}lder's inequality followed by the interpolation inequality on $Q_{k+1}^+$ to obtain:
\begin{align}
	\int_{Q_{k+1}^+}|n||u|\,dxdt &\leq \|u\|_{L_t^qL_x^{\hat{r}}(Q_{k+1}^+)}\|n\|_{L_t^{q'}L_x^{\hat{r}'}(Q_{k+1}^+)} \nonumber \\
	&\leq \|u\|_{L_t^qL_x^{\hat{r}}(Q_{k+1}^+)}\|n\|_{L_t^{1}L_x^{1}(Q_{k+1}^+)}^{\theta_n}\|n\|_{L_t^{q_n}L_x^{r_n}(Q_{k+1}^+)}^{1-\theta_n}, \label{nu}
\end{align}
where the conjugate exponents and the interpolation parameters satisfy:
\begin{align*}
	\frac{1}{q}+\frac{1}{q'}=1,\quad \frac{1}{\hat{r}}+\frac{1}{\hat{r}'}=1,\quad \frac{2}{q_n}+\frac{3}{r_n}=3,
\end{align*}
and
\begin{align*}
	\frac{1-\theta_n}{q_n}+\theta_n=\frac{1}{q'},\quad \frac{1-\theta_n}{r_n}+\theta_n=\frac{1}{\hat{r}'}.
\end{align*}
Recall from the definition of $\hat{r}$ that $\frac{2}{q} + \frac{3}{\hat{r}} = 2 - \tau_1$. By using the relations of conjugate exponents, this is equivalent to $\frac{2}{q'} + \frac{3}{\hat{r}'} = 3 + \tau_1$. On the other hand, taking a linear combination of the interpolation relations yields:
\begin{align*}
	\frac{2}{q'}+\frac{3}{\hat{r}'} = 2\left(\frac{1-\theta_n}{q_n}+\theta_n\right) + 3\left(\frac{1-\theta_n}{r_n}+\theta_n\right) = 3(1-\theta_n)+5\theta_n.
\end{align*}
Comparing these two expressions leads directly to:
\begin{align*}
	\theta_n=\frac{\tau_1}{2},\quad q_n=\frac{1-\theta_n}{\frac{1}{q'}-\theta_n},\quad \text{and}\quad r_n=\frac{1-\theta_n}{\frac{1}{\hat{r}'}-\theta_n}.
\end{align*}
Since $q\in(2,\infty)$ and $r\in(\frac{3}{2},\infty)$, we have $q'\in(1,2)$ and $\hat{r}'\in(r',3)\subset(1,3)$. To ensure $q_n\in(1,\infty)$ and $r_n\in(1,3)$, we restrict the positive parameter $\tau_1$ to the range:
\begin{equation*}
	0 < \tau_1 < \min\left\{ \frac{2}{q'}, \, \frac{2}{\hat{r}'}, \, \frac{3}{\hat{r}'} - 1 \right\}.
\end{equation*}
Consequently, utilizing the bounds $\|u\|_{L_t^qL_x^{\hat{r}}} \leq C\varepsilon_1$, $\|n\|_{L_t^1L_x^1} \leq \varepsilon_1$, and applying Lemma \ref{interpolation} to bound the $L_t^{q_n}L_x^{r_n}$ norm of $n$ by $A_{k+1}+E_{k+1}$, we arrive at the desired estimate:
\begin{equation}\label{eq:interpolation-n} 
	\int_{Q_{k+1}^+}|n||u|\,dxdt \leq C\varepsilon_1^{1+\frac{\tau_1}{2}}(A_{k+1}+E_{k+1})^{1-\frac{\tau_1}{2}}.
\end{equation}
					
					By a similar argument, we have
					\begin{equation*}
						\int_{Q_{k+1}^+}|n||\nabla c|\,dxdt\leq CM_c\varepsilon_1^{1+\frac{\tau_1}{2}}(A_{k+1}+E_{k+1})^{1-\frac{\tau_1}{2}}.
					\end{equation*}
					
                    For the term involving $n^{1+\delta}$, introduce new exponents
\[
\theta_{n,+}:=\frac{\tau_1-3\delta}{2(1+\delta)},
\]
\[
\frac1{q_{n,+}}
=\frac{\frac1{q'(1+\delta)}-\theta_{n,+}}
{1-\theta_{n,+}},\qquad
\frac1{r_{n,+}}
=\frac{\frac1{\hat r'(1+\delta)}-\theta_{n,+}}
{1-\theta_{n,+}}.
\]
Then $2/q_{n,+}+3/r_{n,+}=3$.
For sufficiently small $\delta>0$, these exponents remain in the
admissible range, and interpolation holds separately in time and space:

\begin{align*}
\int_{Q_{k+1}^+}|n|^{1+\delta}|u|\,dxdt&\leq \|u\|_{L_t^qL_x^{\hat{r}}(Q_{k+1}^+)}\|n\|_{L_t^{q'(1+\delta)}L_x^{\hat{r}'(1+\delta)}(Q_{k+1}^+)}^{1+\delta}\\
&\leq \|u\|_{L_t^qL_x^{\hat{r}}(Q_{k+1}^+)}\left(\|n\|_{L_t^1L_x^1(Q_{k+1}^+)}^{\theta_{n,+}}
\|n\|_{L_t^{q_{n,+}}L_x^{r_{n,+}}(Q_{k+1}^+)}^{1-\theta_{n,+}}\right)^{1+\delta}\\
&=\|u\|_{L_t^qL_x^{\hat{r}}(Q_{k+1}^+)}\|n\|_{L_t^{1}L_x^{1}(Q_{k+1}^+)}^{\frac{\tau_1-3\delta}{2}}\|n\|_{L_t^{q_{n,+}}L_x^{r_{n,+}}(Q_{k+1}^+)}^{1+\frac{5\delta}{2}-\frac{\tau_1}{2}}\\
&\leq C\varepsilon_1^{1+\frac{\tau_1-3\delta}{2}}(A_{k+1}+E_{k+1})^{1+\frac{5\delta}{2}-\frac{\tau_1}{2}},\alabel{nu1+}
\end{align*}
and
\begin{align*}	
\int_{Q_{k+1}^+}|n|^{1-\delta}|u|\,dxdt&\leq \|u\|_{L_t^qL_x^{\hat{r}}(Q_{k+1}^+)}\|n\|_{L_t^{q'(1-\delta)}L_x^{\hat{r}'(1-\delta)}(Q_{k+1}^+)}^{1-\delta}\\
&\leq C\|u\|_{L_t^qL_x^{\hat{r}}(Q_{k+1}^+)}\|n\|_{L_t^{q'}L_x^{\hat{r}'}(Q_{k+1}^+)}^{1-\delta}\\
&\leq C\|u\|_{L_t^qL_x^{\hat{r}}(Q_{k+1}^+)}\left(\|n\|_{L_t^{1}L_x^{1}(Q_{k+1}^+)}^{\theta_n}\|n\|_{L_t^{q_n}L_x^{r_n}(Q_{k+1}^+)}^{1-\theta_n}\right)^{1-\delta}\\
&\leq C\varepsilon_1^{1+\frac{\tau_1}{2}(1-\delta)}(A_{k+1}+E_{k+1})^{(1-\frac{\tau_1}{2})(1-\delta)}.\alabel{nu1-}
\end{align*}
Analogously, we obtain
\begin{align*}
&\int_{Q_{k+1}^+}|n|^{1+\delta}|\nabla c|\,dxdt\leq CM_c^{\frac12}\varepsilon_1^{1+\frac{\tau_1-3\delta}{2}}(A_{k+1}+E_{k+1})^{1+\frac{5\delta}{2}-\frac{\tau_1}{2}},\\
&\int_{Q_{k+1}^+}|n|^{1-\delta}|\nabla c|\,dxdt\leq CM_c^{\frac12}\varepsilon_1^{1+\frac{\tau_1}{2}(1-\delta)}(A_{k+1}+E_{k+1})^{(1-\frac{\tau_1}{2})(1-\delta)}. \alabel{nc1+-}
\end{align*}
Finally, we address the pressure term $|p||u|$. For a suitable pair of exponents $(q', d)$ where $d$ reflects the spatial regularity of the Stokes flow, there exists a unique solution pair $(v_k, p_k) \in W^{1,2}_{q', d}(\tilde{Q}_k) \times W^{0,1}_{q', d}(\tilde{Q}_k)$ to the following initial-boundary value problem:
\begin{equation}\label{p_k}
\begin{cases}	
\partial_t v_k - \Delta v_k + \nabla p_k = -u \cdot \nabla u - n \nabla \phi \quad & \text{in } \tilde{Q}_k, \\
									\nabla \cdot v_k = 0 & \text{in } \tilde{Q}_k, \\
									[p_k]_{\tilde{B}_k}(t) = 0 & \text{for a.e. } t \in [-\rho_{k+2}^2, 0], \\
									v_k = 0 & \text{on } \partial_p \tilde{Q}_k,
								\end{cases}
							\end{equation}
							where $\tilde{B}_k \in \mathbb{R}^3$ is a fixed domain with smooth boundary satisfying $B_{k+1}^{+} \subset \tilde{B}_k \subset B_{k+2}^+$, the $C^2$ norm of $\partial\tilde{B}_k$ is bounded by $C2^k$, and $\tilde{Q}_{k} = (-\rho_{k+2}^2, 0) \times \tilde{B}_k$. 
							
							Setting $w_k = u - v_k$ and $h_k = p - p_k$, we observe that $w_k$ and $h_k$ satisfy:
							\begin{equation}
								\begin{cases}
									\partial_t w_k - \Delta w_k + \nabla h_k = 0 \quad & \text{in } \tilde{Q}_k, \\
									\nabla \cdot w_k = 0 & \text{in } \tilde{Q}_k, \\
									w_k = 0 & \text{on } [-\rho_{k+2}^2, 0) \times (\partial\tilde{B}_k \cap \{x_3 = 0\}).
								\end{cases}
							\end{equation}
							
							Before estimating the pressure term $\int_{Q_{k+1}^+} |u||p_k| \, dxdt$, we first establish the local integrability of the non-linear convective term $u \cdot \nabla u$. By H\"{o}lder's inequality, the $L_t^{q'} L_x^d$-norm of $u \cdot \nabla u$ on the larger domain $Q_{k+2}^+$ is bounded by:
							\begin{equation}
								\|u \cdot \nabla u\|_{L_t^{q'} L_x^d(Q_{k+2}^+)} \le \|\nabla u\|_{L_{t,x}^2(Q_{k+2}^+)} \|u\|_{L_t^{a_1} L_x^{b_1}(Q_{k+2}^+)},
								\label{eq:nonlinear-bound}
							\end{equation}
							where $d$ is chosen such that $\hat{r}' = \frac{3d}{3-d}$. Since $\hat r\in(3/2,\min\{r,3\})$, we have $\hat r'\in(\max\{r',3/2\},3)$. Hence $d=3\hat r'/(3+\hat r')\in(1,3/2)$ and $d<\hat r'$. In addition, the exponents satisfy:
							\begin{equation}\label{frac1}
								\frac{1}{q'} = \frac{1}{2} + \frac{1}{a_1}, \quad \frac{1}{d} = \frac{1}{2} + \frac{1}{b_1}.
							\end{equation}
							
							Direct calculation yields:
							\begin{equation*}
								\frac{3}{\hat{r}'} = \frac{3-d}{d} = \frac{3}{d} - 1 \implies \frac{2}{q'} + \frac{3}{d} = \frac{2}{q'} + \frac{3}{\hat{r}'} + 1 = 4 + \tau_1.
							\end{equation*}
							From \eqref{frac1}, since $1 < q' < 2$ and $1 < d < \frac{3}{2}$, we have:
							\begin{equation*}
								\frac{2}{q'} + \frac{3}{d} = \frac{2}{a_1} + \frac{3}{b_1} + \frac{5}{2} = 4 + \tau_1 \implies \frac{2}{a_1} + \frac{3}{b_1} = \frac{3}{2} + \tau_1,
							\end{equation*}
							with $a_1 \in (2, \infty)$ and $b_1 \in (2, 6)$.  The interpolation must hold separately in time and space.  Choose
							\begin{equation}\label{eq:theta-p-corrected}
								\theta_p=\frac{\tau_1}{\frac12-\tau_1},\qquad 0<\tau_1<\frac14,
							\end{equation}
							and define $\hat a_1,\hat b_1$ by
							\begin{equation}\label{eq:perturbation}
								\frac1{\hat a_1}=
								\frac{\frac1{a_1}-\frac{\theta_p}{q}}{1-\theta_p},
								\qquad
								\frac1{\hat b_1}=
								\frac{\frac1{b_1}-\frac{\theta_p}{\hat r}}{1-\theta_p}.
							\end{equation}
							Then
							\[
							\frac1{a_1}=\frac{1-\theta_p}{\hat a_1}+\frac{\theta_p}{q},
							\qquad
							\frac1{b_1}=\frac{1-\theta_p}{\hat b_1}+\frac{\theta_p}{\hat r},
							\]
							and, using $2/q+3/\hat r=2-\tau_1$,
							\[
							\frac2{\hat a_1}+\frac3{\hat b_1}
							=\frac{\frac32+\tau_1-\theta_p(2-\tau_1)}{1-\theta_p}
							=\frac32.
							\]
							After decreasing $\tau_1$ if necessary, $\hat a_1\in(2,\infty)$ and $\hat b_1\in(2,6)$.  Hence the mixed-norm interpolation and Lemma~\ref{interpolation} give
							\begin{equation}\label{eq:u-corrected-interpolation}
								\|u\|_{L_t^{a_1}L_x^{b_1}(Q_{k+2}^+)}
								\leq C\|u\|_{L_t^{\hat a_1}L_x^{\hat b_1}}^{1-\theta_p}
								\|u\|_{L_t^qL_x^{\hat r}}^{\theta_p}
								\leq C\varepsilon_1^{\theta_p}
								(A_{k+2}+E_{k+2})^{\frac{1-\theta_p}{2}}.
							\end{equation}
							Therefore,
							\begin{equation}\label{pk1}
								\|u \cdot \nabla u\|_{L_t^{q'} L_x^{d}(Q_{k+2}^+)} \le C\varepsilon_1^{\theta_p} (A_{k+2} + E_{k+2})^{1 - \frac{\theta_p}{2}}.
							\end{equation}
							
							Similarly to \eqref{eq:interpolation-n} and applying H\"{o}lder's inequality, we obtain:
							\begin{equation}\label{nk2}
								\|n\|_{L_t^{q'} L_x^{d}(Q_{k+2}^+)} \le C \|n\|_{L_t^{q'} L_x^{\hat{r}'}(Q_{k+2}^+)} \le C \varepsilon_1^{\frac{\tau_1}{2}} (A_{k+2} + E_{k+2})^{1 - \frac{\tau_1}{2}}.
							\end{equation}
							
							Next, we consider the linear Stokes system \eqref{p_k}. In view of the maximal regularity estimate in Lemma \ref{stokes}, and noting the domain inclusion $Q_{k+1}^+ \subset \tilde{Q}_k \subset Q_{k+2}^+$, the pressure term $p_k$ satisfies:
							\begin{align*}
								\|p_k\|_{L_t^{q'} W_{d,x}^{0,1}(Q_{k+1}^+)} &\le \|p_k\|_{W_{q', d}^{0,1}(\tilde{Q}_k)} \\
								&\le C 2^{bk} \left( \|u \cdot \nabla u\|_{L_t^{q'} L_x^d(\tilde{Q}_k)} + \|n \nabla \phi\|_{L_t^{q'} L_x^d(\tilde{Q}_k)} \right).\alabel{L3.1}
							\end{align*}
							Since $\tilde{Q}_k \subset Q_{k+2}^+$ and $\nabla \phi \in L^\infty_{t,x}(Q^+)$, the above inequality can be further estimated as:
							\begin{equation}
								\|p_k\|_{L_t^{q'} W_{d,x}^{0,1}(Q_{k+1}^+)} \le C 2^{bk} \left( \|u \cdot \nabla u\|_{L_t^{q'} L_x^d(Q_{k+2}^+)} + \|\nabla \phi\|_{L_{t,x}^\infty(Q^+)} \|n\|_{L_t^{q'} L_x^d(Q_{k+2}^+)} \right).
								\label{eq:pressure-estimate-intermediate}
							\end{equation}
							
							With these preparations, by applying H\"{o}lder's inequality and the Sobolev embedding theorem (where the spatial exponents satisfy $\hat{r}' = \frac{3d}{3-d}$ with $1 < d < \frac{3}{2}$), we obtain the estimate for the target pressure product term on $Q_{k+1}^+$:
							\begin{align*}
								\int_{Q^+_{k+1}} |u||p_k| \, \mathrm{d}x\mathrm{d}t &\le \|u\|_{L_t^q L_x^{\hat{r}}(Q_{k+1}^+)} \|p_k\|_{L_t^{q'} L_x^{\hat{r}'}(Q_{k+1}^+)} \\
								&\le \|u\|_{L_t^q L_x^{\hat{r}}(Q_{k+1}^+)} \|p_k\|_{ W_{q',d}^{0,1}(Q_{k+1}^+)} \\
								&\le C \|u\|_{L_t^q L_x^{\hat{r}}(Q_{k+1}^+)} 2^{bk} \left[ \|u \cdot \nabla u\|_{L_t^{q'} L_x^d(Q_{k+2}^+)} +  \|n\|_{L_t^{q'} L_x^d(Q_{k+2}^+)} \right].
							\end{align*}
							Thus, combining \eqref{pk1}--\eqref{eq:pressure-estimate-intermediate}, we conclude that:
							\begin{align}\label{pkk}
								\int_{Q^+_{k+1}} |u||p_k| \, \mathrm{d}x\mathrm{d}t
								&\le C2^{bk}\left[\varepsilon_1^{1+\theta_p}(A_{k+2}+E_{k+2})^{1-\frac{\theta_p}{2}}+\varepsilon_1^{1+\frac{\tau_1}{2}}(A_{k+2}+E_{k+2})^{1-\frac{\tau_1}{2}}\right].
						\end{align}        
We now estimate the convective coupling term $\int_{Q^+_{k+1}}|u||h_k|\,dxdt$. Applying H\"{o}lder's inequality, we obtain
\begin{align}\label{eq:u-hk-holder}
	\int_{Q^+_{k+1}}|u||h_k|\,dxdt \leq \|u\|_{L_t^{q_2}L_x^{r_2}(Q_{k+1}^+)}^{1-\theta_1}\|u\|_{L_t^qL_x^r(Q_{k+1}^+)}^{\theta_1}\|h_k\|_{L_t^{\tilde{q}}L_x^{\tilde{s}}(Q_{k+1}^+)},
\end{align}
where the exponents satisfy the scaling and interpolation relations
\begin{align}\label{eq:exponent-relations}
	\frac{2}{q_2}+\frac{3}{r_2}=\frac{3}{2},\quad \frac{1}{\tilde{q}}+\frac{1-\theta_1}{q_2}+\frac{\theta_1}{q}=1,\quad \frac{1}{\tilde{s}}+\frac{1-\theta_1}{r_2}+\frac{\theta_1}{r}=1.
\end{align}
Since the parameters $\tilde{q}$, $q$, and $r$ are given, selecting $q_2$ uniquely determines the exponents $r_2$ and $\theta_1$, which subsequently fixes the value of $\tilde{s}$. To apply the interpolation inequalities from Lemma \ref{interpolation}, we require $2\leq q_2\leq \infty$, which is equivalent to
\begin{align*}
	\frac{1}{\tilde{q}}+\frac{\theta_1}{q}\leq 1 \leq \frac{1}{\tilde{q}}+\frac{\theta_1}{q}+\frac{1-\theta_1}{2}.
\end{align*}
	By the preliminary reduction of the pressure exponent, $F(q,r)>\frac{1}{\tilde{q}}>1-\frac{1}{q}$. We choose $\theta_1=q(1-\frac{1}{\tilde{q}})$, so $\theta_1\in (0,1)$.

To apply    Lemma \ref{Seregin} to the pair $(w_k, h_k)$ on the domain $\tilde{Q}_k$, we must verify the regularities $w_k \in W^{0,1}_{\tilde{q}, 1}(\tilde{Q}_k)$ and $h_k \in L_t^{\tilde{q}} L_x^1(\tilde{Q}_k)$:
\begin{enumerate}
	\item \textbf{Energy bounds of $u$:}
	The velocity field satisfies the standard energy bounds:
	\[
	u \in L^\infty_t L^2_x(Q^+) \quad \text{and}\; \nabla u \in L^2_{t,x}(Q^+).
	\]
	According to \eqref{qq}, we may assume $\tilde{q} \in (1, 2)$.  H\"older's inequality on the bounded cylinder implies that $u \in W^{0,1}_{\tilde{q}, 1}(\tilde{Q}_k)$.
	
	\item \textbf{Regularity of the auxiliary velocity $v_k$:}
	The auxiliary velocity field $v_k$ is obtained from Lemma \ref{stokes} with $v_k \in W^{1,2}_{q', d}(\tilde{Q}_k)$. Utilizing the assumption $\frac{1}{\tilde{q}}>1-\frac{1}{q}$ combined with H\"older's inequality yields $v_k \in W^{0,1}_{\tilde{q}, 1}(\tilde{Q}_k)$. Combining these estimates, we obtain $w_k = u - v_k \in W^{0,1}_{\tilde{q}, 1}(\tilde{Q}_k)$.
	
	\item \textbf{Pressure bounds of $p$:}
By the pressure smallness assumption, after the normalization
$p\mapsto p-[p]_1(t)$ made above, we have
\[
\|p\|_{L_t^{\tilde q}L_x^1(Q^+)}\le\varepsilon_1.
\]
Since $\tilde Q_k\subset Q^+$, it follows that
\[
\|p\|_{L_t^{\tilde q}L_x^1(\tilde Q_k)}
\le
\|p\|_{L_t^{\tilde q}L_x^1(Q^+)}
\le\varepsilon_1.
\]
In particular, $p\in L_t^{\tilde q}L_x^1(\tilde Q_k)$.
	
	\item \textbf{Regularity of the auxiliary pressure $p_k$:}
	From \eqref{L3.1}, we have $p_k \in W^{0,1}_{q', d}(\tilde{Q}_k)$, which guarantees that $\nabla p_k \in L^t_{q'} L^x_{d  }(\tilde{Q}_k)$. Since $p_k$ satisfies the zero-mean condition $[p_k]_{\tilde{B}_k}(t) = 0$, the Sobolev--Poincar\'e inequality implies $p_k \in L^t_{q'} L^x_{d}(\tilde{Q}_k)$. Consequently, we have $h_k = p - p_k \in L^t_{\tilde{q}} L^x_1(\tilde{Q}_k)$.
\end{enumerate}
Since $(w_k,h_k)$ solves the homogeneous Stokes system in
$\tilde Q_k$, it also solves it in the standard half-cylinder
\[
Q^+\bigl(\rho_{k+1}+\vartheta2^{-k}\bigr)
\Subset \tilde Q_k.
\]
Applying the rescaled version of Lemma~\ref{Seregin} from
$Q^+(\rho_{k+1}+\vartheta2^{-k})$ to $Q^+_{\rho_{k+1}}$ yields
\begin{align*}
\|h_k\|_{L_t^{\tilde q}L_x^{\tilde s}(Q_{k+1}^+)}
&\leq
C2^{bk}
\left(
\|w_k\|_{L_t^{\tilde q}L_x^1(\tilde Q_k)}
+
\|h_k\|_{L_t^{\tilde q}L_x^1(\tilde Q_k)}
\right)\\
&\leq C2^{bk}\left[\||u|+|p|\|_{L_t^{\tilde{q}}L_x^{1}(\tilde{Q}_k)}+\||v_k|+|p_k|\|_{L_t^{\tilde{q}}L_x^{1}(\tilde{Q}_k)}\right].
\end{align*}
Applying H\"{o}lder's inequality, we obtain
\begin{align*}
	\||u|+|p|\|_{L_t^{\tilde{q}}L_x^{1}(\tilde{Q}_k)}\leq \|u\|_{L_t^{q}L_x^{r}(\tilde{Q}_k)}+\|p\|_{L_t^{\tilde{q}}L_x^{1}(\tilde{Q}_k)}\leq \varepsilon_1.
\end{align*}
For any $\hat{s}>1$, the maximal regularity of the Stokes system (Lemma \ref{stokes}) yields
\begin{align*}
	\||v_k|+|p_k|\|_{L_t^{\tilde{q}}L_x^{1}(\tilde{Q}_k)}&\leq C\||v_k|+|p_k|\|_{L_t^{\tilde{q}}L_x^{\hat{s}}({\tilde{Q}}_{k})}\\
	&\leq C2^{bk}\left[\|u\cdot\nabla u\|_{L_t^{\tilde{q}}L_x^{\hat{s}}({\tilde{Q}}_{k})}+\|\nabla\phi\|_{L^\infty_{t,x}(Q^+)}\|n\|_{L_t^{\tilde{q}}L_x^{\hat{s}}({\tilde{Q}}_{k})}\right].
\end{align*}

We next verify that the external force and convective terms satisfy the necessary integrability conditions. For any $1<\hat{s}<\tilde{q}<\frac53$, H\"{o}lder's inequality gives
\begin{align*}
	\|n\|_{L_t^{\tilde{q}}L_x^{\hat{s}}({\tilde{Q}}_{k})}\leq \|n\|_{L_{t,x}^{\tilde{q}}({\tilde{Q}}_{k})}\leq  \|n\|_{L^\frac53_{t,x}({\tilde{Q}}_{k})}^{\theta_2}\|n\|_{L^1_{t,x}({\tilde{Q}}_{k})}^{1-\theta_2},
\end{align*}
where $\theta_2 \in (0,1)$ is determined by the relation
\begin{align*}
	1-\theta_2+\frac{3}{5}\theta_2=\frac{1}{\tilde{q}}\implies \theta_2=\frac{5}{2}\left(1-\frac{1}{\tilde{q}}\right).
\end{align*}
This yields the bound
\begin{align*}
	\|n\|_{L_t^{\tilde{q}}L_x^{\hat{s}}({\tilde{Q}}_{k})}\leq C(A_{k+2}+E_{k+2})^{\theta_2}\varepsilon_1^{1-\theta_2}.
\end{align*}
We choose $\frac{1}{\tilde{q}}>\frac45$ to guarantee that $\frac12-\frac{\theta_1}{2}+\theta_2<1$.

For the convective term $u\cdot\nabla u$, H\"{o}lder's inequality yields
\begin{align*}
	\|u\cdot\nabla u\|_{L_t^{\tilde{q}}L_x^{\hat{s}}({\tilde{Q}}_{k})}\leq \|\nabla u\|_{L^2_{t,x}(\tilde{Q}_{k})}\|u\|_{L_t^qL_x^r(\tilde{Q}_{k})}^{1-\theta_3}\|u\|_{L_t^{q_3}L_x^{r_3}(\tilde{Q}_{k})}^{\theta_3},
\end{align*}
where the exponents satisfy $2\leq q_3\leq \infty$, $\frac{2}{q_3}+\frac{3}{r_3}=\frac{3}{2}$, and
\begin{align*}
	\frac{1}{2}+\frac{1-\theta_3}{q}+\frac{\theta_3}{q_3}\leq \frac{1}{\tilde{q}},\quad \frac{1}{2}+\frac{1-\theta_3}{r}+\frac{\theta_3}{r_3}\leq \frac{1}{\hat{s}}. 
\end{align*}
To ensure $\hat{s}>1$, we choose a suitable exponent $q_3$ and its associated $r_3$ such that
\begin{align}\label{theta3}
	\frac{1-\theta_3}{r}+\frac{\theta_3}{r_3}< \frac12.
\end{align}
To optimize the admissible range of $\tilde{q}$, we minimize the combination $1+\theta_3-\theta_1$ by choosing the smallest possible $\theta_3$ and the largest possible $\theta_1$, namely, $\theta_1=q(1-\frac{1}{\tilde{q}})$. We analyze the choice of $\theta_3$ by considering two cases:

\textit{Case i):} If $r>2$, we select $\theta_3=0$, which implies $\frac{1}{\tilde{q}}>\frac12+\frac1q$. Under this condition, we obtain $\hat{s}>1$ and
\begin{align*}
	\|u\cdot\nabla u\|_{L_t^{\tilde{q}}L_x^{\hat{s}}({\tilde{Q}}_{k})}\leq \|\nabla u\|_{L^2_{t,x}(\tilde{Q}_{k})}\varepsilon_1\leq C(A_{k+2}+E_{k+2})^\frac12\varepsilon_1.
\end{align*}

\textit{Case ii):} If $r\in(\frac32,2]$, we have $q>4$ since $\frac2q+\frac3r<2$. Assuming $1<\hat{s}\leq \min\{\frac{6}{5},\tilde{q}\}$, we have $(\frac{1}{\hat{s}}-\frac12)^{-1}\in (2,3)$. Choosing $q_3 = 2$ and $r_3 = 6$ to minimize $\theta_3$, we obtain from \eqref{theta3}:
\begin{align*}
	\theta_3>\frac{\frac1r-\frac12}{\frac1r-\frac16}.
\end{align*} 
We set $\frac{1}{\tilde{q}}>\frac78$ to satisfy the inequality above since
\begin{align*}
	\frac{1}{2}+\frac{1-\theta_3}{q}+\frac{\theta_3}{q_3}\leq \frac{1}{2}+\frac{1-\theta_3}{4}+\frac{\theta_3}{2}<\frac78,\quad \text{when}\quad q>4, \,\theta_3<\frac12,
\end{align*} 
and 
\begin{align*}
	0\leq\frac{\frac1r-\frac12}{\frac1r-\frac16}<\frac13, \quad\text{when}\quad r\in\left(\frac32,2\right].
\end{align*} 

Since $1/\tilde q<F(q,r)$, the lower bound above is also
strictly less than $\theta_1=q(1-1/\tilde q)$. We therefore choose
\[
\frac{\frac1r-\frac12}{\frac1r-\frac16}
<\theta_3<\min\left\{\frac12,\theta_1\right\}.
\]
This simultaneous choice ensures both the required mixed-norm
estimate and $1+\theta_3/2-\theta_1/2<1$, as needed for Young's
inequality in the energy iteration.

Under these conditions, the estimate for $h_k$ becomes
\begin{align*}
	\|h_k\|_{L_t^{\tilde{q}}L_x^{\tilde{s}}(Q_{k+1}^+)}\leq C2^{2bk}\Big(\varepsilon_1+&\|\nabla\phi\|_{L^\infty_{t,x}(Q^+)}(A_{k+2}+E_{k+2})^{\theta_2}\varepsilon_1^{1-\theta_2}\\
	&+(A_{k+2}+E_{k+2})^{\frac12+\frac12\theta_3}{\varepsilon_1}^{1-\theta_3}\Big).
\end{align*}
 Combining the exponents yields
\begin{align}\label{hkk}
	\int_{Q^+_{k+1}}|u||h_k|\,dxdt &\leq C2^{2bk}\Big(\varepsilon_1+(A_{k+2}+E_{k+2})^{\theta_2}\varepsilon_1^{1-\theta_2}\nonumber\\
	&\quad +(A_{k+2}+E_{k+2})^{\frac12+\frac12\theta_3}{\varepsilon_1}^{1-\theta_3}\Big)\|u\|_{L_t^{q_2}L_x^{r_2}(Q_{k+1}^+)}^{1-\theta_1}\|u\|_{L_t^qL_x^r(Q_{k+1}^+)}^{\theta_1}\nonumber\\
	&\leq C2^{2bk}\Big(\varepsilon_1^{1+\theta_1}(A_{k+2}+E_{k+2})^{\frac{1-\theta_1}{2}}+(A_{k+2}+E_{k+2})^{\frac12-\frac12\theta_1+\theta_2}\varepsilon_1^{1+\theta_1-\theta_2}\nonumber\\
	&\quad +(A_{k+2}+E_{k+2})^{1+\frac12\theta_3-\frac12\theta_1}{\varepsilon_1}^{1-\theta_3+\theta_1}\Big).
\end{align}
Finally, substituting \eqref{eq:u-L3-estimate}--\eqref{nc1+-}, \eqref{pkk} and \eqref{hkk} back into \eqref{eq:section5-rhs}, and assuming $\max\left\{\frac78, 1-\frac{1}{q}, \frac12+\frac1q\right\} < \frac{1}{\tilde{q}} < F(q,r)$, $\min\{\frac{2}{q'}, \frac{1}{\hat{r}'},\frac{1}{\hat{r}'} -\frac13 \}>\tau_1>5\delta$, we obtain
\begin{align*}
	A_k + E_k &\le \Big\{C(A_{k+1}+E_{k+1})^\frac{3\theta_u}{2}\varepsilon_1^{3(1-\theta_u)}+C(A_{k+1}+E_{k+1})^{\theta_u}\varepsilon_1^{2(1-\theta_u)}\\&\quad+\varepsilon_1+C(A_{k+1}+E_{k+1})^\frac{5\delta}{2}\varepsilon_1^{1-\frac{3\delta}{2}}+C\varepsilon_1^{1-\delta}\\&\quad+C\varepsilon_1^{1+\frac{\tau_1}{2}}(A_{k+1}+E_{k+1})^{1-\frac{\tau_1}{2}}+C\varepsilon_1^{1+\frac{\tau_1-3\delta}{2}}(A_{k+1}+E_{k+1})^{1+\frac{5\delta}{2}-\frac{\tau_1}{2}}\\&\quad+C\varepsilon_1^{(1+\frac{\tau_1}{2})(1-\delta)}(A_{k+1}+E_{k+1})^{(1-\frac{\tau_1}{2})(1-\delta)}\\&\quad+C2^{bk}\left[\varepsilon_1^{1+\theta_p}(A_{k+2}+E_{k+2})^{1-\frac{\theta_p}{2}}+\varepsilon_1^{1+\frac{\tau_1}{2}}(A_{k+2}+E_{k+2})^{1-\frac{\tau_1}{2}}\right]\\&\quad+C2^{2bk}\Big[\varepsilon_1^{1+\theta_1}(A_{k+2}+E_{k+2})^{\frac{1-\theta_1}{2}}+(A_{k+2}+E_{k+2})^{\frac12-\frac12\theta_1+\theta_2}\varepsilon_1^{1+\theta_1-\theta_2}\\
	&\qquad +(A_{k+2}+E_{k+2})^{1+\frac12\theta_3-\frac12\theta_1}{\varepsilon_1}^{1-\theta_3+\theta_1}\Big]\Big\}\times 2^{2k}.
\end{align*}
By Young's inequality, there exist
$B=B(q,r,\tilde q)>0$ and
$\beta=\beta(q,r,\tilde q)\in(0,1)$ such that,
for every $\delta_1>0$,
\begin{equation}
\label{eq:section5-recursion}
A_k+E_k\leq\delta_1^2(A_{k+2}+E_{k+2})+\tilde C\,2^{Bk}\varepsilon_1^\beta.
\end{equation}
where
$\tilde C=
\tilde C(\delta_1,q,r,\tilde q,M_c,M_\phi)>0$
is independent of $k$ and $\varepsilon_1$.

Choosing $\delta_1 = 3^{-B}$ ensures that the second term on the right-hand side of \eqref{eq:section5-recursion} is summable. Multiplying both sides of \eqref{eq:section5-recursion} by $\delta_1^k$ and summing over $k$ from $1$ to infinity, we obtain
\begin{align*}
	\sum_{k=1}^\infty \delta_1^k(A_k + E_k) &\le \sum_{k=1}^\infty \delta_1^{k+2}(A_{k+2} + E_{k+2}) + \tilde{C} \varepsilon_1^\beta \sum_{k=1}^\infty \left(\frac23\right)^{Bk} \\
	&\le \sum_{k=3}^\infty \delta_1^k(A_k + E_k) + \tilde{C} \varepsilon_1^\beta.
\end{align*}

Since $\rho_k\geq1/2$, domain inclusion gives
\[
\sup_{k\geq1}(A_k+E_k)
\leq
2\bigl(A_{u,\sqrt n,\nabla\sqrt{\tilde c}}(1)
      +E_{u,\sqrt n,\nabla\sqrt{\tilde c}}(1)\bigr)
<\infty.
\]
Thus all weighted series below converge, and cancellation
of their common tails is legitimate.
Since all terms are non-negative, this implies
\[
A_1 + E_1 \le \tilde{C} \varepsilon_1^\beta.
\]

Together with $D_{\tilde q,1}(1)<\varepsilon_1$ and the fixed-ratio
pressure bound $D_{\tilde q,1}(1/2)\leq C D_{\tilde q,1}(1)$,
this permits application of Proposition~\ref{main1} at radius $1/2$.
For sufficiently small $\varepsilon_1$, it first gives regularity in
$\overline{Q_{1/4}^+}$. After proving the analogous interior assertion
below, we extend this conclusion to $\overline{Q_{1/2}^+}$ by a
uniform localization argument.

We record the corresponding estimate of the pressure term when the
center is an interior point. After translation and parabolic rescaling,
we may work in a full cylinder $Q_1$.
Let
\[
B_k=B_{\rho_k},\qquad
Q_k=(-\rho_k^2,0)\times B_k,
\qquad
\rho_k=1-2^{-k}.
\]
Choose a smooth domain $\tilde B_k$ such that
\[
B_{k+1}\Subset\tilde B_k\Subset B_{k+2},
\qquad
\operatorname{dist}
(B_{k+1},\partial\tilde B_k)
\geq c2^{-k},
\]
and set
\[
\tilde Q_k
=
(-\rho_{k+2}^2,0)\times\tilde B_k.
\]

As in the boundary argument, we may replace $p$ by
$p-[p]_{B_1}(t)$ and still denote it by $p$. Indeed, since
$\nabla\cdot u=0$ and the cutoff functions are compactly supported
in the interior,
\[
\int c(t)u\cdot\nabla\psi_k\,dx=0
\]
for every function $c=c(t)$. Hence
\[
\|p\|_{L_t^{\tilde q}L_x^1(Q_1)}
\leq\varepsilon_1.
\]

Consider the auxiliary Stokes problem
\[
\left\{
\begin{aligned}
\partial_t v_k-\Delta v_k+\nabla p_k
&=-u\cdot\nabla u-n\nabla\phi
&&\text{in }\tilde Q_k,\\
\nabla\cdot v_k&=0
&&\text{in }\tilde Q_k,\\
[p_k]_{\tilde B_k}(t)&=0
&&\text{for a.e. }t,\\
v_k&=0
&&\text{on }\partial_p\tilde Q_k.
\end{aligned}
\right.
\]
The zero boundary condition above is imposed only on the auxiliary
Stokes problem and does not impose any additional boundary condition
on the original velocity $u$.

Set
\[
w_k=u-v_k,\qquad h_k=p-p_k.
\]
Then
\[
\partial_t w_k-\Delta w_k+\nabla h_k=0,
\qquad
\nabla\cdot w_k=0
\qquad\text{in }\tilde Q_k.
\]
Taking the divergence of this equation gives
\[
\Delta h_k(\cdot,t)=0
\qquad\text{in }\tilde B_k
\]
for almost every $t\in(-\rho_{k+2}^2,0)$.

We first estimate the contribution of $p_k$. The estimates
\eqref{eq:nonlinear-bound}--\eqref{eq:pressure-estimate-intermediate} are unchanged in the full-cylinder setting. In
particular, with $d$ chosen so that
\[
\hat r'=\frac{3d}{3-d},
\]
the maximal regularity of the Stokes system and the Sobolev embedding
$W^{1,d}\hookrightarrow L^{\hat r'}$ give
\[
\begin{aligned}
\|p_k\|_
{L_t^{q'}L_x^{\hat r'}(Q_{k+1})}
\leq C2^{bk}\Big[
\varepsilon_1^{\theta_p}
(A_{k+2}+E_{k+2})^{1-\frac{\theta_p}{2}}
+
\varepsilon_1^{\frac{\tau_1}{2}}
(A_{k+2}+E_{k+2})^{1-\frac{\tau_1}{2}}
\Big].
\end{aligned}
\]
Consequently,
\[
\begin{aligned}
\int_{Q_{k+1}}|u||p_k|\,dxdt
&\leq
\|u\|_{L_t^qL_x^{\hat r}(Q_{k+1})}
\|p_k\|_{L_t^{q'}L_x^{\hat r'}(Q_{k+1})}
\\
&\leq
C2^{bk}\Big[
\varepsilon_1^{1+\theta_p}
(A_{k+2}+E_{k+2})^{1-\frac{\theta_p}{2}}
+
\varepsilon_1^{1+\frac{\tau_1}{2}}
(A_{k+2}+E_{k+2})^{1-\frac{\tau_1}{2}}
\Big].
\end{aligned}
\]
Thus the estimate corresponding to~\eqref{pkk} is the same as in the
boundary case.

The estimate of $h_k$ is simpler. Since $h_k(\cdot,t)$ is harmonic
in $\tilde B_k$ and
$B_{k+1}\Subset\tilde B_k$, the standard interior estimate for
harmonic functions yields, for every fixed
$1<\tilde s<\infty$,
\[
\|h_k(\cdot,t)\|_{L^{\tilde s}(B_{k+1})}
\leq
C2^{bk}
\|h_k(\cdot,t)\|_{L^1(\tilde B_k)}.
\]
Taking the $L^{\tilde q}$ norm in time gives
\[
\|h_k\|_
{L_t^{\tilde q}L_x^{\tilde s}(Q_{k+1})}
\leq
C2^{bk}
\|h_k\|_
{L_t^{\tilde q}L_x^1(\tilde Q_k)}.
\]
Since $h_k=p-p_k$,
\[
\|h_k\|_
{L_t^{\tilde q}L_x^1(\tilde Q_k)}
\leq
\varepsilon_1
+
\|p_k\|_
{L_t^{\tilde q}L_x^1(\tilde Q_k)}.
\]

To estimate the last term, we use the same lower-integrability
Stokes estimate as in the boundary proof. For a suitable
$\hat s>1$,
\[
\begin{aligned}
\|p_k\|_
{L_t^{\tilde q}L_x^1(\tilde Q_k)}
\leq
C2^{bk}\Big(
\|u\cdot\nabla u\|_
{L_t^{\tilde q}L_x^{\hat s}(\tilde Q_k)}+
\|\nabla\phi\|_{L^\infty(Q_1)}
\|n\|_
{L_t^{\tilde q}L_x^{\hat s}(\tilde Q_k)}
\Big).
\end{aligned}
\]
The interpolation estimates used above give
\[
\|n\|_
{L_t^{\tilde q}L_x^{\hat s}(\tilde Q_k)}
\leq
C
(A_{k+2}+E_{k+2})^{\theta_2}
\varepsilon_1^{1-\theta_2},
\]
and
\[
\|u\cdot\nabla u\|_
{L_t^{\tilde q}L_x^{\hat s}(\tilde Q_k)}
\leq
C
(A_{k+2}+E_{k+2})^{\frac{1+\theta_3}{2}}
\varepsilon_1^{1-\theta_3}.
\]
Therefore, after enlarging $b$ if necessary,
\[
\begin{aligned}
\|h_k\|_
{L_t^{\tilde q}L_x^{\tilde s}(Q_{k+1})}
\leq
C2^{2bk}\Big[
\varepsilon_1
+
(A_{k+2}+E_{k+2})^{\theta_2}
\varepsilon_1^{1-\theta_2}
+
(A_{k+2}+E_{k+2})^{\frac{1+\theta_3}{2}}
\varepsilon_1^{1-\theta_3}
\Big].
\end{aligned}
\]

We now use the same exponents
$q_2,r_2,\theta_1,\tilde s$ as in~\eqref{eq:u-hk-holder}--\eqref{eq:exponent-relations}. Thus
\[
\frac2{q_2}+\frac3{r_2}=\frac32,
\]
and
\[
\frac1{\tilde q}
+\frac{1-\theta_1}{q_2}
+\frac{\theta_1}{q}
=1,
\qquad
\frac1{\tilde s}
+\frac{1-\theta_1}{r_2}
+\frac{\theta_1}{r}
=1.
\]
H\"older's inequality, Lemma~\ref{interpolation}, and the smallness assumption give
\[
\begin{aligned}
\int_{Q_{k+1}}|u||h_k|\,dxdt
\leq{}&
\|u\|_
{L_t^{q_2}L_x^{r_2}(Q_{k+1})}^{1-\theta_1}
\|u\|_
{L_t^qL_x^r(Q_{k+1})}^{\theta_1}
\|h_k\|_
{L_t^{\tilde q}L_x^{\tilde s}(Q_{k+1})}
\\
\leq{}&
C2^{2bk}\Big[
\varepsilon_1^{1+\theta_1}
(A_{k+2}+E_{k+2})^{\frac{1-\theta_1}{2}}
\\
&+
\varepsilon_1^{1+\theta_1-\theta_2}
(A_{k+2}+E_{k+2})^{
\frac12-\frac{\theta_1}{2}+\theta_2}
\\
&+
\varepsilon_1^{1-\theta_3+\theta_1}
(A_{k+2}+E_{k+2})^{
1+\frac{\theta_3}{2}-\frac{\theta_1}{2}}
\Big].
\end{aligned}
\]
This is precisely the full-cylinder counterpart of~\eqref{hkk}.

Combining the estimates for $p_k$ and $h_k$, we conclude that
\[
\begin{aligned}
\int_{Q_{k+1}}|u||p|\,dxdt
\leq
C2^{2bk}\Big[
&
\varepsilon_1^{1+\theta_p}
(A_{k+2}+E_{k+2})^{1-\frac{\theta_p}{2}}
\\
&+
\varepsilon_1^{1+\frac{\tau_1}{2}}
(A_{k+2}+E_{k+2})^{1-\frac{\tau_1}{2}}
\\
&+
\varepsilon_1^{1+\theta_1}
(A_{k+2}+E_{k+2})^{\frac{1-\theta_1}{2}}
\\
&+
\varepsilon_1^{1+\theta_1-\theta_2}
(A_{k+2}+E_{k+2})^{
\frac12-\frac{\theta_1}{2}+\theta_2}
\\
&+
\varepsilon_1^{1-\theta_3+\theta_1}
(A_{k+2}+E_{k+2})^{
1+\frac{\theta_3}{2}-\frac{\theta_1}{2}}
\Big].
\end{aligned}
\]
Under the same restrictions on
$\theta_p,\theta_1,\theta_2,\theta_3$ as in the boundary argument,
all powers of $A_{k+2}+E_{k+2}$ on the right-hand side are strictly
less than one. Hence Young's inequality gives, for every
$\delta_1>0$,
\[
2^k\int_{Q_{k+1}}|u||p|\,dxdt
\leq
\delta_1^2(A_{k+2}+E_{k+2})
+
C_{\delta_1}2^{Bk}\varepsilon_1^\beta,
\]
for some $B>0$ and $\beta=\beta(q,r,\tilde{q})\in(0,1)$.

Thus the pressure contribution in the interior satisfies exactly the
same type of estimate needed to derive~\eqref{eq:section5-recursion}. The only essential
difference from the boundary argument is that the estimate of $h_k$
uses the interior harmonic regularity of
$\Delta h_k=0$, rather than the boundary Stokes estimate of
Lemma~\ref{Seregin}; in particular, no separate regularity estimate for $w_k$
is required.

All the remaining terms in the localized energy inequality are
estimated exactly as in the boundary case, with $Q_k^+$ and $B_k^+$
replaced by $Q_k$ and $B_k$, respectively. In fact, the interior
cutoff functions are compactly supported in the full cylinder, so
no flat-boundary terms occur. Combining these estimates with the
interior pressure estimate above yields the same recursion
\eqref{eq:section5-recursion}. The same summation argument therefore
gives
\[A_1+E_1\leq C\varepsilon_1^\beta.\]

Choosing $\varepsilon_1$ sufficiently small and applying the interior
counterpart of Proposition~\ref{main1} at radius $1/2$ gives regularity in
$\overline{Q_{1/4}}$.

We now recover the claimed region in both cases. Denote the small
threshold for these quarter-cylinder conclusions by $\eta$. If a
boundary or interior cylinder of radius $R$ is contained in the
reference cylinder, restriction, change of pressure mean, and rescaling
bound its normalized lower-order hypothesis by
\[
C\left(R^{1-2/q-3/r}+R^{-3}+R^{-1-2/\tilde q}\right)\varepsilon_1.
\]
For the pressure, use
$\int_{\mathcal B_R}|p-[p]_{\mathcal B_R}|dx
\leq2\int_{\mathcal B_R}|p-[p]_{\mathcal B_1}|dx$.
For a boundary reference cylinder, take $R_b=1/8$. At each
$z=(x,t)\in\overline{Q_{1/2}^+}$ with $x_3\leq1/64$, the cylinder
$Q_{R_b}^+(x',0,t)$ lies in $Q_1^+$ and its quarter-cylinder contains
$z$. If $x_3>1/64$, the full cylinder $Q_{R_i}(z)$ with $R_i=1/128$
is contained in $Q_1^+$. For an interior reference cylinder, the same
fixed radius $R_i$ works for every $z\in\overline{Q_{1/2}}$.
Choose $\varepsilon_1$ smaller than $\eta$ divided by the maximum
of the two fixed rescaling constants. The quarter-cylinder result
then applies at every center just described. Translating the terminal
times slightly forward when $t<0$, and using the terminal closure
when $t=0$, gives neighborhoods of every point in the claimed
closed half-cylinder. A finite subcover yields regularity throughout
$\overline{\mathcal Q_{1/2}}$.

\end{proof}

\begin{proof}[Proof of Proposition~\ref{main2}]
Translate the normalized center to the origin. All constants
below may depend on $q,\tilde q,M_c,M_\phi$.
Write $\varepsilon$ for the left-hand side of
\eqref{eq:main2-smallness} and initially assume
$\varepsilon\leq1$. We use $C_M$ for constants that also depend
on the prescribed bound $M$, but not on the solution's entropy
or energy norms. They may be chosen nondecreasing in $M$ by
using $1+M$ in their upper bounds.

\medskip\noindent
\textit{Step 1: a mass-dependent local energy bound.}
Set
\[
\rho_k=\frac78-2^{-k-3},\qquad
X_k=A_{u,\sqrt n,\nabla\sqrt{\tilde c}}(\rho_k)
    +E_{u,\sqrt n,\nabla\sqrt{\tilde c}}(\rho_k),
\qquad k\geq0.
\]
In particular, $\rho_0=3/4$ and $\rho_k\uparrow7/8$.
Repeat the cutoff and pressure estimates in the proof of
Theorem~\ref{main3} on these nested cylinders, with its spatial
exponent equal to $q$. The pressure-exponent reduction made
there remains valid, since $F(q,q)=1$.
In those estimates, retain the bound $M$ for each density
$L^1$ factor and the bound $\varepsilon$ for the velocity,
signal-gradient, and pressure factors.

Here is the resulting control of the terms for which the
density smallness was previously used. With the interpolation
parameter $\tau_1$ of that proof and sufficiently small
$0<\alpha<\min\{2/5,\tau_1/5\}$, the estimates on each
$\mathcal Q_{\rho_{k+1}}$ give
\begin{align*}
\int n&\leq M,\qquad
\int n^{1-\alpha}\leq C M^{1-\alpha},\\
\int n^{1+\alpha}
&\leq C M^{1-3\alpha/2}X_{k+1}^{5\alpha/2},\\
\int n|u|
&\leq C\varepsilon M^{\tau_1/2}
X_{k+1}^{1-\tau_1/2},\\
\int n^{1+\alpha}|u|
&\leq C\varepsilon M^{(\tau_1-3\alpha)/2}
X_{k+1}^{1+5\alpha/2-\tau_1/2},\\
\int n^{1-\alpha}|u|
&\leq C\varepsilon M^{\tau_1(1-\alpha)/2}
X_{k+1}^{(1-\tau_1/2)(1-\alpha)}.
\end{align*}
All integrals in this display are space--time integrals.
The same estimates hold with $|u|$ replaced by $|\nabla c|$,
because
$|\nabla c|\leq2\sqrt{M_c}|\nabla\sqrt{\tilde c}|$.
The fractional power of the mass-flux integral in
\eqref{eq:section5-rhs} is controlled by the same bounds and
$(a+b)^{1-\alpha}\leq a^{1-\alpha}+b^{1-\alpha}$.

For completeness, the density contributions to the two pressure
products in~\eqref{pkk} and~\eqref{hkk} are now bounded by
\[
C2^{bk}\varepsilon M^{\tau_1/2}
X_{k+2}^{1-\tau_1/2},
\qquad
C2^{2bk}\varepsilon^{\theta_1}M^{1-\theta_2}
X_{k+2}^{(1-\theta_1)/2+\theta_2},
\]
respectively. The energy exponents remain strictly less than
one, by the choices made in that proof. All the remaining
terms are independent of density mass smallness and have the
same sublinear energy exponents as before. Since the radii
stay in $[3/4,7/8]$, inclusion of cylinders bounds $X_{k+1}$
by a fixed multiple of $X_{k+2}$. Consequently, for some
$\theta\in(0,1)$ and $b>0$,
\[
X_k\leq C_M2^{bk}(1+X_{k+2}^{\theta}).
\]
Young's inequality therefore gives, for every $a>0$,
\begin{equation}
X_k\leq aX_{k+2}+C_{M,a}2^{Bk}
\label{eq:bounded-mass-energy-recursion}
\end{equation}
with $B>0$. Choose $a2^{2B}<1$. Suitability and cylinder
inclusion imply $\sup_kX_k<\infty$; this finiteness is used
only to discard the tail in the iteration
\[
X_0\leq a^jX_{2j}
+C_{M,a}\sum_{\ell=0}^{j-1}(a2^{2B})^\ell.
\]
Letting $j\to\infty$ and using the energy interpolation
inequality yields
\begin{equation}
A_{u,\sqrt n,\nabla\sqrt{\tilde c}}(3/4)
+E_{u,\sqrt n,\nabla\sqrt{\tilde c}}(3/4)
+\|n\|_{L^{5/3}(\mathcal Q_{3/4})}
\leq C_M.
\label{eq:bounded-mass-local-energy}
\end{equation}
This is a boundedness estimate, not a small-energy estimate.

\medskip\noindent
\textit{Step 2: a caloric decomposition of the density.}
The density equation is
\[
\partial_t n-\Delta n=-\nabla\cdot F,
\qquad F=n(u+\nabla c).
\]
Since $q'=q/(q-1)<5/3$, H\"older's inequality and
\eqref{eq:bounded-mass-local-energy} give
\begin{equation}
\|F\|_{L^1(\mathcal Q_{3/4})}
\leq\|n\|_{L^{q'}(\mathcal Q_{3/4})}
\|u+\nabla c\|_{L^q(\mathcal Q_{3/4})}
\leq C_M\varepsilon.
\label{eq:bounded-mass-small-flux}
\end{equation}
In the boundary case, extend $n$ evenly across $x_3=0$,
extend $F_1,F_2$ evenly and $F_3$ oddly, and denote the
extensions by $N,F^{\mathrm e}$ on $Q_{3/4}$.
The weak Neumann identity gives
\[
\partial_tN-\Delta N=-\nabla\cdot F^{\mathrm e}
\quad\text{in }\mathcal D'(Q_{3/4}).
\]
Indeed, the sum of a test function and its reflection has
zero normal derivative on the plane and is an admissible
test in that identity. Thus no classical normal trace is
being assumed. In the interior case put $N=n$ and
$F^{\mathrm e}=F$.

Let $a_0=-9/16$, and extend $F^{\mathrm e}$ by zero from
$Q_{3/4}$ to $\mathbb R^3\times(a_0,0)$. Denote the result
by $\widehat F$. With the heat kernel
\[
G(x,t)=(4\pi t)^{-3/2}\exp(-|x|^2/(4t)),\qquad t>0,
\]
define
\[
w(x,t)=-\int_{a_0}^t\int_{\mathbb R^3}
\nabla_xG(x-y,t-s)\cdot\widehat F(y,s)\,dy\,ds.
\]
Then $(\partial_t-\Delta)w=-\nabla\cdot\widehat F$ in
distributions. Since
$\|\nabla G(\cdot,t)\|_{L^1(\mathbb R^3)}\leq Ct^{-1/2}$,
Tonelli's theorem gives
\begin{align}
\int_{a_0}^0\|w(\cdot,t)\|_{L^1(\mathbb R^3)}\,dt
&\leq C\int_{a_0}^0\int_{a_0}^t
(t-s)^{-1/2}\|\widehat F(\cdot,s)\|_{L^1}\,ds\,dt
\nonumber\\
&\leq C\|\widehat F\|_{L^1(\mathbb R^3\times(a_0,0))}
\leq C_M\varepsilon.
\label{eq:bounded-mass-heat-potential}
\end{align}
The function $v=N-w$ is caloric on $Q_{3/4}$. The local
$L^1$-to-$L^\infty$ estimate for caloric functions therefore
implies
\begin{equation}
\|v\|_{L^\infty(Q_{5/8})}
\leq C\|v\|_{L^1(Q_{3/4})}
\leq C M+C_M\varepsilon.
\label{eq:bounded-mass-caloric-part}
\end{equation}
Only the flux has been extended by zero at the spherical
boundary; no artificial boundary condition is imposed on $n$.

\medskip\noindent
\textit{Step 3: small rescaled density mass.}
For every full or boundary half-cylinder
$\mathcal Q_\rho(z)\subset\mathcal Q_{5/8}$, with
$0<\rho\leq1$, the preceding decomposition and $n\geq0$
give
\begin{equation}
\rho^{-3}\int_{\mathcal Q_\rho(z)}n\,dx\,dt
\leq C M\rho^2+C_M\varepsilon\rho^{-3}.
\label{eq:bounded-mass-density-decay}
\end{equation}
Here the $C_M\varepsilon\rho^2$ contribution from
\eqref{eq:bounded-mass-caloric-part} has been absorbed into
the last term. This estimate is uniform in the center.

Let $\eta>0$ be the threshold in Theorem~\ref{main3} for
the equal exponents $(q,q)$ and pressure exponent $\tilde q$.
It is admissible because $5/q<2$ and $F(q,q)=1$.
For scales at most one we use the original upper bound
$M_\phi$ for the rescaled potential gradient.
Choose $0<r\leq1/64$, depending only on $M$ and the fixed
parameters, so that
\[
C M r^2\leq\eta/6.
\]
The constants can be enlarged independently of $r$ so that,
at either radius $\rho=r$ or $\rho=r/8$, the error term in
\eqref{eq:bounded-mass-density-decay} is bounded by
$C_M\varepsilon r^{-3}$.
For the same cylinders, restriction, rescaling, and change of
pressure mean give
\begin{align*}
C^{q,q}_{u,\nabla\sqrt{\tilde c}}(\rho;z)
&\leq C r^{1-5/q}\varepsilon,\\
D_{\tilde q,1}(\rho;z)
&\leq C r^{-1-2/\tilde q}\varepsilon.
\end{align*}
In the second inequality we used, for almost every time,
\[
\int_{\mathcal B_\rho(z)}|p-[p]_{\mathcal B_\rho(z)}|\,dx
\leq2\int_{\mathcal B_\rho(z)}|p-[p]_1|\,dx.
\]
After fixing $r$, choose
\begin{equation}
0<\varepsilon_1\leq
\min\left\{1,\frac{\eta r^3}{6C_M},
\frac{\eta r^{5/q-1}}{3C},
\frac{\eta r^{1+2/\tilde q}}{3C}\right\}.
\label{eq:bounded-mass-threshold-choice}
\end{equation}
For $\varepsilon\leq\varepsilon_1$, every such cylinder of
either selected radius satisfies
\[
C^{q,q}_{u,\nabla\sqrt{\tilde c}}(\rho;z)
+\rho^{-3}\int_{\mathcal Q_\rho(z)}n\,dx\,dt
+D_{\tilde q,1}(\rho;z)\leq\eta.
\]
Thus Theorem~\ref{main3} applies after rescaling.
In particular, it is the small density mass on this smaller
cylinder, rather than the original bound $M$, that enters
the small-energy iteration leading to Proposition~\ref{main1}.

\medskip\noindent
\textit{Step 4: coverage of the asserted region.}
In the boundary case, take any
$z=(x,t)\in\overline{Q_{1/2}^+}$. If $x_3\leq r/4$,
use the boundary cylinder $Q_r^+(x',0,t)$; its half-cylinder
contains $z$ in its spatial interior relative to the closed
half-space. If $x_3>r/4$, use the full cylinder $Q_{r/8}(z)$,
which is contained in the half-space. These cylinders lie
in $Q_{5/8}^+$. In the interior reference case, use
$Q_{r/8}(z)\subset Q_{5/8}$ for every
$z\in\overline{Q_{1/2}}$.
The preceding estimates are uniform for all these centers.
As in the proof of Theorem~\ref{main3}, move terminal times
slightly forward when $t<0$ and use the terminal closure at
$t=0$. The resulting neighborhoods cover the asserted closed
half-cylinder or full cylinder, and a finite subcover gives
the claimed regularity.
Finally, choose the intermediate radius nonincreasing in $M$
and replace $r^{5/q-1}$ in~\eqref{eq:bounded-mass-threshold-choice}
by $r^{\max\{5/q-1,0\}}$ if necessary. Together with the
nondecreasing choice of $C_M$, this makes $\varepsilon_1$
nonincreasing in $M$.
\end{proof}

   \section{Dissipation criterion and the singular set}
\label{sec:dissipation-singular-set}
\subsection{Auxiliary regularity criteria}
\label{subsec:auxiliary-regularity-criteria}
Throughout this section, we use the normalized notation
$\mathcal B_r$ and $\mathcal Q_r$ introduced in Section~\ref{subsec:notation-scaling}.
In the interior case, the initial physical radius is always chosen
sufficiently small so that the corresponding cylinder is compactly
contained in $\mathbb H^3\times(0,T)$.

For the dissipation iteration we use the small-mass
criterion of Theorem~\ref{main3} with both mixed-norm exponents
equal to $10/3$. This keeps its threshold independent of a
prescribed density mass bound and gives the following corollary.

\begin{corollary}\label{6.1}
Let $(n,c,u,p)$ be a normalized local restriction of a suitable weak
solution in the sense described above. There exists
\[
\varepsilon_4=\varepsilon_4(M_c,M_\phi)>0
\]
such that, if
\begin{align}\label{coro2}
\|u\|_{L_{t,x}^{10/3}(\mathcal Q)}
+\|\nabla\sqrt{\tilde c}\|_{L_{t,x}^{10/3}(\mathcal Q)}
+\|n\|_{L_{t,x}^{5/3}(\mathcal Q)}
+\|p-[p]_1(t)\|_{L_t^{5/3}L_x^1(\mathcal Q)}
\leq\varepsilon_4,
\end{align}
then $(n,\nabla c,u)$ is regular in
$\overline{\mathcal Q_{1/2}}$.
\end{corollary}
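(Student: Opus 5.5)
The plan is to obtain Corollary~\ref{6.1} as a direct specialization of Theorem~\ref{main3}, choosing
\[
q=r=\frac{10}{3},\qquad \tilde q=\frac53 ,
\]
and then checking each smallness quantity in \eqref{coro2} against the corresponding hypothesis of that theorem. Since the constant produced by Theorem~\ref{main3} will depend only on $(q,r,\tilde q,M_c,M_\phi)$, fixing these three exponents gives a threshold $\varepsilon_4=\varepsilon_4(M_c,M_\phi)$, as claimed.

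The first step is to verify admissibility of the exponents. We have $2/q+3/r=3/2<2$, together with $2<q<\infty$ and $r>3/2$. Because $r\ge2$, the function in \eqref{qq} equals $F(q,r)=1$, so the condition $1/\tilde q=3/5<1$ holds. At the start of its proof, Theorem~\ref{main3} already replaces $\tilde q$ internally by some $\tilde q_0$ satisfying $L(q,r)<1/\tilde q_0<F$. This reduction only uses the embedding $L^{5/3}_t\hookrightarrow L^{\tilde q_0}_t$ on a time interval of unit length, so nothing further is needed for the pressure exponent.

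The second step is to translate the norms. At unit scale the scale-invariant quantity is
\[
C^{10/3,10/3}_{u,\nabla\sqrt{\tilde c}}(1)=\|u\|_{L^{10/3}(\mathcal Q)}+\|\nabla\sqrt{\tilde c}\|_{L^{10/3}(\mathcal Q)},
\]
since the prefactor $\rho^{1-2/q-3/r}$ equals $1$ when $\rho=1$. Similarly $D_{\tilde q,1}(1)=\|p-[p]_1(t)\|_{L^{5/3}_tL^1_x(\mathcal Q)}$. For the density, H\"older's inequality on the bounded cylinder $\mathcal Q$, whose measure is at most $|B_1|$, gives $\|n\|_{L^1(\mathcal Q)}\le C\|n\|_{L^{5/3}(\mathcal Q)}$. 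Hence the left-hand side of the hypothesis of Theorem~\ref{main3} is bounded by $C_0$ times the left-hand side of \eqref{coro2}, where $C_0$ is a universal constant.

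The last step is to choose $\varepsilon_4<\varepsilon_1(10/3,10/3,5/3,M_c,M_\phi)/C_0$. Theorem~\ref{main3} then gives regularity of $(n,\nabla c,u)$ on $\overline{\mathcal Q_{1/2}}$, in both the interior and the boundary case. I see no real obstacle here. The only point to watch is that Theorem~\ref{main3} is a small-mass criterion, which is what makes the threshold independent of any density mass bound $M$. Corollary~\ref{6.1} should therefore be derived from Theorem~\ref{main3} and not from Proposition~\ref{main2}, whose threshold depends on $M$.
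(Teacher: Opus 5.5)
Your proposal is correct and follows the paper's own proof. Like the paper, you specialize Theorem~\ref{main3} to $q=r=10/3$ and $\tilde q=5/3$, bound $\|n\|_{L^1(\mathcal Q)}$ by $|\mathcal Q|^{2/5}\|n\|_{L^{5/3}(\mathcal Q)}$, and shrink the threshold so that $\varepsilon_4$ depends only on $M_c$ and $M_\phi$.
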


\begin{proof}

Apply Theorem~\ref{main3} with $q=r=10/3$ and $\tilde q=5/3$.
On the unit cylinder,
$\|n\|_{L^1}\leq |\mathcal Q|^{2/5}\|n\|_{L^{5/3}}$.
Choosing $\varepsilon_4$ sufficiently small in terms of the
threshold in that theorem proves the assertion. Its only
nonuniversal dependencies are $M_c$ and $M_\phi$.

\end{proof}

For the remainder of this section, define the pressure quantities
\begin{equation}\label{eq:pressure-P-definition}
	P(r):=r^{-1}\|p-[p]_r(t)\|_{L_t^{\frac53}L_x^{\frac53}(\mathcal{Q}_r)},
	\qquad
	\bar P(r):=P(r)^{\frac53}
	=r^{-\frac53}\int_{\mathcal{Q}_r}|p-[p]_r(t)|^{\frac53}\,dxdt.
\end{equation}
Both are invariant under the natural scaling, and unlike the pressure itself
they are unchanged when a function of time is added to $p$.

\begin{corollary}\label{coro6.2}
	Let $\rho_1 \in (0, 1)$. There exists a constant $\varepsilon_5 = \varepsilon_5(\varepsilon_4) > 0$ determined by the threshold $\varepsilon_4$ in Corollary \ref{6.1} such that if
	\begin{align}\label{6.2}
		C^{\frac{10}{3}, \frac{10}{3}}_{\sqrt{n}, u, \nabla\sqrt{\tilde{c}}}(\rho_1) + P(\rho_1) \leq \varepsilon_5,
	\end{align}
	then the scaled solution $(n_{\rho_1}, \nabla c_{\rho_1}, u_{\rho_1})$ is regular in $\overline{\mathcal Q_{1/2}}$.
\end{corollary}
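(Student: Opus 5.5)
The plan is to reduce Corollary~\ref{coro6.2} to Corollary~\ref{6.1} by rescaling $\mathcal Q_{\rho_1}$ to the unit cylinder and checking that the four quantities in \eqref{coro2} for the rescaled solution are controlled by the scale-invariant quantities in \eqref{6.2}. Set $u_{\rho_1}(y,s)=\rho_1u(\rho_1y,\rho_1^2s)$, $n_{\rho_1}=\rho_1^2n(\rho_1y,\rho_1^2s)$, $c_{\rho_1}(y,s)=c(\rho_1y,\rho_1^2s)$, $p_{\rho_1}=\rho_1^2p(\rho_1y,\rho_1^2s)$ and $\phi_{\rho_1}(y)=\phi(\rho_1y)$, as in Section~\ref{subsec:notation-scaling}. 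The rescaled quadruple is again a normalized local restriction of a suitable weak solution on $\mathcal Q$. Its constant $M_c$ is unchanged, since $\tilde c$ is not rescaled. Its potential satisfies $\|\nabla\phi_{\rho_1}\|_{L^\infty}\le\rho_1M_\phi\le M_\phi$ because $\rho_1<1$. The entropy offset $2\log\rho_1\,n_{\rho_1}$ produced by the rescaling is absorbed into the mass identity, exactly as in Lemma~\ref{AE}. Hence the threshold $\varepsilon_4(M_c,M_\phi)$ of Corollary~\ref{6.1} applies to the rescaled solution without change.

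The core step is the computation of norms under scaling. For $f\in\{u,\nabla\sqrt{\tilde c},\sqrt n\}$ the rescaled function is $\rho_1f(\rho_1y,\rho_1^2s)$, with $\sqrt{n_{\rho_1}}=\rho_1\sqrt n(\cdot)$ and $\nabla_y\sqrt{\tilde c_{\rho_1}}=\rho_1\nabla\sqrt{\tilde c}(\cdot)$. A change of variables gives
\[
\|f_{\rho_1}\|_{L^{10/3}(\mathcal Q)}=\rho_1^{1-\frac{3}{2}}\|f\|_{L^{10/3}(\mathcal Q_{\rho_1})}=C_f^{\frac{10}{3},\frac{10}{3}}(\rho_1).
\]
For the density, $\|n_{\rho_1}\|_{L^{5/3}(\mathcal Q)}=\|\sqrt{n_{\rho_1}}\|_{L^{10/3}(\mathcal Q)}^2=\bigl(C_{\sqrt n}^{\frac{10}{3},\frac{10}{3}}(\rho_1)\bigr)^2$. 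For the pressure, the rescaled average is $[p_{\rho_1}]_1(s)=\rho_1^2[p]_{\rho_1}(\rho_1^2s)$, so
\[
\|p_{\rho_1}-[p_{\rho_1}]_1\|_{L^{5/3}(\mathcal Q)}=P(\rho_1),
\]
and H\"older's inequality on the unit ball gives $\|p_{\rho_1}-[p_{\rho_1}]_1\|_{L_t^{5/3}L_x^1(\mathcal Q)}\le|\mathcal B|^{2/5}P(\rho_1)$. In the boundary case the same identities hold on half-cylinders, because the averages are taken over $\mathcal B_\rho$.

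Adding these estimates, the left-hand side of \eqref{coro2} for the rescaled solution is at most $C(\varepsilon_5+\varepsilon_5^2)$, where $C$ is universal. We then choose $\varepsilon_5\le1$ so small that $2C\varepsilon_5\le\varepsilon_4$, and Corollary~\ref{6.1} gives regularity of $(n_{\rho_1},\nabla c_{\rho_1},u_{\rho_1})$ in $\overline{\mathcal Q_{1/2}}$. The only step needing care is making sure that the rescaled force coefficient does not exceed $M_\phi$ and that suitability survives rescaling. Both are already handled in the normalization used for Propositions~\ref{main1}--\ref{main2}, so I expect no real obstacle.
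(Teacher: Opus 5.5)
Your proposal is correct and follows essentially the same route as the paper: rescale $\mathcal Q_{\rho_1}$ to the unit cylinder, identify the three $L^{10/3}$ norms and the squared $\sqrt n$ norm with the $C^{\frac{10}{3},\frac{10}{3}}$ quantities, and bound the $L_t^{5/3}L_x^1$ pressure oscillation by $P(\rho_1)$ via H\"older's inequality in space. You then get the bound $C(\varepsilon_5+\varepsilon_5^2)$ and choose $\varepsilon_5$ small to invoke Corollary~\ref{6.1}. Your extra remarks on $M_c$ and on $\|\nabla\phi_{\rho_1}\|_{L^\infty}\le M_\phi$ make explicit what the paper leaves implicit.
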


\begin{proof}
	By scale invariance,
	\begin{align*}
		\|u_{\rho_1}\|_{L^{\frac{10}{3}}_{t,x}(\mathcal{Q})}
		&=C^{\frac{10}{3},\frac{10}{3}}_u(\rho_1),\\
		\|\nabla\sqrt{\tilde c_{\rho_1}}\|_{L^{\frac{10}{3}}_{t,x}(\mathcal{Q})}
		&=C^{\frac{10}{3},\frac{10}{3}}_{\nabla\sqrt{\tilde c}}(\rho_1),\\
		\|n_{\rho_1}\|_{L^{\frac53}_{t,x}(\mathcal{Q})}
		&=\left(C^{\frac{10}{3},\frac{10}{3}}_{\sqrt n}(\rho_1)\right)^2.
	\end{align*}
	Moreover, H\"older's inequality in space gives
	$D_{\frac53,1}(\rho_1)\leq C P(\rho_1)$.  Therefore the left-hand side of \eqref{coro2} for the scaled solution is bounded by $C(\varepsilon_5+\varepsilon_5^2)$.  Choosing
	\[
	\varepsilon_5\leq\min\left\{1,\frac{\varepsilon_4}{3C}\right\}
	\]
	allows us to apply Corollary~\ref{6.1}, proving the claim.
\end{proof}

\subsection{Proof of the dissipation criterion}
\label{subsec:dissipation-criterion}
Having established Corollary \ref{6.1} and \ref{coro6.2}, we now focus on the proof of Theorem \ref{main4}. To this end, we analyze the behavior of the scaled energy-like quantities and the pressure term at small scales. The estimates for $A(r)$ and $E(r)$ follow from the local energy inequality. The subsequent estimates for $C(r)$ and $\bar P(r)$ are established to formulate an iterative scheme, which serves as the key mechanism for proving the unified interior-boundary regularity criterion asserted in Theorem \ref{main4}.

\begin{proof}[Proof of Theorem \ref{main4}]
Fix
\[
z_0=(x_0,t_0)
\in
\overline{\mathbb H^3}\times(0,\infty).
\]
Choose $r_0>0$ sufficiently small so that
\[
4r_0^2<t_0.
\]
If $x_0\in\mathbb H^3$, we additionally require
\[
2r_0<\operatorname{dist}(x_0,\Gamma).
\]
Hence the cylinder of scale $2r_0$ is either a boundary half-cylinder
or an interior full cylinder.

After translating $z_0$ to the origin and parabolically rescaling by
$r_0$, the reference unit cylinder is $Q_1^+$ in the boundary case
and $Q_1$ in the interior case. In either case we denote it by
$\mathcal Q$ and keep the same notation for the rescaled solution.

We first initialize the stronger pressure quantity in \eqref{6.2}.
Since
\[
p\in L^{5/3}(\mathcal{Q}),
\]
Jensen's inequality for the
spatial average and the triangle inequality give
\begin{align}\label{eq:pressure-gain-initialization}
P(\rho)
&=\rho^{-1}
\|p-[p]_\rho(t)\|_{L^{5/3}(\mathcal{Q}_\rho)}
\leq 2\rho^{-1}\|p\|_{L^{5/3}(\mathcal{Q}_\rho)}
<\infty
\end{align}
for every fixed admissible radius $0<\rho<1$. Therefore the pressure iteration below is well defined at every fixed admissible starting scale ($0<\rho<1$).
In particular, no additional pressure integrability assumption is required.
\par\textbf{Estimate of $C^{\frac{10}{3},\frac{10}{3}}_{\sqrt{n},u,\nabla\sqrt{\tilde{c}}}(r)$:}
For any $0 < 4r < \rho<\rho_0 < 1$, we begin by decomposing the velocity term as follows:
\begin{align*}
	\left(C^{\frac{10}{3},\frac{10}{3}}_{u}(r)\right)^\frac{10}{3}=r^{-\frac{5}{3}}\int_{\mathcal{Q}_r}|u|^\frac{10}{3}dxdt\leq C r^{-\frac{5}{3}}\left(\int_{\mathcal{Q}_r}|u-u_\rho|^\frac{10}{3}dxdt + \int_{\mathcal{Q}_r}|u_\rho|^\frac{10}{3}dxdt\right).
\end{align*}
For almost every $t$, denote the spatial mean by
\[
u_\rho(t)
:=
\fint_{\mathcal B_\rho}u(x,t)\,dx.
\] we obtain:
\begin{align*}
	C r^{-\frac{5}{3}}\int_{\mathcal{Q}_r}|u_\rho|^\frac{10}{3}dxdt &\leq Cr^\frac{4}{3}\int_{-\rho^2}^{0}\rho^{-10}\left(\int_{\mathcal{B}_\rho}|u|dx\right)^\frac{10}{3}dt\\
	&\leq Cr^\frac{4}{3}\rho^{-3}\int_{\mathcal{Q}_\rho}|u|^\frac{10}{3}dxdt = C\left(\frac{r}{\rho}\right)^\frac{4}{3}\left(C^{\frac{10}{3},\frac{10}{3}}_{u}(\rho)\right)^\frac{10}{3}.
\end{align*}
Applying the Sobolev embedding theorem along with H\"{o}lder's inequality in space yields:
\begin{align*}
	\|u-u_\rho\|_{L^\frac{10}{3}(\mathcal{B}_\rho)}\leq \|u-u_\rho\|^\frac{2}{5}_{L^2(\mathcal{B}_\rho)}\|u-u_\rho\|^\frac{3}{5}_{L^6(\mathcal{B}_\rho)}\leq C\|u\|^\frac{2}{5}_{L^2(\mathcal{B}_\rho)}\|\nabla u\|^\frac{3}{5}_{L^2(\mathcal{B}_\rho)}.
\end{align*}
Consequently, we have:
\begin{align*}
	Cr^{-\frac{5}{3}}\int_{\mathcal{Q}_r}|u-u_\rho|^\frac{10}{3}dxdt\leq Cr^{-\frac{5}{3}}\int_{-\rho^2}^{0}\|u\|^\frac{4}{3}_{L^2(\mathcal{B}_\rho)}\|\nabla u\|^2_{L^2(\mathcal{B}_\rho)}dt\leq C\left(\frac{\rho}{r}\right)^\frac{5}{3}A_u^\frac{2}{3}(\rho)E_u(\rho).
\end{align*}
Since the estimates for $C^{\frac{10}{3},\frac{10}{3}}_{\sqrt{n}}(r)$ and $C^{\frac{10}{3},\frac{10}{3}}_{\nabla\sqrt{\tilde{c}}}(r)$ can be handled in an analogous manner, their details are omitted for brevity. Finally, we define
\begin{align*}
	\mathscr{C}(\rho):=\rho^{-\frac{5}{3}}\int_{\mathcal{Q}_\rho}\left(|u|^\frac{10}{3}+|\sqrt{n}|^\frac{10}{3}+|\nabla\sqrt{\tilde{c}}|^\frac{10}{3}\right)dxdt,
\end{align*}
which leads to the following bound:
\begin{align}\label{C}
	\mathscr{C}(r)\leq C\left(\frac{r}{\rho}\right)^\frac{4}{3}\mathscr{C}(\rho)+C\left(\frac{\rho}{r}\right)^\frac{5}{3}A_{\sqrt{n},u,\nabla\sqrt{\tilde{c}}}^\frac{2}{3}(\rho)E_{\sqrt{n},u,\nabla\sqrt{\tilde{c}}}(\rho).
\end{align}

\par\textbf{Estimate of $\bar P(r)$:}
By Corollary~\ref{cor:pressure-53}, we have:
\begin{align}\label{D}
	\bar P(r)\leq& C\left(\frac{r}{\rho}\right)^\frac{5}{3}\bar P(\rho)+C\left(\frac{r}{\rho}\right)^\frac{5}{3}\left(A_{u,\sqrt{n}}^\frac{2}{3}(\rho)E_{u,\sqrt{n}}(\rho)+A_u(\rho)^{\frac13}E_u(\rho)^{\frac12}\right)\nonumber\\
	&+C\left(\frac{\rho}{r}\right)^\frac{5}{3}A_{u,\sqrt{n}}^\frac{2}{3}(\rho)E_{u,\sqrt{n}}(\rho)+C\left(\frac{\rho}{r}\right)^\frac{5}{3}\rho^{\frac53} A_{\sqrt{n}}^\frac{5}{3}(\rho).
\end{align}

\par\textbf{Estimate of $A(r)+E(r)$:}

Use the abbreviations $A=A_{u,\sqrt n,\nabla\sqrt{\tilde c}}$ and
$E=E_{u,\sqrt n,\nabla\sqrt{\tilde c}}$. For $0<r<1/2$, normalize
the cylinder of radius $2r$ and use the preceding heat-kernel proof
with $\gamma=1/2$ and, for example, $\alpha=1/15$.

The normalized norms obey
\[
(2r)^{-1}\|n\|_{L^{5/3}(\mathcal Q_{2r})}
\leq C\mathscr C(2r)^{3/5}
\]
and
\[
(2r)^{-1/2}\left(\|u\|_{L^{10/3}(\mathcal Q_{2r})}
+\|\nabla\sqrt{\tilde c}\|_{L^{10/3}(\mathcal Q_{2r})}\right)
\leq C\mathscr C(2r)^{3/10}.
\]
The weak mass identity bounds $M_\psi$ by
$C\bigl(\mathscr C(2r)^{3/5}+\mathscr C(2r)^{9/10}\bigr)$.
Estimate the pressure contribution directly by
$C\mathscr C(2r)^{3/10}\bar P(2r)^{3/5}$,
using $u\in L^{5/2}$ on the unit cylinder.
Including \eqref{eq:weighted-negative-entropy} therefore gives
\begin{equation}\label{aerr}
\begin{aligned}
A(r)+E(r)\leq C_\alpha\bigl[&\mathscr C(2r)^{3/5}
+\mathscr C(2r)^{3(1+\alpha)/5}
+\mathscr C(2r)^{3(1-\alpha)/5}\\
&+\mathscr C(2r)^{9/10}
+\mathscr C(2r)^{9/10+3\alpha/5}
+\mathscr C(2r)^{9/10-3\alpha/5}\\
&+\mathscr C(2r)^{3/10}\bar P(2r)^{3/5}
+\bigl(\mathscr C(2r)^{3/5}+\mathscr C(2r)^{9/10}\bigr)^{1-\alpha}\bigr]\\
\leq{}&C\bigl(\mathscr C(2r)+\bar P(2r)+1\bigr).
\end{aligned}
\end{equation}
All pure powers of $\mathscr C(2r)$ displayed here are less than one;
the mixed pressure term is bounded by
$C\bigl(\mathscr C(2r)+\bar P(2r)+1\bigr)$ by Young's inequality.
Thus $C=C(M_c,M_\phi)$ after fixing $\alpha$.

\par\textbf{Iteration argument:} 
We define the combined quantity:
\begin{align}
	G(r):=\bar P(r)+\mathscr{C}(r).
\end{align}

Combining \eqref{C} and \eqref{D} at radius $\rho/2$ is valid for
$0<8r\leq\rho$. Put $r=\theta\rho$, $0<\theta<1/8$.
Since $G(\rho/2)\leq CG(\rho)$, \eqref{aerr} and
\[
A^{2/3}E\leq(A+E)E^{2/3},\qquad
A_u^{1/3}E_u^{1/2}\leq(A+1)E^{1/2}
\]
give
\begin{equation}\label{eq:corrected-G-iteration}
\begin{aligned}
G(\theta\rho)\leq{}&C\left[\theta^{4/3}
+\theta^{-5/3}E(\rho/2)^{2/3}
+\theta^{5/3}E(\rho/2)^{1/2}\right]G(\rho)\\
&+C\left[\theta^{-5/3}E(\rho/2)^{2/3}
+\theta^{5/3}E(\rho/2)^{1/2}
+\theta^{-5/3}\bigl(\rho A_{\sqrt n}(\rho/2)\bigr)^{5/3}\right].
\end{aligned}
\end{equation}
Here $E$ is the full dissipation, including both scalar terms, at every
occurrence.

Choose $\theta_0\in(0,1/8)$ with $C\theta_0^{4/3}\leq1/8$.
Choose $0<\varepsilon_6\leq1$ sufficiently small that
$C(\varepsilon_6^{3/10}+\varepsilon_6^{3/5})<\varepsilon_5$,
where $\varepsilon_5$ is the threshold in Corollary~\ref{coro6.2}.
Now choose $\varepsilon_3>0$ such that
\[
C\theta_0^{-5/3}(2\varepsilon_3)^{2/3}\leq\varepsilon_6/16,
\qquad
C\theta_0^{5/3}(2\varepsilon_3)^{1/2}\leq\varepsilon_6/16.
\]
The limsup assumption in Theorem~\ref{main4} implies that, for some
$\rho_2>0$,
\[
E(\rho/2)\leq2\varepsilon_3\qquad(0<\rho\leq\rho_2).
\]
The slack factor $2$ makes this a uniform statement at all subsequent
scales, not merely at one selected radius.

The local entropy bound also implies $rA_{\sqrt n}(r)\to0$ as
$r\downarrow0$. Indeed, for a fixed bounded domain $D$ containing
the balls and any $K>1$,
\[
rA_{\sqrt n}(r)
\leq K|\mathcal B_r|
+\frac1{\log K}\operatorname*{ess\,sup}_{-1<s<0}
\int_D n(x,s)\log^+n(x,s)\,dx.
\]
The supremum is finite by Definition~\ref{def:suitable-half-space}(iii).
Let $r\downarrow0$ first and then $K\to\infty$. Shrinking $\rho_2$
therefore ensures
\[
C\theta_0^{-5/3}\bigl(\rho A_{\sqrt n}(\rho/2)\bigr)^{5/3}
\leq\varepsilon_6/8\qquad(0<\rho\leq\rho_2).
\]
Equation~\eqref{eq:corrected-G-iteration} now yields, at every such scale,
\[
G(\theta_0\rho)\leq\tfrac12G(\rho)+\tfrac14\varepsilon_6,
\qquad
G(\theta_0^k\rho_2)\leq2^{-k}G(\rho_2)+\tfrac12\varepsilon_6.
\]
The initial value $G(\rho_2)$ is finite. For sufficiently large $k$,
$G(\theta_0^k\rho_2)\leq3\varepsilon_6/4$, which implies
\eqref{6.2}. Corollary~\ref{coro6.2} proves regularity at the center.
The regularity threshold $\varepsilon_3$ depends only on
$M_c$, $M_\phi$, and fixed geometric constants.
The starting radius $\rho_2$ and the number of iterations
needed to reach the regularity threshold may depend on
the solution and on the point under consideration.

\end{proof}

\subsection{Parabolic Hausdorff measure of the singular set}
\label{subsec:singular-set-measure}
With the unified boundary interior criterion of
Theorem~\ref{main4} established, we now estimate the full singular
set $\mathcal S$.

\begin{proof}[Proof of Corollary~\ref{cor:full-singular-set}]

Recall the dissipation density $e$ and the parabolic metric $d_P$
introduced in Section~\ref{subsec:main-regularity-results}. We denote the corresponding
parabolic balls by
\[
P_r(z):=B_r(x)\times(t-r^2,t+r^2),
\qquad z=(x,t).
\]

For each integer $m\ge2$, define
\[
K_m
:=
\left\{
(x,t)\in
\overline{\mathbb H^3}\times(0,\infty):
|x|\le m,\quad
\frac1m\le t\le m
\right\},
\]
and set
\[
\mathcal S_m:=\mathcal S\cap K_m.
\]
Since
\[
\overline{\mathbb H^3}\times(0,\infty)
=
\bigcup_{m=2}^{\infty}K_m,
\]
we have
\[
\mathcal S
=
\bigcup_{m=2}^{\infty}\mathcal S_m.
\]
Thus it suffices to prove
\[
\mathcal P^1(\mathcal S_m)=0
\]
for every fixed $m\ge2$.

Fix $m\ge2$. Choose a bounded open set
\[
U_m\Subset
\mathbb R^3\times(0,\infty)
\]
such that
\[
K_m\subset U_m.
\]
For instance, one may take
\[
U_m
=
\left\{
(x,t):
|x|<m+1,\quad
\frac1{2m}<t<m+1
\right\}.
\]
By the local integrability of the dissipation density,
\[
e\in
L^1\bigl(
U_m\cap(\mathbb H^3\times(0,\infty))
\bigr).
\]

Extend $e$ by zero outside the half-space and set
\[
\widetilde e(x,t)
:=
e(x,t)
\mathbf 1_{\mathbb H^3\times(0,\infty)}(x,t).
\]
Then
\[
\widetilde e\in L^1(U_m).
\]

We first show that $\mathcal S_m$ has zero four-dimensional
Lebesgue measure. By the Lebesgue differentiation theorem for
parabolic balls, for almost every $z\in U_m$,
\[
\fint_{P_r(z)}
\widetilde e\,dx\,dt
\longrightarrow
\widetilde e(z)
\qquad\text{as }r\downarrow0.
\]
Let such a Lebesgue point satisfy
\[
z=(x,t)\in\mathbb H^3\times(0,\infty).
\]
For all sufficiently small admissible radii $r$,
\[
\mathcal Q_r(z)\subset P_r(z),
\]
and hence
\[
\frac1r
\int_{\mathcal Q_r(z)}e\,dx\,dt
\le
\frac1r
\int_{P_r(z)}\widetilde e\,dx\,dt.
\]
Since
\[
|P_r(z)|=C r^5,
\]
we obtain
\[
\frac1r
\int_{\mathcal Q_r(z)}e\,dx\,dt
\le
C r^4
\fint_{P_r(z)}
\widetilde e\,dx\,dt
\longrightarrow0.
\]
Therefore Theorem~\ref{main4} implies that every such interior
Lebesgue point is regular. Hence the interior part of $\mathcal S_m$
has zero four-dimensional Lebesgue measure. The boundary part is
contained in
\[
\Gamma\times(0,\infty),
\]
which itself has zero four-dimensional Lebesgue measure. Consequently,
\[
|\mathcal S_m|=0.
\]

Let $\bar\eta>0$.
Since $|\mathcal S_m|=0$, there exists an open set
containing $\mathcal S_m$ with arbitrarily small
Lebesgue measure; this is the outer regularity of
Lebesgue measure.
Intersecting this set with a fixed open neighborhood
of $K_m$ compactly contained in $U_m$, we obtain
an open set $J$ satisfying
\[
\mathcal S_m\subset J\Subset U_m.
\]
Since $\widetilde e\in L^1(U_m)$, the absolute
continuity of the Lebesgue integral ensures that,
by choosing $|J|$ sufficiently small,
\begin{align}\label{6.9}
\int_J\widetilde e\,dx\,dt<\bar\eta.
\end{align}

Let $z\in\mathcal S_m$. Since $z$ is singular, the contrapositive of
Theorem~\ref{main4} gives
\[
\limsup_{r\downarrow0}
\frac1r
\int_{\mathcal Q_r(z)}
e\,dx\,dt
>
\varepsilon_3,
\]
where the limit is taken along admissible radii. Hence there exist
arbitrarily small admissible radii $r>0$ such that
\begin{align}\label{6.11}
\int_{\mathcal Q_r(z)}
e\,dx\,dt
>
\varepsilon_3 r.
\end{align}

Fix $\delta_2>0$. Since $J$ is open and $z\in J$,
\[
d_P(z,J^c)>0.
\]
Because the admissible radii satisfying \eqref{6.11} can be chosen
arbitrarily small, for every $z=(x_z,t_z)\in\mathcal S_m$ we may
choose one such radius $r_z>0$ satisfying
\[
r_z
<
\min\left\{
\frac{\delta_2}{10},
\frac{d_P(z,J^c)}5,
\frac{\sqrt{t_z}}5
\right\}.
\tag{6.12}
\]
Consider the family of parabolic balls
\[
\mathcal F
:=
\{P_{r_z}(z):z\in\mathcal S_m\}.
\]
By the $5r$-covering lemma, there exists a countable pairwise disjoint
subfamily
\[
\{P_{r_j}(z_j)\}_{j=1}^{\infty}
\subset\mathcal F
\]
such that
\[
\mathcal S_m
\subset
\bigcup_{j=1}^{\infty}
P_{5r_j}(z_j)
\subset J.
\tag{6.13}
\]
Indeed, the last inclusion follows from
\[
5r_j<d_P(z_j,J^c).
\]

Since
\[
\mathcal Q_{r_j}(z_j)
\subset
P_{r_j}(z_j),
\]
and the balls $P_{r_j}(z_j)$ are pairwise disjoint, the family
\[
\{\mathcal Q_{r_j}(z_j)\}_{j=1}^{\infty}
\]
is also pairwise disjoint. Using \eqref{6.9} and \eqref{6.11}, we obtain
\[
\varepsilon_3
\sum_{j=1}^{\infty}r_j
<
\sum_{j=1}^{\infty}
\int_{\mathcal Q_{r_j}(z_j)}
e\,dx\,dt
\le
\int_{J\cap(\mathbb H^3\times(0,\infty))}
e\,dx\,dt
<
\bar{\eta}.
\tag{6.14}
\]
Consequently,
\[
\sum_{j=1}^{\infty}r_j
<
\frac{\bar{\eta}}{\varepsilon_3}.
\]

Since
\[
\operatorname{diam}_P P_{5r_j}(z_j)
=
10r_j,
\]
and $r_j<\delta_2/10$, the family
\[
\{P_{5r_j}(z_j)\}_{j=1}^{\infty}
\]
is an admissible $\delta_2$-cover of $\mathcal S_m$. Hence
\[
\mathcal P^1_{\delta_2}(\mathcal S_m)
\le
10\sum_{j=1}^{\infty}r_j
<
\frac{10\bar{\eta}}{\varepsilon_3}.
\]
Since $\bar{\eta}>0$ is arbitrary,
\[
\mathcal P^1_{\delta_2}(\mathcal S_m)=0.
\]
Letting $\delta_2\downarrow0$, we obtain
\[
\mathcal P^1(\mathcal S_m)=0.
\]

Finally,
\[
\mathcal S
=
\bigcup_{m=2}^{\infty}\mathcal S_m.
\]
Therefore, by countable subadditivity,
\[
\mathcal P^1(\mathcal S)
\le
\sum_{m=2}^{\infty}
\mathcal P^1(\mathcal S_m)
=
0.
\]
Thus
\[
\boxed{\mathcal P^1(\mathcal S)=0.}
\]
\end{proof}

\section{Eventual regularity of the global solution}
\label{sec:eventual-regularity}

We prove Theorem~\ref{thm:eventual-regularity} for a suitable weak
solution obtained from the approximation in Section~\ref{sec:existence}.
The proof begins with a signal estimate that is independent of the
transport velocity. We then select a limit that retains the global
kinetic-energy inequality and permits the weighted scalar tests.
These tests give density decay and an integrable effective force.
The velocity can therefore be restarted as a global strong solution,
after which parabolic estimates yield uniform bounds and regularity
up to the flat boundary.

Throughout this section,
$M=\|n_0\|_{L^1(\mathbb H^3)}$ and
$L=\|c_0\|_{L^1(\mathbb H^3)}$; spatial norms and integrals are over
$\mathbb H^3$ unless stated otherwise. We retain the Helmholtz
projection $\mathbb P_+$ and the no-slip Stokes operator
$A=-\mathbb P_+\Delta$ from Section~\ref{sec:existence}.
The first three subsections require only the standing bound on
$\nabla\phi$. Boundedness of $\phi$ enters in
Lemma~\ref{lem:ev-force-tail}, and H\"older continuity of
$\nabla\phi$ enters only in the final Schauder argument.
Constants may depend on the initial quantities in
\eqref{eq:initial-data-half-space} and on $M_\phi$, with the
additional dependence on $\|\phi\|_{L^\infty}$ and
$\|\nabla\phi\|_{C^{0,\beta_0}}$ at the respective steps.
No second derivative of $\phi$ is used.

The limits are taken in two stages: first $\varepsilon\to0$ with
$\tau$ fixed, giving $(n^\tau,c^\tau,u^\tau,p^\tau)$, and then
$\tau\to0$, giving $(n,c,u,p)$. We indicate explicitly when
superscripts are suppressed. Bounds that depend on $\tau$ are used
only to justify tests before the second limit.

\subsection{A transport-independent signal estimate}

\begin{lemma}[A drift-independent signal estimate]
\label{lem:ev-signal}
The approximate solutions in Section~\ref{sec:existence} satisfy
\begin{equation}\label{eq:ev-signal-decay}
\|c^{\varepsilon,\tau}(t)\|_{L^\infty}
\leq \min\{\|c_0\|_{L^\infty},\,C L t^{-3/2}\},
\qquad t>0,
\end{equation}
with a numerical constant independent of both parameters.
The same estimate holds, almost everywhere in time, for each
limit selected below. In addition, for $2\leq q<\infty$,
\begin{equation}\label{eq:ev-signal-energy}
\frac{d}{dt}\|c^{\varepsilon,\tau}\|_{L^q}^q
+\frac{4(q-1)}q\|\nabla(c^{\varepsilon,\tau})^{q/2}\|_{L^2}^2
\leq0.
\end{equation}
\end{lemma}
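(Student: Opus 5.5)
The plan is to derive the $L^q$ energy inequality \eqref{eq:ev-signal-energy} first and then obtain the $t^{-3/2}$ bound from it by a Nash--Moser iteration that uses only the $L^1$ bound, with constants independent of $q$.

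\textbf{Step 1: the $L^q$ inequality.} For fixed $(\varepsilon,\tau)$, test the signal equation of \eqref{eq:regularized-half-space-system} with $q c^{q-1}$, $q\geq2$, inserting the cutoff $\chi_R$. The strong regularity of Proposition~\ref{prop:regularized-global} justifies the test, and $c\geq0$ by \eqref{eq:positivity-mass-maximum}. Transport gives $\int u\cdot\nabla(c^q)=0$, with no boundary term because $\nabla\cdot u=0$ and $u=0$ on $\Gamma$. Diffusion gives $q(q-1)\int c^{q-2}|\nabla c|^2=\frac{4(q-1)}{q}\|\nabla c^{q/2}\|_{L^2}^2$, and the flat boundary term vanishes since $\partial_\nu c=0$. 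The reaction contributes $-q\int h_\tau(n)c^q\leq0$. Letting $R\to\infty$, using $c\in L^1\cap L^\infty$, yields \eqref{eq:ev-signal-energy}. The essential point is that $u$ drops out entirely, so no constant depends on the velocity.

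\textbf{Step 2: from $L^1$ to $L^\infty$.} The first bound $\|c(t)\|_{L^\infty}\leq\|c_0\|_{L^\infty}$ is already contained in \eqref{eq:positivity-mass-maximum}. For the decay, take $q=2$ and combine two facts. Mass is nonincreasing, $\|c(t)\|_{L^1}\leq L$, again by \eqref{eq:positivity-mass-maximum}. The half-space Nash inequality $\|f\|_{L^2}^{2+4/3}\leq C\|f\|_{L^1}^{4/3}\|\nabla f\|_{L^2}^2$ follows from the whole-space inequality applied to the even reflection. Together these give
\[
\frac{d}{dt}\|c\|_{L^2}^2\leq -cL^{-4/3}\|c\|_{L^2}^{10/3},
\]
and hence $\|c(t)\|_{L^2}\leq CLt^{-3/4}$.

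To pass to $L^\infty$, I would iterate the Nash argument over $q_k=2^k$ on time intervals $[t(1-2^{-k}),t]$, with $\|c^{q_k/2}\|_{L^1}=\|c\|_{L^{q_{k-1}}}^{q_{k-1}}$ playing the role of the mass. This is the standard Moser/Nash scheme from Carlen--Loss or Fabes--Stroock. Its constants depend only on the dimension, and the factor $\frac{4(q-1)}{q}\geq2$ is uniform in $q$. The outcome is $\|c(t)\|_{L^\infty}\leq Ct^{-3/2}\sup_{s\leq t}\|c(s)\|_{L^1}\leq CLt^{-3/2}$.

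A duality alternative is also available: the $L^1\to L^2$ bound, applied to the adjoint backward problem with drift $-u$, which is also divergence free and has the same boundary conditions. This gives $L^2\to L^\infty$, and the semigroup property then combines the two. Comparison is only available at the linear level, where the reaction has sign $\leq0$.

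\textbf{Step 3: limits.} I expect the main obstacle to be bookkeeping: making sure the iteration constants are truly numerical and that the cutoff limits are justified. The passage to limits is then simple. Strong $L^1_{\rm loc}$ convergence of $c_k$ gives almost-everywhere convergence along a subsequence, so the pointwise bound \eqref{eq:ev-signal-decay} survives for almost every $(x,t)$, and hence for almost every $t$ in $L^\infty$. This holds for both the $\varepsilon\to0$ and $\tau\to0$ selections.
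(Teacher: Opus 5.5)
Your proof is correct, but your main route to the $t^{-3/2}$ bound differs from the paper's.

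\textbf{The paper's route.} The paper freezes the coefficients $u^{\varepsilon,\tau}$ and $h_\tau(n^{\varepsilon,\tau})$ and works with the linear Neumann propagator $U(t,s)$ on signed data. The $L^1$ contraction, the $L^2$ energy inequality and the half-space Nash inequality give $\|U(t,s)\|_{L^1\to L^2}\le C(t-s)^{-3/4}$. The same argument applied to the backward adjoint problem gives the $L^2\to L^\infty$ bound by duality; there the drift is $-u$, still solenoidal with zero trace, and the absorption is still nonnegative. The evolution property then yields $\|U(t,s)\|_{L^1\to L^\infty}\le C(t-s)^{-3/2}$, which is applied to $c^{\varepsilon,\tau}(t)=U(t,0)c_0^\varepsilon$. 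The paper proves the $L^q$ inequality \eqref{eq:ev-signal-energy} only afterwards, as a separate statement. This is exactly the ``duality alternative'' you mention in passing.

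\textbf{Your route.} Your primary argument is a Nash--Moser iteration run directly on the nonnegative nonlinear solution. You use \eqref{eq:ev-signal-energy} with $q_k=2^k$, and apply Nash to $c^{q_k/2}$, for which $\|c^{q_k/2}\|_{L^1}=\|c\|_{L^{q_{k-1}}}^{q_{k-1}}$. Monotonicity of every $L^q$ norm, including $q=1$ from the mass balance, lets you replace the suprema over each time step by values at its left endpoint. With steps $t2^{-k}$, the factors $(C2^k/t)^{3/2^{k+1}}$ multiply to $Ct^{-3/2}$, and $\|c(t)\|_{L^\infty}=\lim_k\|c(t)\|_{L^{2^k}}$ closes the argument.

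\textbf{What each buys.} Your version needs no evolution operator for signed data and no solvability of an adjoint backward problem. It also makes the second assertion of the lemma the engine for the first. In exchange, it relies on nonnegativity of $c$ and the iteration bookkeeping, and it gives the bound only for this particular solution rather than an ultracontractivity estimate for the propagator. The paper's route requires linear solvability of the frozen-coefficient forward and adjoint problems. In return, it involves only the exponents $1$ and $2$ and delivers the operator bound in one step. Both are drift-independent for the same reason: the transport term drops out of every $L^q$ test. Your passage to the limits, via almost-everywhere convergence and then Fubini in time, coincides with the paper's.
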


\begin{proof}

Fix $\varepsilon,\tau>0$ and a finite time interval $[0,T]$.
For $0\leq s<t\leq T$, let $U(t,s)$ be the solution operator for
the linear Neumann problem with generator
\[
 \Delta-u^{\varepsilon,\tau}(t)\cdot\nabla
       -h_\tau(n^{\varepsilon,\tau}(t)).
\]
The coefficients are fixed throughout this argument. In particular,
$U(t,s)c^{\varepsilon,\tau}(s)=c^{\varepsilon,\tau}(t)$.
We first use smooth boundary-compatible initial data
$f\in L^1\cap L^2$, allowing either sign, and then pass by density.

The transport term vanishes in the scalar energy estimates because
$u^{\varepsilon,\tau}$ is divergence free and has zero boundary trace.
The absorption term is nonnegative. Testing with regularized signs
and with $U(t,s)f$, respectively, gives
\[
 \|U(t,s)f\|_{L^1}\leq\|f\|_{L^1},
 \qquad
 \frac{d}{dt}\|U(t,s)f\|_{L^2}^2
       +2\|\nabla U(t,s)f\|_{L^2}^2\leq0.
\]
These tests use the boundary-compatible cutoffs from
Section~\ref{sec:existence}. The half-space Nash inequality,
obtained by even extension, reads
\begin{equation}\label{eq:ev-nash}
 \|U(t,s)f\|_{L^2}^{10/3}
 \leq C\|U(t,s)f\|_{L^1}^{4/3}
          \|\nabla U(t,s)f\|_{L^2}^2.
\end{equation}
For $f\neq0$, it follows that
\[
 \frac{d}{dt}\|U(t,s)f\|_{L^2}^2
 +\frac{2}{C\|f\|_{L^1}^{4/3}}
       \bigl(\|U(t,s)f\|_{L^2}^2\bigr)^{5/3}\leq0.
\]
Integration gives
\[
 \|U(t,s)f\|_{L^2}\leq C(t-s)^{-3/4}\|f\|_{L^1},
 \qquad
 \|U(t,s)\|_{L^1\to L^2}\leq C(t-s)^{-3/4}.
\]
The case $f=0$ is immediate.

Integration by parts with the Neumann boundary condition gives
\[
\bigl(\Delta-u^{\varepsilon,\tau}\cdot\nabla
-h_\tau(n^{\varepsilon,\tau})\bigr)^*
=\Delta+u^{\varepsilon,\tau}\cdot\nabla
-h_\tau(n^{\varepsilon,\tau}).
\]
For a fixed terminal time $t$, reversing time in the backward
adjoint equation therefore replaces the transport velocity by
$-u^{\varepsilon,\tau}$, with both coefficients evaluated at the
reversed time. This velocity is still divergence free and vanishes
on $\Gamma$, and the absorption coefficient is still nonnegative.
The same $L^1$ contraction and Nash argument apply to the adjoint
solution operator $U(t,s)^*$. Hence, by duality,
\[
\|U(t,s)\|_{L^2\to L^\infty}
=\|U(t,s)^*\|_{L^1\to L^2}
\leq C(t-s)^{-3/4}.
\]
The evolution property
$U(t,s)=U(t,(s+t)/2)U((s+t)/2,s)$ now yields
\[
\|U(t,s)\|_{L^1\to L^\infty}\leq C(t-s)^{-3/2}.
\]
Taking $s=0$ and $f=c_0^\varepsilon$, we recover
$c^{\varepsilon,\tau}(t)=U(t,0)c_0^\varepsilon$ and use
$\|c_0^\varepsilon\|_{L^1}\leq L$ to obtain the decay bound in
\eqref{eq:ev-signal-decay}. The maximum principle and
$\|c_0^\varepsilon\|_{L^\infty}\leq\|c_0\|_{L^\infty}$ give
its other bound.

Finally, testing the regularized concentration equation by
$(c^{\varepsilon,\tau})^{q-1}$, for $2\leq q<\infty$, gives
\[
\begin{aligned}
\frac{d}{dt}\|c^{\varepsilon,\tau}\|_{L^q}^q
+\frac{4(q-1)}q
\|\nabla(c^{\varepsilon,\tau})^{q/2}\|_{L^2}^2
+q\int_{\mathbb H^3}h_\tau(n^{\varepsilon,\tau})
(c^{\varepsilon,\tau})^q\,dx=0.
\end{aligned}
\]
Dropping the nonnegative absorption term proves
\eqref{eq:ev-signal-energy}. None of these bounds uses a norm of
the transport velocity. Almost-everywhere convergence on an
exhaustion of $\mathbb H^3\times(0,\infty)$ passes
\eqref{eq:ev-signal-decay} to the successive limits.

\end{proof}

\subsection{A finite-energy selection and admissible scalar tests}
\label{subsec:eventual-finite-energy-selection}

The signal estimate is uniform in both approximation parameters.
The next step is to retain the additional properties needed to use it
in the density and velocity equations. In particular, the global
bounds in Definition~\ref{def:suitable-half-space}(i) must be
supplemented by a kinetic-energy inequality starting at almost every
positive time. The selection below provides this inequality and
justifies the later nonlinear scalar tests. We assume $M>0$ in the
density estimates; when $M=0$, nonnegativity gives $n=0$.

\begin{lemma}[Finite-energy selection]\label{lem:ev-selection}
The approximation in Section~\ref{sec:existence} admits a diagonal
subsequence selected by first letting $\varepsilon\to0$ for each fixed
$\tau>0$, and then letting $\tau\to0$, whose limit has all the properties
asserted in Theorem~\ref{thm:half-space-existence}.
In addition, the velocity satisfies
\begin{equation}\label{eq:eventual-global-kinetic}
 \frac12\|u(t)\|_{L^2}^2+\int_s^t\|\nabla u\|_{L^2}^2
 \le \frac12\|u(s)\|_{L^2}^2
       -\int_s^t\int_{\mathbb H^3} n\nabla\phi\cdot u
\end{equation}
for $s=0$ and for almost every $s>0$, and every $t\ge s$.

At the intermediate, fixed-$\tau$ level, the scalar equations have the
same transport velocity $u^\tau$, and, for every $0<\delta<T<\infty$,
\begin{equation}\label{eq:eventual-fixed-tau-scalar-class}
\begin{aligned}
 &n^\tau\in L^\infty((\delta,T);L^1\cap L^4)
       \cap L^2((\delta,T);H^1),\\
 &c^\tau\in L^\infty(\mathbb H^3\times(0,T)),\qquad
 \nabla c^\tau\in L^4(\mathbb H^3\times(0,T)).
\end{aligned}
\end{equation}
The constants in this assertion may depend on $\delta,T,\tau$.
The density bounds in \eqref{eq:eventual-fixed-tau-scalar-class}
are asserted for $n^\tau$ at fixed $\tau$, not as uniform bounds for
the final density $n$.
\end{lemma}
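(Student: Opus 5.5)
The plan has two stages. First let $\varepsilon\to0$ with $\tau>0$ fixed, then let $\tau\to0$, and extract a diagonal subsequence. At fixed $\tau$ the truncations satisfy $a_\tau\le\tau^{-1}$ and $h_\tau(n)\le n$. These give $\tau$-dependent bounds that are uniform in $\varepsilon$.

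The first such bound is a density $L^2$ estimate, exactly as in \eqref{eq:fixed-n-L2}:
\[
\sup_t\|n^{\varepsilon,\tau}\|_{L^2}^2+\int_0^T\|\nabla n^{\varepsilon,\tau}\|_{L^2}^2\le C(\|n_0^\varepsilon\|_{L^2}^2+\tau^{-2}\|c_0\|_{L^2}^2).
\]
The initial norm $\|n_0^\varepsilon\|_{L^2}$ is not uniform in $\varepsilon$. To handle this, we start instead at a time $\delta>0$. There the smoothing $S_N$ and the $L\log L$ bound are not enough on their own. So I would test with $n^{q-1}$ for $q=2,4$ and combine it with the flux bound $|a_\tau\nabla c|\le\tau^{-1}|\nabla c|$. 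Since $\nabla c\in L^4$ uniformly by \eqref{eq:limit-global-energy}, a Moser/Nash-type smoothing (as in Lemma~\ref{lem:ev-signal}, with the drift divergence-free and vanishing on $\Gamma$, and the source $\nabla\cdot(a_\tau\nabla c)$ a bounded-coefficient divergence) gives $L^1\to L^4$ smoothing on $(\delta,T)$. This yields \eqref{eq:eventual-fixed-tau-scalar-class} uniformly in $\varepsilon$. The signal bounds come from the maximum principle and the global energy.

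Then comes compactness as $\varepsilon\to0$. The Aubin--Lions argument of Section~\ref{subsec:compactness-local-energy} applies, and now the density converges strongly in $L^2_{\rm loc}$. The limit $u^\tau$ appears in both scalar equations, since $\mathcal J_\varepsilon u\to u^\tau$ by \eqref{eq:stokes-smoothing-difference}; this proves the fixed-$\tau$ claim. The global kinetic-energy identity for the approximation is passed to the limit by lower semicontinuity at $s=0$. For a.e. $s>0$ we additionally need strong convergence $u_k(s)\to u(s)$ in $L^2(\mathbb H^3)$ globally, not just locally.

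The main obstacle is exactly this global strong convergence at a.e. starting time, because local strong convergence plus lower semicontinuity only handles the left-hand side. I would prove uniform tightness: test the momentum equation by $|u|^2$ against a cutoff $1-\chi_R$. This uses the pressure splitting of Proposition~\ref{prop:uniform-pressure} far away, the $L^{5/3}_tL^{15/14}_x$ bounds, and the force bound $\|n\|_{L^{6/5}}$ via mass. The result is that $\int_0^T\int_{|x|>R}|u_k|^2\to0$ uniformly as $R\to\infty$. Combined with local strong convergence, this gives $u_k\to u$ in $L^2((0,T)\times\mathbb H^3)$, hence in $L^2(\mathbb H^3)$ for a.e. $s$ along a further subsequence. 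The force term converges since $n_k\nabla\phi\to n\nabla\phi$ in $L^{5/4}$ globally (mass tightness, obtained similarly from the weighted mass test in Step 5) and $u_k\to u$ in $L^5$-type norms by interpolation.

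Repeating the same selection as $\tau\to0$ (now the density bounds are lost, but the tightness arguments were $\tau$-uniform) and diagonalizing retains \eqref{eq:eventual-global-kinetic} and all properties of Theorem~\ref{thm:half-space-existence}. The local energy inequality is rederived exactly as before.
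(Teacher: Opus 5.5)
Your overall plan follows the paper's own proof. The density step matches: a Nash-type comparison from the conserved mass, testing with $n$ and $n^3$, using $a_\tau\le\tau^{-1}$ and the uniform bound $\nabla c\in L^4_{t,x}$, gives bounds on $(\delta,T)$ that do not depend on $\|n_0^\varepsilon\|_{L^2}$. Mass tightness comes from an exterior cutoff, and velocity tightness from an exterior kinetic-energy test, yielding $u_k(s)\to u(s)$ in $L^2(\mathbb H^3)$ for a.e.\ $s$.

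\textbf{The gap: velocity tightness near $t=0$.}
You invoke the splitting of Proposition~\ref{prop:uniform-pressure} and the $L^{5/3}_tL^{15/14}_x$ bounds. These control only $p_1$. Indeed $p_1\in L^{5/3}(\mathbb H^3\times(0,T))$ by homogeneous Sobolev, so its exterior contribution is $O(R^{-1})$.

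The other piece, $p_2$, is the pressure of $e^{-tA}u_0^\varepsilon$, and it satisfies only $\|p_2(t)\|_{L^6}\le Ct^{-1}\|u_0\|_{L^2}$. This bound has two problems.
\begin{itemize}
\item It is not integrable at $t=0$. That is exactly why the proposition asserts nothing down to the initial time.
\item Even on $(\delta,T)$, the exterior term is only bounded by $C\int_\delta^T\|p_2(t)\|_{L^6(\mathbb H^3\setminus B_R)}\|u\|_{L^2}\,dt$. This does not decay in $R$ from norm bounds alone.
\end{itemize}
So you cannot run the exterior test from $t=0$, which is the only time where the tail is known to be small (via $u_0^\varepsilon\to u_0$). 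If you start at $\delta>0$, you need uniform smallness of $\int|u_k(\delta)|^2(1-\chi_R)$, which is the tightness you are trying to prove. Aiming for space--time tightness does not help: the strip $(0,\delta)$ contributes only $O(\delta)$, but the tail at the restart time is still required.

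\textbf{The missing ingredient: uniform strong continuity at $t=0$.}
\begin{itemize}
\item The energy identity and $\int_0^t\|n_k\|_{L^{6/5}}^2\le Ct^{1/2}$ give $\|u_k(t)\|_{L^2}^2\le\|u_k(0)\|_{L^2}^2+Ct^{1/2}$.
\item The uniform bound on $\partial_tu_k$ in $L^{4/3}(0,T;(H^1_0\cap L^2_\sigma)')$ gives uniform weak continuity at $0$.
\item Together these yield $\lim_{\delta\downarrow0}\limsup_k\sup_{0<t<\delta}\|u_k(t)-u_0\|_{L^2}=0$.
\end{itemize}
Consequently the tail at a small time $\delta$ is controlled by the tail of $u_0$. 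On $(\delta,T)$, the $L^6$-tail of $p_{2,k}$ is uniformly small because $p_{2,k}\to p_2$ in $L^1((\delta,T);L^6)$, by linearity and $u_0^{\varepsilon_k}\to u_0$. At the second stage $p_2$ does not depend on $\tau$ at all.

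\textbf{Smaller points.}
\begin{itemize}
\item Your force pairing ($L^{5/4}$ against $L^5$) is not available. The energy class gives $L^{10/3}_{t,x}$, not $L^5_{t,x}$, for $u$. Pair strong $L^2_tL^{6/5}_x$ (or $L^{10/7}_{t,x}$) convergence of the force against weak $L^2_tL^6_x$ (or $L^{10/3}_{t,x}$) convergence of $u$. At the first stage the force is $\mathcal K_\varepsilon(n\nabla\phi)$, not $n\nabla\phi$.
\item At the second stage the input is only an inequality. Either prove the tightness bound uniformly in $(\varepsilon,\tau)$ at the smooth level and pass it to $u^\tau$ by Fatou. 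Or, as the paper does, keep the local kinetic-energy inequality through the first limit and approximate the exterior cutoff by compactly supported ones.
\end{itemize}
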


\begin{proof}
The local convergence and localized energy inequality follow from
Section~\ref{sec:existence}. We explain the additional estimates
in three steps: fixed-$\tau$ scalar regularity, density tightness,
and global velocity compactness. These retain the scalar tests
and the kinetic-energy inequality for the same selected limit.

\emph{Step 1: scalar regularity before removing the truncation.}
Fix $\tau>0$ and retain the superscript $(\varepsilon,\tau)$ throughout
this step. All estimates below are uniform in $\varepsilon$ at this
fixed $\tau$; the density bounds may depend on $\tau$.
Testing the concentration equation with $c^{\varepsilon,\tau}$ gives
\[
 \sup_{t<T}\|c^{\varepsilon,\tau}(t)\|_{L^2}^2
 +2\int_0^T\|\nabla c^{\varepsilon,\tau}\|_{L^2}^2
 \le\|c_0\|_{L^2}^2.
\]
Since $a_\tau\le\tau^{-1}$, testing the density equation with
$n^{\varepsilon,\tau}$ gives
\[
 \frac{d}{dt}\|n^{\varepsilon,\tau}\|_{L^2}^2
 +\|\nabla n^{\varepsilon,\tau}\|_{L^2}^2
 \le\tau^{-2}\|\nabla c^{\varepsilon,\tau}\|_{L^2}^2.
\]
The half-space Nash inequality and
$\|n^{\varepsilon,\tau}(t)\|_{L^1}=M$ imply
\[
 \frac{d}{dt}\|n^{\varepsilon,\tau}\|_{L^2}^2
 +c_M\|n^{\varepsilon,\tau}\|_{L^2}^{10/3}
 \le\tau^{-2}\|\nabla c^{\varepsilon,\tau}\|_{L^2}^2.
\]
Here and below $c_M>0$ depends only on $M$ and dimensional constants.
Compare the squared $L^2$ norm with the solution of the
corresponding unforced differential equation plus the time integral
of the nonnegative forcing. This sum is a supersolution, and
integration of the unforced equation gives
\begin{equation}\label{eq:eventual-forced-ode-comparison}
 \|n^{\varepsilon,\tau}(t)\|_{L^2}^2
 \le C_Mt^{-3/2}+C\tau^{-2}\|c_0\|_{L^2}^2.
\end{equation}
The estimate is independent of the approximate initial $L^2$ norm.
Integrating the preceding density-energy inequality from a positive
time gives, for $0<\delta<T$,
\[
 \sup_{\delta<t<T}\|n^{\varepsilon,\tau}(t)\|_{L^2}^2
 +\int_\delta^T\|\nabla n^{\varepsilon,\tau}\|_{L^2}^2
 \le C_{\delta,T,\tau}.
\]
No uniform bound for $\|n_0^\varepsilon\|_{L^2}$ is used.

For the additional $L^4$ regularity, testing with
$(n^{\varepsilon,\tau})^3$ gives
\[
\begin{aligned}
 \frac{d}{dt}\|n^{\varepsilon,\tau}\|_{L^4}^4
 +\frac32\|\nabla(n^{\varepsilon,\tau})^2\|_{L^2}^2
 \le
 6\tau^{-2}\|n^{\varepsilon,\tau}\|_{L^4}^2
       \|\nabla c^{\varepsilon,\tau}\|_{L^4}^2.
\end{aligned}
\]
Mass interpolation and the Sobolev inequality give
\[
 \|\nabla(n^{\varepsilon,\tau})^2\|_{L^2}^2
 \ge c_M\|n^{\varepsilon,\tau}\|_{L^4}^{44/9}.
\]
Dividing the resulting inequality by
$2\|n^{\varepsilon,\tau}\|_{L^4}^2$, with positive regularization
at its zeros, gives
\[
 \frac{d}{dt}\|n^{\varepsilon,\tau}\|_{L^4}^2
 +c_M\|n^{\varepsilon,\tau}\|_{L^4}^{26/9}
 \le C\tau^{-2}\|\nabla c^{\varepsilon,\tau}\|_{L^4}^2.
\]
Proposition~\ref{prop:uniform-energy} gives
\[
 \int_0^T\|\nabla c^{\varepsilon,\tau}\|_{L^4}^2
 \le T^{1/2}
       \left(\int_0^T\|\nabla c^{\varepsilon,\tau}\|_{L^4}^4\right)^{1/2}
 \le C_T,
\]
with $C_T$ independent of both parameters. Applying the same
differential-inequality comparison as in
\eqref{eq:eventual-forced-ode-comparison} to the squared $L^4$ norm yields
\[
 \|n^{\varepsilon,\tau}(t)\|_{L^4}^2
 \le C_Mt^{-9/4}+C_T\tau^{-2},\qquad 0<t<T.
\]
In particular,
\[
 \sup_{0<\varepsilon<1}
 \|n^{\varepsilon,\tau}\|_{L^\infty((\delta,T);L^4)}
 \le C_{\delta,T,\tau}.
\]
We now let $\varepsilon\to0$ at this fixed $\tau$. Weak lower
semicontinuity and the local strong convergence from
Section~\ref{sec:existence} give
\eqref{eq:eventual-fixed-tau-scalar-class} for
$(n^\tau,c^\tau,u^\tau)$.
The approximation property of $\mathcal J_\varepsilon$ gives
\[
 \|\mathcal J_\varepsilon u^{\varepsilon,\tau}
             -u^{\varepsilon,\tau}\|_{L^2(0,T;L^2)}
 \le C\varepsilon^{1/2}
       \|\nabla u^{\varepsilon,\tau}\|_{L^2(0,T;L^2)}
 \longrightarrow0.
\]
Consequently the limiting scalar equations are
\[
\begin{aligned}
 \partial_t n^\tau+u^\tau\cdot\nabla n^\tau-\Delta n^\tau
   &=-\nabla\cdot\bigl(a_\tau(n^\tau)\nabla c^\tau\bigr),\\
 \partial_t c^\tau+u^\tau\cdot\nabla c^\tau-\Delta c^\tau
   &=-h_\tau(n^\tau)c^\tau.
\end{aligned}
\]
The $L^4$ and $L^2H^1$ density bounds in this step are only
fixed-$\tau$ bounds. They justify the later scalar tests at that
level; they are not passed directly to the final density $n$
as $\tau\to0$.

\emph{Step 2: tightness of density and convergence of the force.}

For the cutoff estimates in this step, suppress the superscripts
$(\varepsilon,\tau)$. The bounds from Section~\ref{sec:existence}
are uniform in both parameters on each finite time interval:
\[
 n\in L^\infty_tL^1_x\cap L^{5/3}_{t,x},\qquad
 u,\mathcal J_\varepsilon u,\nabla c\in L^2_tL^6_x.
\]
We use these bounds here, rather than the fixed-$\tau$ estimates
from Step~1.
For the last assertion, note explicitly that
\[
 D^2c=2\sqrt{1+c}\,D^2\sqrt{1+c}
       +2\nabla\sqrt{1+c}\otimes\nabla\sqrt{1+c}
       \in L^2_{t,x},
\]
and use the half-space Sobolev inequality for $\nabla c$.
Mass interpolation also gives $n\in L^2_tL^{6/5}_x$.
Take a radial exterior cutoff $\eta_R$, equal to zero on $B_R$ and
one outside $B_{2R}$, with $|D^j\eta_R|\le CR^{-j}$ and
$\partial_\nu\eta_R=0$ on $\Gamma$.  The scalar equation and
$a_\tau(n)\le n$ yield
\begin{align*}
 \sup_{0<t<T}\int_{\mathbb H^3} n(t)\eta_R
 &\le \int_{\mathbb H^3} n_0^\varepsilon\eta_R+\frac{CMT}{R^2}
       +\frac C R\int_0^T\int_{\mathbb H^3}n(|\mathcal J_\varepsilon u|+|\nabla c|)\\
 &\le \int_{\mathbb H^3} n_0^\varepsilon\eta_R+\frac{C_T}{R}.
\end{align*}
Thus mass is uniformly tight in space.
Combining this fact with local strong convergence gives
$n^{\varepsilon_j,\tau}\to n^\tau$ at fixed $\tau$, and subsequently
$n^{\tau_j}\to n$, strongly in $L^r(\mathbb H^3\times(0,T))$
for every $1\le r<5/3$.
To include times near zero, interpolate the uniformly bounded
$L^{5/3}_{t,x}$ norm with the $L^1$ norm on $\mathbb H^3\times(0,\delta)$,
which is at most $M\delta$; this makes each such $L^r$ norm
uniformly small as $\delta\downarrow0$.
More precisely, interpolation
on a measurable set $E\subset\mathbb H^3$ gives
\[
 \int_0^T\|n\mathbf1_E\|_{L^{6/5}}^2
 \le C_T\bigl(\sup_{t<T}\|n(t)\mathbf1_E\|_{L^1}\bigr)^{7/6}
             \left(\int_0^T\int_{\mathbb H^3} n^{5/3}\right)^{1/2}.
\]

On bounded sets, the same convergence is strong in
$L^2_tL^{6/5}_x$. Indeed, the interpolation inequality
\[
 \|f(t)\|_{L^{6/5}}^2
 \leq\|f(t)\|_{L^1}\|f(t)\|_{L^{3/2}},
\]
applied to the difference of a density and its limit, combines the
uniform $L^\infty_tL^1_x$ bound with strong $L^{3/2}_{t,x}$
convergence. The preceding tail estimate extends this convergence
to the whole half-space at each of the two limiting stages.

Thus, with all convergences strong in
$L^2((0,T);L^{6/5}(\mathbb H^3))$,
\begin{equation}\label{eq:eventual-global-force-compactness}
\begin{aligned}
 n^{\varepsilon_j,\tau}&\longrightarrow n^\tau,&
 f^{\varepsilon_j,\tau}&\longrightarrow n^\tau\nabla\phi
 &&(\tau\text{ fixed}),\\
 n^{\tau_j}&\longrightarrow n,&
 n^{\tau_j}\nabla\phi&\longrightarrow n\nabla\phi
 &&(\tau_j\downarrow0).
\end{aligned}
\end{equation}
For the first force convergence, use
$f^{\varepsilon_j,\tau}=\mathcal K_{\varepsilon_j}
(n^{\varepsilon_j,\tau}\nabla\phi)$, boundedness and strong continuity
of $\mathcal K_\varepsilon$ on $L^{6/5}$, and
$\nabla\phi\in L^\infty$. The second passage uses the same uniform
density-tail estimates and has no force-smoothing operator.

\emph{Step 3: global velocity compactness, including the initial time.}

At the first passage, fix $\tau$ and write
\[
 (n_j,u_j,b_j,p_j,f_j)
 =(n^{\varepsilon_j,\tau},u^{\varepsilon_j,\tau},
   \mathcal J_{\varepsilon_j}u^{\varepsilon_j,\tau},
   p^{\varepsilon_j,\tau},f^{\varepsilon_j,\tau}).
\]
At the second passage the corresponding sequence is
\[
 (n_j,u_j,b_j,p_j,f_j)
 =(n^{\tau_j},u^{\tau_j},u^{\tau_j},p^{\tau_j},
   n^{\tau_j}\nabla\phi).
\]
Only within Step~3, unindexed solution variables denote the limit of
the sequence currently under consideration: the fixed-$\tau$ limit
in the first passage and the final selected limit in the second.
The initial velocity is $u_0^{\varepsilon_j}$ in the first passage
and exactly $u_0$ in the second. All constants used below are
uniform in the remaining approximation parameters.

We include this step because local strong convergence of velocity alone
would not suffice to pass the kinetic energy at almost every starting
time.  The global Stokes decomposition in the proof of
Proposition~\ref{prop:uniform-pressure} permits the normalization
\[
 p_j=p_{1j}+p_{2j},\qquad
 \|p_{1j}\|_{L^{5/3}(\mathbb H^3\times(0,T))}\le C_T.
\]
Indeed, $\nabla p_{1j}\in L^{5/3}_tL^{15/14}_x$, and the homogeneous
Sobolev inequality on $\mathbb H^3$ selects the unique representative in
$L^{5/3}(\mathbb H^3)$.  At the first limit, the second part is the pressure generated by
$e^{-tA}u_0^{\varepsilon_j}$; choose its representative in
$L^6(\mathbb H^3)$. At the second limit it is the same pressure
generated by $e^{-tA}u_0$ for every $j$. Thus, for every $0<\delta<T$,
the first limit satisfies
\[
 \|p_{2j}(t)\|_{L^6}\le C t^{-1}\|u_0^{\varepsilon_j}\|_{L^2},
 \qquad
 p_{2j}\longrightarrow p_2
       \quad\hbox{in }L^1((\delta,T);L^6(\mathbb H^3)).
\]
The latter convergence follows from linearity and
$u_0^{\varepsilon_j}\to u_0$ in $L^2$.

There is uniform strong continuity at the genuine initial time along
the approximation sequence.  In fact, interpolation gives
\[
 \int_0^t\|n_j\|_{L^{6/5}}^2
 \le C t^{1/2}
      \left(\int_0^t\int_{\mathbb H^3}n_j^{5/3}\right)^{1/2}
 \le C t^{1/2}\qquad(0<t<1).
\]
The kinetic-energy identity, or the inequality retained after the
first limit, and Young's inequality imply
\[
 \|u_j(t)\|_{L^2}^2\le\|u_j(0)\|_{L^2}^2+C t^{1/2}.
\]
The equations also bound $\partial_tu_j$ uniformly in
\[
 L^{4/3}\bigl(0,T;
 (H^1_0(\mathbb H^3)\cap L^2_\sigma(\mathbb H^3))'\bigr).
\]
The convection term is controlled by
$b_j,u_j\in L^{8/3}_tL^4_x$, and the force by
\eqref{eq:eventual-global-force-compactness}. Testing against fixed
solenoidal $H^1_0$ functions and using their density in
$L^2_\sigma$ proves uniform weak continuity at zero.  Together with the last
upper bound on the norm, this proves
\begin{equation}\label{eq:eventual-uniform-initial-continuity}
 \lim_{\delta\downarrow0}\limsup_{j\to\infty}
       \sup_{0<t<\delta}\|u_j(t)-u_0\|_{L^2}=0.
\end{equation}

For the exterior kinetic-energy test, first multiply $\eta_R$ by
$1-\eta_{2^mR}$ and then let $m\to\infty$. The energy and convection
bounds and the global pressure bounds just established remove the
outer cutoff errors; the $p_{2j}$ error is controlled by its
$L^1((\delta,T);L^6)$ spatial tail, as below.
Apply the approximate local kinetic-energy identity with the exterior
cutoff $\eta_R$ on $(\delta,t)$, choosing $\delta$ from the
full-measure set of admissible initial times when working with a limit.  Diffusion and convection cutoff errors
are bounded by
\[
 CR^{-2}\int_\delta^T\|u_j\|_{L^2}^2
 +CR^{-1}\int_\delta^T\int_{\mathbb H^3} |b_j||u_j|^2=o_R(1)
\]
uniformly in $j$.  The $p_{1j}$ contribution is at most
\[
 CR^{-1}\|p_{1j}\|_{L^{5/3}_{t,x}}
           \|u_j\|_{L^{5/2}_{t,x}}=o_R(1).
\]
Since the cutoff annulus has volume at most $CR^3$, the remaining pressure
term is bounded by
\[
 C\|u_j\|_{L^\infty_tL^2_x}
    \int_\delta^T\|p_{2j}(t)\|_{L^6(\mathbb H^3\setminus B_R)}\,dt=o_R(1).
\]
Here uniformity follows from the strong convergence of $p_{2j}$ on
$(\delta,T)$.  Finally the force term is bounded by
\[
 C\|f_j\|_{L^2((\delta,T);L^{6/5}(\mathbb H^3\setminus B_R))}
     \|u_j\|_{L^2((\delta,T);L^6)}=o_R(1)
\]
by \eqref{eq:eventual-global-force-compactness}.  Consequently,
\[
 \sup_{\delta<t<T}\int_{\mathbb H^3\setminus B_{2R}}|u_j(t)|^2
 \le C\int_{\mathbb H^3}|u_j(\delta)|^2\eta_R+o_R(1).
\]
For each fixed admissible $\delta>0$, take first $\limsup_{j\to\infty}$
and then $R\to\infty$. The initial tail is bounded by
$C\limsup_j\|u_j(\delta)-u_0\|_{L^2}^2$, since $u_0\in L^2$.
The part $0<t<\delta$ is controlled directly by
\eqref{eq:eventual-uniform-initial-continuity}. Letting $\delta\downarrow0$
through common admissible times proves the precise tightness statement
\[
 \lim_{R\to\infty}\limsup_{j\to\infty}
 \operatorname*{ess\,sup}_{0<t<T}
 \int_{\mathbb H^3\setminus B_R}|u_j(t)|^2=0.
\]
At the second limit, the homogeneous pressure is already the same
pressure generated by $e^{-tA}u_0$ for every $\tau$.
The time-derivative bound also selects a representative in
$C_{\rm w}([0,T];L^2_\sigma(\mathbb H^3))$ for each limit.
Local compactness therefore improves to global strong convergence in
$L^2(\mathbb H^3\times(0,T))$.  A subsequence converges strongly in $L^2(\mathbb H^3)$ at
almost every time.

We may now pass to the kinetic-energy inequality from each such starting
time $s$: the force work converges by
\eqref{eq:eventual-global-force-compactness} and weak convergence of
$u_j$ in $L^2_tL^6_x$, the energy at $s$ converges strongly, and the
terminal energy and dissipation are lower semicontinuous.  Weak continuity
of the limit extends the inequality to every terminal time $t\ge s$.
At $s=0$ use strong convergence of the initial data.  This argument applies
first at fixed $\tau$ and then to a countable sequence $\tau\downarrow0$.
The same passage also retains the local kinetic-energy inequality for
the fixed-$\tau$ velocity: the pressure--velocity product converges by
weak $L^{5/3}$ convergence of pressure and strong local $L^{5/2}$
convergence of velocity.  Thus the exterior-cutoff argument remains
available at the second limit, even though its input is then an
inequality instead of the smooth identity.  Intersecting the
corresponding full-measure sets of starting times proves
\eqref{eq:eventual-global-kinetic} for the final limit.

Finally, realize the iterated limit $(n,c,u,p)$ by a diagonal
sequence of the original smooth approximants.
For a convergent sequence $\tau_j\downarrow0$, choose
$\varepsilon_j<1/j$ so that the original approximant is within $1/j$
of the fixed-$\tau_j$ limit in the global strong norms above and in
the strong local norms of Section~\ref{subsec:compactness-local-energy}
on the first $j$ members of a fixed compact exhaustion. Impose the
same accuracy for pressure pairings against the first $j$ members
of countable dense families of dual tests. These finitely many
requirements can be met simultaneously. This diagonal sequence has
the same limit in all the topologies used in that section, so its
original localized entropy--energy argument proves suitability of
this precise limit. The additional global and scalar estimates are
retained for the same selection.
\end{proof}

\subsection{Weighted density estimates and dissipation tails}
\label{subsec:eventual-scalar-decay}

The signal becomes uniformly small after a fixed time. We use this
smallness in a weighted density test, performed first on the
fixed-$\tau$ system selected above. The resulting bounds are uniform
in $\tau$ and therefore pass to the final solution.

\begin{lemma}[Eventual scalar estimates]\label{lem:ev-scalar-decay}
For the selection in Lemma~\ref{lem:ev-selection},
\[
 \|c(t)\|_{L^{\infty}}\le C t^{-3/2}\|c_0\|_{L^1}.
\]
There exists $T_0<\infty$ such that, for $q=2,6$ and almost every
$t\ge T_0+1$,
\begin{equation}\label{eq:ev-density-decay}
 \|n(t)\|_{L^q}\le C_q M(t-T_0)^{-\frac32(1-1/q)}.
\end{equation}
Moreover, for every $t\geq T_0+1$,
\begin{equation}\label{eq:ev-density-tail}
 \int_t^\infty\|\nabla n(s)\|_{L^2}^2\,ds
       \le C M^2(t-T_0)^{-3/2}.
\end{equation}
\end{lemma}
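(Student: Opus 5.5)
The plan is to run a joint scalar test on the fixed-$\tau$ system $(n^\tau,c^\tau,u^\tau)$ of Lemma~\ref{lem:ev-selection}, obtain bounds uniform in $\tau$, and pass them to $(n,c)$ by lower semicontinuity and almost-everywhere convergence.

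\emph{Signal bound.} The bound $\|c(t)\|_{L^\infty}\le Ct^{-3/2}\|c_0\|_{L^1}$ is already contained in Lemma~\ref{lem:ev-signal}, which carries \eqref{eq:ev-signal-decay} through both limits. Fix $\delta_0>0$ small, depending only on $q\in\{2,6\}$ and numerical constants; it will be fixed in the weighted test below. Choose $T_0$ with $CLT_0^{-3/2}\le\delta_0$. Then $0\le c^\tau\le\delta_0$ on $\mathbb H^3\times(T_0,\infty)$, uniformly in $\tau$, since \eqref{eq:ev-signal-decay} holds already for the approximants.

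\emph{Weighted test on $(T_0,\infty)$.} For $q\in\{2,6\}$, set
\[
Y_q(t)=\int_{\mathbb H^3}(n^\tau)^q\varphi(c^\tau)\,dx,\qquad \varphi(s)=(2\delta_0-s)^{-\kappa}
\]
with $\kappa=\kappa(q)>0$ (a Tao--Winkler type weight). Differentiate $Y_q$ using both scalar equations.

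\begin{itemize}
\item The transport terms cancel exactly. Both equations carry the same divergence-free drift $u^\tau$, which has zero trace on $\Gamma$, so $\int u^\tau\cdot\nabla\bigl((n^\tau)^q\varphi(c^\tau)\bigr)=0$. This is why the selection removes $\mathcal J_\varepsilon$ before $\tau$.
\item The absorption contribution is $-q\int\varphi'(c)\,h_\tau(n)\,c\,n^q\le0$, because $\varphi'\ge0$.
\item The Neumann conditions remove all flat-boundary terms.
\end{itemize}

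What remains is a quadratic form in $(\nabla n,\nabla c)$ with coefficients $n^{q-2}\varphi$, $n^{q-1}\varphi'$ and $n^q\varphi''$. Its chemotactic cross term carries $a_\tau(n)\le n$. Choosing $\kappa$ appropriately and $\delta_0$ small, the usual discriminant computation ($\varphi''\varphi$ large against $(\varphi')^2$ and against the chemotactic coefficient, uniformly for $c\le\delta_0$) should make the form negative definite. This gives
\[
Y_q'+c_q\int|\nabla (n^\tau)^{q/2}|^2\le0 .
\]
Since $\varphi(c)\in[(2\delta_0)^{-\kappa},\delta_0^{-\kappa}]$, $Y_q$ is comparable to $\|n^\tau\|_{L^q}^q$.

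\emph{Decay.} Combine this with mass conservation $\|n^\tau(t)\|_{L^1}=M$ and the half-space Nash (for $q=2$) or Gagliardo--Nirenberg (for $q=6$, interpolating against $L^1$) inequality. This gives an ODE inequality $Y'+c_{q,M}Y^{1+\gamma_q}\le0$, whose solution satisfies $Y(t)\le C(t-T_0)^{-1/\gamma_q}$ independently of $Y(T_0)$. Tracking the $M$-dependence gives \eqref{eq:ev-density-decay} for $n^\tau$ at $t\ge T_0+1$. For $q=2$, integrate the differential inequality from $t$ to $\infty$:
\[
\int_t^\infty\|\nabla n^\tau\|_{L^2}^2\le CY_2(t)\le CM^2(t-T_0)^{-3/2},
\]
which is \eqref{eq:ev-density-tail} at the $\tau$-level.

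\emph{Passage to the limit.} All these constants are independent of $\tau$. The strong $L^r$ convergence of $n^{\tau_j}\to n$ (with almost-everywhere time slices) gives \eqref{eq:ev-density-decay} by Fatou. Weak lower semicontinuity of $\nabla n^{\tau_j}$ in $L^2((t,S)\times\mathbb H^3)$, followed by $S\to\infty$, gives \eqref{eq:ev-density-tail}.

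\emph{Main obstacle.} I expect the hard step to be justifying the nonlinear test at fixed $\tau$, rather than the algebra of the quadratic form. Only \eqref{eq:eventual-fixed-tau-scalar-class} is available: $n^\tau\in L^\infty L^4\cap L^2H^1$ and $\nabla c^\tau\in L^4$. This does not directly allow testing with $(n^\tau)^5\varphi$ when $q=6$. I would first test with truncations $\min\{n,K\}^{q-1}\varphi(c)$ and spatial cutoffs $\chi_R$, and use a Steklov time average. Then let $R\to\infty$ using the mass tightness of Lemma~\ref{lem:ev-selection}, and $K\to\infty$ by monotone convergence in the resulting differential inequality. If the $q=6$ case resists, a fallback is to obtain $L^6$ from the $L^2$ decay through an $L^2\to L^6$ Moser-type step on unit time intervals, using the same weighted structure.
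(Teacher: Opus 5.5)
Your architecture matches the paper's. You run the joint test on the fixed-$\tau$ system, where both scalars carry $u^\tau$. You use a Tao--Winkler weight $(1-c/d_q)^{-\kappa}$ on $\{c\le d_q/2\}$, reached after $T_0$ by the drift-independent signal decay. A Nash/Gagliardo--Nirenberg ODE against the conserved mass then forgets the starting value, the $q=2$ inequality gives the tail, and Fatou and weak lower semicontinuity handle $\tau\to0$. The gap is in the step you flag, and it is not merely technical: your proposed truncation fails for $q=6$.

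Take $\Psi_K'(s)=q\min\{s,K\}^{q-1}$. Then $\Psi_K''=0$ on $\{n>K\}$. There the quadratic form loses its $|\nabla n|^2$ coefficient but keeps the cross term $-2qK^{q-1}\varphi'(c)\nabla n\cdot\nabla c$. Absorbing that term into the $|\nabla c|^2$ coefficient, using $\Psi_K(n)\ge K^{q-1}n$ there, leaves an error of order $K^{q-2}\iint_{\{n>K\}}|\nabla n|^2$. For $q=6$ you cannot send this to zero without already knowing $\iint n^{q-2}|\nabla n|^2<\infty$, which is the dissipation you are trying to produce. So there is no good differential inequality at level $K$ to which monotone convergence could be applied.

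For the same reason your smoothing step fails at the truncated level. Converting the dissipation into $\|\nabla\sqrt{\Psi(n)}\|_{L^2}^2$ requires $(\Psi')^2\le C\Psi\Psi''$, which this truncation violates. You also cannot run the ODE on the untruncated $Y_6$. The fixed-$\tau$ class only gives $n^\tau\in L^\infty L^4$, so $Y_6$ need not be finite at any starting time. Likewise the transport and cutoff fluxes involving $(n^\tau)^6$ are not known to be integrable. Your Moser-type fallback meets the same obstruction. For $q=2$ your plan is fine: direct testing is legitimate since $n^\tau\in L^1\cap L^4$, $\nabla n^\tau\in L^2$ and $\nabla c^\tau\in L^4$, and $Y_2$ is finite at almost every time.

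The missing idea is a truncation that keeps convexity, with ratio bounds uniform in the truncation parameter and growth at most quadratic. The paper uses Winkler's
\[
\Psi_{q,\lambda}(s)=q\int_0^s\frac{z^{q-1}}{1+\lambda z^{q-9/5}}\,dz .
\]
This function increases to $s^q$ as $\lambda\downarrow0$, grows like $s^{9/5}$, and satisfies $(\Psi')^2\le\frac{5q}{4}\Psi\Psi''$ and $s^2\Psi''\le q(q-1)\Psi$ for all $\lambda$. Combine it with $\rho_q(c)=(1-c/d_q)^{-1/(10q)}$ and $d_q^2=1/(10q^2(q-1))$. This yields the structural inequality
\[
4(\Psi')^2(\rho_q')^2+s^2(\Psi'')^2\rho_q^2\le2\Psi\Psi''\rho_q\rho_q'' ,
\]
and hence one half of the dissipation for every $\lambda$. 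Several consequences follow.
\begin{itemize}
\item The subquadratic growth makes the test legitimate in the fixed-$\tau$ class.
\item The truncated functional is finite at almost every $s_\tau\in(T_0,T_0+1/4)$, where $n^\tau\in L^1\cap L^2$.
\item The bounds $\int\Psi^{1/q}\le M$ and $\|\nabla\sqrt{\Psi(n)}\|_{L^2}^2\le\frac{5q}{16}\int\Psi''|\nabla n|^2$ close the ODE with a bound independent of $\lambda$, $\tau$ and the starting value.
\end{itemize}
Monotone convergence then removes $\lambda$. Once the late starting values are finite, the untruncated inequality with dissipation $\frac{2(q-1)}{q}\|\nabla n^{q/2}\|_{L^2}^2$ follows by monotone convergence and Fatou's lemma.
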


\begin{proof}
Lemma~\ref{lem:ev-signal} supplies the stated signal estimate,
uniformly at every approximation level.

For $q\in\{2,6\}$ set
\[
 d_q^2=\frac1{10q^2(q-1)},\qquad
 \rho_q(z)=(1-z/d_q)^{-1/(10q)}.
\]
Choose $T_0$ so that all the fixed-$\tau$ limits satisfy
$0\le c^\tau\le\frac12\min\{d_2,d_6\}$ for $t\ge T_0$.
Then $1\le\rho_q(c^\tau)\le2^{1/(10q)}$.
We justify the joint test on the fixed-$\tau$ system, where the two scalar
transport velocities coincide.  This order is essential: applying it
directly before $\varepsilon\to0$ would leave a term containing
$(\mathcal J_\varepsilon u^{\varepsilon,\tau}
-u^{\varepsilon,\tau})\cdot\nabla c^{\varepsilon,\tau}$ multiplied by
a high power of $n^{\varepsilon,\tau}$, for which no uniform estimate
has been proved.

For $\lambda>0$ put
\[
 \Psi_{q,\lambda}(s)
   =q\int_0^s\frac{z^{q-1}}{1+\lambda z^{q-9/5}}\,dz.
\]
For fixed $q$ and $\lambda$, write $\Psi=\Psi_{q,\lambda}$ in the
following scalar identities. These functions increase to $s^q$ as
$\lambda\downarrow0$, have growth at infinity of order $s^{9/5}$,
and satisfy
\[
 \frac{(\Psi')^2}{\Psi\Psi''}\le\frac{5q}{4},\qquad
 \frac{s^2\Psi''}{\Psi}\le q(q-1).
\]
These are the elementary one-variable estimates of
Winkler~\cite[Lemma~6.1]{Winkler2017}.  Since
\[
 \frac{\rho_q'^2}{\rho_q\rho_q''}
    =\frac1{10q+1},\qquad
 \frac{\rho_q}{\rho_q''}
    =\frac{100q^2(d_q-z)^2}{10q+1},
\]
we have, for $s>0$ and $0\le z\le d_q/2$,
\begin{equation}\label{eq:eventual-weight-structural}
 4(\Psi')^2(\rho_q')^2+s^2(\Psi'')^2\rho_q^2
 \le2\Psi\Psi''\rho_q\rho_q''.
\end{equation}
Indeed, after division by $\Psi\Psi''\rho_q\rho_q''$, the left side is
at most $15q/(10q+1)<2$.

For the joint-test calculation and its justification, up to the
removal of the spatial cutoffs below, fix $\tau>0$ and abbreviate
\[
 (n,c,u)=(n^\tau,c^\tau,u^\tau).
\]
In this part, the integrability bounds used to justify testing may
depend on $\tau$ and $\lambda$. The constants in the structural
inequality and the resulting smoothing bound do not.
The formal joint identity is

\begin{align*}
 \frac{d}{dt}\int_{\mathbb H^3}\Psi(n)\rho_q(c)
 ={}&-\int_{\mathbb H^3}\Psi''\rho_q|\nabla n|^2
     +\int_{\mathbb H^3}(a_\tau(n)\Psi''\rho_q-2\Psi'\rho_q')\nabla n\cdot\nabla c\\
    &-\int_{\mathbb H^3}(\Psi\rho_q''-a_\tau(n)\Psi'\rho_q')|\nabla c|^2
     -\int_{\mathbb H^3}h_\tau(n)c\Psi\rho_q'.
\end{align*}
Young's inequality, $0\le a_\tau(n)\le n$, and
\eqref{eq:eventual-weight-structural} leave one half of the first
dissipation on the left.  More explicitly, the coefficient produced
by absorbing the mixed term is bounded by
\[
 \frac{(a_\tau(n)\Psi''\rho_q-2\Psi'\rho_q')^2}{2\Psi''\rho_q}
       +a_\tau(n)\Psi'\rho_q'
 \le \frac{n^2\Psi''\rho_q}{2}
       +\frac{2(\Psi')^2(\rho_q')^2}{\Psi''\rho_q}
 \le\Psi\rho_q''.
\]

The resulting time-integrated inequality is legitimate under
\eqref{eq:eventual-fixed-tau-scalar-class}. One uses paired Steklov
time averages in the two scalar equations and then lets the averaging
length tend to zero, as in \cite[Lemma~5.1]{Winkler2017}. The required
products have the same integrability: $n,\nabla c\in L^4_{t,x}$,
$\nabla n\in L^2_{t,x}$, and $u\in L^{10/3}_{t,x}$ on finite positive
time intervals, so that $n h_\tau'(n)\nabla c\in L^2$,
$u\cdot\nabla n\in L^{5/4}$, and $u\cdot\nabla c\in L^{20/11}$.
For fixed $\lambda$, $\Psi''$ is bounded, and the test functions satisfy
$\Psi'(n)\in L^2\cap L^4\cap L^5$,
$a_\tau(n)\Psi''(n)\in L^4\cap L^5$, and
$\Psi(n)\in L^1\cap L^2\cap L^{20/9}$ on these space--time slabs.
In particular, $\nabla(\Psi'(n)\rho_q(c))\in L^2$, while the
concentration-test gradients converge in $L^{4/3}$ under time averaging.
For example, the terms $\Psi'(n)\nabla n\cdot\nabla c$,
$u\Psi'(n)\cdot\nabla n$, and $u\Psi(n)\cdot\nabla c$ are
integrable by H\"older's inequality with reciprocal exponents
$(1/4,1/2,1/4)$, $(3/10,1/5,1/2)$, and
$(3/10,9/20,1/4)$, respectively. Also,
$\Psi(n)|\nabla c|^2\in L^1$, and the reaction term is integrable
since $h_\tau(n)\le n$ and $n\in L^1\cap L^4$.
At zeros of the density, the tests are justified first by smooth
nonnegative regularization on bounded cylinders and then by
dominated convergence; no nonzero constant is integrated over $\mathbb H^3$.

The passage from bounded spatial cutoffs to $\mathbb H^3$ introduces no extra
term.  The common transport combines into
$\nabla\cdot(u\Psi(n)\rho_q(c))$; its flux belongs to $L^1$ on every
finite positive time interval because $u\in L^\infty_tL^2_x$,
$n\in L^\infty_t(L^1\cap L^4)$, and
$\Psi(n)\in L^\infty_tL^2_x$.  The other cutoff fluxes are integrable
for the same reason, using $\nabla n,\nabla c\in L^2_{t,x}$ and the
growth of $\Psi$ and $\Psi'$.  Their cutoff derivatives tend to zero.
The physical-boundary terms vanish by the weak Neumann conditions and
the zero velocity trace. We now restore the superscript $\tau$.
For the truncated weighted integral
\[
 F_{q,\lambda}^{\tau}(t)
 :=\int_{\mathbb H^3}\Psi_{q,\lambda}(n^\tau(t))\rho_q(c^\tau(t)),
\]
the justified calculation gives
\[
 F_{q,\lambda}^{\tau}(t)+\frac12\int_s^t\int_{\mathbb H^3}
       \Psi_{q,\lambda}''(n^\tau)\rho_q(c^\tau)|\nabla n^\tau|^2
 \le F_{q,\lambda}^{\tau}(s).
\]
To obtain smoothing, we first estimate the truncated integral.
Its finite starting value will not need a uniform bound.

At almost every $s_\tau\in(T_0,T_0+1/4)$ the density belongs to
$L^1\cap L^2$, so that $F_{q,\lambda}^{\tau}(s_\tau)<\infty$ for every fixed
$\lambda>0$.  Choose one such time for a countable sequence
$\lambda\downarrow0$ and both $q=2,6$.  The derivative-ratio estimate gives
\[
 \|\nabla\sqrt{\Psi_{q,\lambda}(n^\tau)}\|_{L^2}^2
 \le\frac{5q}{16}\int_{\mathbb H^3}\Psi_{q,\lambda}''(n^\tau)|\nabla n^\tau|^2,
 \qquad
 \int_{\mathbb H^3} \Psi_{q,\lambda}(n^\tau)^{1/q}\le\int_{\mathbb H^3} n^\tau\le M.
\]
Only ordinary H\"older and Sobolev inequalities are needed; no
Banach-space property below exponent one is used. Indeed,
\[
\begin{aligned}
 \int_{\mathbb H^3}\Psi_{q,\lambda}(n^\tau)
 &\le\left(\int_{\mathbb H^3}\Psi_{q,\lambda}(n^\tau)^{1/q}\right)^{2q/(3q-1)}
       \left(\int_{\mathbb H^3}\Psi_{q,\lambda}(n^\tau)^3\right)^{(q-1)/(3q-1)}\\
 &\le C M^{2q/(3q-1)}
       \|\nabla\sqrt{\Psi_{q,\lambda}(n^\tau)}\|_{L^2}^{6(q-1)/(3q-1)}.
\end{aligned}
\]
Consequently the truncated joint inequality gives
\[
 (F_{q,\lambda}^{\tau})'+c_q M^{-2q/[3(q-1)]}
       (F_{q,\lambda}^{\tau})^{1+2/[3(q-1)]}\le0.
\]
This holds in distributions, and hence for the nonincreasing
representative of $F_{q,\lambda}^{\tau}$; downward jumps only strengthen the
comparison.  It follows that
\[
 F_{q,\lambda}^{\tau}(t)\le C_q M^q(t-s_\tau)^{-3(q-1)/2},\qquad t>s_\tau,
\]
independently of $\lambda,\tau$ and of the size of
$F_{q,\lambda}^{\tau}(s_\tau)$.  Now monotone convergence removes $\lambda$.

For $t\ge T_0+1$, $t-s_\tau\ge\frac34(t-T_0)$, giving
\[
 \|n^\tau(t)\|_{L^q}
 \le C_qM(t-T_0)^{-\frac32(1-1/q)},\qquad q=2,6,
\]
with $C_q$ and $T_0$ independent of $\tau$. These bounds will pass to the final density $n$.

The weighted starting integrals are now finite at almost every
late time. Monotone convergence and Fatou's lemma therefore pass the
truncated energy inequality to
\begin{equation}\label{eq:eventual-weighted-density}
 \begin{aligned}
 &\int_{\mathbb H^3}(n^\tau(t))^q\rho_q(c^\tau(t))
 +\frac{2(q-1)}q\int_s^t\|\nabla(n^\tau)^{q/2}\|_{L^2}^2\\
 &\qquad\leq
 \int_{\mathbb H^3}(n^\tau(s))^q\rho_q(c^\tau(s)).
 \end{aligned}
\end{equation}
This also holds for almost every pair $T_0<s<t$ whenever its
right-hand side is finite. In particular, the $q=2$ inequality gives
\begin{equation}\label{eq:eventual-density-tail}
 \int_t^\infty\|\nabla n^\tau\|_{L^2}^2
 \leq\int_{\mathbb H^3}(n^\tau(t))^2\rho_2(c^\tau(t))
 \le C M^2(t-T_0)^{-3/2}.
\end{equation}

Finally, let $\tau_j\downarrow0$ along the selection of
Lemma~\ref{lem:ev-selection}, with $n^{\tau_j}\to n$ locally strongly.
Global Fatou lower semicontinuity and weak lower semicontinuity
of the gradient pass these $\tau$-uniform estimates to the final
limit, proving \eqref{eq:ev-density-decay} and
\eqref{eq:ev-density-tail} for $n$.

The tail estimate, first obtained for almost every starting time,
extends to every $t\geq T_0+1$ by continuity of the integral on the
left and of its upper bound. No convergence of a possibly unbounded
starting value is required.
The compactness, energy inequality, and suitability from
Lemma~\ref{lem:ev-selection} are retained along this same diagonal
subsequence.
\end{proof}

\subsection{An integrable effective force on the half-space}

The density dissipation tail has two consequences for the velocity:
it makes the effective force integrable in time, and it bounds the
kinetic energy on an infinite time interval. Here boundedness of
$\phi$ is needed for the first time. Together with the standing
gradient bound, we now assume
$\phi\in W^{1,\infty}(\mathbb H^3)$.

\begin{lemma}[Time-integrable density gradient and effective force]
\label{lem:ev-force-tail}
Assume additionally that $\phi\in L^\infty(\mathbb H^3)$, and let
$(n,c,u,p)$ be the solution selected in
Lemma~\ref{lem:ev-selection}. For $R\geq1$,
\begin{equation}\label{eq:ev-gradient-L1-tail}
\int_{T_0+R}^{\infty}\|\nabla n(t)\|_{L^2}\,dt
\leq C M R^{-1/4}.
\end{equation}
In particular, with
\begin{equation}\label{eq:ev-effective-force}
g:=\mathbb P_+(\phi\nabla n),
\end{equation}
the velocity equation on $(T_0+1,\infty)$ can be written as
\begin{equation}\label{eq:ev-projected-NS}
u_t+Au+\mathbb P_+(u\cdot\nabla u)=g,
\end{equation}
where
\begin{equation}\label{eq:ev-force-integrability}
g\in L^1(T_0+1,\infty;L^2_\sigma(\mathbb H^3))
\cap L^2(T_0+1,\infty;L^2_\sigma(\mathbb H^3)).
\end{equation}
There is a good time $S>T_0+1$ such that
\begin{equation}\label{eq:ev-velocity-finite-energy}
\operatorname*{ess\,sup}_{t\geq S}\|u(t)\|_{L^2}^2
+\int_S^\infty\|\nabla u(t)\|_{L^2}^2\,dt<\infty.
\end{equation}
\end{lemma}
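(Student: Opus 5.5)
The proof has three parts: the time tail of $\|\nabla n\|_{L^2}$, the rewriting of the force, and the kinetic energy on $(S,\infty)$.

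\emph{Step 1: the $L^1$ tail of the density gradient.} We use a dyadic decomposition of the tail estimate \eqref{eq:ev-density-tail} from Lemma~\ref{lem:ev-scalar-decay}. For $k\ge0$ set $I_k=(T_0+2^kR,\,T_0+2^{k+1}R)$. Cauchy--Schwarz and \eqref{eq:ev-density-tail} give
\[
\int_{I_k}\|\nabla n\|_{L^2}\le (2^kR)^{1/2}\Bigl(\int_{T_0+2^kR}^\infty\|\nabla n\|_{L^2}^2\Bigr)^{1/2}
\le C(2^kR)^{1/2}M(2^kR)^{-3/4}=CM(2^kR)^{-1/4}.
\]
Summing this geometric series over $k$ yields \eqref{eq:ev-gradient-L1-tail}. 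On the initial interval $(T_0+1,T_0+2)$, Cauchy--Schwarz together with the case $t=T_0+1$ of \eqref{eq:ev-density-tail} gives a finite $L^1$ bound. Hence $\nabla n\in L^1\cap L^2(T_0+1,\infty;L^2)$.

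\emph{Step 2: the effective force.} We write $-n\nabla\phi=-\nabla(n\phi)+\phi\nabla n$. On $(T_0+1,\infty)$ we have $n\in L^\infty_tL^2_x$ by \eqref{eq:ev-density-decay}, so $n\phi\in L^2$ and the gradient $\nabla(n\phi)$ can be absorbed into the pressure. The projection $\mathbb P_+$ is bounded on $L^2(\mathbb H^3)$, so
\[
\|g(t)\|_{L^2}\le\|\phi\|_{L^\infty}\|\nabla n(t)\|_{L^2},
\]
and Step 1 gives \eqref{eq:ev-force-integrability}.

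To identify \eqref{eq:ev-projected-NS} in the weak sense, we test the distributional momentum equation with smooth compactly supported solenoidal fields on $\mathbb H^3\times(T_0+1,\infty)$. The terms $p$ and $n\phi$ disappear because $\int\nabla(\cdot)\cdot\varphi=0$. The resulting weak Leray form holds with force $\phi\nabla n$, and projecting onto $L^2_\sigma$ gives $g$. This step is routine.

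\emph{Step 3: the kinetic energy.} We choose a good time $S>T_0+1$ from the full-measure set of starting times at which \eqref{eq:eventual-global-kinetic} holds. From $s=S$ and any $t\ge S$, the force work is
\[
-\int_S^t\int n\nabla\phi\cdot u=\int_S^t\int\phi\nabla n\cdot u.
\]
This identity follows by integration by parts, since $\nabla\cdot u=0$ and $u|_\Gamma=0$. It is justified with the cutoff $\chi_R$: the error term involves $n\phi\,u\cdot\nabla\chi_R$, which tends to zero because $n\in L^2$, $u\in L^2$ and $|\nabla\chi_R|\le C/R$. The force work is then bounded by
\[
\|\phi\|_{L^\infty}\int_S^t\|\nabla n\|_{L^2}\|u\|_{L^2}\le \|\phi\|_{L^\infty}\,\|\nabla n\|_{L^1(S,\infty;L^2)}\,\sup_{(S,t)}\|u\|_{L^2}.
\]
Let $Y(t)=\operatorname{ess\,sup}_{(S,t)}\|u\|_{L^2}$. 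The energy inequality gives
\[
\tfrac12Y^2\le\tfrac12\|u(S)\|_{L^2}^2+KY
\]
with $K$ independent of $t$, which bounds $Y$ uniformly. Returning to the inequality then bounds $\int_S^\infty\|\nabla u\|_{L^2}^2$, proving \eqref{eq:ev-velocity-finite-energy}.

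The main obstacle is Step 3, specifically justifying the integration by parts of the force work globally on $\mathbb H^3$ and working only with the weak energy inequality at admissible times. The cutoff argument is what we try first. Steps 1 and 2 are direct.
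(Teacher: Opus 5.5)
Your proposal is correct and follows essentially the same route as the paper. It uses the dyadic Cauchy--Schwarz argument on \eqref{eq:ev-density-tail}, the splitting $-n\nabla\phi=-\nabla(n\phi)+\phi\nabla n$ with $\|g(t)\|_{L^2}\le\|\phi\|_{L^\infty}\|\nabla n(t)\|_{L^2}$, and, from a good starting time $S$, a cutoff-justified rewriting of the force work followed by solving the quadratic inequality for $\operatorname*{ess\,sup}_{(S,t)}\|u\|_{L^2}$. Your identification of the projected equation by testing with solenoidal fields is a slightly more explicit version of the paper's remark that $\mathbb P_+$ annihilates $\nabla(n\phi)$, and the finiteness of that essential supremum on each bounded interval, which the quadratic-inequality step needs, comes from the global bound in Definition~\ref{def:suitable-half-space}(i).
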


\begin{proof}
By \eqref{eq:ev-density-tail} and Cauchy--Schwarz on the successive
intervals $(T_0+2^jR,T_0+2^{j+1}R)$,
\[
\begin{aligned}
\int_{T_0+R}^{\infty}\|\nabla n\|_{L^2}
&\leq\sum_{j=0}^\infty (2^jR)^{1/2}
\left(\int_{T_0+2^jR}^\infty\|\nabla n\|_{L^2}^2\right)^{1/2}\\
&\leq C M R^{-1/4}\sum_{j=0}^\infty2^{-j/4}.
\end{aligned}
\]
This proves \eqref{eq:ev-gradient-L1-tail}. The identity
\[
-n\nabla\phi=-\nabla(n\phi)+\phi\nabla n
\]
holds in distributions. Since $n(t)\in H^1(\mathbb H^3)$ for almost every
late time and $\phi\in W^{1,\infty}(\mathbb H^3)$, we have $n(t)\phi\in H^1(\mathbb H^3)$.
The Helmholtz projection therefore annihilates $\nabla(n\phi)$.
Its $L^2$ boundedness gives
\[
\|g(t)\|_{L^2}\leq\|\phi\|_{L^\infty}\|\nabla n(t)\|_{L^2}.
\]
Together with \eqref{eq:ev-density-tail} this proves
\eqref{eq:ev-force-integrability}. The gradient $\nabla(n\phi)$ disappears only in the projected
equation; the pressure in the suitable quadruple remains $p$.

For almost every pair of admissible times the global kinetic
energy inequality from Lemma~\ref{lem:ev-selection} becomes
\begin{equation}\label{eq:ev-kinetic-g}
\frac12\|u(t)\|_{L^2}^2+
\int_s^t\|\nabla u\|_{L^2}^2
\leq\frac12\|u(s)\|_{L^2}^2+\int_s^t(g,u)_{L^2}.
\end{equation}
Indeed $u\in H^1_0\cap L^2_\sigma$ implies
$-\int n\nabla\phi\cdot u=\int\phi\nabla n\cdot u$.
There is no boundary term, and a cutoff at infinity is justified
by $n,u\in L^2$ and $\phi\in W^{1,\infty}$.
Fix a good $S>T_0+1$. Since $g\in L^1_tL^2_x$, the energy
inequality on any finite interval yields
\[
\operatorname*{ess\,sup}_{S<t<B}\|u(t)\|_{L^2}
\leq \|u(S)\|_{L^2}+2\int_S^\infty\|g(t)\|_{L^2}\,dt.
\]
This follows by solving the quadratic inequality for the essential
supremum of the $L^2$ norm.
Return to \eqref{eq:ev-kinetic-g} and let $B\to\infty$ to obtain
\eqref{eq:ev-velocity-finite-energy}.
This step uses the bounded potential, not a Poincar\'e inequality
on the unbounded domain.
\end{proof}

\subsection{Restarting the velocity as a global strong solution}

\begin{lemma}[Eventual strong regularity of the velocity]
\label{lem:ev-velocity}
For the selected solution there is $t_*>S$ such that
\begin{equation}\label{eq:ev-velocity-strong}
\begin{aligned}
&u\in C([t_*,\infty);H^1_0(\mathbb H^3)\cap L^2_\sigma(\mathbb H^3)),\\
\sup_{t\geq t_*}&\|\nabla u(t)\|_{L^2}^2
+\int_{t_*}^\infty\|Au(t)\|_{L^2}^2\,dt<\infty.
\end{aligned}
\end{equation}
In particular $u\in L^\infty(t_*,\infty;L^6(\mathbb H^3))$.
\end{lemma}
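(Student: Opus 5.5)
The plan is a small-enstrophy restart followed by weak--strong uniqueness. By \eqref{eq:ev-velocity-finite-energy}, $\int_S^\infty\|\nabla u\|_{L^2}^2\,dt<\infty$. Hence for any $\epsilon_*>0$ there are good times $t_*>S$, taken from the full-measure set where \eqref{eq:eventual-global-kinetic} holds, such that $u(t_*)\in H^1_0\cap L^2_\sigma$ and
\[
\|u(t_*)\|_{L^2}\|\nabla u(t_*)\|_{L^2}+\int_{t_*}^\infty\|g\|_{L^2}\,dt+\int_{t_*}^\infty\|g\|_{L^2}^2\,dt\le\epsilon_* .
\]
The $L^2$ norm is bounded uniformly on $[S,\infty)$, and the tails of $g$ are small by \eqref{eq:ev-force-integrability}. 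Starting from $u(t_*)$, I would solve the projected equation \eqref{eq:ev-projected-NS} with the \emph{fixed} force $g$, using a Galerkin or Stokes-semigroup fixed-point argument for the half-space Stokes operator $A$. This gives a local strong solution $v$ in $C([t_*,T);D(A^{1/2}))\cap L^2(t_*,T;D(A))$.

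The key a priori estimate comes from testing with $Av$. Using $\|\nabla v\|_{L^2}=\|A^{1/2}v\|_{L^2}$, the half-space bound $\|\nabla^2 v\|_{L^2}\le C\|Av\|_{L^2}$ from \eqref{eq:stokes-elliptic-graph}, and the Gagliardo--Nirenberg inequality, I get
\[
\frac{d}{dt}\|\nabla v\|_{L^2}^2+\|Av\|_{L^2}^2\le C\|v\|_{L^2}^{1/2}\|\nabla v\|_{L^2}^{3/2}\|Av\|_{L^2}^{3/2}\cdot\|\nabla v\|_{L^2}^{-1/2}\cdots+2\|g\|_{L^2}^2,
\]
and more concretely $\frac{d}{dt}\|\nabla v\|^2+\frac12\|Av\|^2\le C\|v\|_{L^2}^2\|\nabla v\|_{L^2}^6+2\|g\|^2$. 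The energy inequality for $v$ controls $\|v\|_{L^2}$ and $\int\|\nabla v\|^2$ uniformly in time, because $g\in L^1_tL^2_x$. Combined with the smallness of $\|v\|_{L^2}\|\nabla v\|_{L^2}$ at $t_*$, this yields by a continuity argument a global bound on $\sup_t\|\nabla v\|_{L^2}$ and $\int\|Av\|^2$. Alternatively, Gronwall can be applied with the integrable coefficient $\|v\|^2\|\nabla v\|^4\le C\|\nabla v\|^2\cdot\sup(\|v\|^2\|\nabla v\|^2)$. Either way the strong solution is global on $[t_*,\infty)$ with the bounds in \eqref{eq:ev-velocity-strong}, and continuity in $H^1_0$ follows from the Lions--Magenes lemma.

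Next I identify $u=v$ on $[t_*,\infty)$. The suitable solution $u$ is a Leray--Hopf-type weak solution of \eqref{eq:ev-projected-NS} with the same force $g$. It satisfies the energy inequality \eqref{eq:ev-kinetic-g} from $t_*$, which is why $t_*$ is chosen among good starting times, and it attains $u(t_*)$ strongly there. I would then run the standard Serrin--Prodi weak--strong uniqueness argument. Expand $\|u-v\|^2$ using the energy inequality for $u$, the energy equality for $v$, and the weak formulation tested with $v$. This requires $\partial_t v\in L^2L^2$ and $v\in L^2H^2$ to justify $v$ as a test field, and the force terms cancel because the force is identical. The trilinear remainder $\int ((u-v)\cdot\nabla v)\cdot(u-v)$ is bounded by $\|w\|^{1/2}\|\nabla w\|^{3/2}\|\nabla v\|_{L^2}^{1/2}\|Av\|^{1/2}$-type terms, which Gronwall closes since $v\in L^\infty H^1\cap L^2H^2\subset L^4L^\infty$-admissible classes. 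Finally, $L^\infty_tL^6_x$ follows from Sobolev embedding.

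The main obstacle is this last uniqueness step. The weak solution was built from the original force $-n\nabla\phi$, while the strong solution uses $g$, so I must check carefully that they are the same tested distribution on solenoidal $H^1_0$ fields. This uses $n(t)\phi\in H^1$ and the vanishing of $\int\nabla(n\phi)\cdot\varphi$ for $\varphi\in H^1_0\cap L^2_\sigma$, as noted in Lemma~\ref{lem:ev-force-tail}. I must also check that \eqref{eq:eventual-global-kinetic} holds from the particular $t_*$ with equality of the starting data.
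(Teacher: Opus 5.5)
Your route is the paper's. You restart at a good time $t_*$ with small enstrophy, solve the projected equation with the fixed force $g$, and close a first-exit argument for $\|\nabla v\|_{L^2}$. You then identify $u=v$ by weak--strong uniqueness for the forced Navier--Stokes equation, using the energy inequality from $t_*$. The force identification you flag is exactly what Lemma~\ref{lem:ev-force-tail} supplies.

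The step that fails as written is your key a priori bound. The estimate
$|\int_{\mathbb H^3}(v\cdot\nabla v)\cdot Av|\le\frac14\|Av\|_{L^2}^2+C\|v\|_{L^2}^2\|\nabla v\|_{L^2}^6$
behind your displayed differential inequality is false. Under $v\mapsto\mu v(\lambda\,\cdot)$, which preserves $\mathbb H^3$ and $D(A)$, the left side scales like $\mu^3$. Choosing $\lambda\sim\mu^{6/7}$ makes the right side $O(\mu^{20/7})$, so the bound breaks down for large amplitudes. Your Gronwall alternative with coefficient $\|v\|^2\|\nabla v\|^4$ therefore has no basis. The same homogeneity slip occurs in your uniqueness bound $\|w\|^{1/2}\|\nabla w\|^{3/2}\|\nabla v\|^{1/2}\|Av\|^{1/2}$.

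The repair is immediate. The paper uses
\[
\Bigl|\int_{\mathbb H^3}(v\cdot\nabla v)\cdot Av\Bigr|
\le\|v\|_{L^3}\|\nabla v\|_{L^6}\|Av\|_{L^2}
\le C\|v\|_{L^2}^{1/2}\|\nabla v\|_{L^2}^{1/2}\|Av\|_{L^2}^2 .
\]
You already have $\sup_t\|v(t)\|_{L^2}\le C$, independently of $t_*$. So it suffices to pick $t_*$ with $\|\nabla u(t_*)\|_{L^2}^2+2\int_{t_*}^\infty\|g\|_{L^2}^2\le\eta/2$, where $C\eta^{1/4}\le1/4$.

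As long as $\|\nabla v\|_{L^2}^2\le\eta$, the nonlinear term is absorbed, and integration gives $\|\nabla v(t)\|_{L^2}^2+\int_{t_*}^t\|Av\|_{L^2}^2\le\eta/2$. This excludes a first exit. Neither smallness of $\|u(t_*)\|_{L^2}\|\nabla u(t_*)\|_{L^2}$ nor a Gronwall step is needed.

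Alternatively, the bound $\frac14\|Av\|_{L^2}^2+C\|\nabla v\|_{L^2}^6$ is correct. On $\{\|\nabla v\|_{L^2}^2\le2\eta\}$ its nonlinear part is at most $C\eta^2\|\nabla v\|_{L^2}^2$. Its time integral is controlled by the uniform energy bound, so Gronwall closes the same continuity argument.

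For uniqueness, use
$|\int(w\cdot\nabla v)\cdot w|\le C\|w\|_{L^2}\|\nabla w\|_{L^2}\|\nabla v\|_{L^2}^{1/2}\|Av\|_{L^2}^{1/2}$.
After Young's inequality, its Gronwall coefficient $\|\nabla v\|_{L^2}\|Av\|_{L^2}$ is integrable on $(t_*,\infty)$. Alternatively, use the paper's $C\|v\|_{L^6}\|w\|_{L^2}^{1/2}\|\nabla w\|_{L^2}^{3/2}$ together with $v\in L^\infty L^6$.
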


\begin{proof}
Keep the force $g$ from \eqref{eq:ev-effective-force} fixed.
For a good restart time $s\geq S$ with
$u(s)\in H^1_0\cap L^2_\sigma$, let $v$ be the local strong solution of
\[
 v_t+Av+\mathbb P_+(v\cdot\nabla v)=g,\qquad v(s)=u(s).
\]
The energy argument in \eqref{eq:ev-kinetic-g} gives
\[
 \|v(t)\|_{L^2}
 \leq\operatorname*{ess\,sup}_{r\geq S}\|u(r)\|_{L^2}
       +2\int_S^\infty\|g(r)\|_{L^2}\,dr\leq C
\]
throughout its existence interval, with a constant independent of
the restart time.
Local strong solvability and the $H^1$ continuation criterion
follow from the half-space Stokes estimates
\cite{DHP2001,DHP2003}; here $g\in L^2_{\rm loc}L^2$.

The homogeneous Sobolev and
Stokes elliptic estimates give
\[
\|v\|_{L^3}\leq C\|v\|_{L^2}^{1/2}
\|\nabla v\|_{L^2}^{1/2},
\qquad
\|\nabla v\|_{L^6}\leq C\|\nabla^2v\|_{L^2}
\leq C\|Av\|_{L^2}.
\]
Testing by $Av$ thus gives
\begin{equation}\label{eq:ev-enstrophy-bootstrap}
\begin{aligned}
\frac12\frac{d}{dt}\|\nabla v\|_{L^2}^2
+\|Av\|_{L^2}^2
\leq C\|\nabla v\|_{L^2}^{1/2}\|Av\|_{L^2}^2+\frac14\|Av\|_{L^2}^2+\|g\|_{L^2}^2.
\end{aligned}
\end{equation}
Choose $\eta>0$ so small that $C\eta^{1/4}\leq1/4$.
Since the two integrals in
\eqref{eq:ev-force-integrability} and
\eqref{eq:ev-velocity-finite-energy} have vanishing tails,
one can select a good time $t_*>S$ for the kinetic energy
inequality such that $u(t_*)\in H^1_0\cap L^2_\sigma$ and
\begin{equation}\label{eq:ev-restart-smallness}
\|\nabla u(t_*)\|_{L^2}^2
+2\int_{t_*}^\infty\|g(t)\|_{L^2}^2\,dt\leq\frac\eta2.
\end{equation}

As long as $\|\nabla v(t)\|_{L^2}^2\leq\eta$,
\eqref{eq:ev-enstrophy-bootstrap} implies
\[
\|\nabla v(t)\|_{L^2}^2+\int_{t_*}^t\|Av\|_{L^2}^2
\leq\|\nabla v(t_*)\|_{L^2}^2
+2\int_{t_*}^t\|g\|_{L^2}^2\leq\frac\eta2.
\]
Continuity of $t\mapsto\|\nabla v(t)\|_{L^2}^2$ excludes a first exit
from $\|\nabla v(t)\|_{L^2}^2<\eta$.

The continuation criterion then yields a global strong solution
$v$ on $[t_*,\infty)$.

To identify $v$ with the selected weak velocity, test the difference of their equations.
The global energy inequality starting at the good time $t_*$,
the weak momentum identity and the strong equation for $v$
give the usual difference estimate, with the identical force
$g$ canceling. Its nonlinear term satisfies
\[
\begin{aligned}
\left|\int_{\mathbb H^3}((u-v)\cdot\nabla)v\cdot(u-v)\right|
&=\left|\int_{\mathbb H^3}((u-v)\cdot\nabla)(u-v)\cdot v\right|\\
&\leq C\|v\|_{L^6}\|u-v\|_{L^2}^{1/2}
\|\nabla(u-v)\|_{L^2}^{3/2}\\
&\leq\frac12\|\nabla(u-v)\|_{L^2}^2
+C\|v\|_{L^6}^4\|u-v\|_{L^2}^2.
\end{aligned}
\]
These tests are justified by time regularization and approximation
of the strong solution in the solenoidal energy space.
Since $v\in L^\infty L^6$ on each finite interval, Gronwall's
inequality and $u(t_*)=v(t_*)$ imply $u=v$.
This use of weak--strong uniqueness is for a forced
Navier--Stokes equation with a fixed force and does not assume
weak--strong uniqueness of the coupled scalar--fluid system.
The preceding estimates prove \eqref{eq:ev-velocity-strong}.
\end{proof}

\subsection{Uniform bounds and regularity up to the flat boundary}

\begin{lemma}[The final scalar--fluid regularity bootstrap]
\label{lem:ev-boundary-bootstrap}
There is $T_*>t_*$ such that
\begin{equation}\label{eq:ev-uniform-boundedness}
\sup_{t\geq T_*}
\bigl(\|n(t)\|_{L^\infty}
+\|c(t)\|_{W^{1,\infty}}+\|u(t)\|_{L^\infty}\bigr)<\infty.
\end{equation}
If, in addition,
$\nabla\phi\in C^{0,\beta_0}(\overline{\mathbb H^3})$ for some
$\beta_0\in(0,1)$, then after a further increase of $T_*$, for some
$\alpha\in(0,\beta_0)$,
\begin{equation}\label{eq:ev-classical-tail}
n,c,u\in C^{2+\alpha,1+\alpha/2}_{\rm loc}
\bigl(\overline{\mathbb H^3}\times(T_*,\infty)\bigr).
\end{equation}
The original equations and boundary conditions hold classically
there, with the corresponding local regularity of $\nabla p$.
\end{lemma}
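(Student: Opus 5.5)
The plan is to bootstrap from the bounds already available after $t_*$. From Lemma~\ref{lem:ev-velocity} we know $u\in L^\infty(t_*,\infty;H^1_0\cap L^6)$ and $Au\in L^2(t_*,\infty;L^2)$. Lemma~\ref{lem:ev-signal} gives $\|c(t)\|_{L^\infty}\le CLt^{-3/2}$. Lemma~\ref{lem:ev-scalar-decay} gives uniform $L^2$ and $L^6$ bounds on $n$ for $t\ge T_0+1$.

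\textbf{Step 1: the signal gradient.} We use the Neumann mild formula on unit windows $[t-1,t]$:
\[
c(t)=S_N(1)c(t-1)-\int_{t-1}^tS_N(t-s)\bigl(u\cdot\nabla c+nc\bigr)(s)\,ds .
\]
We first bound $\nabla c$ in $L^\infty_tL^4_x$ on late windows. The transport term is controlled by $\|u\|_{L^6}\|\nabla c\|_{L^{12}}$, or more simply by writing it as $\nabla\cdot(uc)$ and using $\mathcal T_N$ with $\|uc\|_{L^6}\le \|u\|_{L^6}\|c\|_{L^\infty}$. The reaction term is controlled by $\|n\|_{L^6}\|c\|_{L^\infty}$. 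A fractional Gronwall argument with singular kernels $(t-s)^{-1/2}$ and $(t-s)^{-1/2-3/2(1/6-1/q)}$ then gives a uniform bound. Repeating the same formula with $L^6\to L^\infty$ kernels of order $(t-s)^{-3/4}$, now using the $L^6$ bound on $\nabla c$, yields $\sup_t\|\nabla c\|_{L^\infty}<\infty$.

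\textbf{Step 2: the density.} We write
\[
n(t)=S_N(1)n(t-1)-\int_{t-1}^t\mathcal T_N(t-s)\bigl(n(u+\nabla c)\bigr)(s)\,ds .
\]
The flux has zero normal component on $\Gamma$, so $\mathcal T_N$ applies. With $n\in L^6$, $u\in L^6$ and $\nabla c\in L^\infty$, the flux lies in $L^3$. One application of $\mathcal T_N$ from $L^3$ to $L^\infty$ has kernel $(t-s)^{-1/2-1/2}$, which is borderline and not integrable. We therefore proceed in two stages: first pass $L^3\to L^{12}$, then use $L^{12}$ products (with $u$ improved to $L^\infty$ in Step~3, or by iterating the $L^p$ gain) to reach $L^\infty$. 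Justifying these mild formulas requires the weak formulation with Neumann tests and the uniqueness of the linear Neumann problem in the relevant class; we cite the reflection argument.

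\textbf{Step 3: the velocity, which we expect to be the main obstacle.} The difficulty is uniformity in time on the unbounded half-space. We plan to use the Stokes mild formula on unit windows together with the projected equation \eqref{eq:ev-projected-NS}. The force $g=\mathbb P_+(\phi\nabla n)$ lies in $L^2_tL^2_x$, and $\mathbb P_+(u\cdot\nabla u)$ is bounded by $\|u\|_{L^6}\|\nabla u\|_{L^3}\le C\|\nabla u\|_{L^2}^{1/2}\|Au\|_{L^2}^{1/2}$. This places $u$ in $L^\infty(H^{7/4-\epsilon})$ uniformly, provided $\nabla n$ is uniformly bounded in $L^2$ on late windows. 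That bound in turn follows from the $L^2$ energy estimate of the density equation on unit windows, using the bounds from Steps~1--2. Alternatively, we can take the force in its original form $-n\nabla\phi\in L^\infty_tL^6_x$, which gives $D(A^{\theta})$ bounds with $\theta<1$ via analyticity of $e^{-tA}$ in $L^6_\sigma$, and hence $u\in L^\infty$ by embedding. We prefer this second route.

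\textbf{Step 4: classical regularity.} With $n,\nabla c,u\in L^\infty$ after $T_*$, we run Schauder theory locally up to $\Gamma$. Even reflection makes the scalar Neumann problems interior problems. For the velocity we use half-space Stokes Schauder estimates with force $n\nabla\phi$, which is $C^{\alpha}$ once $n\in C^\alpha$ (from De Giorgi--Nash or Proposition~\ref{main1}) and $\nabla\phi\in C^{0,\beta_0}$. The bootstrap proceeds in order: $u$ and $\nabla c$ in $C^{\alpha}$, then $n$ in $C^{1+\alpha}$, then $c$ and $u$ in $C^{2+\alpha,1+\alpha/2}$, then $n$ in $C^{2+\alpha,1+\alpha/2}$. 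Each step loses a little Hölder exponent, giving some $\alpha<\beta_0$, and the classical form of the equations and boundary conditions follows.
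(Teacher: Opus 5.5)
Your Step 3 does not actually deliver $u\in L^\infty$, and this is the step you yourself called the main obstacle.

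\emph{The route you prefer.} Analyticity of $e^{-tA}$ on $L^6_\sigma$ only treats the force $-n\nabla\phi$. The convective term $\mathbb P_+(u\cdot\nabla u)$ is not known to lie in $L^6$; that would already require $u\in L^\infty$ and $\nabla u\in L^6$. So the convection is simply unaccounted for.

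\emph{Your first route.} It does not close either. From $\nabla u\in L^\infty_tL^2_x$ and $Au\in L^2_tL^2_x$ you only get $\mathbb P_+(u\cdot\nabla u)\in L^4_tL^2_x$ on unit windows. This yields $D(A^\theta)$ bounds only for $\theta<3/4$, i.e.\ $H^{3/2-}$, not $H^{7/4-\epsilon}$, and $H^{3/2-}$ does not embed into $L^\infty$ in three dimensions. Equivalently, the $L^2\to L^\infty$ Stokes kernel $r^{-3/4}$ paired with an $L^4_t$ source diverges logarithmically. Moreover, the $L^2$ energy estimate for $n$ gives only $\nabla n\in L^2_tL^2_x$, not the $L^\infty_tL^2_x$ bound your treatment of $g$ needs.

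\emph{The missing idea.} Keep the convection in divergence form and apply the Stokes divergence smoothing $\|e^{-rA}\mathbb P_+\nabla\cdot\mathcal F\|_{L^b}\le Cr^{-\frac12-\frac32(\frac1a-\frac1b)}\|\mathcal F\|_{L^a}$ twice on sliding unit windows. First, $u\otimes u\in L^\infty_tL^3_x$ maps into $L^{12}$ with kernel $r^{-7/8}$. Then $u\otimes u\in L^\infty_tL^6_x$ maps into $L^\infty$ with kernel $r^{-3/4}$. The force $n\nabla\phi\in L^\infty_tL^6_x$ contributes the integrable kernels $r^{-1/8}$ and $r^{-1/4}$. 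This uses nothing about the scalars beyond the $L^6$ bound on $n$ from Lemma~\ref{lem:ev-scalar-decay}, which is why the paper bounds the velocity first.

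The last stage of your Step 1 hits exactly the borderline you correctly flagged in Step 2. With $u,\nabla c\in L^6$ the transport $u\cdot\nabla c$ lies only in $L^3$, and the $L^3\to L^\infty$ gradient kernel is $(t-s)^{-1}$. The ``simpler'' divergence form $\nabla\cdot(uc)$ is worse, since $\nabla\mathcal T_N$ puts two derivatives on the kernel.

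This is repairable without a bounded velocity. The estimate $\|u\cdot\nabla c\|_{L^p}\le\|u\|_{L^6}\|\nabla c\|_{L^q}$ with $1/p=1/6+1/q$ gives the kernel $(t-s)^{-3/4}$ for every $q$. So you can close the Volterra inequality in $L^4$, where $\nabla c\in L^4_{t,x}$ supplies the a priori integrability. You then raise the exponent in non-closed steps, e.g.\ $L^4\to L^{10}\to L^\infty$ with kernels $(t-s)^{-39/40}$ and $(t-s)^{-9/10}$. Once $u$ is bounded, though, the paper's weighted supremum $\sup_{0<r<h}r^{1/2}\|\nabla c(s+r)\|_{L^\infty}$ on short windows is simpler. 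The density then follows from one application of the $L^6\to L^\infty$ Neumann divergence kernel $(t-s)^{-3/4}$; your two-stage version also works.

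Finally, Step 4 skips what the classical velocity regularity needs. The Stokes Schauder estimate requires $u\cdot\nabla u\in C^{\alpha,\alpha/2}$, hence H\"older continuity of both $u$ and $\nabla u$. Your chain never supplies this; the paper gets it from the maximal-regularity bootstrap $W^{2,1}_2\to W^{2,1}_{10/3}\to W^{2,1}_8$. Also, the Stokes step cannot be run locally up to $\Gamma$: the nonlocal harmonic pressure defeats local boundary Schauder estimates in general. The paper therefore applies the global half-space H\"older estimate to the time-cutoff pair $(\chi u,\chi p)$, after splitting the source by the Helmholtz projection in H\"older norms.
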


\begin{proof}
Start after $\max\{t_*,T_0+1\}$. The preceding lemmas give
$u,n\in L^\infty_tL^6_x$. We first obtain global bounds for $u$,
then for $\nabla c$, and finally for $n$. All estimates use sliding
time intervals of a fixed length, so their waiting times are
independent of the spatial center. Local boundary estimates then
give the asserted classical regularity.

\emph{Step 1: bounded velocity.}
For $1<a\leq b\leq\infty$, in the ranges used below, the
half-space Stokes estimates are
\begin{align*}
\|e^{-rA}\mathbb P_+F\|_{L^b}
&\leq C r^{-\frac32(\frac1a-\frac1b)}\|F\|_{L^a},\\
\|e^{-rA}\mathbb P_+\nabla\cdot\mathcal F\|_{L^b}
&\leq C r^{-\frac12-\frac32(\frac1a-\frac1b)}
\|\mathcal F\|_{L^a},\qquad 0<r\leq1;
\end{align*}
the second estimate is understood by the Stokes divergence
solution operator. Its $L^a\to L^a$ bound follows by duality from
the gradient estimate for the Stokes semigroup; a semigroup
factorization and the homogeneous Sobolev estimates give the displayed
$L^a\to L^b$ bounds, including the exponents used here
\cite{DHP2001}.
Use the mild formula with the original force $-n\nabla\phi$.
On a sliding interval of length one, the mapping from
$u\otimes u\in L^\infty_tL^3_x$ to $L^{12}$ has kernel
$r^{-7/8}$, and the mapping from $n\nabla\phi\in L^\infty_tL^6_x$
has kernel $r^{-1/8}$. Therefore, after a unit delay,
\[
\begin{aligned}
\|u(s+1)\|_{L^{12}}
\leq C\bigl(&\|u\|_{L^\infty(s,s+1;L^6)}
+\|u\|_{L^\infty(s,s+1;L^6)}^2\\
&+\|\nabla\phi\|_{L^\infty}
\|n\|_{L^\infty(s,s+1;L^6)}\bigr).
\end{aligned}
\]
Apply the mild formula once more. Now $u\otimes u\in L^\infty L^6$,
and the two kernels for the $L^\infty$ estimate are $r^{-3/4}$
and $r^{-1/4}$. Both are integrable, so
\begin{equation}\label{eq:ev-u-infinity}
\begin{aligned}
\|u(s+2)\|_{L^\infty}
\leq C\bigl(&\|u\|_{L^\infty(s+1,s+2;L^{12})}
+\|u\|_{L^\infty(s+1,s+2;L^{12})}^2\\
&+\|\nabla\phi\|_{L^\infty}
\|n\|_{L^\infty(s+1,s+2;L^6)}\bigr).
\end{aligned}
\end{equation}
The two unit delays and the resulting bounds are uniform in $s$.

\emph{Step 2: bounded signal gradient.}
Use $\|c(t)\|_{L^\infty}\leq\|c_0\|_{L^\infty}$.
With $S_N$ the Neumann heat semigroup, the scalar mild formula on
$(s,s+h)$ gives
\[
c(s+r)=S_N(r)c(s)
-\int_0^r S_N(r-\sigma)
\bigl[u\cdot\nabla c+nc\bigr](s+\sigma)\,d\sigma.
\]
The gradient kernels for the two terms on the right are
$(r-\sigma)^{-1/2}$ on $L^\infty$ and
$(r-\sigma)^{-3/4}$ from $L^6$ to $L^\infty$.
Multiplying by $r^{1/2}$ and taking the supremum gives
\[
\begin{aligned}
 \sup_{0<r<h}r^{1/2}\|\nabla c(s+r)\|_{L^\infty}
 &\leq C\|c_0\|_{L^\infty}
 +Ch^{3/4}\|n\|_{L^\infty(s,s+h;L^6)}\|c_0\|_{L^\infty}\\
 &\quad+Ch^{1/2}\|u\|_{L^\infty(s,s+h;L^\infty)}
       \sup_{0<r<h}r^{1/2}\|\nabla c(s+r)\|_{L^\infty}.
\end{aligned}
\]
For completeness this estimate can first be applied to the
linear mild solution with bounded drift $u$ and given source
$-nc$. Its construction in the displayed weighted gradient
space follows by contraction for small $h$. The construction
is carried out simultaneously in
$C([s,s+h];L^2(\mathbb H^3))\cap
L^2(s,s+h;H^1(\mathbb H^3))$: here $c(s)\in L^2$ and
$nc\in L^\infty(s,s+h;L^2)$, and the $L^2$ energy estimate
is uniform along smooth approximations of the bounded drift.
Its equality with the weak concentration then follows from
the $L^2$ energy estimate for their difference. This justifies the
weighted gradient estimate without assuming it in advance for the
weak concentration. By the uniform velocity bound, choose
$h\in(0,1)$ so that the coefficient of the last supremum is at most
$1/2$ for every sufficiently late starting time $s$. Use sliding intervals of this
same length. This proves a uniform
$L^\infty$ bound for $\nabla c$ after another delay.

\emph{Step 3: bounded density.}
Write the density equation in flux form,
\[
n_t-\Delta n=-\nabla\cdot\bigl[n(u+\nabla c)\bigr].
\]
The flux has zero normal component, in the weak Neumann sense.
The Neumann divergence operator from
\eqref{eq:neumann-divergence-operator} maps $L^6$ to $L^\infty$
with kernel $r^{-3/4}$. Hence, on sliding intervals of the fixed length $h$,
\[
\begin{aligned}
\|n(s+h)\|_{L^\infty}
&\leq C h^{-1/4}\|n\|_{L^\infty(s,s+h;L^6)}\\
&\quad+C h^{1/4}\|n\|_{L^\infty(s,s+h;L^6)}
\sup_{s<t<s+h}\|u(t)+\nabla c(t)\|_{L^\infty}.
\end{aligned}
\]
The mild identity follows from the weak Neumann equation by
testing against the backward Neumann heat kernel; all fluxes
are in $L^\infty_tL^6_x$ on these intervals. This proves
\eqref{eq:ev-uniform-boundedness}.

\emph{Step 4: boundary H\"older regularity and classical equations.}
The concentration equation now has bounded right-hand side
$-u\cdot\nabla c-nc$. Local Neumann parabolic estimates give
$c\in W^{2,1}_r$ on each later cylinder meeting the boundary,
for every finite $r>1$. In particular, taking $r>5$ gives
H\"older continuity of $\nabla c$ on smaller closed cylinders.
These statements follow as well by even reflection of $c$.

For $n$, reflect the scalar evenly and the flux
$n(u+\nabla c)$ with even tangential and odd normal components.
The reflected equation has no measure supported on $\Gamma$,
by the weak no-flux identity. Its flux is bounded, and the heat
potential estimates imply local $C^{\alpha,\alpha/2}$ regularity
of $n$ for any fixed $\alpha\in(0,1)$ after shrinking cylinders.
For example, the spatial H\"older estimate for the gradient heat
potential uses the integrable kernel
$r^{-(1+\alpha)/2}$; the corresponding time-difference estimate
gives the exponent $\alpha/2$.

For the velocity, the strong estimates and boundedness of $u$ give
$u_t\in L^2$ on each finite strip, since $\nabla u,n\in L^2$ there.
Thus $u\in W^{2,1}_2$ and $\nabla u\in L^{10/3}$.
The source $-u\cdot\nabla u-n\nabla\phi$ consequently belongs to
$L^{10/3}$. After a temporal cutoff, Stokes maximal regularity gives
$u\in W^{2,1}_{10/3}$ on a smaller strip, hence $\nabla u\in L^{10}$.
Interpolation with the existing $L^2$ bound yields
\[
 \|\nabla u\|_{L^8}
 \le \|\nabla u\|_{L^2}^{1/16}
      \|\nabla u\|_{L^{10}}^{15/16}.
\]
A further Stokes estimate gives $u\in W^{2,1}_8$ and $\nabla p\in L^8$
\cite{DHP2001}. All norms here are on finite strips over the entire
half-space. The cutoff sources have the required integrability because
$u\in L^2\cap L^\infty$ and $n\in L^1\cap L^\infty$.
Parabolic embedding now gives H\"older continuity of $u$ and
$\nabla u$ up to $\Gamma$, with bounds independent of the spatial center.

Up to this point, only $\phi\in W^{1,\infty}$ has been used.
For the final Stokes Schauder step, now assume additionally that
$\nabla\phi\in C^{0,\beta_0}(\overline{\mathbb H^3})$.
Fix a compact time interval $I$ and a slightly larger interval $J$
in the bounded time tail.
Choose $\chi\in C_c^\infty(J)$ equal to one on $I$.
The pair $(\chi u,\chi p)$ solves the global half-space Stokes problem
with zero initial and boundary data and source
\[
 F=\chi(-u\cdot\nabla u-n\nabla\phi)+\chi'u
 \in L^\infty(J;L^2(\mathbb H^3))
       \cap C^{\beta,\beta/2}(\overline{\mathbb H^3}\times J)
\]
for some $\beta\in(0,\beta_0)$. Choose $\beta$ also smaller than
 the H\"older exponent already obtained for $n,u,\nabla u$. Then
\[
[n\nabla\phi]_{C^{\beta,\beta/2}}
\leq C\|\nabla\phi\|_{L^\infty}[n]_{C^{\beta,\beta/2}}
   +C\|n\|_{L^\infty}[\nabla\phi]_{C^{0,\beta}},
\]
where the norms are taken on the preceding space--time strip
(and spatially for $\nabla\phi$). Time independence of $\phi$
handles the time increments. This is the only step requiring
H\"older continuity of its gradient: boundedness of $\nabla\phi$
alone would not control the second term.
The $L^\infty_tL^2_x$ bound follows from boundedness of $u$ and the
uniform $L^2$ bounds for $u,\nabla u,n$.

The local Neumann elliptic Schauder estimate, combined with the global
$L^4$ Helmholtz estimate, gives, for $0<\alpha<1$,
\[
 \|\mathbb P_+F\|_{C^\alpha(\overline{\mathbb H^3})}
 +\|(I-\mathbb P_+)F\|_{C^\alpha(\overline{\mathbb H^3})}
 \le C_\alpha\bigl(\|F\|_{C^\alpha(\overline{\mathbb H^3})}
                       +\|F\|_{L^4}\bigr).
\]
Indeed, on unit half-balls the gradient of the Neumann potential is
controlled by the $C^\alpha$ norm of $F$ and its own local $L^4$ norm;
the latter is bounded by the global Helmholtz estimate. The same
argument applies on interior balls, with uniform constants.
For time differences, interpolation gives
\[
\begin{aligned}
 \|F(t)-F(s)\|_{C^\alpha}
 &\le C|t-s|^{(\beta-\alpha)/2},\\
 \|F(t)-F(s)\|_{L^4}
 &\le \|F(t)-F(s)\|_{L^\infty}^{1/2}
       \|F(t)-F(s)\|_{L^2}^{1/2}
 \le C|t-s|^{\beta/4}.
\end{aligned}
\]
Hence both projected and gradient parts belong to
$C^{\alpha,\alpha/2}$ if $0<\alpha<\beta/2$.
The projected source is divergence free and has zero normal trace.

Apply the global half-space Stokes H\"older estimate
\cite[Theorem~1.1]{Solonnikov2003Half} to $\mathbb P_+F$.
To justify its use without a prescribed decay rate at infinity,
approximate $F$ by smooth spatially compactly supported functions,
still zero near the initial endpoint, uniformly bounded in
$L^\infty_tL^2_x\cap C^{\beta,\beta/2}$ and converging in $L^4_{t,x}$.
Project these approximations after truncation; they then have zero
normal trace and algebraic spatial decay. The preceding estimates
bound their H\"older norms uniformly. With zero initial and boundary
data, the cited Schauder estimate contains only the source H\"older
seminorms, with no decay constant. Maximal $L^4$ regularity controls
the lower-order norms and identifies the limit with $\chi u$.
Local compactness, with a smaller positive exponent if necessary,
passes the Schauder bounds to this same solution. Adding back the
gradient part of $F$ recovers the original pressure gradient. Thus
\[
 u\in C^{2+\alpha,1+\alpha/2}_{\rm loc},
 \qquad \nabla p\in C^{\alpha,\alpha/2}_{\rm loc}
 \quad\text{on }\overline{\mathbb H^3}\times I.
\]
Using the global Stokes solution class here controls the
possible time-dependent harmonic pressure component.

The Neumann Schauder estimates applied to $c$ next give
$c\in C^{2+\alpha,1+\alpha/2}_{\rm loc}$. Finally apply the
scalar boundary estimates to
\[
n_t-\Delta n+(u+\nabla c)\cdot\nabla n=-n\Delta c,
\qquad\partial_\nu n=0.
\]
Its coefficients and zeroth-order right-hand side have the
required H\"older regularity. This gives the same regularity
for $n$. These are local estimates on time intervals separated
from their initial endpoints; no compatibility at the original
time $t=0$ is needed. The preceding uniform global bounds ensure
that a single waiting time works for every interior and boundary
point. This proves \eqref{eq:ev-classical-tail}.
\end{proof}

\begin{proof}[Proof of Theorem~\ref{thm:eventual-regularity}]
When $M>0$, Lemmas~\ref{lem:ev-selection} and
\ref{lem:ev-scalar-decay} select a global suitable weak solution of
Theorem~\ref{thm:half-space-existence} with the global kinetic-energy
inequality and the scalar decay estimates. Lemmas~\ref{lem:ev-force-tail} and \ref{lem:ev-velocity}
give a global strong velocity after a finite time.
Lemma~\ref{lem:ev-boundary-bootstrap} then proves the asserted
uniform bounds and, using the additional H\"older assumption in
\eqref{eq:eventual-potential-assumptions}, classical regularity up to
$\Gamma$. Increase the waiting time once more if necessary so that all
statements hold also at the left endpoint of the displayed
closed time interval.
If $M=0$, mass conservation and nonnegativity give $n=0$.
The selected velocity is then a Leray--Hopf solution of the
unforced half-space Navier--Stokes equations. The proof of
Lemma~\ref{lem:ev-velocity} applies with $g=0$, and
Lemma~\ref{lem:ev-boundary-bootstrap} applies to the remaining
linear concentration equation with $n=0$.
Finally, classical regularity implies regularity in the sense
of Definition~\ref{def:boundary-regular-point}; hence the full
singular set has no point at or after the resulting waiting time.
\end{proof}

\section{Large-time decay and Gaussian asymptotics}
\label{sec:large-time-asymptotics}

We prove Theorem~\ref{thm:large-time-asymptotics} for the solution
selected in Section~\ref{sec:eventual-regularity}. The argument has
three parts. First, we pass the scalar mass identities to this
solution and show that a nonzero signal retains positive mass at
every finite time. Next, the force estimates from the preceding
section give velocity decay, which in turn yields the heat-scale
scalar bounds. Finally, the mild formulas identify the Gaussian
profiles: the divergence terms are lower order, while absorption
changes the limiting mass of the signal.

We retain $M,L$ and the effective force $g$ from the preceding
section, and $G_N,L_\infty$ from
Theorem~\ref{thm:large-time-asymptotics}.
For the repeatedly used heat exponent, write
$\alpha_p=\frac32(1-1/p)$, with $1/\infty=0$.
Spatial norms and integrals are over $\mathbb H^3$ unless stated
otherwise. Constants may depend on the initial data and on $\phi$;
decay estimates are asserted after a sufficiently large fixed time.
No additional smallness or spatial moment assumption is needed.

\subsection{Global mass identities and survival of the signal}
\label{subsec:asym-mass}

\begin{lemma}[Mass identities and a positive signal mass]
\label{lem:asym-mass}
Let $(n,c,u,p)$ be the solution selected in
Lemma~\ref{lem:ev-selection}.
For the weakly $L^1$-continuous representatives of the scalar
variables, and every $t\geq0$,
\begin{equation}\label{eq:asym-mass-identities}
 \int_{\mathbb H^3}n(t)=M,\qquad
 \int_{\mathbb H^3}c(t)+\int_0^t\int_{\mathbb H^3}nc=L.
\end{equation}
In particular, $t\mapsto\int_{\mathbb H^3}c(t)$ is nonincreasing and locally absolutely
continuous, and
\begin{equation}\label{eq:asym-signal-mass-limit}
 L_\infty:=\lim_{t\to\infty}\int_{\mathbb H^3}c(t)
 =L-\int_0^\infty\int_{\mathbb H^3}nc\in[0,L].
\end{equation}
If $L>0$, then no finite-time loss of the entire signal mass
can occur: more precisely,
\begin{equation}\label{eq:asym-finite-time-positive-mass}
 \int_{\mathbb H^3}c(t)\geq L\exp\left(-\frac{M\|c_0\|_{L^\infty}}{L}t\right)>0,
 \qquad 0\leq t<\infty.
\end{equation}
If $L=0$, then $c=0$.
\end{lemma}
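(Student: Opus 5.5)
The plan is to prove the mass identities at the approximation level, pass them to the selected limit using the tightness and strong convergences from Lemma~\ref{lem:ev-selection}, and then handle positivity by a separate comparison argument.

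\textbf{Step 1: approximate identities.} By Proposition~\ref{prop:regularized-global}, every approximant satisfies
\[
\|n^{\varepsilon,\tau}(t)\|_{L^1}=\|n_0^\varepsilon\|_{L^1},
\qquad
\|c^{\varepsilon,\tau}(t)\|_{L^1}+\int_0^t\int h_\tau(n^{\varepsilon,\tau})c^{\varepsilon,\tau}=\|c_0^\varepsilon\|_{L^1}.
\]
Also $\|n_0^\varepsilon\|_{L^1}=M$ exactly, and $c_0^\varepsilon\to c_0$ in $L^1$.

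\textbf{Step 2: passage to the limit.} The exterior-cutoff computation in Step~2 of Lemma~\ref{lem:ev-selection} gives tightness of the density mass. The same computation, with $0\le c\le\|c_0\|_{L^\infty}$, the transport flux $cu$, and tightness of $c_0^\varepsilon$, gives tightness of the signal mass uniformly on $[0,T]$. Combined with the common time-slice convergences \eqref{eq:compactness-common-time-slices} (and their analogue for $c$), this yields strong $L^1(\mathbb H^3)$ convergence at almost every $t$.

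For the reaction term, $h_\tau(n)c\to nc$ in $L^1(\mathbb H^3\times(0,T))$. Locally this follows from the strong convergences already established. Globally it follows from the bound $h_\tau(n)c\le\|c_0\|_{L^\infty}n$ together with the density tightness.

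This gives \eqref{eq:asym-mass-identities} for almost every $t$. Weak continuity of the representatives, tested against $\zeta=\chi_R$ in the weak scalar identities with $R\to\infty$ using tightness, extends the identities to every $t$. Since $nc\ge0$ and $nc\in L^1_{\rm loc}([0,\infty);L^1)$, the function $t\mapsto\int c(t)$ is monotone and locally absolutely continuous. The limit in \eqref{eq:asym-signal-mass-limit} then follows by monotone convergence.

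\textbf{Step 3: positivity, the main obstacle.} The naive estimate $\int nc\le\|c(t)\|_{L^\infty}M$ does not bound the loss in terms of $\int c$. Instead I would compare $c$ with the passive solution $c_{\rm p}$ of
\[
\partial_t c_{\rm p}+u\cdot\nabla c_{\rm p}-\Delta c_{\rm p}=0,
\]
which has conserved mass $L$. The abstract suggests a relative-entropy comparison.

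First try this: the ratio $w=c/c_{\rm p}\in[0,1]$ satisfies a drift equation with absorption $-nw$. The quantity
\[
\Lambda(t)=-\int c_{\rm p}\log w
\]
then obeys $\Lambda'\le\int n\,c_{\rm p}\le M\|c_0\|_{L^\infty}$, because the drift-diffusion terms give a nonpositive contribution (a Fisher-type term), and $c_{\rm p}\le\|c_0\|_{L^\infty}$ while $\int n=M$. Jensen's inequality with respect to the probability measure $c_{\rm p}\,dx/L$ gives
\[
\log\frac{\int c}{L}\ge -\frac{\Lambda}{L}\ge-\frac{M\|c_0\|_{L^\infty}t}{L},
\]
which is exactly \eqref{eq:asym-finite-time-positive-mass}.

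All of this must be done at the fixed-$(\varepsilon,\tau)$ level, where both signal equations share the drift $u^{\varepsilon,\tau}$, the absorption is $h_\tau(n)\le n$, and solutions are smooth and positive. To make the logarithms finite, I would regularize with $c_0+\delta$ on large balls, or use $\log(w+\delta)$. The resulting bound is uniform in the parameters, so it passes to the limit via Step~2.

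\textbf{Case $L=0$.} Here $c_0=0$, so $c^{\varepsilon,\tau}\equiv0$ by the maximum principle, and therefore $c=0$.
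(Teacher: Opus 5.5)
Your proposal is correct and follows essentially the same route as the paper: approximate mass identities are passed to the limit using spatial tightness and global $L^1$ convergence of the absorption term. Positivity then comes from the relative entropy $\int q\log(q/c)$ against the passive scalar $q$ driven by the same velocity $u^{\varepsilon,\tau}$, followed by Jensen's inequality.

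The only difference is that the paper needs no $\delta$-regularization of the logarithm. There $c_0^\varepsilon=(S_N(\varepsilon)\sqrt{c_0})^2$ is strictly positive, and the comparison $e^{-\int_0^t\|h_\tau(n)\|_{L^\infty}}q\le c\le q$ bounds $q\log(q/c)$ by a multiple of $q$, which also justifies integrating the identity over all of $\mathbb H^3$.
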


\begin{proof}
\emph{Step 1: no mass is lost at spatial infinity in the
approximation limit.}
We first record this point for the concentration, in addition to
the density tightness established in the proof of
Lemma~\ref{lem:ev-selection}.  Let $\eta_R$ be a smooth radial
exterior cutoff, zero on $B_R$ and equal to one outside $B_{2R}$,
with $|D^j\eta_R|\leq CR^{-j}$ and
$\partial_\nu\eta_R=0$ on $\Gamma$.

Suppress the superscripts $(\varepsilon,\tau)$ in the next
cutoff estimate. Testing the regularized concentration equation and
dropping the nonnegative absorption term gives

\begin{align}
 \sup_{0<t<T}\int_{\mathbb H^3}c(t)\eta_R
 &\leq\int_{\mathbb H^3}c_0^\varepsilon\eta_R
       +\frac{CLT}{R^2}
       +\frac C R\int_0^T\int_{\mathbb H^3} c|u|\nonumber\\
 &\leq\int_{\mathbb H^3}c_0^\varepsilon\eta_R+\frac{C_T}{R}.
 \label{eq:asym-c-tightness}
\end{align}
Indeed, $\|c(t)\|_{L^{6/5}}\leq L^{5/6}\|c_0\|_{L^\infty}^{1/6}$ and
$u\in L^2(0,T;L^6(\mathbb H^3))$ uniformly in both parameters.  The initial
concentrations converge strongly in $L^1(\mathbb H^3)$, since
\[
 c_0^\varepsilon=(S_N(\varepsilon)\sqrt{c_0})^2,
 \qquad S_N(\varepsilon)\sqrt{c_0}\longrightarrow\sqrt{c_0}
             \quad\hbox{in }L^2(\mathbb H^3).
\]
Thus their $L^1$ tails are uniformly small, and
\eqref{eq:asym-c-tightness} is uniform spatial tightness on
every finite time interval.

Restoring the superscripts, local almost-everywhere convergence,
the uniform concentration bound, and spatial tightness give
\[
 c^{\varepsilon_j,\tau}\to c^\tau\quad(\tau\text{ fixed}),
 \qquad c^{\tau_j}\to c\quad(\tau_j\downarrow0)
 \quad\text{in }L^1(\mathbb H^3\times(0,T)).
\]
  The corresponding strong convergences
$n^{\varepsilon_j,\tau}\to n^\tau$ and $n^{\tau_j}\to n$ in
$L^1(\mathbb H^3\times(0,T))$ follow from the density tightness and
uniform integrability proved in Lemma~\ref{lem:ev-selection}; see also
\eqref{eq:eventual-global-force-compactness}.

The absorption terms converge strongly on the whole finite
space--time slab.  At fixed $\tau$, the $1$-Lipschitz property of $h_\tau$,
the global strong $L^1$ convergence $n^{\varepsilon_j,\tau}\to n^\tau$,
and bounded almost-everywhere convergence
$c^{\varepsilon_j,\tau}\to c^\tau$ give

\[
 h_\tau(n^{\varepsilon_j,\tau})c^{\varepsilon_j,\tau}
 \longrightarrow h_\tau(n^\tau)c^\tau
 \quad\text{in }L^1(\mathbb H^3\times(0,T)).
\]
For the product with $h_\tau(n^\tau)$, use dominated convergence
and $h_\tau(n^\tau)\le n^\tau$.
At the subsequent limit $\tau_j\to0$, the same conclusion follows from
\[
 \|h_{\tau_j}(n^{\tau_j})-n\|_{L^1_{t,x}}
 \leq\|n^{\tau_j}-n\|_{L^1_{t,x}}
       +\|h_{\tau_j}(n)-n\|_{L^1_{t,x}}\longrightarrow0,
\]
and
\[
 \|h_{\tau_j}(n^{\tau_j})c^{\tau_j}-nc\|_{L^1_{t,x}}
 \leq \|c_0\|_{L^\infty}\|h_{\tau_j}(n^{\tau_j})-n\|_{L^1_{t,x}}
       +\int_0^T\int_{\mathbb H^3}n|c^{\tau_j}-c|\longrightarrow0.
\]
Here $0\leq h_\tau(n)\leq n$ and $n\in L^1(\mathbb H^3\times(0,T))$ justify
both dominated-convergence arguments.  Hence there is no defect
measure in the total absorption.

On the smooth approximation,
\[
 \int_{\mathbb H^3}n^{\varepsilon,\tau}(t)=M,\qquad
 \int_{\mathbb H^3}c^{\varepsilon,\tau}(t)
 +\int_0^t\int_{\mathbb H^3}h_\tau(n^{\varepsilon,\tau})c^{\varepsilon,\tau}
 =\|c_0^\varepsilon\|_{L^1}\longrightarrow L.
\]
These identities pass to both limits for almost every time.
To obtain representatives at every time, observe that the weak
scalar equations give locally absolutely continuous pairings with
smooth compactly supported spatial tests. The entropy bound for
$n$, boundedness of $c$, and the uniform spatial tails give weak
$L^1$ compactness on each finite time interval. Thus any sequence
of admissible times approaching a fixed time has a weak $L^1$
cluster point, uniquely determined by those continuous pairings.

The convergence of the pairings extends to bounded measurable
tests. First discard the uniformly small spatial tails; on the
remaining bounded set, Lusin approximation and uniform integrability
control the error in replacing such a test by a continuous one.
Smooth approximation then applies. This gives weakly
$L^1$-continuous representatives, including at $t=0$, for which
\eqref{eq:asym-mass-identities} holds at every time. On the classical
time interval these are the classical representatives. The signal
identity gives local absolute continuity of its total mass, and
monotone convergence gives \eqref{eq:asym-signal-mass-limit}.

\emph{Step 2: positivity of the signal mass at finite times.}
Assume $L>0$. Fix the approximation parameters and, within this
step, suppress the superscripts on $(n,c,u)$. Let $q$ solve the
passive scalar problem
\[
 q_t+u\cdot\nabla q=\Delta q,\qquad
 \partial_\nu q=0,\qquad q(0)=c_0^\varepsilon.
\]
Incompressibility, the boundary conditions, and the maximum
principle give
\[
 \int_{\mathbb H^3}q(t)=\|c_0^\varepsilon\|_{L^1}>0,\qquad
 0<q(t)\leq\|c_0\|_{L^\infty}.
\]
The initial regularization is strictly positive by positivity of
the Neumann kernel. Comparison with the concentration equation
therefore gives
\[
 e^{-\int_0^t\|h_\tau(n(s))\|_{L^\infty}\,ds}q(t)
 \leq c(t)\leq q(t).
\]
In particular, the relative logarithmic density is well defined and
\[
 0\leq q(t)\log\frac{q(t)}{c(t)}
 \leq q(t)\int_0^t\|h_\tau(n(s))\|_{L^\infty}\,ds.
\]
Using the common transport velocity, direct differentiation gives
\begin{equation}\label{eq:asym-relative-logarithmic-identity}
 \begin{aligned}
 (\partial_t+u\cdot\nabla-\Delta)\left(q\log\frac qc\right)
 &=-q\left|\nabla\log\frac qc\right|^2+h_\tau(n)q,\\
 \partial_\nu\left(q\log\frac qc\right)&=0.
 \end{aligned}
\end{equation}
This identity can be integrated over the half-space. Indeed,
insert the cutoff $1-\eta_R$ from Step~1. On any fixed interval
$(0,T)$, the diffusion and transport errors are bounded by
\[
 \begin{aligned}
 C\|c_0^\varepsilon\|_{L^1}
 \left(R^{-2}+R^{-1}\|u\|_{L^\infty(\mathbb H^3\times(0,T))}\right)
 \int_0^T\int_0^t\|h_\tau(n(s))\|_{L^\infty}\,ds\,dt
 \longrightarrow0.
 \end{aligned}
\]
The coefficients are bounded for fixed approximation parameters.
Fatou's lemma handles the nonnegative gradient term, and the
initial logarithmic density is zero. Since $h_\tau(n)\leq n$,
we obtain the parameter-independent estimate
\begin{equation}\label{eq:asym-relative-entropy-bound}
 \int_{\mathbb H^3}q(t)\log\frac{q(t)}{c(t)}
 \leq\int_0^t\int_{\mathbb H^3}h_\tau(n)q
 \leq M\|c_0\|_{L^\infty}t.
\end{equation}
Jensen's inequality for the probability measure
$q(t,x)\,dx/\|c_0^\varepsilon\|_{L^1}$ gives
\[
 \begin{aligned}
 -\frac1{\|c_0^\varepsilon\|_{L^1}}
       \int_{\mathbb H^3}q(t)\log\frac{q(t)}{c(t)}
 &\leq
 \log\left(\frac{\|c(t)\|_{L^1}}{\|c_0^\varepsilon\|_{L^1}}\right).
 \end{aligned}
\]
Restoring the superscripts, it follows that
\[
 \|c^{\varepsilon,\tau}(t)\|_{L^1}
 \geq\|c_0^\varepsilon\|_{L^1}
 \exp\left(-\frac{M\|c_0\|_{L^\infty}}
                  {\|c_0^\varepsilon\|_{L^1}}\,t\right).
\]
The global strong $L^1$ convergence from Step~1 passes this bound
to the selected solution for almost every time, first as
$\varepsilon\to0$ and then as $\tau\to0$. Continuity of the total
signal mass extends it to every finite time, proving
\eqref{eq:asym-finite-time-positive-mass}. If $L=0$, nonnegativity
and \eqref{eq:asym-mass-identities} give $c=0$.
\end{proof}

\subsection{Decay of the velocity in the finite-energy class}
\label{subsec:asym-velocity}

Finite total dissipation alone does not give kinetic-energy decay
on the half-space, where no Poincar\'e inequality is available.
We first compare $u$ with its homogeneous Stokes evolution.
Averaging the gradient energy inequality then gives $L^6$ decay,
and two applications of Stokes smoothing yield the endpoint
$L^\infty$ rate.

\begin{lemma}[Decay of the velocity]
\label{lem:asym-velocity}
For the global solution selected in Section~\ref{sec:eventual-regularity},
\begin{equation}\label{eq:asym-velocity-L2-H1}
 \lim_{t\to\infty}\|u(t)\|_{L^2}=0,
 \qquad
 \lim_{t\to\infty}t^{1/2}\|\nabla u(t)\|_{L^2}=0.
\end{equation}
Moreover, for every $2<p\le\infty$,
\begin{equation}\label{eq:asym-velocity-Lp}
 \lim_{t\to\infty}
 t^{\frac32(\frac12-\frac1p)}\|u(t)\|_{L^p}=0,
\end{equation}
where $1/\infty=0$.  In particular, $u(t)\to0$ strongly in
$H^1_0(\mathbb H^3)$ and uniformly on $\overline{\mathbb H^3}$.
\end{lemma}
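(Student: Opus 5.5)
We follow the strategy announced before the lemma: compare $u$ with a homogeneous Stokes evolution started at a late time, then average the enstrophy inequality, and finally use Stokes smoothing.

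\emph{Step 1: $L^2$ decay.} Fix a late time $s\geq t_*$ from Lemma~\ref{lem:ev-velocity}, where $u$ is a global strong solution of \eqref{eq:ev-projected-NS} with force $g$ satisfying \eqref{eq:ev-force-integrability}. We write $u(t)=e^{-(t-s)A}u(s)+w(t)$. The homogeneous part tends to zero in $L^2$ as $t\to\infty$ for each fixed $s$: this is strong stability of the analytic contraction semigroup on $L^2_\sigma(\mathbb H^3)$, since $A$ is self-adjoint, nonnegative and injective. For $w$ we use the energy identity for $w_t+Aw=g-\mathbb P_+(u\cdot\nabla u)$ with $w(s)=0$. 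The convective term pairs against $w=u-e^{-(t-s)A}u(s)$. The $u$-part vanishes, and the remaining pairing is $\int(u\cdot\nabla)e^{-(t-s)A}u(s)\cdot u$. We bound it by $\|u\|_{L^3}\|u\|_{L^6}\|\nabla e^{-(t-s)A}u(s)\|_{L^2}$, which is at most $C\|u(s)\|_{L^2}(t-s)^{-1/2}\|u\|_{L^2}^{1/2}\|\nabla u\|_{L^2}^{3/2}$. Writing $\|u\|_{L^2}^{1/2}\|\nabla u\|_{L^2}^{3/2}\leq C\|\nabla u\|_{L^2}^{2}+C\|\nabla u\|_{L^2}$ and using \eqref{eq:ev-velocity-finite-energy} together with $\sup\|\nabla u\|_{L^2}<\infty$, this gives
\[
\|w(t)\|_{L^2}^2\leq 2\int_s^\infty\|g\|_{L^2}\,\sup\|w\|_{L^2}+C\|u(s)\|_{L^2}\int_s^\infty(\cdots).
\]
Alternatively, we use the cleaner energy-inequality form \eqref{eq:ev-kinetic-g} for the difference. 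Both right-hand terms are tails of convergent integrals and so become small as $s\to\infty$. The main obstacle is the convective cross term; if the $(t-s)^{-1/2}$ singularity causes trouble, we split $[s,s+1]$ from $[s+1,t]$ and use the bounded $L^2\to H^1$ smoothing on the latter. Letting $t\to\infty$ and then $s\to\infty$ gives $\|u(t)\|_{L^2}\to0$.

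\emph{Step 2: gradient decay.} On $[t_*,\infty)$ the enstrophy inequality \eqref{eq:ev-enstrophy-bootstrap} holds in the small regime. It gives $\frac{d}{dt}\|\nabla u\|_{L^2}^2\leq 2\|g\|_{L^2}^2$, so $\|\nabla u(t)\|_{L^2}^2\leq\|\nabla u(\sigma)\|_{L^2}^2+2\int_\sigma^\infty\|g\|_{L^2}^2$ for $\sigma<t$. We average over $\sigma\in(t/2,t)$ and multiply by $t$:
\[
t\|\nabla u(t)\|_{L^2}^2\leq 2\int_{t/2}^t\|\nabla u\|_{L^2}^2+2t\int_{t/2}^\infty\|g\|_{L^2}^2.
\]
The first term is a vanishing tail by \eqref{eq:ev-velocity-finite-energy}. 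For the second, \eqref{eq:ev-density-tail} gives $\int_{t/2}^\infty\|g\|_{L^2}^2\leq\|\phi\|_{L^\infty}^2CM^2(t/2-T_0)^{-3/2}$, so $t$ times it is $O(t^{-1/2})\to0$. This proves \eqref{eq:asym-velocity-L2-H1}.

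\emph{Step 3: $L^p$ rates.} For $2<p\leq6$, Gagliardo--Nirenberg interpolation gives $\|u\|_{L^p}\leq C\|u\|_{L^2}^{1-\theta}\|\nabla u\|_{L^2}^{\theta}$ with $\theta=3(\frac12-\frac1p)$. Then $t^{\theta/2}\|u\|_{L^p}\leq C\|u\|_{L^2}^{1-\theta}(t^{1/2}\|\nabla u\|_{L^2})^\theta\to0$. For $6<p\leq\infty$ we use the mild formula on $[t/2,t]$:
\[
u(t)=e^{-\frac t2A}u(t/2)-\int_{t/2}^te^{-(t-r)A}\mathbb P_+\bigl[\nabla\cdot(u\otimes u)+n\nabla\phi\bigr]\,dr.
\]
The Stokes $L^a\to L^b$ bounds from the proof of Lemma~\ref{lem:ev-boundary-bootstrap} apply. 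The first term is $o(t^{-\frac32(\frac16-\frac1p)})\cdot o(t^{-1/2})$ from the $L^6$ rate. The nonlinear term is handled by two passes, $L^6\to L^{12}\to L^\infty$, as in Step~1 of that lemma. For these we use $\|u\|_{L^6}=o(t^{-1/2})$ and the uniform $L^\infty$ bound, and we need only the $t^{-1/2}$-type gain on the shorter window $[t-1,t]$ combined with $e^{-A}$ smoothing. The force term we rewrite as $g$, using $\|g(r)\|_{L^2}\leq C\|\nabla n(r)\|_{L^2}$. Alternatively we use $\|n(r)\|_{L^6}\leq Cr^{-5/4}$ from \eqref{eq:ev-density-decay}, which gives a contribution $O(t^{-5/4+\cdots})$ that decays faster than $t^{-3/4}$. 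We expect the bookkeeping of exponents at $p=\infty$ to be routine once the $L^6$ rate is in hand.
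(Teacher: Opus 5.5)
Your Steps 1--2 follow the paper's route, and the Gagliardo--Nirenberg argument for $2<p\le 6$ is a valid shortcut; the paper instead interpolates between $L^2$ and $L^\infty$.

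\textbf{The gap: the force term for $p>6$.} The gap is the force term in the Duhamel formula on $[t/2,t]$ for $p>6$; it suffices to treat $p=\infty$ and interpolate with $L^6$.

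Your claim that the $L^6$ density bound gives a contribution decaying faster than $t^{-3/4}$ is false. With the $L^6\to L^\infty$ kernel $r^{-1/4}$ one gets $\|n\|_{L^\infty(t/2,\infty;L^6)}\int_0^{t/2}r^{-1/4}\,dr\sim t^{-5/4}t^{3/4}=t^{-1/2}$. Using $\|n\|_{L^2}\lesssim t^{-3/4}$ with the kernel $r^{-3/4}$ gives the same $t^{-1/2}$, because a force with conserved mass is scaling-critical here.

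The other representation, $g=\mathbb P_+(\phi\nabla n)$, is controlled only through $\int_t^\infty\|g\|_{L^2}^2\,ds=O(t^{-3/2})$. Against the $L^2\to L^\infty$ kernel $r^{-3/4}$, Cauchy--Schwarz needs $\int r^{-3/2}\,dr$, which diverges at $r=0$. Cutting at $r=1$ only gives $O(t^{-3/4})$, not $o(t^{-3/4})$.

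The missing idea is to use both representations at once and split at a growing intermediate scale. This is legitimate because $g=-\mathbb P_+(n\nabla\phi)$ at almost every late time, with consistent projections on $L^2\cap L^6$. Take the split at $r=\sqrt t$. On $(0,\sqrt t)$ the $L^6$ bound gives $t^{-5/4}t^{3/8}=t^{-7/8}$. On $(\sqrt t,t/2)$ one gets $(\int_{\sqrt t}^\infty r^{-3/2}\,dr)^{1/2}(\int_{t/2}^\infty\|g\|_{L^2}^2\,ds)^{1/2}\lesssim t^{-1/8}t^{-3/4}$. Any split point $t^\gamma$ with $0<\gamma<2/3$ works.

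\textbf{Two further points.}

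First, in the nonlinear Duhamel term, the piece $0<r<1$ needs the preliminary decay $\|u\|_{L^\infty(t/2,\infty;L^\infty)}=o(t^{-1/2})$ produced by your two unit-window passes $L^6\to L^{12}\to L^\infty$. With only the uniform $L^\infty$ bound, $\|u\|_{L^\infty}\|u\|_{L^6}\int_0^1 r^{-3/4}\,dr$ is merely $o(t^{-1/2})$. With the preliminary decay this piece is $o(t^{-1})$. On $(1,t/2)$, the bound $\|u\otimes u\|_{L^3}\le\|u\|_{L^6}^2$ with kernel $r^{-1}$ gives $o(t^{-1}\log t)$.

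Second, in Step 1 the split $\|u\|_{L^2}^{1/2}\|\nabla u\|_{L^2}^{3/2}\le C\|\nabla u\|_{L^2}^2+C\|\nabla u\|_{L^2}$ fails. The term $\int_{s+1}^\infty(r-s)^{-1/2}\|\nabla u(r)\|_{L^2}\,dr$ is not controlled by finite dissipation. Keep the power $3/2$ and use H\"older with exponents $4$ and $4/3$. The factor $\|\nabla e^{-(r-s)A}u(s)\|_{L^2}\le\min\{\|\nabla u(s)\|_{L^2},C(r-s)^{-1/2}\|u(s)\|_{L^2}\}$ lies in $L^4(s,\infty)$ uniformly in $s\ge t_*$. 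The cross term is then bounded by a constant times $(\int_s^\infty\|\nabla u\|_{L^2}^2\,dr)^{3/4}$. Alternatively, as in the paper, bound $|(u\otimes u,\nabla w)|\le\|u\|_{L^4}^2\|\nabla w\|_{L^2}$, absorb it into the dissipation of $w$, and use $\int_S^\infty\|u\|_{L^4}^4\,dt<\infty$.
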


\begin{proof}
Choose $S\ge1$ sufficiently large that the strong-solution and
boundedness conclusions of Section~\ref{sec:eventual-regularity}
hold on $[S,\infty)$.  Recall that
$g=\mathbb P_+(\phi\nabla n)$.
By \eqref{eq:ev-force-integrability}, \eqref{eq:ev-density-tail},
and Lemma~\ref{lem:ev-velocity},
\begin{equation}\label{eq:asym-velocity-tail-inputs}
\begin{gathered}
 \sup_{t\ge S}\|u(t)\|_{L^2}<\infty,\qquad
 \int_S^\infty\|\nabla u(t)\|_{L^2}^2\,dt<\infty,\qquad
 g\in L^1((S,\infty);L^2_\sigma(\mathbb H^3)),\\
 \int_t^\infty\|g(s)\|_{L^2}^2\,ds
 \le \|\phi\|_{L^{\infty}}^2
       \int_t^\infty\|\nabla n(s)\|_{L^2}^2\,ds
 \le C(1+t)^{-3/2}\qquad(t\ge S).
\end{gathered}
\end{equation}
The last estimate is understood with a larger constant if necessary
to replace $t-T_0$ by $1+t$.  The velocity energy relation and the
closed bootstrap in \eqref{eq:ev-enstrophy-bootstrap} give, for
almost every $t\ge S$,
\begin{equation}\label{eq:asym-velocity-differential-energies}
\begin{aligned}
 \frac{d}{dt}\|u(t)\|_{L^2}^2
 +2\|\nabla u(t)\|_{L^2}^2
 &\le 2(g(t),u(t))_{L^2},\\
 \frac{d}{dt}\|\nabla u(t)\|_{L^2}^2
 +\|Au(t)\|_{L^2}^2&\le2\|g(t)\|_{L^2}^2.
\end{aligned}
\end{equation}
All time-dependent estimates below use the continuous strong-solution
representative of Lemma~\ref{lem:ev-velocity}.

\emph{Step 1: decay of the kinetic energy.}
The eventual $H^1$ bound and the finite total dissipation imply
\[
 \int_S^\infty\|u(s)\|_{L^4}^4\,ds
 \le C\sup_{s\ge S}\|u(s)\|_{L^2}
       \sup_{s\ge S}\|\nabla u(s)\|_{L^2}
       \int_S^\infty\|\nabla u(s)\|_{L^2}^2\,ds<\infty.
\]
Fix $T\ge S$ and put $z(t)=u(t)-e^{-(t-T)A}u(T)$.
Then $z(T)=0$ and
$z_t+Az=-\mathbb P_+\operatorname{div}(u\otimes u)+g$.
Testing by $z$ is justified by the eventual strong regularity.
Young's inequality for the convection term gives, for every finite $B>T$,
\[
 \sup_{T\le t\le B}\|z(t)\|_{L^2}^2
 \le \int_T^B\|u(s)\|_{L^4}^4\,ds
   +2\left(\sup_{T\le t\le B}\|z(t)\|_{L^2}\right)
       \int_T^B\|g(s)\|_{L^2}\,ds.
\]
Solving this quadratic inequality and then letting $B\to\infty$ yields
\[
 \sup_{t\ge T}\|z(t)\|_{L^2}
 \le \left(\int_T^\infty\|u(s)\|_{L^4}^4\,ds\right)^{1/2}
       +2\int_T^\infty\|g(s)\|_{L^2}\,ds.
\]
The nonnegative self-adjoint Dirichlet Stokes operator has trivial
kernel: $Av=0$ implies $\|\nabla v\|_{L^2}=0$, and hence $v=0$ in
$L^2(\mathbb H^3)$. The spectral theorem therefore gives
$\|e^{-(t-T)A}u(T)\|_{L^2}\to0$ for fixed $T$.
Thus $\limsup_{t\to\infty}\|u(t)\|_{L^2}$ is bounded by the last
display's right-hand side. Letting $T\to\infty$, using the first
display and $g\in L^1((S,\infty);L^2)$, proves the claim.

\emph{Step 2: decay of the velocity gradient.}
Integrating the second inequality in
\eqref{eq:asym-velocity-differential-energies} from $s$ to $t$ and
then averaging over $s\in[t/2,t]$, for $t\ge2S$, yields
\begin{align}
 \|\nabla u(t)\|_{L^2}^2
 &\le\frac2t\int_{t/2}^t\|\nabla u(s)\|_{L^2}^2\,ds
       +2\int_{t/2}^\infty\|g(s)\|_{L^2}^2\,ds\nonumber\\
 &=o(t^{-1})+O(t^{-3/2}).
 \label{eq:asym-gradient-rate}
\end{align}
Here the first term is $o(t^{-1})$ because
\[
\int_S^\infty\|\nabla u(s)\|_{L^2}^2\,ds<\infty.
\]
Consequently $t^{1/2}\|\nabla u(t)\|_{L^2}\to0$, and the homogeneous
Sobolev inequality gives
\begin{equation}\label{eq:asym-velocity-L6-preliminary}
 \|u(t)\|_{L^6}=o(t^{-1/2}).
\end{equation}

\emph{Step 3: a preliminary uniform decay estimate.}
We will use the half-space Stokes estimates, for the exponents
appearing below,
\begin{align}
 \|e^{-rA}\mathbb P_+F\|_{L^b}
 &\le C r^{-\frac32(\frac1a-\frac1b)}\|F\|_{L^a},
 \nonumber\\
 \|e^{-rA}\mathbb P_+\operatorname{div}\mathcal F\|_{L^b}
 &\le C r^{-\frac12-\frac32(\frac1a-\frac1b)}
             \|\mathcal F\|_{L^a},
 \qquad r>0.
 \label{eq:asym-stokes-smoothing}
\end{align}
Here the input exponents used below satisfy $2\le a<\infty$,
and $a\le b\le\infty$. We give the endpoint justification.
Use the same notation $A$ for the consistent Stokes operators on
$L^a_\sigma$. For $1<a<\infty$, analyticity and the homogeneous
elliptic estimate give \cite{DHP2001}
\[
\|e^{-rA}v\|_{L^a}+r\|\nabla^2e^{-rA}v\|_{L^a}\le C_a\|v\|_{L^a},
\qquad
\|\nabla e^{-rA}v\|_{L^a}\le C_a r^{-1/2}\|v\|_{L^a}.
\]
Duality with the gradient estimate at the conjugate exponent
defines the divergence operator and yields
\[
\|e^{-rA}\mathbb P_+\operatorname{div}\mathcal F\|_{L^a}
\le C_a r^{-1/2}\|\mathcal F\|_{L^a}.
\]
For $a\ge2$, the homogeneous Gagliardo--Nirenberg inequality
\[
\|w\|_{L^b}
\le C\|\nabla^2w\|_{L^a}^{\frac32(\frac1a-\frac1b)}
       \|w\|_{L^a}^{1-\frac32(\frac1a-\frac1b)},
\]
has derivative exponent less than one, includes $b=\infty$,
and gives the first estimate in
\eqref{eq:asym-stokes-smoothing}. Factoring the semigroup at
$r/2$ and applying the preceding same-exponent divergence bound
gives the second one. Consistency on intersections identifies
these operators for all exponents used. The constants are valid
for every $r>0$: the dilation $x=r^{1/2}y$ preserves the half-space
and the no-slip Stokes problem. No $L^1$ or $L^\infty$ input
endpoint is used.

Equation~\eqref{eq:asym-velocity-L6-preliminary} and
\eqref{eq:ev-density-decay} imply
\[
 \|u\|_{L^\infty(t,\infty;L^6)}=o(t^{-1/2}),
 \qquad \|n\|_{L^\infty(t,\infty;L^6)}=O(t^{-5/4}).
\]
The first bound follows directly from the definition of the
pointwise small-$o$ estimate. Apply the mild formula on sliding
intervals of length one,
using the original force $-n\nabla\phi$. The $L^3\to L^{12}$
divergence kernel is $r^{-7/8}$, and the $L^6\to L^{12}$ force
kernel is $r^{-1/8}$. Therefore
\[
\begin{aligned}
 \|u\|_{L^\infty(t+1,\infty;L^{12})}
 \le C\bigl(\|u\|_{L^\infty(t,\infty;L^6)}
             +\|u\|_{L^\infty(t,\infty;L^6)}^2
+\|n\|_{L^\infty(t,\infty;L^6)}\bigr)
 =o(t^{-1/2}).
\end{aligned}
\]
The constants here absorb $\|\nabla\phi\|_{L^\infty}$.
On another unit interval the nonlinear tensor belongs to $L^6$,
and the relevant kernels are $r^{-3/4}$ and $r^{-1/4}$. Hence
\begin{equation}\label{eq:asym-velocity-infinity-preliminary}
\begin{aligned}
 \|u\|_{L^\infty(t+2,\infty;L^\infty)}
 &\le C\bigl(\|u\|_{L^\infty(t+1,\infty;L^{12})}
             +\|u\|_{L^\infty(t+1,\infty;L^{12})}^2\\
 &\qquad\quad+\|n\|_{L^\infty(t+1,\infty;L^6)}\bigr)
 =o(t^{-1/2}).
\end{aligned}
\end{equation}
Thus the preliminary decay is uniform on all sufficiently late
sliding intervals.

\emph{Step 4: the $L^\infty$ decay rate.}
Use the mild formula on the longer interval $[t/2,t]$, where
$t\ge\max\{2S,4\}$, and write $r=t-s$:
\begin{align}
 u(t)
 ={}&e^{-(t/2)A}u(t/2)
 -\int_0^{t/2}e^{-rA}\mathbb P_+
       \operatorname{div}(u\otimes u)(t-r)\,dr
 +\int_0^{t/2}e^{-rA}g(t-r)\,dr.
 \label{eq:asym-long-window-mild}
\end{align}
By Step~1 and \eqref{eq:asym-stokes-smoothing}, the first term satisfies
\[
 \|e^{-(t/2)A}u(t/2)\|_{L^{\infty}}
 \le Ct^{-3/4}\|u(t/2)\|_{L^2}=o(t^{-3/4}).
\]
For the nonlinear term split its integral at $r=1$.  On $(0,1)$,
use
$\|u\otimes u\|_{L^6}\le\|u\|_{L^{\infty}}\|u\|_{L^6}$ and the
$L^6\to L^\infty$ divergence estimate.  On $(1,t/2)$, use
$\|u\otimes u\|_{L^3}\le\|u\|_{L^6}^2$ and the
$L^3\to L^\infty$ estimate.  This gives
\begin{equation}\label{eq:asym-long-window-nonlinearity}
\begin{aligned}
 &\left\|\int_0^{t/2}e^{-rA}\mathbb P_+
       \operatorname{div}(u\otimes u)(t-r)\,dr\right\|_{L^{\infty}}\\
 &\quad\le C\|u\|_{L^\infty(t/2,\infty;L^\infty)}
                   \|u\|_{L^\infty(t/2,\infty;L^6)}
          \int_0^1r^{-3/4}\,dr\\
 &\qquad+C\|u\|_{L^\infty(t/2,\infty;L^6)}^2
          \int_1^{t/2}r^{-1}\,dr\\
 &\quad=o(t^{-1})+o(t^{-1}\log t)=o(t^{-3/4}).
\end{aligned}
\end{equation}

For the force, two representations are useful.  At almost every
late time, $n\phi\in H^1(\mathbb H^3)$ and the Helmholtz projection annihilates
$\nabla(n\phi)$, so that
\begin{equation}\label{eq:asym-force-two-representations}
 g=\mathbb P_+(\phi\nabla n)
   =-\mathbb P_+(n\nabla\phi).
\end{equation}
The Helmholtz projections and Stokes semigroups in $L^2$ and
$L^6$ are consistent on their intersection.

Since $\sqrt t\le t/2$, on $0<r<\sqrt t$ use the second
representation, \eqref{eq:ev-density-decay}, and the
$L^6\to L^\infty$ estimate to obtain
\begin{equation}\label{eq:asym-recent-force}
\begin{aligned}
 \left\|\int_0^{\sqrt t}e^{-rA}g(t-r)\,dr\right\|_{L^{\infty}}
 &\le C\|n\|_{L^\infty(t/2,\infty;L^6)}
          \int_0^{\sqrt t}r^{-1/4}\,dr\\
 &\le Ct^{-5/4}t^{3/8}=Ct^{-7/8}.
\end{aligned}
\end{equation}
On $\sqrt t<r<t/2$, use the $L^2\to L^\infty$ estimate,
Cauchy--Schwarz in time, and \eqref{eq:asym-velocity-tail-inputs}:
\begin{align}
 \left\|\int_{\sqrt t}^{t/2}e^{-rA}g(t-r)\,dr\right\|_{L^{\infty}}
 &\le C\left(\int_{\sqrt t}^\infty r^{-3/2}\,dr\right)^{1/2}
         \left(\int_{t/2}^\infty\|g(s)\|_{L^2}^2\,ds\right)^{1/2}
 \nonumber\\
 &\le C t^{-1/8}t^{-3/4}=Ct^{-7/8}.
 \label{eq:asym-earlier-force}
\end{align}

Thus the free evolution is $o(t^{-3/4})$, the convection term
has the same bound, and both force contributions are $O(t^{-7/8})$.
Substituting these estimates in \eqref{eq:asym-long-window-mild}
proves
\begin{equation}\label{eq:asym-velocity-infinity}
 \|u(t)\|_{L^{\infty}}=o(t^{-3/4}).
\end{equation}
Finally, for $2<p<\infty$,
\[
 \|u(t)\|_{L^p}
 \le\|u(t)\|_{L^2}^{2/p}\|u(t)\|_{L^{\infty}}^{1-2/p},
\]
which, together with Step~1 and
\eqref{eq:asym-velocity-infinity}, yields
\eqref{eq:asym-velocity-Lp}.  The $H^1$ convergence follows from
\eqref{eq:asym-velocity-L2-H1}.
\end{proof}

\subsection{Heat-scale decay of the scalar variables}

The velocity decay now allows the scalar equations to be treated
as perturbations of the Neumann heat equation. We record the kernel
bounds first, since both endpoint norms will also be needed for the
Gaussian profiles.
Write $y^*=(y_1,y_2,-y_3)$ and
\[
 K_N(r,x,y)=(4\pi r)^{-3/2}
 \left(e^{-|x-y|^2/(4r)}+e^{-|x-y^*|^2/(4r)}\right).
\]
Thus $S_N(r)f(x)=\int_{\mathbb H^3} K_N(r,x,y)f(y)\,dy$.
For a vector field $F$, the weak Neumann divergence propagator is
\[
 \mathcal D_N(r)F(x):=-\int_{\mathbb H^3}\nabla_yK_N(r,x,y)\cdot F(y)\,dy.
\]
When $F\cdot\nu=0$, this is $S_N(r)\nabla\cdot F$; the kernel
formula is also its definition without a classical normal trace.
The Gaussian formula and Young's inequality give, for
$1\leq a\leq b\leq\infty$ and every $r>0$,
\begin{equation}\label{eq:asym-neumann-bounds}
\begin{aligned}
 \|S_N(r)f\|_{L^b}
 &\leq C r^{-\frac32(\frac1a-\frac1b)}\|f\|_{L^a},\\
 \|\nabla S_N(r)f\|_{L^b}+\|\mathcal D_N(r)F\|_{L^b}
 &\leq C r^{-\frac12-\frac32(\frac1a-\frac1b)}
       (\|f\|_{L^a}+\|F\|_{L^a}).
\end{aligned}
\end{equation}
In particular the bounds for $a=b=1$ and $a=b=\infty$ hold.
Equivalently, scalar functions are extended evenly across the flat
boundary, and vector fluxes have even tangential and odd normal
components. This specifies the boundary operator used in all the
following mild formulas.

\begin{lemma}[Scalar decay at the heat scale]\label{lem:asym-scalars}
For all sufficiently large $t$ and every $1\leq p\leq\infty$,
\begin{equation}\label{eq:asym-scalar-rates}
 \|n(t)\|_{L^p}+\|c(t)\|_{L^p}\leq C_p t^{-\alpha_p}.
\end{equation}
Moreover,
\begin{equation}\label{eq:asym-signal-gradient}
 \|\nabla c(t)\|_{L^{\infty}}\leq Ct^{-2}.
\end{equation}
\end{lemma}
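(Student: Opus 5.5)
The plan is to work on the eventual classical time tail $[T_*,\infty)$ of Theorem~\ref{thm:eventual-regularity}, where $n,c,u$ are bounded and the mild formulas with the Neumann kernel $K_N$ and the propagator $\mathcal D_N$ hold. The available inputs are the following.
\begin{itemize}
\item The mass identities of Lemma~\ref{lem:asym-mass}, namely $\|n(t)\|_{L^1}=M$ and $\|c(t)\|_{L^1}\le L$.
\item The density decay \eqref{eq:ev-density-decay} for $q=2,6$.
\item The signal decay $\|c(t)\|_{L^\infty}\le CLt^{-3/2}$ from Lemma~\ref{lem:ev-signal}.
\item The velocity decay $\|u(t)\|_{L^\infty}=o(t^{-3/4})$ of Lemma~\ref{lem:asym-velocity}.
\end{itemize}
For $c$, the $L^1$ and $L^\infty$ endpoints are therefore already in hand, and interpolation gives \eqref{eq:asym-scalar-rates} for every $p$. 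For $n$ the $L^1$ endpoint is exact, so the only missing estimate is $\|n(t)\|_{L^\infty}\le Ct^{-3/2}$. After that, interpolation with $L^1$ gives every $p$.

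The next step is the gradient bound \eqref{eq:asym-signal-gradient}. I write the signal equation in divergence form,
\[
c_t-\Delta c=-\nabla\cdot(uc)-nc,
\]
where $uc$ has zero normal trace, and use the mild formula on the long window $[t/2,t]$:
\[
\nabla c(t)=\nabla S_N(t/2)c(t/2)-\int_{t/2}^t \nabla\mathcal D_N\text{-type terms}(uc)-\int_{t/2}^t\nabla S_N(t-s)(nc)(s)\,ds.
\]
\begin{itemize}
\item The free term gives $Ct^{-1/2-3/2}\|c(t/2)\|_{L^1}=O(t^{-2})$.
\item For the reaction term I use $\|nc\|_{L^\infty}\le\|n\|_{L^\infty}\|c\|_{L^\infty}$ near $s=t$ and $\|nc\|_{L^1}\le M\|c\|_{L^\infty}=O(t^{-3/2})$ farther away. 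This needs at least a preliminary bound $\|n\|_{L^\infty}=O(t^{-5/4})$ or so, and then gives $O(t^{-2})$ or better.
\item For the transport term I avoid a second derivative of the kernel: I write it as $u\cdot\nabla c$, so that only one derivative of $S_N$ appears. Since $\|u\|_{L^\infty}=o(t^{-3/4})$, this term is $\int (t-s)^{-1/2}o(s^{-3/4})\|\nabla c\|_{L^\infty}\,ds$, which is absorbable into a weighted quantity $\sup_s s^{2}\|\nabla c(s)\|_{L^\infty}$ after a Gronwall or bootstrap argument on sliding windows.
\end{itemize}
A first pass only needs some decay of $\nabla c$, for example $O(t^{-1})$ via $\|c\|_{L^\infty}$ together with a short window of length one. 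That preliminary decay is then fed into the density step below, and the two estimates are iterated.

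The density step, which I expect to be the main obstacle, is to show $\|n(t)\|_{L^\infty}\le Ct^{-3/2}$ from the flux form $n_t-\Delta n=-\nabla\cdot F$ with $F=n(u+\nabla c)$. The mild formula on $[t/2,t]$ has three pieces.
\begin{itemize}
\item The free term $S_N(t/2)n(t/2)$ is bounded by $CMt^{-3/2}$ in $L^\infty$.
\item On $t-s\le 1$ I use the $L^\infty\to L^\infty$ divergence bound with kernel $(t-s)^{-1/2}$, which gives $\|n\|_{L^\infty}\,o(t^{-3/4})$ and is small relative to $\sup s^{3/2}\|n(s)\|_{L^\infty}$.
\item On $1<t-s<t/2$ I use the $L^1\to L^\infty$ divergence kernel $(t-s)^{-2}$ together with $\|F\|_{L^1}\le M\|u+\nabla c\|_{L^\infty}=o(s^{-3/4})$. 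This gives $o(t^{-3/4})$, which is too weak.
\end{itemize}
To fix the last piece I will interpolate the flux instead, bounding $\|F\|_{L^a}\le\|n\|_{L^a}\|u+\nabla c\|_{L^\infty}$ with $a$ chosen so that the kernel $(t-s)^{-1/2-3/(2a)}$ is integrable at infinity while $\|n\|_{L^a}$ is controlled by the weighted sup. Equivalently, I set $N(t)=\sup_{t_0\le s\le t}s^{3/2}\|n(s)\|_{L^\infty}$, bound $\|n(s)\|_{L^a}\le M^{1/a}(N s^{-3/2})^{1-1/a}$, and obtain
\[
N(t)\le C+o(1)N(t).
\]
This closes for large $t_0$, because the extra decay factor $o(s^{-3/4})$ beats the loss. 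I will check the exponent bookkeeping with $a$ slightly above $3$. The same sliding-window device, with $\|c\|_{L^p}$ in place of $\|n\|_{L^p}$, completes the $c$ rates. Finally, returning to the $\nabla c$ iteration with $\|n\|_{L^\infty}=O(t^{-3/2})$ yields the sharp $O(t^{-2})$.
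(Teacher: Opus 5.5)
Your route is correct, but it is organized differently from the paper's. The paper proves the gradient bound first and the density bound second, each on a window $[s,2s]$ with a weight in the elapsed time $r$ ($r^{1/2}$ for $\nabla c$, $r^{3/4}$ for $n$).
For $\nabla c$ it starts from $\|c(s)\|_{L^\infty}=O(s^{-3/2})$ and puts both $u\cdot\nabla c$ and $nc$ in $L^6$, using $\|u\|_{L^6}=o(s^{-1/2})$ and $\|n\|_{L^6}=O(s^{-5/4})$ from \eqref{eq:ev-density-decay}.
For $n$ it starts from $\|n(s)\|_{L^2}=O(s^{-3/4})$ and needs only the $L^\infty\to L^\infty$ divergence bound. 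It absorbs because $\sqrt s(\|u\|_{L^\infty}+\|\nabla c\|_{L^\infty})\to0$, which gives $\|n(2s)\|_{L^\infty}\le Cs^{-3/4}\|n(s)\|_{L^2}$.
Anchoring the density at its already-proved $L^2$ decay is what lets the paper skip both your preliminary gradient pass and your flux interpolation. You instead anchor at the conserved mass on $[t/2,t]$, with a weighted supremum in absolute time. This costs you the three-stage ordering and the interpolation of $F$. In return your density step uses only mass, eventual boundedness and velocity decay. Nothing is really saved, though, because the $o(t^{-3/4})$ velocity rate you rely on was itself derived from the $L^6$ density decay.

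A few bookkeeping points.
\begin{itemize}
\item The kernel $(t-s)^{-1/2-3/(2a)}$ is integrable at infinity only for $a<3$. So either take $a\in[2,3)$ on $\{t-s>1\}$, or take $a>3$ on the whole window, where the kernel is integrable at $0$ and the contribution is $o(t^{-7/4})N^{1-1/a}$.
\item With $a=6$ and the known $L^6$ rate, the whole Duhamel flux term is $o(t^{-7/4})$ outright, so no absorption is needed at all.
\item In the long-window gradient estimate, an $L^1$ bound for $nc$ on $\{t-s>1\}$ only gives $O(t^{-3/2})$. Either split at $t-s\sim\sqrt t$, or simply use $\|nc\|_{L^\infty}$ on the whole window. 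The latter gives $O(t^{1/2}\|n\|_{L^\infty}\|c\|_{L^\infty})=O(t^{-5/2})$ once $\|n\|_{L^\infty}=O(t^{-3/2})$.
\item Your unit-window first pass, using eventual boundedness of $n$, already gives $\|\nabla c\|_{L^\infty}=O(t^{-3/2})$. That is more than your density step requires.
\end{itemize}
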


\begin{proof}
All intervals in this proof lie inside the classical time interval
provided by Theorem~\ref{thm:eventual-regularity}. The scalar bounds
initially valid almost everywhere in time extend to every such time
by local continuity and Fatou's lemma.

For large $s$, Lemma~\ref{lem:asym-velocity} and the previously established scalar
estimates imply
\[
\begin{gathered}
 \|u\|_{L^\infty(s,2s;L^6)}=o(s^{-1/2}),\qquad
 \|nc\|_{L^\infty(s,2s;L^6)}=O(s^{-11/4}),\\
 \|c(s)\|_{L^{\infty}}=O(s^{-3/2}).
\end{gathered}
\]
The concentration mild formula and \eqref{eq:asym-neumann-bounds} give,
for $0<r\leq s$,
\begin{align*}
 \|\nabla c(s+r)\|_{L^{\infty}}
 &\leq C r^{-1/2}\|c(s)\|_{L^{\infty}}
   +C \|u\|_{L^\infty(s,2s;L^6)}\int_0^r(r-\sigma)^{-3/4}
                    \|\nabla c(s+\sigma)\|_{L^{\infty}}\,d\sigma\\
 &\qquad+C \|nc\|_{L^\infty(s,2s;L^6)}r^{1/4}.
\end{align*}
Multiply by $r^{1/2}$ and take the supremum over $0<r\leq s$.
The time-kernel estimate
\[
 r^{1/2}\int_0^r(r-\sigma)^{-3/4}\sigma^{-1/2}\,d\sigma
 \leq Cr^{1/4}
\]
gives
\[
 \begin{aligned}
 \sup_{0<r\leq s}r^{1/2}\|\nabla c(s+r)\|_{L^\infty}
 &\leq C\|c(s)\|_{L^\infty}
       +Cs^{3/4}\|nc\|_{L^\infty(s,2s;L^6)}\\
 &\quad+Cs^{1/4}\|u\|_{L^\infty(s,2s;L^6)}
       \sup_{0<r\leq s}r^{1/2}\|\nabla c(s+r)\|_{L^\infty}.
 \end{aligned}
\]
The supremum is finite by the eventual gradient bound already
proved. The coefficient of the last term tends to zero, while
$s^{3/4}\|nc\|_{L^\infty(s,2s;L^6)}=O(s^{-2})$.
Absorption therefore gives
\[
 \sup_{0<r\leq s}r^{1/2}\|\nabla c(s+r)\|_{L^\infty}
 \leq Cs^{-3/2}.
\]
Taking $r=s$ proves \eqref{eq:asym-signal-gradient}.

For the density equation, the combined transport satisfies
\begin{equation}\label{eq:asym-small-rescaled-drift}
 \sqrt s\bigl(
 \|u\|_{L^\infty(\mathbb H^3\times[s,2s])}
 +\|\nabla c\|_{L^\infty(\mathbb H^3\times[s,2s])}
 \bigr)\longrightarrow0.
\end{equation}
Its mild formula is
\[
 n(s+r)=S_N(r)n(s)
 -\int_0^r\mathcal D_N(r-\sigma)
       [n(u+\nabla c)](s+\sigma)\,d\sigma.
\]
The flux has zero normal component, so this is the mild form of the
classical no-flux equation. The $L^2\to L^\infty$ heat estimate,
the $L^\infty\to L^\infty$ divergence estimate, and
\[
 r^{3/4}\int_0^r(r-\sigma)^{-1/2}\sigma^{-3/4}\,d\sigma
 \leq Cr^{1/2}
\]
yield
\[
 \begin{aligned}
 \sup_{0<r\leq s}r^{3/4}\|n(s+r)\|_{L^\infty}
 &\leq C\|n(s)\|_{L^2}\\
 &\quad+C\sqrt s\bigl(
       \|u\|_{L^\infty(\mathbb H^3\times[s,2s])}
       +\|\nabla c\|_{L^\infty(\mathbb H^3\times[s,2s])}\bigr)\\
 &\qquad\qquad\times
       \sup_{0<r\leq s}r^{3/4}\|n(s+r)\|_{L^\infty}.
 \end{aligned}
\]
Again the supremum is finite by eventual boundedness. Absorbing
the last term using \eqref{eq:asym-small-rescaled-drift} and taking
$r=s$ gives
\[
 \|n(2s)\|_{L^\infty}
 \leq Cs^{-3/4}\|n(s)\|_{L^2}\leq Cs^{-3/2},
\]
where \eqref{eq:ev-density-decay} was used in the last step.
The calculation can first be made at admissible starting times;
continuity on the classical interval and Fatou's lemma extend
the bound to every sufficiently large terminal time.
Interpolation with $\|n(t)\|_{L^1}=M$ proves the density bounds in
\eqref{eq:asym-scalar-rates}. For $c$, interpolate its mass bound
with \eqref{eq:ev-signal-decay}.
\end{proof}

The density bound just proved is integrable in time in
$L^\infty$. Together with finite-time positivity from
Lemma~\ref{lem:asym-mass}, this ensures that a nonzero signal also
has positive limiting mass.

\begin{lemma}[Positive remaining mass]\label{lem:asym-residual-mass}
The remaining mass in \eqref{eq:asym-signal-mass-limit} satisfies
\begin{equation}\label{eq:asym-residual-mass}
 0<L_\infty\leq L\qquad\text{if }L>0.
\end{equation}
If $L=0$, then $c\equiv0$.
\end{lemma}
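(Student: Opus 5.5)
The case $L=0$ is already contained in Lemma~\ref{lem:asym-mass}: nonnegativity and the signal mass identity give $c\equiv0$. So assume $L>0$. The upper bound $L_\infty\le L$ is immediate from \eqref{eq:asym-signal-mass-limit}, since $nc\ge0$. The whole task is therefore strict positivity of $L_\infty$. The plan is to convert the mass identity into a linear differential inequality for the total signal mass, whose absorption rate is time-integrable.

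Write $m(t):=\int_{\mathbb H^3}c(t)$. By \eqref{eq:asym-mass-identities}, $m$ is locally absolutely continuous and nonincreasing, with
\[
 m'(t)=-\int_{\mathbb H^3}n(t)c(t)\ \ge\ -\|n(t)\|_{L^\infty}\,m(t)
\]
for almost every $t$. Lemma~\ref{lem:asym-scalars} with $p=\infty$ supplies a time $T_2$ such that
\[
 \|n(t)\|_{L^\infty}\le Ct^{-3/2}\qquad\text{for } t\ge T_2 .
\]
This rate is integrable on $[T_2,\infty)$; this is exactly the heat-scale integrability announced before the lemma.

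On the interval $[T_2,\infty)$, Gronwall's inequality (equivalently, differentiating $m(t)\exp(\int_{T_2}^t\|n\|_{L^\infty})$) gives
\[
 m(t)\ \ge\ m(T_2)\exp\Bigl(-\int_{T_2}^\infty\|n(s)\|_{L^\infty}\,ds\Bigr)
 \ \ge\ m(T_2)\,e^{-2CT_2^{-1/2}}
\]
for every $t\ge T_2$. The finite-time positivity estimate \eqref{eq:asym-finite-time-positive-mass} gives $m(T_2)\ge L\exp(-M\|c_0\|_{L^\infty}T_2/L)>0$. Letting $t\to\infty$ then yields
\[
 L_\infty\ \ge\ L\exp\bigl(-M\|c_0\|_{L^\infty}T_2/L-2CT_2^{-1/2}\bigr)>0 .
\]

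The only delicate point is justifying the differential inequality and its integrated form. On $[T_2,\infty)$ the solution is classical (Theorem~\ref{thm:eventual-regularity}), so one can simply integrate the identity
\[
 m(t)=m(T_2)-\int_{T_2}^t\!\int_{\mathbb H^3} nc .
\]
The product $nc$ is bounded by $\|n\|_{L^\infty}c$ pointwise, which is legitimate because $n$ is bounded on this interval. The rest is the standard Gronwall argument for an absolutely continuous function. I therefore expect no real obstacle: all the difficulty was already absorbed into the time-integrable $L^\infty$ decay of $n$ in Lemma~\ref{lem:asym-scalars} and the positivity at finite times in Lemma~\ref{lem:asym-mass}.
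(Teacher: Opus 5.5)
Your proposal is correct and follows the same route as the paper. The argument combines finite-time positivity of $\int_{\mathbb H^3}c(T)$ from Lemma~\ref{lem:asym-mass}, the linear inequality $\frac{d}{dt}\int_{\mathbb H^3}c\ge-\|n(t)\|_{L^\infty}\int_{\mathbb H^3}c$ obtained from the mass identity, and time-integrability of $\|n(t)\|_{L^\infty}$ from Lemma~\ref{lem:asym-scalars}, joined by Gronwall's inequality; your explicit lower bound simply writes out both exponential factors that the paper leaves implicit.
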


\begin{proof}
If $L>0$, choose any sufficiently large time $T$.
Lemma~\ref{lem:asym-mass} gives $\int_{\mathbb H^3}c(T)>0$.
The mass identity and Lemma~\ref{lem:asym-scalars} give
\[
 \frac{d}{dt}\int_{\mathbb H^3}c(t)
 =-\int_{\mathbb H^3}nc
 \geq-\|n(t)\|_{L^\infty}\int_{\mathbb H^3}c(t),
 \qquad
 \int_T^\infty\|n(t)\|_{L^\infty}\,dt<\infty.
\]
Integrating this differential inequality yields
\[
 L_\infty\geq
 \left(\int_{\mathbb H^3}c(T)\right)
 \exp\left(-\int_T^\infty\|n(t)\|_{L^\infty}\,dt\right)>0.
\]
For $L=0$, nonnegativity and the mass inequality force $c=0$.
\end{proof}

\subsection{Gaussian profiles, including the endpoint norms}

The remaining task is to identify the leading profiles. We first
separate two heat-kernel facts: an $L^1$ datum contributes its total
mass times the Gaussian, and a sufficiently decaying divergence
flux contributes only a lower-order term. The reaction $nc$ in
the signal equation is then handled through its total time integral.

\begin{lemma}[Neumann heat asymptotics and divergence remainders]
\label{lem:asym-heat-profile}
Let $G_N$ be the unit-mass Neumann Gaussian defined in
Theorem~\ref{thm:large-time-asymptotics}. For $f\in L^1(\mathbb H^3)$, a fixed $T\geq0$, and
$1\leq p\leq\infty$,
\begin{equation}\label{eq:asym-linear-profile}
  t^{\alpha_p}
 \left\|S_N(t-T)f-\left(\int_{\mathbb H^3}f\right)G_N(t)\right\|_{L^p}
 \longrightarrow0.
\end{equation}
Suppose a vector field $F$ belongs to
$L^1((T,B);L^1(\mathbb H^3))$ for every $B>T$ and satisfies, for this $p$,
\[
 \|F(s)\|_{L^1}=o(s^{-1/2}),\qquad
 \|F(s)\|_{L^p}=o(s^{-\alpha_p-1/2}).
\]
Then, whenever the following integral is defined (as it is for the
classical fluxes below),
\begin{equation}\label{eq:asym-flux-remainder}
 t^{\alpha_p}
 \left\|\int_T^t\mathcal D_N(t-s)F(s)\,ds\right\|_{L^p}
 \longrightarrow0.
\end{equation}
\end{lemma}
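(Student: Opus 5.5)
For \eqref{eq:asym-linear-profile}, I would reduce everything to the whole-space heat equation by even reflection. Write $f^*$ for the even extension of $f$ to $\mathbb R^3$. Then $S_N(r)f$ is the restriction of $H_r*f^*$ to $\mathbb H^3$, and $\int_{\mathbb R^3}f^*=2\int_{\mathbb H^3}f$. Since $G_N(t)=2H_t$ on $\mathbb H^3$, the claim is equivalent to the classical statement
\[
t^{\alpha_p}\bigl\|H_{t-T}*f^*-\bigl(\textstyle\int f^*\bigr)H_t\bigr\|_{L^p(\mathbb R^3)}\to0,
\]
because the restriction to the half-space loses at most a factor one in each norm. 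I would prove this in three steps.

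\emph{Time shift.} First replace $H_{t-T}$ by $H_t$. By Young's inequality,
\[
\|(H_{t-T}-H_t)*f^*\|_{L^p}\le\|H_{t-T}-H_t\|_{L^p}\|f^*\|_{L^1}=o(t^{-\alpha_p}),
\]
since $t^{\alpha_p}\|H_{t-T}-H_t\|_{L^p}\to0$ by the explicit scaling of the Gaussian.

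\emph{Density argument.} Next use density: given $\eta>0$, pick $g\in C_c(\mathbb R^3)$ with $\|f^*-g\|_{L^1}<\eta$. For the error $f^*-g$, the bound $\|H_t\|_{L^p}\le Ct^{-\alpha_p}$ gives
\[
\|H_t*(f^*-g)\|_{L^p}+\bigl|\textstyle\int(f^*-g)\bigr|\,\|H_t\|_{L^p}\le C\eta\,t^{-\alpha_p}.
\]

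\emph{Compact support.} For $g$ supported in $B_R$, the mean value theorem gives
\[
|H_t*g-(\textstyle\int g)H_t|(x)\le\int|g(y)|\,|H_t(x-y)-H_t(x)|\,dy\le R\|g\|_{L^1}\sup_{|z|\le R}|\nabla H_t(x-z)|.
\]
The $L^p$ norm in $x$ of this supremum is $O(t^{-1/2-\alpha_p})$, because for $t\gg R^2$ it is dominated by a Gaussian of variance comparable to $t$. This term is therefore $o(t^{-\alpha_p})$, and letting $\eta\to0$ finishes \eqref{eq:asym-linear-profile}. None of these steps is sensitive to the endpoints $p=1$ or $p=\infty$.

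For \eqref{eq:asym-flux-remainder}, I would split $\int_T^t$ at $s=t/2$ and use the kernel bounds \eqref{eq:asym-neumann-bounds}. On $(t/2,t)$, I apply the $L^p\to L^p$ divergence bound:
\[
\int_{t/2}^t(t-s)^{-1/2}\|F(s)\|_{L^p}\,ds\le o(t^{-\alpha_p-1/2})\int_{t/2}^t(t-s)^{-1/2}\,ds=o(t^{-\alpha_p}).
\]
On $(T,t/2)$, where $t-s\ge t/2$, I apply the $L^1\to L^p$ bound:
\[
C\,t^{-1/2-\alpha_p}\int_T^{t/2}\|F(s)\|_{L^1}\,ds.
\]
The main obstacle is here. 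The hypothesis $\|F(s)\|_{L^1}=o(s^{-1/2})$ gives $\int_T^{t/2}\|F\|_{L^1}=o(t^{1/2})$ only up to the contribution of the early times, which is a fixed constant. This is handled by splitting at a large fixed $S$. On $(T,S)$, the integrability of $F$ on finite intervals gives a constant contribution, which becomes $O(t^{-1/2-\alpha_p})$ after multiplication. On $(S,t/2)$, we have $\|F(s)\|_{L^1}\le\eta s^{-1/2}$, so that part contributes at most $C\eta\,t^{1/2}\cdot t^{-1/2-\alpha_p}$. Multiplying by $t^{\alpha_p}$ and letting $t\to\infty$ and then $\eta\to0$ gives \eqref{eq:asym-flux-remainder}.
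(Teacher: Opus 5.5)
Your proof is correct. The flux-remainder part is the paper's argument: you split at $t/2$, use the $L^p\to L^p$ divergence bound on $(t/2,t)$ and the $L^1\to L^p$ bound on $(T,t/2)$, and separate a fixed initial interval so that $\|F(s)\|_{L^1}=o(s^{-1/2})$ can be used.

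For the linear profile you take a different, somewhat longer route.

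\emph{The paper's route.} It writes the difference as $\int f^e(y)\,[G(\cdot-y,r)-G(\cdot,r)]\,dy$, applies Minkowski's integral inequality, and uses dominated convergence against $|f^e(y)|$. For each fixed $y$, scaling gives $r^{\alpha_p}\|G(\cdot-y,r)-G(\cdot,r)\|_{L^p}=\|G(\cdot-y/\sqrt r,1)-G(\cdot,1)\|_{L^p}\to0$, with the uniform bound $2\|G(\cdot,1)\|_{L^p}$. The passage from $G_N(t-T)$ to $G_N(t)$ is made at the end by dilation continuity of the Gaussian. This is your time-shift step, done last rather than first.

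\emph{Your route.} You approximate $f^*$ in $L^1$ by a compactly supported $g$. You then bound $H_t*g-(\int g)H_t$ by the mean-value theorem together with a Gaussian envelope for $\sup_{|z|\le R}|\nabla H_t(x-z)|$, which is valid once $t\ge R^2$.

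\emph{Comparison.} The paper's version needs neither the density step nor the envelope estimate. Yours is quantitative for compactly supported data: it gives the explicit relative rate $O(t^{-1/2})$, and the only non-quantitative ingredient is the $L^1$ approximation.
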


\begin{proof}
Extend $f$ evenly to $f^e$ on $\mathbb R^3$. Its integral is $2\int_{\mathbb H^3}f$,
and $S_N(r)f=(G(\cdot,r)*f^e)|_{\mathbb H^3}$, where $G$ is the whole-space
Gaussian. For every fixed $y\in\mathbb R^3$,
\[
 r^{\alpha_p}\|G(\cdot-y,r)-G(\cdot,r)\|_{L^p(\mathbb R^3)}
 =\|G(\cdot-y/\sqrt r,1)-G(\cdot,1)\|_{L^p(\mathbb R^3)}\longrightarrow0.
\]
This follows from translation continuity for finite $p$, and from
uniform continuity of the Gaussian for $p=\infty$.
The expression is bounded by $2\|G(\cdot,1)\|_{L^p(\mathbb R^3)}$.
Minkowski's inequality and dominated convergence against $|f^e(y)|$
prove the assertion with $r=t-T$ and $G_N(t-T)$.
Replacing $t-T$ by $t$ in the Gaussian changes the normalized norm
by a quantity tending to zero, by dilation continuity of the Gaussian
and $(t-T)/t\to1$. This proves \eqref{eq:asym-linear-profile}
without any spatial moment assumption.

For the second assertion, split the time integral at $t/2$.
The early part satisfies
\[
 \left\|\int_T^{t/2}\mathcal D_N(t-s)F(s)\,ds\right\|_{L^p}
 \leq C t^{-\alpha_p-1/2}\int_T^{t/2}\|F(s)\|_{L^1}\,ds
 =o(t^{-\alpha_p}).
\]
The last equality follows by separating a fixed initial interval and
using $\|F(s)\|_{L^1}=o(s^{-1/2})$ on the rest.
For the late part, use the $L^p\to L^p$ divergence bound:
\[
 \begin{aligned}
 \left\|\int_{t/2}^t\mathcal D_N(t-s)F(s)\,ds\right\|_{L^p}
 &\leq C\left(\operatorname*{ess\,sup}_{t/2<s<t}\|F(s)\|_{L^p}\right)
       \int_{t/2}^t(t-s)^{-1/2}\,ds\\
 &=o(t^{-\alpha_p-1/2})\,O(t^{1/2})
 =o(t^{-\alpha_p}).
 \end{aligned}
\]
Both estimates use \eqref{eq:asym-neumann-bounds}, so they include
$p=1$ and $p=\infty$.
\end{proof}

\begin{lemma}[The scalar Gaussian profiles]\label{lem:asym-gaussian}
For every $1\leq p\leq\infty$,
\begin{equation}\label{eq:asym-gaussian-profiles}
 \lim_{t\to\infty}t^{\alpha_p}
 \left(\|n(t)-MG_N(t)\|_{L^p}
       +\|c(t)-L_\infty G_N(t)\|_{L^p}\right)=0.
\end{equation}
\end{lemma}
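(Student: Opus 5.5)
We will work entirely on the classical time tail supplied by Theorem~\ref{thm:eventual-regularity}, write both scalar equations in Neumann mild form from a fixed late time $T$, and apply Lemma~\ref{lem:asym-heat-profile} term by term. For the density, with $F=n(u+\nabla c)$, we have
\[
 n(t)=S_N(t-T)n(T)-\int_T^t\mathcal D_N(t-s)F(s)\,ds .
\]
Since $\int_{\mathbb H^3} n(T)=M$ by Lemma~\ref{lem:asym-mass}, \eqref{eq:asym-linear-profile} gives the Gaussian term $MG_N(t)$. It then suffices to check the flux hypotheses of \eqref{eq:asym-flux-remainder}. Lemma~\ref{lem:asym-velocity} and \eqref{eq:asym-signal-gradient} give $\|u(s)+\nabla c(s)\|_{L^\infty}=o(s^{-1/2})$. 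Combining this with \eqref{eq:asym-scalar-rates}, we get $\|F(s)\|_{L^1}\le M\,o(s^{-1/2})$ and $\|F(s)\|_{L^p}\le C s^{-\alpha_p}\,o(s^{-1/2})$ for every $1\le p\le\infty$. The density half of \eqref{eq:asym-gaussian-profiles} follows.

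For the signal, we write $c$ in divergence form, using $u\cdot\nabla c=\nabla\cdot(uc)$ with $uc\cdot\nu=0$:
\[
 c(t)=S_N(t-T)c(T)-\int_T^t\mathcal D_N(t-s)(uc)(s)\,ds-\int_T^t S_N(t-s)(nc)(s)\,ds .
\]
The transport flux is handled exactly as above, now using $\|c(s)\|_{L^p}\le Cs^{-\alpha_p}$ and $\|u(s)\|_{L^\infty}=o(s^{-1/2})$. The first term produces $\bigl(\int_{\mathbb H^3} c(T)\bigr)G_N(t)$ up to $o(t^{-\alpha_p})$.

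The main obstacle is the absorption term, which is not a divergence. We split it at $t/2$. On the late part $(t/2,t)$ we use $\|S_N\|_{L^p\to L^p}\le C$ together with $\|nc(s)\|_{L^p}\le \|n(s)\|_{L^\infty}\|c(s)\|_{L^p}\le Cs^{-3/2-\alpha_p}$. Integrating over an interval of length $t/2$ gives $O(t^{-1/2-\alpha_p})=o(t^{-\alpha_p})$.

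On the early part $(T,t/2)$ the argument is more delicate. For each fixed $s$, the proof of \eqref{eq:asym-linear-profile}, after the time shift, gives
\[
 t^{\alpha_p}\bigl\|S_N(t-s)f-\textstyle\int f\,G_N(t)\bigr\|_{L^p}\to0 .
\]
Here we need the error to be dominated uniformly for $s\le t/2$. The bound $t^{\alpha_p}\|S_N(t-s)f\|_{L^p}\le C\|f\|_{L^1}$ provides this domination. The pointwise convergence comes from the translation argument with the rescaled shift, using $(t-s)/t\in[1/2,1]$ and dilation continuity. Dominated convergence in $s$, with majorant $\|nc(s)\|_{L^1}\in L^1(T,\infty)$, then shows that the early part equals
\[
 \Bigl(\int_T^{t/2}\!\!\int_{\mathbb H^3} nc\Bigr)G_N(t)+o(t^{-\alpha_p}) .
\]
The tail $\int_{t/2}^\infty\!\int_{\mathbb H^3} nc$, multiplied by $t^{\alpha_p}\|G_N(t)\|_{L^p}=O(1)$, tends to zero.

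Collecting the terms, the coefficient of $G_N(t)$ is $\int_{\mathbb H^3} c(T)-\int_T^\infty\!\int_{\mathbb H^3} nc$. By \eqref{eq:asym-mass-identities} and \eqref{eq:asym-signal-mass-limit} this equals $L_\infty$, which proves the signal half and hence \eqref{eq:asym-gaussian-profiles}. The case $L=0$ is trivial, since then $c\equiv0$ by Lemma~\ref{lem:asym-residual-mass}.
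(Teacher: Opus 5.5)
Your proposal is correct and follows the paper's proof essentially step for step. You use the same Neumann mild formulas from a fixed late time $T$ and Lemma~\ref{lem:asym-heat-profile} for the linear and divergence-flux terms. You also split the absorption term at $t/2$ (dominated convergence on the early part, $L^p$ contractivity on the late part) and identify the coefficient $\int_{\mathbb H^3}c(T)-\int_T^\infty\!\int_{\mathbb H^3}nc=L_\infty$ through the mass identity.
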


\begin{proof}
Fix $T$ after the waiting times of the preceding lemmas.
For every $1\leq p\leq\infty$, the density flux satisfies
\[
 \|n(t)(u(t)+\nabla c(t))\|_{L^p}
 \leq\|n(t)\|_{L^p}
       \bigl(\|u(t)\|_{L^\infty}+\|\nabla c(t)\|_{L^\infty}\bigr)
 =o(t^{-\alpha_p-1/2}).
\]
For $p=1$ this is $o(t^{-1/2})$, and the flux is integrable on
every finite interval starting at $T$. Apply
Lemma~\ref{lem:asym-heat-profile} to
\[
 n(t)=S_N(t-T)n(T)
       -\int_T^t\mathcal D_N(t-s)[n(u+\nabla c)](s)\,ds.
\]
Since $\int_{\mathbb H^3}n(T)=M$, this gives the density profile.

The concentration mild formula is
\begin{equation}\label{eq:asym-c-mild-profile}
 \begin{aligned}
 c(t)={}&S_N(t-T)c(T)
 -\int_T^t\mathcal D_N(t-s)(uc)(s)\,ds\\
 &-\int_T^tS_N(t-s)(nc)(s)\,ds.
 \end{aligned}
\end{equation}
Here incompressibility and the zero boundary trace of $u$ justify
the divergence form of the transport term. The flux $uc$ satisfies
the same small-$o$ bounds as the density flux, so its contribution
is $o(t^{-\alpha_p})$. The absorption term satisfies
\begin{equation}\label{eq:asym-reaction-decay}
 \begin{aligned}
 \|(nc)(s)\|_{L^p}
 &\leq\|n(s)\|_{L^\infty}\|c(s)\|_{L^p}
 \leq C_p s^{-\alpha_p-3/2},\\
 nc&\in L^1((T,\infty)\times\mathbb H^3).
 \end{aligned}
\end{equation}
By the mass identity, its total integral from $T$ onward equals
$\int_{\mathbb H^3}c(T)-L_\infty$. We claim that
\begin{equation}\label{eq:asym-reaction-profile}
 t^{\alpha_p}
 \left\|\int_T^tS_N(t-s)(nc)(s)\,ds
 -\left(\int_{\mathbb H^3}c(T)-L_\infty\right)G_N(t)\right\|_{L^p}
 \longrightarrow0.
\end{equation}

For almost every fixed $s$, the linear profile estimate gives
\[
 t^{\alpha_p}
 \left\|S_N(t-s)(nc)(s)
       -\left(\int_{\mathbb H^3}(nc)(s)\right)G_N(t)\right\|_{L^p}
 \longrightarrow0.
\]
On $T<s<t/2$, the expression is bounded by
$C_p\|(nc)(s)\|_{L^1}$, which is integrable in $s$.
Dominated convergence handles this early part. The omitted mass
$\int_{t/2}^\infty\int_{\mathbb H^3}nc$ tends to zero, and
$t^{\alpha_p}\|G_N(t)\|_{L^p}$ is constant. For the late part,
contractivity and \eqref{eq:asym-reaction-decay} give
\[
 \left\|\int_{t/2}^tS_N(t-s)(nc)(s)\,ds\right\|_{L^p}
 \leq C_p\int_{t/2}^t s^{-\alpha_p-3/2}\,ds
 \leq C_p t^{-\alpha_p-1/2}.
\]
These estimates include $p=1$ and $p=\infty$, and prove
\eqref{eq:asym-reaction-profile}.

Finally, the linear term in \eqref{eq:asym-c-mild-profile}
contributes $\bigl(\int_{\mathbb H^3}c(T)\bigr)G_N(t)$.
Subtracting the absorption profile leaves exactly
$L_\infty G_N(t)$. This proves \eqref{eq:asym-gaussian-profiles}.
\end{proof}

\begin{proof}[Proof of Theorem~\ref{thm:large-time-asymptotics}]
Use the same selected solution as in
Theorem~\ref{thm:eventual-regularity}; no further choice or change of
initial data is involved. Lemma~\ref{lem:asym-mass} gives the global
mass identities. Lemma~\ref{lem:asym-velocity} gives the velocity
limits and rates, Lemma~\ref{lem:asym-scalars} gives the scalar rates,
and Lemmas~\ref{lem:asym-residual-mass} and \ref{lem:asym-gaussian}
give the remaining signal mass and both Gaussian profiles.
If $M=0$, then $n=0$ and $g=0$; the velocity argument remains valid
for the unforced equation and the scalar proofs apply with the density
terms omitted. If $L=0$, then $c=0$ and its profile is identically
zero. Thus the conclusions also include both degenerate cases.
\end{proof}

\numberwithin{equation}{section} 
\numberwithin{theorem}{section}  

	\noindent {\bf Acknowledgments.}
	W. Wang was supported by National Key R\&D Program of China (No. 2023YFA1009200), NSFC under grants 12071054 and 12471219.\\	
	\noindent {\bf Declaration of competing interest.}
	The authors state that there is no conflict of interest.\\	
	\noindent {\bf Data availability.}
	No data was used in this paper.

\end{document}